\title{H\'enon-like maps with arbitrary stationary combinatorics}
\author{P.~E.~Hazard}
\date{\today}
\newcounter{bean}
\renewenvironment{enumerate}{\begin{list}{(\roman{bean})}{\usecounter{bean}\setlength{\rightmargin}{\leftmargin}}}{\end{list}\setcounter{bean}{1}}
\newenvironment{a-enumerate}{\begin{list}{(A-\arabic{bean})}{\usecounter{bean}\setlength{\rightmargin}{\leftmargin}}}{\end{list}\setcounter{bean}{1}}
\newenvironment{b-enumerate}{\begin{list}{(B-\arabic{bean})}{\usecounter{bean}\setlength{\rightmargin}{\leftmargin}}}{\end{list}\setcounter{bean}{1}}
\newenvironment{u-enumerate}{\begin{list}{(U-\arabic{bean})}{\usecounter{bean}\setlength{\rightmargin}{\leftmargin}}}{\end{list}\setcounter{bean}{1}}
\newenvironment{t-enumerate}{\begin{list}{(T-\arabic{bean})}{\usecounter{bean}\setlength{\rightmargin}{\leftmargin}}}{\end{list}\setcounter{bean}{1}}
\newenvironment{ur-enumerate}{\begin{list}{(UR-\arabic{bean})}{\usecounter{bean}\setlength{\rightmargin}{\leftmargin}}}{\end{list}\setcounter{bean}{1}}
\newenvironment{hr-enumerate}{\begin{list}{(HR-\arabic{bean})}{\usecounter{bean}\setlength{\rightmargin}{\leftmargin}}}{\end{list}\setcounter{bean}{1}}
\newcounter{condition}
\newenvironment{Alpha-enumerate}{\begin{list}{(\Alph{condition})}{\usecounter{condition}\setlength{\rightmargin}{\leftmargin}}}{\end{list}\setcounter{condition}{1}}
\theoremstyle{plain}
\newtheorem{thm}{Theorem}[section]
\newtheorem{lem}[thm]{Lemma}
\newtheorem{prop}[thm]{Proposition}
\newtheorem{cor}[thm]{Corollary}
\theoremstyle{definition}
\newtheorem{defn}[thm]{Definition}
\theoremstyle{remark}
\newtheorem{rmk}[thm]{Remark}
\newtheorem{notn}[thm]{Notation}
\numberwithin{equation}{section}
\begin{document}

\maketitle
\def\IMSmarkvadjust{0 pt}
\def\IMSmarkhadjust{0 pt}
\def\IMSmarkhpadding{0 pt}
\def\IMSpubltext{Published in modified form:}
\def\SBIMSMark#1#2#3{
 \font\SBF=cmss10 at 10 true pt
 \font\SBI=cmssi10 at 10 true pt
 \setbox0=\hbox{\SBF \hbox to \IMSmarkhpadding{\relax}
                Stony Brook IMS Preprint \##1}
 \setbox2=\hbox to \wd0{\hfil \SBI #2}
 \setbox4=\hbox to \wd0{\hfil \SBI #3}
 \setbox6=\hbox to \wd0{\hss
             \vbox{\hsize=\wd0 \parskip=0pt \baselineskip=10 true pt
                   \copy0 \break%
                   \copy2 \break%
                   \copy4 \break}}
 \dimen0=\ht6   \advance\dimen0 by \vsize \advance\dimen0 by 8 true pt
                \advance\dimen0 by -\pagetotal
	        \advance\dimen0 by \IMSmarkvadjust
 \dimen2=\hsize \advance\dimen2 by .25 true in
	        \advance\dimen2 by \IMSmarkhadjust

%
%
  \openin2=publishd.tex
  \ifeof2\setbox0=\hbox to 0pt{}
  \else 
     \setbox0=\hbox to 3.1 true in{
                \vbox to \ht6{\hsize=3 true in \parskip=0pt  \noindent  
                {\SBI \IMSpubltext}\hfil\break
                \input publishd.tex 
                \vfill}}
  \fi
  \closein2
  \ht0=0pt \dp0=0pt
 \ht6=0pt \dp6=0pt
 \setbox8=\vbox to \dimen0{\vfill \hbox to \dimen2{\copy0 \hss \copy6}}
 \ht8=0pt \dp8=0pt \wd8=0pt
 \copy8
 \message{*** Stony Brook IMS Preprint #1, #2. #3 ***}
}

\SBIMSMark{2010/1}{February 2010}{}

\vspace{-1cm}

\begin{abstract}
We extend the renormalisation operator introduced in~\cite{dCML} from period-doubling H\'enon-like maps to H\'enon-like maps with arbitrary stationary combinatorics. We show the renormalisation picture holds also holds in this case if the maps are taken to be \emph{strongly dissipative}. We study infinitely renormalisable maps $F$ and show they have an invariant Cantor set $\Cantor$ on which $F$ acts like a $p$-adic adding machine for some $p>1$. We then show, as for the period-doubling case in~\cite{dCML}, the sequence of renormalisations have a universal form, but the invariant Cantor set $\Cantor$ is non-rigid. We also show $\Cantor$ cannot possess a continuous invariant line field.
\end{abstract}

\section{Introduction}
\subsection{H\'enon Renormalisation}
In~\cite{dCML}, de Carvalho, Lyubich and Martens constructed a period-doubling renormalisation operator for H\'enon-like mappings of the form
\begin{equation}
F(x,y)=(f(x)-\e(x,y),x).
\end{equation}
Here $f$ is a unimodal map and $\e$ was a real-valued map from the square to the positive real numbers of small size (we shall be more explicit about the maps under
consideration in Sections~\ref{sect:unimodal} and~\ref{sect:henonlike}). They showed that for $|\e|$ sufficiently small the unimodal renormalisation picture carries over to this case. Namely, there exists a unique renormalisation fixed point (which actually coincides with unimodal period-doubling renormalisation fixed point) which is hyperbolic with codimension one stable manifold, consisting of infinitely renormalisable period-doubling maps, and dimension one local unstable manifold. They later called this regime \emph{strongly dissipative}.

In the same paper they then studied the dynamics of infinitely renormalisable H\'enon-like maps $F$. They showed that such a map has an invariant Cantor set, $\Cantor$, upon which the map acts like an adding machine. This allowed them to define the \emph{average Jacobian} given by
\begin{equation}
b=\exp \int_\Cantor \log|\jac{F}{z}|d\mu(z)
\end{equation}
where $\jac{F}{z}=\det \D{F}{z}$ is the Jacobian determinant and $\mu$ denotes the unique $F$-invariant measure on $\Cantor$ induced by the adding machine. This quantity played an important role in their study of the local behaviour of
such maps around the Cantor set. They took a distinguished point $\tau$ of the Cantor set $\Cantor$ called the \emph{tip}. They examined the dynamics and geometry of the Cantor set asymptotically around $\tau$. Their two main results can then be stated as follows.
\begin{thm}[Universality at the tip]
There exists a universal constant $0<\rho<1$ and a universal real-analytic real-valued  function $a(x)$ such that the following holds: 
Let $F$ be a strongly dissipative, period-doubling, infinitely renormalisable H\'enon-like map. Then
\begin{equation}
\RH^nF(x,y)=(f_n(x)-b^{2^n}a(x)y(1+\bigo(\rho^n)),x)
\end{equation}
where $b$ denotes the average Jacobian of $F$ and $f_n$ are unimodal maps converging exponentially to the unimodal period-doubling renormalisation fixed point.
\end{thm}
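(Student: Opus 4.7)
The plan is to combine the hyperbolicity of $\RH$ at the period-doubling fixed point $F_*$ with a transfer equation for the perturbation $\e_n$ that exploits how the average Jacobian behaves under $\RH$. Since $F$ is infinitely renormalisable, it lies on the codimension-one stable manifold of $F_*$, so hyperbolicity (as established in~\cite{dCML}) gives $\RH^nF\to F_*$ exponentially at some geometric rate $\rho^n$. Writing $\RH^nF(x,y)=(f_n(x)-\e_n(x,y),x)$, the convergence $f_n\to f_*$ at rate $\rho^n$ is then immediate, as is the weak bound $\|\e_n\|=\bigo(\rho^n)$; the real content of the theorem is the finer asymptotic shape of $\e_n$.

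The scaling factor $b^{2^n}$ originates from the chain rule. Because $\RH F$ is affinely conjugate to a restriction of $F^2$ via the rescaling $H$, one has $\jac{\RH F}{z}=\jac{F}{H^{-1}z}\cdot\jac{F}{F(H^{-1}z)}$, and integration of $\log|\jac{\RH F}{z}|$ against the invariant measure on $\Cantor$ yields $b(\RH F)=b(F)^2$, so $b(\RH^nF)=b^{2^n}$. Since $\jac{\RH^nF}{(x,y)}=\partial_y\e_n(x,y)$, this already forces the leading magnitude of $\e_n$ to be $b^{2^n}$, doubly exponentially smaller than any purely geometric bound would predict.

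To identify the universal function $a(x)$, I would expand
\begin{equation*}
F^2(x,y)=\bigl(f(f(x)-\e(x,y))-\e(f(x)-\e(x,y),x),\,f(x)-\e(x,y)\bigr),
\end{equation*}
apply the rescaling $H$, and collect terms. The result is a transfer relation of the schematic form $\e_{n+1}(x,y)=f_n'(y)\,\e_n(\tilde x,\tilde y)+\bigo(\e_n^2)$ for a rescaled argument $(\tilde x,\tilde y)$. The quadratic remainder is of order $b^{2^{n+1}}$, negligible compared to the leading $b^{2^n}$. Factoring $\e_n(x,y)=y\,u_n(x,y)$ (after absorbing any $y$-independent piece into $f_n$) and normalising $v_n:=u_n/b^{2^n}$, the profiles $v_n$ satisfy a linear recursion whose coefficient operator converges exponentially to a limiting transfer operator $\mathcal{L}_*$ depending only on $f_*$. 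The universal function $a(x)$ is then the unique fixed solution of $\mathcal{L}_*$, and convergence $v_n\to a$ at rate $\rho^n$ is inherited from the spectral gap of $\mathcal{L}_*$ together with the exponential convergence $f_n\to f_*$.

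The main obstacle is handling the two scales simultaneously: the geometric $\rho^n$ coming from hyperbolicity of $\RH$ and the doubly exponential $b^{2^n}$ coming from the Jacobian. A crude application of hyperbolicity gives only $\|\e_n\|\lesssim\rho^n$, which is far too coarse to resolve the shape $b^{2^n}a(x)y$. The strong-dissipation hypothesis enters precisely to keep the quadratic error $\bigo(\e_n^2)=\bigo(b^{2^{n+1}})$ genuinely subdominant uniformly in $n$, so that the normalised recursion for $v_n$ remains well-posed and the rescaling $H$ stays uniformly well-defined at every scale. Establishing uniform control of the $\bigo(\rho^n)$ error in $(x,y)$, together with the real-analyticity of the universal profile $a$, is where the bulk of the technical work lies.
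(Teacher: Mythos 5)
Your high-level observation that $b(\RH F)=b(F)^{2}$ and hence $b(\RH^{n}F)=b^{2^{n}}$ is correct, and it is indeed a consistency check that the paper also records (the monotonicity proposition). But the mechanism you propose for extracting the universal profile $a(x)$ does not work as stated, for two independent reasons.

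First, the identity $\jac{\RH F}{z}=\jac{F}{H^{-1}z}\cdot\jac{F}{F(H^{-1}z)}$ is wrong: the scope map $\MT=\bar H\circ\bar\III$ is \emph{not} affine (it contains the horizontal diffeomorphism $\bar H$, built from $\oo{p-1}{\phi}$), so the correct chain rule reads
\begin{equation}
\jac{\RH F}{z}=\jac{F^{p}}{\MT(z)}\cdot\frac{\jac{\MT}{z}}{\jac{\MT}{\RH F(z)}},
\end{equation}
and the second factor cannot be dropped. It happens to integrate to zero against $\mu$, which is why the \emph{average} still satisfies $b(\RH F)=b(F)^{p}$; but the present theorem is a pointwise statement, and the ratio of scope-map Jacobians is precisely where $a(x)$ lives. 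Omitting it, you can only recover the magnitude $b^{2^{n}}$, not the shape $a(x)$.

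Second, your schematic recursion $\e_{n+1}(x,y)=f_{n}'(y)\,\e_{n}(\tilde x,\tilde y)+\bigo(\e_{n}^{2})$ has the wrong leading order. The Jacobian of $F^{2}$ involves the product $\del_{y}\e(x,\cdot)\cdot\del_{y}\e(\cdot,y)$, so the term that is linear in $\e_{n}$ cancels and $\e_{n+1}$ is genuinely of order $\e_{n}^{2}$ (this is exactly the content of Theorem~\ref{thm:R-construction}: $\bar\e\mapsto C\bar\e^{p}$). Consequently the normalised profiles $v_{n}=u_{n}/b^{2^{n}}$ do \emph{not} satisfy a linear recursion in $v_{n}$ — naively one gets something like $v_{n+1}\sim C_{n}\,v_{n}(\cdot)v_{n}(\cdot')$, a multiplicative (quadratic) relation — so the ``fixed point of a linear transfer operator $\mathcal L_{*}$'' picture does not apply in the form you wrote it.

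The paper's actual argument is not recursive. It differentiates the telescoped identity $F_{n}=\MT_{0,n-1}^{-1}\circ\o{p^{n}}{F}\circ\MT_{0,n-1}$ at an arbitrary point $\varsigma$, writing
\begin{equation}
\del_{y}\phi_{n}(\varsigma)=\jac{\o{p^{n}}{F}}{\MT_{0,n-1}(\varsigma)}\cdot\frac{\jac{\MT_{0,n-1}}{\varsigma}}{\jac{\MT_{0,n-1}}{F_{n}(\varsigma)}}.
\end{equation}
The first factor is pinned to $b^{p^{n}}\left(1+\bigo(\rho^{n})\right)$ pointwise by the Distortion Lemma (Lemma~\ref{lem:distortion}), not merely on average. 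The ratio of scope-map Jacobians converges, via the decomposition $\MT_{m,n}=\tau_{m}+D_{m,n}(\id+R_{m,n})$ and the asymptotics of $r_{m,n}$ near the tip (Propositions~\ref{prop:D_mn-estimate} and~\ref{prop:r_mn-convergence-1}), to an explicit ratio of derivatives of the rescaled universal presentation function $v_{*}$, giving
\begin{equation}
a(\xi)=\frac{v_{*}'\!\left(\xi-\pi_{x}(\tau_{*})\right)}{v_{*}'\!\left(f_{*}(\xi)-\pi_{x}(\tau_{*})\right)}.
\end{equation}
Two ingredients central to this — the role of the tip $\tau_{*}$ as the anchoring point and the scope-map/presentation-function machinery of Sections~\ref{sect:asymptotics}--\ref{sect:tip} — are entirely absent from your proposal, and they cannot be replaced by appealing only to hyperbolicity of $\RH$ plus the average-Jacobian identity.
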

\begin{thm}[Non-rigidity around the tip]
Let $F$ and $\tilde F$ be two strongly dissipative, period-doubling, infinitely renormalisable H\'enon-like maps. Let their average Jacobians be $b$ and $\tilde b$ and their Cantor sets be $\Cantor$ and $\tilde\Cantor$ respectively. Then for any conjugacy $\pi\colon \Cantor\to\tilde\Cantor$ between $F$ and $\tilde F$ the H\"older exponent $\alpha$ satisfies
\begin{equation}
\alpha\leq \frac{1}{2}\left(1+\frac{\log b}{\log\tilde b}\right)
\end{equation}
In particular if the average Jacobians $b$ and $\tilde b$ differ then there cannot exist a $C^1$-smooth conjugacy between $F$ and $\tilde F$.  
\end{thm}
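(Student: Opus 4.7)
The plan is to use the Universality at the Tip theorem to read off the asymptotic geometry of $\Cantor$ near $\tau$, and then to translate it into a constraint on $\alpha$ via the H\"older hypothesis and the combinatorial conjugacy.

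First I would identify, for each sufficiently large $n$, two distinguished points $p_n, q_n$ in the piece $B_n \subset \Cantor$ containing $\tau$ (the $n$-th renormalisation level), and compute their asymptotic Euclidean distances to $\tau$. Near the tip, the universal form $\RH^n F(x,y) = (f_n(x) - b^{2^n} a(x) y(1+\bigo(\rho^n)), x)$ dictates two genuinely different geometric scales: a horizontal scale of order $\sigma^n$ (where $\sigma$ denotes the universal scaling factor of the unimodal period-doubling renormalisation fixed point), which is independent of $b$, and a vertical thickness of order $\sigma^n b^{2^n}$ induced by the H\'enon perturbation $b^{2^n}a(x)y$. These can be extracted by noting that as $\RH^n F$ converges to the renormalisation fixed point its Cantor set limits onto a graph with $b^{2^n}$-thick neighbourhood, and then unrescaling by the composition of renormalisation rescalings.

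Next I would transport these estimates through $\pi$. Since both $F$ and $\tilde F$ act on their Cantor sets as the dyadic adding machine, $\pi$ is determined by the combinatorial conjugacy and sends $p_n, q_n$ to their combinatorial counterparts $\tilde p_n, \tilde q_n \in \tilde\Cantor$, whose distances to $\tilde\tau$ are given by the same formulas with $\tilde b$ replacing $b$. Applying $|\pi(x)-\pi(y)|\leq C|x-y|^\alpha$ to appropriate pairs yields inequalities of the form $(\text{scale in }\tilde\Cantor)\leq C\,(\text{scale in }\Cantor)^\alpha$, which after taking logarithms, dividing by the dominant exponential factor $2^n$, and sending $n\to\infty$ reduce to linear inequalities among $\alpha$, $\log b$, and $\log\tilde b$.

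The delicate point, and the source of the factor $\tfrac12$ in the bound, is the choice of the pair $(p_n,q_n)$. A naive choice relating $b^{2^n}$ directly to $\tilde b^{2^n}$ only gives the weaker conclusion $\alpha\leq\log\tilde b/\log b$. The sharper bound must exploit the quadratic tangency of the unimodal fixed point at its critical value: a displacement of size $r$ in $x$ near the critical point produces a displacement of size $r^2$ in the first coordinate, so the perturbation of scale $b^{2^n}$ is felt in the orthogonal geometric direction at scale $\sqrt{b^{2^n}} = b^{2^{n-1}}$. Identifying the pair of points that realises exactly this square-root relationship, and controlling the universality error $\bigo(\rho^n)$ along the way, is the main technical obstacle; the rest is bookkeeping. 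Finally, the corollary that $b\neq\tilde b$ precludes a $C^1$-conjugacy is immediate: a $C^1$-diffeomorphism is bi-Lipschitz on compact sets, so the H\"older exponent of $\pi$ equals $1$, and applying the theorem also to $\pi^{-1}$ (which swaps the roles of $b$ and $\tilde b$) gives simultaneously $1\leq\tfrac12(1+\log b/\log\tilde b)$ and $1\leq\tfrac12(1+\log\tilde b/\log b)$, forcing $\log b=\log\tilde b$, hence $b=\tilde b$.
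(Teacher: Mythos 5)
Your overall strategy --- read off two geometric scales from the universality theorem, transport through the conjugacy, apply the H\"older condition, take logarithms and divide by the dominant exponential factor --- does match the paper's, and the quadratic tangency at the critical value is indeed the phenomenon responsible for the factor $\frac{1}{2}$. But you explicitly stop at the point where the actual proof begins: you write that ``identifying the pair of points that realises exactly this square-root relationship\dots is the main technical obstacle; the rest is bookkeeping.'' That obstacle \emph{is} the proof, and the proposal leaves it open. The paper resolves it by a \emph{two-scale balance}: fix $K>0$ large, choose $m$ with $\tilde b^{p^m}>Kb^{p^m}$, then choose $n>m$ with $\sigma^{n-m+1}\leq b^{p^m}<\sigma^{n-m}$, so the depth $n$ is tuned to the Jacobian scale at depth $m$. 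The distinguished pair is the tip at height $n+1$ together with its image under $F_{n+1}$; these are pushed forward through the scope map from height $n+1$ to $m$, then through one iterate of $F_m$, then through the scope map from height $m$ to $0$. The scope map from height $n+1$ to $m$ contracts horizontally by roughly $\sigma^{2(n-m)}$ --- the extra power is the squeeze $s_{m,n}\approx\sigma^{n-m}$ of Lemma~\ref{lem:D_mn-decomposition} and Proposition~\ref{prop:D_mn-estimate}, which is precisely the quadratic-tangency phenomenon you allude to --- and it has tilt $t_{m,n}\approx b^{p^m}$. The balance makes the squeeze and tilt contributions comparable for $F$, yielding a final distance of order $\sigma^m b^{2p^m}$, while for $\tilde F$ the tilt $\tilde b^{p^m}$ dominates by the $K$-gap, yielding a distance of order $\sigma^m b^{p^m}\tilde b^{p^m}$. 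Applying the H\"older condition and then taking $p^{-m}\log(\,\cdot\,)$ as $m\to\infty$ gives $\log b+\log\tilde b\leq 2\alpha\log b$, hence the stated bound.

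Two further points. First, your ``naive bound'' remark does not stand as written: the Cantor pieces at depth $n$ have diameter comparable to $\sigma^n$ \emph{independently of} $b$, so comparing the transverse thicknesses $\sigma^n b^{p^n}$ and $\sigma^n\tilde b^{p^n}$ does not by itself yield a H\"older constraint --- those thicknesses are not realised as distances between combinatorially matched pairs, and producing matched pairs whose distance is actually dominated by the $b$-dependent scale is exactly what the two-depth construction achieves. Relatedly, your sketch implicitly works at a \emph{single} depth $n$, whereas the argument genuinely needs two independent depths $m<n$ linked by $\sigma^{n-m}\approx b^{p^m}$; this is not cosmetic. Second, your derivation of the $C^1$ corollary (bi-Lipschitz $\Rightarrow \alpha=1$, then apply the bound to both $\pi$ and $\pi^{-1}$ and force $b=\tilde b$) is correct.
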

For a long time it was assumed that the properties satisfied by the one-dimensional unimodal renormalisation theory would also be satisfied by any renormalisation theory in any dimension.
In particular, the equivalence of the universal (real and complex a priori bounds) and rigid (pullback argument) properties in this setting made it natural to think that such a relation would hold more generally. That is, if universality controls the geometry of an attractor and we have a topological conjugacy between two attractors it was expected that such a conjugacy could be promoted to a smooth map, since the geometry of infinitesimally close pairs of orbits cannot differ too much. The above shows that this intuitive reasoning is incorrect.

Let us now outline the structure of the present work.
After recalling preliminary results in Section~\ref{sect:unimodal}, in Section~\ref{sect:henonlike} we generalise this renormalisation operator to other combinatorial types. We show that in this case the renormalisation picture also holds if $\bar\e$ is sufficiently small. Namely, for any stationary combinatorics there exists a unique renormalisation fixed point, again coinciding with the unimodal renormalisation fixed point, which is hyperbolic with codimension one stable manifold, consisting of infinitely renormalisable maps, and dimension-one local unstable manifold. 

We then study the dynamics of infinitely renormalisable maps of stationary combinatorial type and show that such maps have an $F$-invariant Cantor set $\Cantor$ on which $F$ acts as an adding machine. We note that the strategy to show that the limit set is a Cantor set in the period-doubling case does not carry over to maps with general stationary combinatorics. In both cases the construction of the Cantor set is via `Scope Maps', defined in sections~\ref{sect:unimodal} and~\ref{sect:henonlike}, which we approximate using the so-called `Presentation function' of the renormalisation fixed point. In the period-doubling case this is known to be contracting as the renormalisation fixed point is convex (see the result of Davie~\cite{Dav1}) and the unique fixed point lying in the interior of the interval is expanding (see the theorem of Singer~\cite[Ch. 3]{dMvS}). In the case of general combinatorics this is unlikely to be true. The work of Eckmann and Wittwer~\cite{EandW} suggests the convexity of fixed points for sufficiently large combinatorial types does not hold. The problem of contraction of branches of the presentation function was also asked in~\cite{JMS}.

Once this is done we can define the average Jacobian and the tip of an infinitely renormalisable H\'enon-like map in a way analogous to the period-doubling case. This then allows us, in Section~\ref{sect:applications}, to generalise the universality and non-rigidity results stated above to the case of arbitrary combinatorics.
We also generalise another result from~\cite{dCML}, namely the Cantor set of an infinitely renormalisable H\'enon-like map cannot support a continuous invariant line field. Our proof, though, is significantly different. This is because in the period-doubling case they observed a `flipping' phenomenon where orientations were changed purely because of combinatorics. Their argument clearly breaks down in the more general case where there is no control over such things.

\paragraph{Acknowledgements.} Firstly, I thank my thesis adviser Marco Martens for suggesting these problems and giving many useful insights into renormalisation. Secondly, I would like to thank Misha Lyubich for many useful comments about my work and for reading a draft of this manuscript. Finally, I also thank Michael Benedicks, Andr\'e de Carvalho and Sebastian van Strien for their insights on the contents of this paper and for their continuing interest in my work.

\subsection{Notations and Conventions}

Given a function $F$ we denote by $\Dom(F)$ its domain of definition.
Typically this will be a subset of $\RR^n$ or $\CC^n$.
For $i\geq 0$ we denote its $i$-th iterate by $\o{i}{F}$ whenever it exists. For $S\subset \Dom(F)$ we denote its $i$-th
preimage by $\o{-i}{F}\colon \o{i}{F}(S)\to S\subset\RR^n$ whenever this exists.

Now we restrict our attention to the one- and two-dimensional cases, both real and complex.
Let $\pi_x,\pi_y\colon \RR^2\to \RR$ denote the projections onto the $x$- and $y$- coordinates. We will identify these with their extensions to $\CC^2$. (In fact we
identify all real functions with their complex extensions whenever they exist.)

Given $a,b\in\RR$, denote the closed interval between them by $[a,b]=[b,a]$. Denote the interval $[0,1]$ by $J$.
For any interval $T\subset\RR$ denote its boundary by $\del T$, its left endpoint by $\del^{-}T$ and its right endpoint by $\del^{+}T$. 
Given two intervals $T_0,T_1\subset J$ denote an affine bijection from $T_0$ to $T_1$ by $\ii_{T_0\to T_1}$. 
Typically it will be clear from the situation whether we are using the unique orientation preserving or orientation reversing bijection. 

Let us denote the square $[0,1]\times [0,1]=J^2$ by $B$. 
We call $S\subset B$ a \emph{rectangle} if it is the Cartesian product of two intervals.
Given two rectangles $B_0, B_1\subset B$ we will denote an affine bijection from $B_0$ to $B_1$ preserving horizontal and vertical lines by $\III_{B_0\to B_1}$. 
Again the orientations of its components will be clear from the situation.

Let $\Omegax\subseteq\Omegay\subset\CC$ be simply connected domains compactly containing $J$ and let $\Omega=\Omegax\times\Omegay$ denote the resulting polydisk containing $B$.

If $F\colon \RR^2\to\RR^2$ is differentiable at a point $z\in\RR^2$ we will denote the derivative of $F$ at $z$ by $\D{F}{z}$. The \emph{Jacobian} of $F$ is given by
\begin{equation}
\jac{F}{z}=\det \D{F}{z}
\end{equation}
Given a bounded region $S\subset \RR^2$ we will define the \emph{distortion} of $F$ on $S$ by
\begin{equation}
\dis{F}{S}=\sup_{z,\tilde z\in S}\log\left|\frac{\jac{F}{z}}{\jac{F}{\tilde z}}\right|.
\end{equation}

\section{Unimodal Maps}\label{sect:unimodal}
Let us now briefly review some parts of one-dimensional unimodal renormalisation theory. In particular, the presentation function theory associated with it developed by Rand~\cite{Rand1}, Feigenbaum~\cite{F3}, Sullivan~\cite{Sull2} and Birkhoff, Martens and Tresser~\cite{BMT}.
\subsection{The Space of Unimodal Maps}
Let $\beta>0$ be a constant, which we will think of as being small. Let $\U_{\Omegax,\beta}$ denote the space of maps $f\in C^\omega(J,J)$ satisfying the following properties:
\begin{enumerate}
\item there is a unique critical point $c_0=c_0(f)$, which lies in $(0,1-\beta]$;
\item $f$ is orientation preserving to the left of $c$ and orientation reversing to the right of $c$;
\item $f(\del^+ J)=f(\del^-J)=0$ and $c_1=f(c_0)>c_0$;
\item there is a unique fixed point $\alpha=\alpha(f)$ lying in $\interior(J)$. Both fixed points are expanding;
\item $f$ admits a holomorphic extension to the domain $\Omegax$, upon which it can be factored as $\psi\circ Q$, where $Q\colon\CC\to\CC$ is given by $Q(z)=4z(1-z)$ and
$\psi\colon Q(\Omegax)\to\CC$ is an orientation preserving univalent mapping which fixes the real axis;
\end{enumerate}
Such maps will be called \emph{unimodal maps}. Given any interval $T\subset \RR$ we will say a map $g\colon T\to T$ is \emph{unimodal on $T$} if there exists an affine bijection $h\colon J\to T$ such that $h^{-1}\circ g\circ
h\in \U_{\Omegax,\beta}$. We will identify all unimodal maps with their holomorphic extensions. 

We make following observations: first, this extension will be
$\RR$-symmetric (i.e. $f(\bar z)=\oline{f(z)}$ for all $z\in\Omegax$) and secondly, there are two fixed points, one with positive multiplier lying in $\del J$ and the other lying in the interior with negative multiplier.


\subsection{The Renormalisation Operator}
\begin{defn}
Let $p>1$ be an integer and let $W=W_p$ denote the set $\{0,1,\ldots,p-1\}$. A permutation $\perm$ of $W$ is said to be \emph{unimodal of length $p$} if there exists an order preserving embedding $\i\colon W\to J$ and a unimodal map $f\colon J\to J$ such that $f(\i(k-1))=\i(k\!\!\mod p)$.
\end{defn}
\begin{defn}
Let $p>1$ be an integer.
A map $f\in \U_{\Omegax,\beta}$ has a \emph{renormalisation interval of period $p$} if 
\begin{enumerate}
\item there is a closed subinterval $\oo{0}{J}\subset J$ such that $\o{p}{f}\left(\oo{0}{J}\right)\subset \oo{0}{J}$;
\item there exists an affine bijection $h\colon J\to\oo{0}{J}$ such that
\begin{equation}
\RU f=h^{-1}\circ \o{p}{f}\circ h\colon J\to J
\end{equation}
is an element of $\U_{\Omega_x,\beta}$. Note there are exactly two such affine bijections, but there will only be one such that $\RU f\in \U_{\Omega_x,\beta}$;
\end{enumerate}
The interval $\oo{0}{J}$ is called a \emph{renormalisation interval of period $p$} for $f$. The collection $\left\{\oo{w}{J}\right\}_{w\in W}$ is called the \emph{renormalisation cycle}.
\end{defn}
\begin{defn}
Let $p>1$ be an integer and let $\perm$ be a unimodal permutation of length $p$. 
A map $f\in \U_{\Omegax,\beta}$ is \emph{renormalisable of combinatorial type $\perm$} if, 
\begin{enumerate}
\item $f$ has a renormalisation interval $\oo{0}{J}$ of type $p$ which contains the critical point $c_0$;
\item letting $\oo{w}{J}$ denote the connected component of $\o{p-w}{f}\left(\oo{0}{J}\right)$ containing $\o{w}{f}\left(\oo{0}{J}\right)$, the interiors of the subintervals $\oo{w}{J}$, $w\in W$, are pairwise disjoint;
\item $f$ acts on the set $\left\{\oo{0}{J},\oo{1}{J},\ldots \oo{p-1}{J}\right\}$, embedded in the line with the standard orientation, as $\upsilon$ acts on the symbols in $W$. More precisely, if
$J',J''\in\left\{ \oo{w}{J}\right\}_{w\in W}$ are the $i$-th and $j$-th sub-intervals from the left endpoint of $J$ respectively, then $f\left(J'\right)$ lies to the left of
$f\left(J''\right)$ if and only if $\upsilon(i)<\upsilon(j)$.
\end{enumerate}
In this case the map $\RU f$ is called the \emph{renormalisation of $f$} and the operator $\RU$ the \emph{renormalisation operator} of combinatorial type $\perm$.
\end{defn}
\begin{figure}[t]
\centering
\psfrag{J0}{$\oo{0}{J}$}
\psfrag{J1}{$\oo{1}{J}$}
\psfrag{J2}{$\oo{2}{J}$}
\psfrag{f}{\tiny $y=f(x)$}
\psfrag{f3}{\tiny $y=\o{3}{f}(x)$}
\includegraphics[scale=0.62]{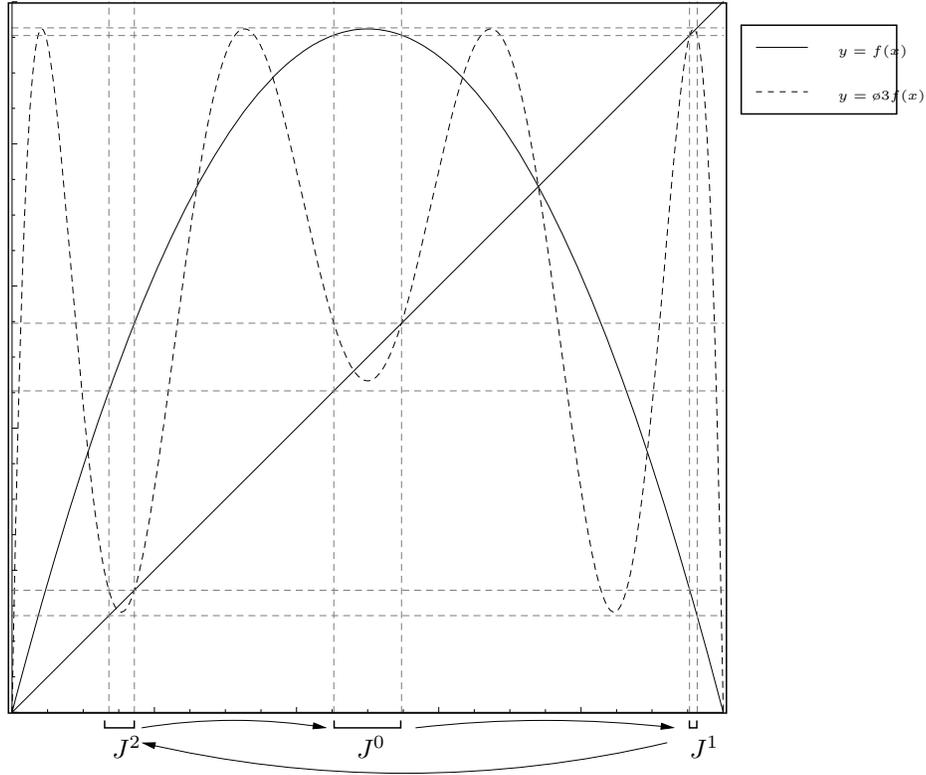}
\caption{The graph of a renormalisable period-three unimodal map $f$ with renormalisation interval $\oo{0}{J}$ and renormalisation cycle $\{\oo{i}{J}\}_{i=0,1,2}$. Note that for $p=3$ there is only one admissable combinatorial type. }\label{fig:unimodal-renormalisation-domain}
\end{figure}
\begin{defn}
Given a renormalisable $f\in \U_{\Omegax,\beta}$ of combinatorial type $\perm$ the renormalisation interval $\oo{0}{J}$ is called the \emph{central interval}. Given $\oo{w}{J}$, $w\in W$, the \emph{maximal extension} of $\oo{w}{J}$ is the largest open interval $\oo{w}{J'}$ containing $\oo{w}{J}$ such that $\o{p-w}{f}|\oo{w}{J'}$ is a diffeomorphism onto its image. 
\end{defn}
\begin{rmk}
The assumption that $\RU f$ lies in $\U_{\Omegax,\beta}$ implies that the boundary of $\oo{0}{J}$ consists of a $p^n$-periodic point and one of its preimages. Moreover, in $\oo{0}{J}$ there is no other preimage of this point and there is at most one periodic point of period $p^n$.
\end{rmk}
Let $\U_{\Omegax,\beta,\perm}$ denote the subspace consisting of unimodal maps $f\in\U_{\Omegax,\beta}$ which are renormalisable of combinatorial type $\perm$.  
If $f\in\U_{\Omegax,\beta,\perm}$ is infinitely renormalisable there is a nested sequence $\uline{J}=\left\{\oo{\word{w}{}}{J}\right\}_{\word{w}{}\in W^*}$ of subintervals such that
\begin{enumerate}
\item $f\left(\oo{\word{w}{}}{J}\right)=\oo{{1+\word{w}{}}}{J}$ for all $\word{w}{}\in W^*$;
\item $\interior\oo{\word{w}{}}{J}\cap\interior\oo{\word{\tilde w}{}}{J}=\emptyset$ for all $\word{w}{}\neq \word{\tilde w}{}\in W^*$ of the same length;
\item $\bigcup_{w\in W}\oo{\word{w}{w}}{J}\subset\oo{\word{w}{}}{J}$ for each $\word{w}{}\in W^*$.
\end{enumerate} 
where $W^*$ denotes the totality of all finite words $\word{w}{}=w_0\ldots w_n$ over $W$ and $1+\word{w}{}$ denotes the adding machine map,
\begin{equation}
1+w_0\ldots w_n=\left\{\begin{array}{ll}
(1+w_0)w_1w_2\ldots w_n & w_0\neq p-1 \\ 
0^k(1+w_k)w_{k+1}\ldots & w_0,\ldots,w_{k-1}=p-1, w_k\neq p-1
\end{array}\right.
\end{equation}
We will denote by $W^n\subset W^*$ the subset of words of length $n>0$ and by $\bar W$ the set of all infinite words over $W$. (Note that $W^*$ corresponds to the space of all cylinder sets of $\bar W$.) We will call this indexing of $\uline{J}$ the \emph{spatial indexing}.

There is a second indexing of $\uline{J}$ given as follows: any $J'\in\uline{J}$ is the $m$-th preimage of $\oo{0^n}{J}$ for some $m, n>0$ satisfying $0\leq m<p^n$. This indexing we will call the \emph{dynamical indexing} and the quantity $n$ will be called the \emph{depth} of $\oo{\word{w}{}}{J}$. We note that this indexing is used in~\cite[Chapter VI.3]{dMvS}.

For each depth $n\geq 0$ the correspondence is given by the map $\word{q}{}\colon W^n\to \{0,1,\ldots,p^n-1\}$ where if $\word{w}{}=w_0\ldots w_n$ then\footnote{This is since $\oo{w}{J}$ are indexed by images but $\oo{w}{\phi}$ is constructed from the preimage of $\oo{0}{J}$ to $\oo{w}{J}$.} $\word{q}{}(\word{w}{})=\sum_{0\leq i\leq n} p^i(p-w_i)$ . More explicitly, if $\word{w}{}=w_0\ldots w_n$ then $\oo{\word{w}{}}{J}=\o{\word{r}{}(\word{w}{})}{f}(\oo{0^n}{J})$ where $\word{r}{}(\word{w}{})=\sum_{0\leq i\leq n}p^iw_i$ and so, as the first return time of $\oo{0^n}{J}$ is $p^n$, we find the transfer time of $\oo{\word{w}{}}{J}$ to $\oo{0^n}{J}$ is $p^n-\word{r}{}(\word{w}{})=\word{q}{}(\word{w}{})$.

\begin{notn}
If $f\in \U_{\Omegax,\beta,\perm}$ is an infinitely renormalisable unimodal map let $f_n=\RU^n f$. Then all objects associated to $f_n$ will also be given this subscript. For
example we will denote by $\uline{J}_n=\left\{\oo{\word{w}{}}{J}_n\right\}_{\word{w}{}\in W^*}$ the nested collection of intervals constructed for $f_n$ in the same way that $\uline{J}$ was constructed for $f$.
\end{notn}

The following plays a crucial role in the renormalisation theory of unimodal maps. (See~\cite{dMvS} for the proof and more details.)
\begin{thm}[real $C^1$-a priori bounds]\label{thm:real-ap-bounds}
Let $f\in\U_{\Omegax,\beta,\perm}$ be an infinitely renormalisable unimodal map. Then there exist constants $L(f), K(f)>1$ and $0<k_0(f)<k_1(f)<1$, such that for all $\word{w}{}\in W^*, w,\tilde{w}\in W$ and each $i=0,1\ldots,p^n-\word{q}{}(\word{w}{})$ the following properties hold,
\begin{enumerate}
\item[(i-a)] $\dis{\o{i}{f}}{\oo{\word{w}{}}{J}}\leq L(f)$;
\item[(i-b)] the previous bound is \emph{beau}: there exists a constant $L>1$ such that for each $f$ as above $L(\RU^n f)<L$ for $n$ sufficiently large;
\item[(ii-a)] $K(f)^{-1}<|\oo{\word{w}{w}}{J}|/|\oo{\word{w}{\tilde{w}}}{J}|<K(f)$;
\item[(ii-b)] the previous bound is \emph{beau}: there exists a constant $K>1$ such that for each $f$ as above $K(\RU^n f)<K$ for $n$ sufficiently large;
\item[(iii-a)] $k_0(f)<|\oo{\word{w}{w}}{J}|/|\oo{\word{w}{}}{J}|<k_1(f)$;
\item[(iii-b)] the previous bound is beau: there exist constants $0<k_0<k_1<1$ such that for each $f$ as above $k_0<k_0(\RU^n f)<k_1(\RU^n f)<k_1<1$ for $n$ sufficiently large.
\end{enumerate}
\end{thm}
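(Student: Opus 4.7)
The plan is to prove (i-a), (ii-a) and (iii-a) using the negative Schwarzian derivative together with the Koebe distortion principle, and then to upgrade each to its \emph{beau} counterpart via compactness of the renormalisation attractor in $\U_{\Omegax,\beta,\perm}$. First I would check that every $f\in\U_{\Omegax,\beta}$ has negative Schwarzian derivative: the factorisation $f=\psi\circ Q$ with $Q(z)=4z(1-z)$ and $\psi$ univalent on $Q(\Omegax)$ gives $SQ<0$ away from the critical point, $S\psi\le 0$ on the real slice (by Nehari's inequality for univalent maps), and the Schwarzian composition formula $S(g\circ h)=(Sg\circ h)(h')^2+Sh$ then yields $Sf<0$. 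Negative Schwarzian is preserved under iteration, so every monotone branch of $\o{i}{f}$ inherits it.

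Next, for each $\word{w}{}=w_0\ldots w_n\in W^{n+1}$, set $i=\word{q}{}(\word{w}{})$ so that $\o{i}{f}$ sends $\oo{\word{w}{}}{J}$ diffeomorphically onto $\oo{0^{n+1}}{J}$. By the remark preceding the theorem, the endpoints of the renormalisation intervals at every depth are expanding periodic points of $f$ or preimages thereof; pulling these back under $\o{i}{f}$ produces the maximal extension $\oo{\word{w}{}}{J'}$, and a cross-ratio argument shows that $|\oo{\word{w}{}}{J}|/|\oo{\word{w}{}}{J'}|$ stays bounded away from $1$. Koebe's distortion principle, applicable thanks to the negative Schwarzian, then gives (i-a) with constant $L(f)$ depending only on the amount of Koebe space. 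From (i-a) the ratios (ii-a) and (iii-a) follow by pullback: the diffeomorphism $\o{p^n}{f}|\oo{0^n}{J}$ has bounded distortion, so it compares a sibling ratio $|\oo{\word{w}{w}}{J}|/|\oo{\word{w}{\tilde w}}{J}|$ with $|\oo{w}{J}_n|/|\oo{\tilde w}{J}_n|$, and a parent-child ratio $|\oo{\word{w}{w}}{J}|/|\oo{\word{w}{}}{J}|$ with $|\oo{w}{J}_n|/|J|$; both first-level quantities are controlled because $f_n=\RU^n f\in\U_{\Omegax,\beta,\perm}$.

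Finally, the beau upgrades follow from the fact that the forward orbit $\{\RU^n f\}$ eventually enters, and stays inside, a fixed compact subset of $\U_{\Omegax,\beta,\perm}$ (Sullivan-style precompactness of infinitely renormalisable unimodal maps). On this compact set the constants $L$, $K^{\pm 1}$, $k_0$, $k_1$ vary continuously and are therefore uniformly bounded. The main obstacle I expect is the Koebe space estimate: in the period-doubling case combinatorics supply it almost for free, but for general stationary combinatorics one must bootstrap, using the bound at depth $n$ to control expansion at the boundary of the central cycle at depth $n+1$. The classical resolution in~\cite{dMvS} combines Yoccoz's cross-ratio inequalities with Singer's theorem, and verifying that the constants so obtained are truly universal for large $n$ really does need the full renormalisation convergence apparatus rather than pure Koebe geometry alone.
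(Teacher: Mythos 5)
The paper gives no proof of this theorem; it is cited from de Melo--van Strien (Chapter VI of~\cite{dMvS}), so the comparison is against that reference rather than a paper-internal argument. Your template is the standard one --- cross-ratio control of the transfer diffeomorphisms, Koebe distortion, Sullivan-style precompactness for the \emph{beau} upgrade --- and you have correctly identified where the real work lies: producing definite Koebe space uniformly in depth for arbitrary bounded combinatorics.

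There is, however, one genuine error in the first step. Nehari's inequality (Kraus--Nehari) gives an \emph{upper} bound on $|S\psi|$ for a univalent $\psi$; it does not give $S\psi\leq 0$ on the real slice, and that conclusion is in fact false for real-symmetric univalent maps in general: $\psi(z)=\log(1+z)$ is univalent and real-symmetric on the unit disk, but $S\psi(x)=\tfrac{1}{2}(1+x)^{-2}>0$ for real $x>-1$. The conclusion $Sf<0$ for $f=\psi\circ Q$ is nevertheless correct, but for a different reason. By the composition formula, together with $SQ(x)=-\tfrac{3}{2}(1/2-x)^{-2}$ and $(Q'(x))^2=64(1/2-x)^2$, one has $Sf(x)\leq 16\,\sup_{J}|S\psi|-6$, which is negative as soon as the Kraus--Nehari bound on $|S\psi|$ over $J$ is below $3/8$; this is exactly what the hypothesis that $\Omegax$ compactly contains $J$ (with $\beta$ small) supplies. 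Alternatively --- and this is closer to what~\cite{dMvS} actually does in the chapter cited --- one can avoid negative Schwarzian entirely, staying in the Epstein class and using the macroscopic Koebe principle for holomorphic pullbacks; this is also what ultimately makes the \emph{beau} constants independent of $\perm$. A smaller inaccuracy: $\o{p^n}{f}|\oo{0^n}{J}$ is the pre-renormalisation, which is \emph{not} a diffeomorphism (its domain contains the critical point); the bounded-distortion map needed to pull back (ii-a) and (iii-a) is the transfer diffeomorphism $\o{i}{f}|\oo{\word{w}{}}{J}$ with $i=\word{q}{}(\word{w}{})$, to which (i-a) applies.
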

The term \emph{beau} for such a property was coined by Sullivan - being an acronym for bounded eventually and universally.
Now we show some properties of the renormalisation operator and renormalisable maps. The Proposition below follows directly by observing that the renormalisation cycle of a renormalisable unimodal map is determined by a hyperbolic periodic orbit and a collection of its preimages. The proof of injectivity can be found in~\cite[Chapter VI]{dMvS}.

\begin{prop}
Let $p>1$ be an integer. Let $0<\gamma<1$. Let $\perm$ be a unimodal permutation of length $p$. 
If $f\in\U_{\Omegax,\beta}$ has renormalisation interval $\oo{0}{J}$ of type $p$ and satisfies the following conditions,
\begin{itemize}
\item $\o{p}{f}(\oo{0}{J})\subsetneq \oo{0}{J}$;
\item $f$ is renormalisable with combinatorics $\perm$;
\end{itemize}
then there exists a neighbourhood $\mathcal N\subset \U_{\Omegax,\beta}$ of $f$ such that for any $\tilde f\in \mathcal N$ the following properties hold, 
\begin{enumerate}
\item
$\tilde f$ is renormalisable with combinatorics $\perm$;
\item
there exists a constant $C>0$, depending upon $f$ only, such that
\begin{equation}
|\RU f-\RU\tilde f|_{\Omegax}<C|f-\tilde f|_{\Omegax};
\end{equation}
\item the operator $\RU$ is injective.
\end{enumerate}
\end{prop}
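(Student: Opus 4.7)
The plan is to treat the three assertions in order. For (i), the key observation (already noted in the remark preceding the proposition) is that the boundary of $\oo{0}{J}$ consists of a $p$-periodic point of $f$ together with one of its preimages, and that this periodic orbit is hyperbolic and repelling: after conjugation by $h$ the periodic point becomes the expanding endpoint fixed point of $\RU f \in \U_{\Omegax,\beta}$ guaranteed by property~(iv), so the corresponding point for $f$ is itself expanding. By the implicit function theorem, both boundary points of $\oo{0}{J}$ (and by the same reasoning the boundary points of each $\oo{w}{J}$) depend real-analytically on $f$ in some $C^0$-neighbourhood $\mathcal N \subset \U_{\Omegax,\beta}$. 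The conditions $\o{p}{f}(\oo{0}{J}) \subsetneq \oo{0}{J}$, pairwise disjointness of the interiors of $\oo{w}{J}$, and realisation of $\upsilon$ by $f$ are all open, so shrinking $\mathcal N$ gives~(i).

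For (ii), decompose
\begin{equation}
\RU f - \RU \tilde f = h_f^{-1} \circ \o{p}{f} \circ h_f - h_{\tilde f}^{-1} \circ \o{p}{\tilde f} \circ h_{\tilde f},
\end{equation}
where $h_g\colon J \to \oo{0}{J}_g$ denotes the affine rescaling from the definition of $\RU g$. By step~(i) the endpoints of $\oo{0}{J}_g$ vary analytically with $g$, so $h_g$ and $h_g^{-1}$ are Lipschitz in $g$. It then suffices to bound $|\o{p}{f} - \o{p}{\tilde f}|_{\Omegax}$, which follows from the telescoping identity
\begin{equation}
\o{p}{f} - \o{p}{\tilde f} = \sum_{k=0}^{p-1} \bigl( \o{p-k-1}{f} \circ f \circ \o{k}{\tilde f} \;-\; \o{p-k-1}{f} \circ \tilde f \circ \o{k}{\tilde f} \bigr),
\end{equation}
combined with uniform bounds on the derivatives of the iterates $\o{j}{f}$ on a slightly shrunk polydisk (obtained via Cauchy's estimate, using that the iterates of maps sufficiently close to $f$ remain inside $\Omegax$). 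Together these yield the desired bound with $C = C(f)>0$.

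For (iii), I would defer to \cite[Chapter VI]{dMvS}. The essential idea is that $\RU f = \RU \tilde f$ determines $\o{p}{f}$ on $\oo{0}{J}_f$ through $h_f$, and hence, using the cyclic action realising $\upsilon$, determines $f$ on the entire renormalisation cycle $\bigcup_{w\in W} \oo{w}{J}_f$. The $\RR$-symmetric univalent holomorphic extension guaranteed by property~(v) of $\U_{\Omegax,\beta}$ then propagates this identification to all of $\Omegax$ by the identity principle. The main subtlety I anticipate is the hyperbolicity claim for the $p$-periodic orbit bounding $\oo{0}{J}$ in step~(i); without it the implicit function theorem would not apply. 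The strict containment $\o{p}{f}(\oo{0}{J}) \subsetneq \oo{0}{J}$ together with $\RU f \in \U_{\Omegax,\beta}$ rules out parabolic boundary behaviour, so this is not a genuine obstacle but needs to be verified carefully before the perturbation argument can be invoked.
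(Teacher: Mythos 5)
Your proof is correct and follows essentially the same route the paper indicates: the paper simply observes that the renormalisation cycle is determined by a hyperbolic periodic orbit and its preimages (which vary continuously with $f$), and defers injectivity to \cite[Chapter VI]{dMvS}. You fill in the same skeleton with the natural details — implicit function theorem for the hyperbolic boundary orbit in (i), a telescoping estimate for $\o{p}{f}-\o{p}{\tilde f}$ together with Lipschitz dependence of the affine rescaling in (ii), and the same citation for (iii). One small clarification: the hyperbolicity you flag as the ``main subtlety'' is already supplied directly by property~(iv) in the definition of $\U_{\Omegax,\beta}$ applied to $\RU f$ (both of its fixed points, including the boundary one, are expanding), so no extra argument ruling out parabolicity is required beyond noting that $\RU f\in\U_{\Omegax,\beta}$ — the strict containment $\o{p}{f}(\oo{0}{J}) \subsetneq \oo{0}{J}$ is used for the openness of the combinatorial conditions, not for hyperbolicity.
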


\subsection{The Fixed Point and Hyperbolicity}
As was noted in the introduction, real a priori bounds was an important component in Sullivan's proof of the following part of the Renormalisation conjecture.
For the proof we refer the reader to~\cite[Chapter VI]{dMvS}. This also contains substantial background material and references. 
\begin{thm}[existence of fixed point]\label{thm:unimodal-fix-point}
Given any unimodal permutation $\perm$ and any domain $\Omegax\subset\CC$ containing $J$, if $\beta>0$ is sufficiently small there exists an $f_*=f_{*,\perm}\in \U_{\Omegax,\beta,\perm}$ such that 
\begin{equation}
\RU f_*=f_*.
\end{equation}
\end{thm}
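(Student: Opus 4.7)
The plan is to obtain $f_*$ as an accumulation point of renormalisations and promote it to a true fixed point by a contraction argument. First, one must exhibit at least one infinitely renormalisable map $f\in\U_{\Omegax,\beta,\upsilon}$; one way is to work in a real-analytic family such as $q_t(x)=tx(1-x)$, use a combinatorial intermediate-value argument (or kneading theory) to find a parameter $t_*$ at which every renormalisation has type $\upsilon$, and verify that for $\beta$ small the corresponding map lies in $\U_{\Omegax,\beta}$ after a possible affine rescaling.

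Second, I would show that the orbit $\{f_n\}=\{\RU^n f\}$ is precompact in $\U_{\Omegax',\beta,\upsilon}$ for some slightly smaller simply connected domain $\Omegax'\Subset\Omegax$. The real $C^1$ a priori bounds from Theorem~\ref{thm:real-ap-bounds} provide uniformly bounded distortion of the return maps, bounded ratios between neighbouring pieces of the renormalisation cycle, and uniform geometric shrinking rates. These bounds, together with the complex (Sullivan--McMullen) a priori bounds --- which upgrade the real geometry to a definite annular modulus around each renormalisation domain --- give a normal family on $\Omegax'$. Extracting a limit $f_*=\lim f_{n_k}$ one obtains a map which is again in $\U_{\Omegax',\beta,\upsilon}$, and by continuity of $\RU$ in the topology of uniform convergence on compacts the accumulation set $\mathcal{A}\subset\U_{\Omegax',\beta,\upsilon}$ is compact, non-empty, and forward-invariant under $\RU$.

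Third, to force $\mathcal A$ to collapse to a single fixed point I would invoke Sullivan's Teichm\"uller contraction principle. Any two elements of $\mathcal A$ have the same combinatorics $\upsilon$ on every renormalisation level, so the pullback argument produces a quasiconformal hybrid conjugacy between them with bounded dilatation. The renormalisation operator then acts on the corresponding Teichm\"uller space of deformations and, by the Schwarz lemma applied along the complex-analytic family of polynomial-like germs, is strictly contracting with a uniform factor depending only on the a priori bounds. Iterating this contraction on $\mathcal A$ forces $\mathcal A$ to be a single point, so the limit $f_*$ satisfies $\RU f_*=f_*$.

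The main obstacle is supplying the complex a priori bounds for arbitrary unimodal combinatorics $\upsilon$: the period-doubling argument uses the specific geometry of the immediate basin, whereas for general $\upsilon$ one must show that definite moduli persist along the whole renormalisation cycle. This is the content of the Lyubich--Graczyk--\'Swi\k atek style complex bounds, and its dependence on $\upsilon$ (through the constants $L,K,k_0,k_1$ of Theorem~\ref{thm:real-ap-bounds}) is what ultimately controls how small $\beta$ must be chosen in the hypothesis of the theorem.
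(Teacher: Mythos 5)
The paper does not supply its own proof of this theorem: it states it and refers the reader to~\cite[Chapter VI]{dMvS} for Sullivan's argument, together with the discussion of real a priori bounds preceding the statement. Your outline is an accurate sketch of exactly that route --- existence of an infinitely renormalisable representative, precompactness of the orbit of $\RU$ via the real (Theorem~\ref{thm:real-ap-bounds}) and complex a priori bounds, and collapse of the accumulation set to a single fixed point via the pullback argument and Teichm\"uller contraction --- so you are following essentially the same approach the paper invokes by citation, and the proposal reads as a correct summary of the cited proof rather than a genuinely different argument.
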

\begin{notn}
Henceforth we assume that the unimodal permutation $\perm$ and the positive real number $\beta>0$ are fixed, with $\beta$ small enough to ensure $\U_{\Omegax,\beta,\perm}$ contains the renormalisation fixed point. We therefore drop $\beta$ from our notation.
\end{notn}
Sullivan's result was then strengthened by McMullen. See~\cite{McM1}.
\begin{thm}[weak convergence]\label{thm:1d-weak-convergence}
Given any unimodal permutation $\perm$ and any domain $\Omegax\subset\CC$ containing $J$, there exists
\begin{enumerate}
\item a domain $\Omegax'\cc\Omegax$ containing $J$;
\item an integer $N>0$;
\end{enumerate}
both dependent upon $\perm$ and $\Omegax$, such that for any $n>N$ if $f\in\U_{\Omegax,\perm}$ is $n$-times renormalisable then 
\begin{equation}
|\RU^n f-f_\infb |_{\Omegax'}\leq \quarter|f-f_*|_{\Omegax'}.
\end{equation}
\end{thm}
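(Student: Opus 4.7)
The plan is a compactness and contradiction argument built on top of complex a priori bounds and uniqueness of the renormalisation fixed point. The factor $\tfrac{1}{4}$ plays no special role; once one has uniform convergence $\RU^n f \to f_*$ over a precompact family of inputs, the quantitative statement follows by extracting a failure sequence.

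First I would establish precompactness. Combining the real $C^1$-bounds of Theorem~\ref{thm:real-ap-bounds} with Sullivan--McMullen complex a priori bounds (definite moduli around the little Julia sets at every renormalisation depth), one obtains a domain $\Omegax''\cc\Omegax$ containing $J$, together with an integer $N_0$ depending only on $\perm$ and $\Omegax$, such that for any $f\in\U_{\Omegax,\perm}$ which is at least $N_0$-times renormalisable and any $n\geq N_0$ with $\RU^n f$ defined, $\RU^n f$ extends $\RR$-symmetrically and holomorphically to $\Omegax''$ with uniformly bounded norm. Montel's theorem then exhibits this as a normal family on $\Omegax''$.

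Next I would identify every subsequential limit with $f_*$. Suppose $f_k\in\U_{\Omegax,\perm}$ is $n_k$-times renormalisable with $n_k\to\infty$ and $\RU^{n_k}f_k\to g$ uniformly on compact subsets of $\Omegax''$. Renormalisability on a neighbourhood of $J$ is controlled by finitely many holomorphic quantities and is therefore preserved under uniform convergence, so since each $\RU^{n_k}f_k$ is itself $(n_k-m)$-times renormalisable for any fixed $m$, the limit $g$ is infinitely renormalisable of type $\perm$. The quasiconformal rigidity theory of Sullivan and McMullen then forces $g=f_*$, since $f_*$ is the unique infinitely renormalisable fixed point in $\U_{\Omegax,\perm}$.

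Finally I would deduce the quantitative contraction by contradiction. If the conclusion failed on some $\Omegax'\cc\Omegax''$, then there would exist sequences $n_k\to\infty$ and $n_k$-times renormalisable $f_k\in\U_{\Omegax,\perm}$ satisfying
\begin{equation}
|\RU^{n_k}f_k-f_*|_{\Omegax'} > \tfrac{1}{4}\,|f_k-f_*|_{\Omegax'}.
\end{equation}
The quantities $|f_k-f_*|_{\Omegax'}$ are uniformly bounded above by some $M$, so by the first two steps a subsequence satisfies $\RU^{n_k}f_k\to f_*$ uniformly on $\Omegax'$, contradicting the displayed inequality whenever $|f_k-f_*|_{\Omegax'}$ stays bounded below. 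In the remaining regime $|f_k-f_*|_{\Omegax'}\to 0$ one applies the same compactness argument to the normalised differences $(f_k-f_*)/|f_k-f_*|_{\Omegax'}$, whose $\RU^{n_k}$-images also tend to zero on $\Omegax'$ by normality, furnishing the missing contraction near $f_*$. The principal obstacle is the second step: establishing that the only possible subsequential limit of iterated renormalisations is $f_*$, which is essentially the Sullivan--McMullen programme of pulling back a quasiconformal conjugacy using complex bounds and then applying the measurable Riemann mapping theorem; given that input, the explicit factor $\tfrac{1}{4}$ is soft.
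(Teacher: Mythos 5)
The paper offers no proof of this theorem; it is attributed to McMullen and cited as \cite{McM1}, so the comparison here is against McMullen's argument rather than one in the text. Your first two steps --- complex a priori bounds for precompactness, and tower rigidity to force any subsequential limit of $\RU^{n_k}f_k$ to equal $f_*$ --- do capture the skeleton of McMullen's proof. The gap is in your final step, specifically the regime $|f_k-f_*|_{\Omegax'}\to 0$, which you treat as soft but which is precisely where the quantitative work lives.

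You propose applying $\RU^{n_k}$ to the normalised differences $(f_k-f_*)/|f_k-f_*|_{\Omegax'}$, but $\RU$ acts on unimodal maps, not on difference quotients, so this expression is not defined. To make the idea rigorous you would have to compare $\RU^{n_k}f_k - f_* = \RU^{n_k}f_k - \RU^{n_k}f_*$ with the derivative of $\RU^{n_k}$ at $f_*$ applied to $f_k-f_*$, and then show that this derivative eventually has operator norm at most $1/4$ on the relevant codimension-one subspace. That is the contracting half of the hyperbolicity theorem --- strictly stronger than weak convergence, and certainly not supplied by Montel's theorem. You would also need uniform-in-$n$ differentiability and Lipschitz control of $\RU^n$ near $f_*$ merely to justify writing down such a linearisation. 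McMullen's actual argument avoids this regime entirely: it establishes strict contraction of renormalisation in the Carath\'eodory/Teichm\"uller metric on the space of germs supporting a priori bounds (via the Schwarz lemma and tower rigidity), a contraction which is quantitative and uniform arbitrarily close to $f_*$, and the complex bounds then convert it into the sup-norm estimate on the shrunken domain $\Omegax'$. That mechanism, not compactness-plus-normalisation, is where the factor $1/4$ comes from; compactness alone only rules out failure at a definite distance from $f_*$.
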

Proof of the full renormalisation conjecture was then completed by Lyubich in~\cite{L3}.
\begin{thm}[exponential convergence]\label{thm:1d-exp-convergence}
Given any unimodal permutation $\perm$ and any domain $\Omegax\subset\CC$ containing $J$, there exists
\begin{enumerate}
\item a domain $\Omegax'\cc\Omegax$, containing $J$;
\item an $\RU$-invariant subspace, $\U_{adapt}\subset\U_{\Omegax',\perm}$;
\item a metric, $d_{adapt}$, on $\U_{adapt}$ which is Lipschitz-equivalent to the sup-norm on $\U_{\Omegax',\perm}$;
\item a constant $0<\rho<1$;
\end{enumerate}
such that, for all $f\in\U_{adapt}$,
\begin{equation}
d_{adapt}(\RU f,f_\infb)\leq \rho d_{adapt}(f,f_\infb).
\end{equation}
\end{thm}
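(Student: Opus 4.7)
The plan is to upgrade McMullen's weak contraction (Theorem~\ref{thm:1d-weak-convergence}) to genuine exponential contraction by linearising $\RU$ at $f_*$, using compactness to force a discrete spectrum, and then building an adapted metric out of the spectral data.

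First I would set up the functional-analytic framework. Choose nested domains $\Omegax''\cc\Omegax'\cc\Omegax$, all compactly containing $J$, and regard $\RU$ as an analytic map between sup-norm neighbourhoods of $f_*$ in $\U_{\Omegax,\perm}$, $\U_{\Omegax',\perm}$ and $\U_{\Omegax'',\perm}$. Because $\RU f$ is built by restricting $\o{p}{f}$ to the central interval $\oo{0}{J}\subsetneq J$ and affinely rescaling, and because the real a priori bounds of Theorem~\ref{thm:real-ap-bounds} control the geometry of $\oo{0}{J}$ uniformly in a sup-norm neighbourhood of $f_*$, the inclusion of holomorphic maps on a larger domain into those on a strictly smaller one is a compact embedding by Montel. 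Consequently $\RU$ factors through compact analytic operators between these spaces.

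Second I would linearise at $f_*$. The differential $L=D\RU_{f_*}$ is then a compact operator on the tangent space $E=T_{f_*}\U_{\Omegax',\perm}$, hence has discrete spectrum off $0$ with each non-zero eigenvalue of finite multiplicity. Applying Theorem~\ref{thm:1d-weak-convergence} along analytic curves $t\mapsto f_*+t\xi$ with $\xi\in E$, and extracting the derivative at $t=0$ via Cauchy estimates, gives
\begin{equation}
|L^N\xi|_{\Omegax'}\leq \tfrac{1}{4}|\xi|_{\Omegax'}
\end{equation}
for every $\xi\in E$, so the spectral radius satisfies $\rho_0=\lim\|L^n\|^{1/n}\leq (1/4)^{1/N}<1$. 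This is the main obstacle: one has to ensure that $E$ really is the correct $\RU$-invariant tangent space (the one capturing infinitely renormalisable perturbations of type $\perm$) and not a superspace containing an expanding direction. This is handled by restricting to the local stable manifold of infinitely renormalisable maps of type $\perm$, which is $\RU$-invariant by construction; outside this subspace one expects a one-dimensional unstable direction, and the weak-convergence estimate only controls the stable part.

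Third, fix any $\rho\in(\rho_0,1)$. On a sufficiently small $\RU$-invariant sup-norm neighbourhood $\U_{adapt}\subset\U_{\Omegax',\perm}$ of $f_*$, define
\begin{equation}
d_{adapt}(f,g)=\sup_{n\geq 0}\rho^{-n}|\RU^n f-\RU^n g|_{\Omegax'}.
\end{equation}
The $n=0$ term gives $d_{adapt}(f,g)\geq |f-g|_{\Omegax'}$, while the spectral gap $\rho_0<\rho$ together with the analyticity of $\RU$ on $\U_{adapt}$ yields $|\RU^n f-\RU^n g|_{\Omegax'}\leq C\rho_0^n|f-g|_{\Omegax'}$, so $d_{adapt}(f,g)\leq C'|f-g|_{\Omegax'}$ for some $C'>0$. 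Thus $d_{adapt}$ is Lipschitz-equivalent to the sup norm on $\U_{adapt}$. Finally the bound $d_{adapt}(\RU f,f_*)\leq \rho\, d_{adapt}(f,f_*)$ follows directly from the definition, since $\RU f_*=f_*$ and shifting the index of the supremum by one loses precisely the factor $\rho$.
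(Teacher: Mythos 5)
The paper does not prove this statement: it is stated as a theorem of Lyubich and the text cites~\cite{L3} for the proof, so there is no internal argument to compare against. What you have written is therefore an attempt to \emph{rederive} Lyubich's exponential contraction from McMullen's weak contraction, and that attempt has real gaps.

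The central difficulty is a circularity in your step involving the stable manifold. To make sense of the restriction $L|_E$ of $D\RU_{f_*}$ to ``the correct $\RU$-invariant tangent space'' you need to know that the set of infinitely renormalisable maps of type $\perm$ near $f_*$ is a $C^1$ submanifold with a well-defined tangent space that $L$ preserves. But that is precisely the content of the codimension-one stable manifold theorem, which in this development is a \emph{consequence} of hyperbolicity of $\RU$ at $f_*$, i.e.\ of the very exponential contraction you are trying to prove. Without it, you cannot differentiate McMullen's estimate ``along the stable manifold'': the analytic curves $t\mapsto f_*+t\xi$ will generically leave the infinitely renormalisable set immediately, so Theorem~\ref{thm:1d-weak-convergence} simply does not apply to them, and the Cauchy estimate for $L^N\xi$ has no hypothesis to feed on. Restricting to the full tangent space is worse, since there $L$ has an unstable eigenvalue and the spectral radius is $\geq 1$.

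There is a second gap you do not address: iterating McMullen's estimate costs domain. Theorem~\ref{thm:1d-weak-convergence} takes $f\in\U_{\Omegax,\perm}$ and returns a bound on $\RU^n f$ only on a \emph{strictly smaller} polydisk $\Omegax'\cc\Omegax$. To apply the estimate again to $\RU^N f$ one must first re-extend $\RU^N f$ holomorphically back to the larger domain $\Omegax$, which is a nontrivial input (Sullivan's complex a priori bounds). Without this re-extension the domains shrink at every block of $N$ iterates and one cannot get a uniform rate. Finally, once you grant the two missing ingredients (smoothness/invariance of the stable leaf and domain re-extension), the spectral-radius detour in your middle step is unnecessary: iterating McMullen directly already gives $|\RU^{kN}f-f_*|_{\Omegax'}\leq (1/4)^k|f-f_*|_{\Omegax'}$, and your adapted-metric construction $d_{adapt}(f,g)=\sup_n\rho^{-n}|\RU^nf-\RU^ng|_{\Omegax'}$ then works verbatim. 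In short, the adapted-metric endgame is fine, but the two analytic inputs you gloss over are exactly where Lyubich's proof has to do substantial work (towers, rigidity, the hybrid lamination), and they cannot be replaced by the compactness-plus-differentiation argument you sketch.
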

\begin{thm}[codimension-one stable manifold]
For any unimodal permutation $\perm$ and any domain $\Omegax\subset\CC$ containing $J$, the renormalisation operator $\RU\colon \U_{\Omega_x,\perm}\to\U_{\Omegax}$ has a codimension-one stable manifold $\mathcal W_\perm$ at the renormalisation fixed point $f_{*,\perm}$.
\end{thm}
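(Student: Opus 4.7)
The plan is to apply the standard Hadamard--Perron stable manifold theorem in a Banach-space setting, once a hyperbolic splitting of the linearisation $D\RU$ at the fixed point $f_{*,\perm}$ has been exhibited. First I would observe that $\RU$ is a real-analytic operator on a neighbourhood of $f_*$ in the Banach space $\mathcal{B}(\Omegax)$ of bounded holomorphic functions on $\Omegax$ (restricted to its $\RR$-symmetric slice). This follows from the formula $\RU f = h^{-1}\circ \o{p}{f}\circ h$, where $h$ is affine and depends analytically on $f$, together with the weak convergence theorem (Theorem~\ref{thm:1d-weak-convergence}), which permits us to pass to a slightly smaller domain $\Omegax' \cc \Omegax$ on which $\o{p}{f}$ is defined for all $f$ sufficiently near $f_*$.

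Next I would establish that $D\RU_{f_*}$ is a compact operator. The key point is that $\RU$ factors through the restriction map $\mathcal{B}(\Omegax)\to \mathcal{B}(\Omegax')$ with $\Omegax' \cc \Omegax$, which is compact by Montel's theorem. Compactness forces the spectrum of $D\RU_{f_*}$ to consist of a discrete set of eigenvalues accumulating only at~$0$, so in particular the unstable part of the spectrum is automatically finite.

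The exponential convergence theorem (Theorem~\ref{thm:1d-exp-convergence}) then delivers the stable portion: on the invariant subspace $\U_{adapt}$ the operator contracts at rate $\rho<1$ in an adapted metric Lipschitz-equivalent to the sup-norm, so every eigendirection tangent to the infinitely renormalisable set lies inside the disk of radius $\rho$. To exhibit a single expanding direction, I would consider a transversal real-analytic one-parameter family (e.g.\ perturbing $f_*$ by moving the constant term in its quasi-quadratic factorisation $\psi\circ Q$) and use the real $C^1$ a~priori bounds (Theorem~\ref{thm:real-ap-bounds}) together with monotonicity of the kneading invariant to show that the induced one-dimensional parameter map has derivative of modulus strictly greater than one at the fixed-point parameter. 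This rules out neutral or additional unstable eigendirections transverse to the infinitely renormalisable set.

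With the hyperbolic splitting in place, the Hadamard--Perron theorem produces a local codimension-one stable manifold $\mathcal W_\perm$ tangent to the stable eigenspace. The main obstacle is precisely this last step: ruling out further unstable or neutral eigenvalues of $D\RU_{f_*}$. Compactness reduces the question to a finite-dimensional one, and the exponential convergence bound controls the stable part, but showing that \emph{exactly} one eigenvalue lies outside the unit disk requires a genuinely dynamical input. The cleanest route is to adapt Lyubich's hybrid-class formalism from the quadratic-like category: one shows that maps hybrid-equivalent to $f_*$ form a codimension-one real-analytic submanifold invariant under $\RU$, which by Theorem~\ref{thm:1d-exp-convergence} lies in the local stable set, and must therefore coincide locally with $\mathcal W_\perm$.
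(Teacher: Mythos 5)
The paper does not actually prove this theorem. It is stated, together with the exponential convergence theorem immediately preceding it, as a known result recalled for completeness; the preceding paragraph attributes the completion of the renormalisation conjecture to Lyubich~\cite{L3}, and no argument is offered in the text. So there is no proof in the paper to compare against --- the comparison is with the literature the paper cites.

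Your sketch follows the standard strategy (compactness of $D\RU_{f_*}$ via Montel, contraction from the exponential convergence theorem, one transversal expanding direction, then Hadamard--Perron), and you correctly identify the crux: showing the unstable eigenspace is \emph{exactly} one-dimensional. You do not, however, resolve it, and two of your intermediate steps are shakier than they may appear. First, the exponential convergence theorem gives contraction \emph{on the set} $\U_{adapt}$ in an adapted metric; to convert that into a spectral statement about $D\RU_{f_*}$ (namely that the whole stable spectrum lies in $\{|\lambda|\le\rho\}$) one would need $\U_{adapt}$ to be a $C^1$ submanifold through $f_*$ and to know that its tangent space at $f_*$ is $D\RU$-invariant and complementary to the unstable direction. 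That regularity is precisely the kind of thing the stable manifold theorem is meant to deliver, so invoking it here risks circularity. Second, the claim that real $C^1$ a priori bounds plus monotonicity of the kneading invariant force the induced parameter map of a transversal family to have derivative of modulus strictly greater than $1$ at $f_*$ is not a standard lemma; in the literature the expansion in the unstable direction is obtained from complex-analytic inputs (Schwarz lemma / hybrid-class rigidity in the quadratic-like category), not from real a priori bounds. Your closing paragraph, which defers the crucial dimension count to Lyubich's hybrid-class formalism, is in practice what the paper itself does by citing the theorem rather than proving it.
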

\begin{cor}
Let $\perm$ be a unimodal permutation on $W$. Let $f\in\U_{\Omegax,\perm}$ be an infinitely renormalisable unimodal map. Then the cycle, $\{\oo{w}{J_n}\}_{w\in W}$, of the central interval of $f_n$ converges exponentially, in the Hausdorff topology, to the corresponding cycle, $\{\oo{w}{J_*}\}_{w\in W}$, of the renormalisation fixed point $f_*$.
\end{cor}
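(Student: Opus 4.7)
The plan is to combine the exponential convergence of the renormalisation operator (Theorem~\ref{thm:1d-exp-convergence}) with an implicit function theorem argument showing that the cycle intervals depend Lipschitz-continuously on the underlying unimodal map. Since $f$ is infinitely renormalisable, it lies on the stable manifold $\mathcal W_\perm$, so after possibly discarding finitely many terms the sequence $\{f_n\}_{n\geq 0}$ enters the $\RU$-invariant adapted space $\U_{adapt}$ of Theorem~\ref{thm:1d-exp-convergence}. Applying that theorem together with the Lipschitz-equivalence between $d_{adapt}$ and the sup-norm on $\U_{\Omegax',\perm}$ yields constants $C_0>0$ and $0<\rho<1$ such that
\begin{equation*}
|f_n-f_*|_{\Omegax'}\leq C_0\rho^n \quad \text{for all } n\geq 0.
\end{equation*}

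I would next analyse the central interval. Its boundary $\del\oo{0}{J_n}$ consists of a hyperbolic $p$-periodic point $\alpha_n$ of $f_n$ together with one of its strict preimages $\beta_n$ (neither of which is the critical point); the analogous points $\alpha_*,\beta_*$ for $f_*$ are hyperbolic by condition (iv) of the definition of $\U_{\Omegax,\beta}$ and the remark following the definition of the renormalisation operator. Applying the implicit function theorem to the analytic equation $g^p(z)-z=0$ at $(f_*,\alpha_*)$, and to the analogous equation expressing $\beta$ as a preimage of $\alpha$ under an iterate of $g$, produces a sup-norm neighbourhood of $f_*$ on which these two points depend analytically on $g$, and in particular satisfy a Lipschitz bound
\begin{equation*}
|\alpha_n-\alpha_*|+|\beta_n-\beta_*|\leq C_1|f_n-f_*|_{\Omegax'}.
\end{equation*}
Combining with the preceding paragraph gives Hausdorff convergence of $\oo{0}{J_n}$ to $\oo{0}{J_*}$ at rate $C_2\rho^n$.

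For $w>0$ the interval $\oo{w}{J_n}$ is obtained from $\oo{0}{J_n}$ by at most $p$ iterations of $f_n$, selecting the appropriate connected component at each preimage step. Since only a bounded number of such operations is involved, each with a Lipschitz constant controlled uniformly on a complex neighbourhood of $J$ by the uniform convergence $f_n\to f_*$ on $\Omegax'$ and by the real $C^1$ a priori bounds of Theorem~\ref{thm:real-ap-bounds}, the exponential rate propagates, and one obtains Hausdorff convergence of $\oo{w}{J_n}$ to $\oo{w}{J_*}$ at rate $C_3\rho^n$ for every $w\in W$. This is the assertion of the corollary.

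The only delicate point to verify is that the combinatorial labelling of the cycle is stable along the sequence, i.e.\ that the correct connected component is selected at each preimage step for all sufficiently large $n$. This is automatic once $f_n$ is close enough to $f_*$: hyperbolicity of the periodic orbit of $\alpha_*$ together with the fact that the critical value of $f_*$ lies strictly inside $\oo{1}{J_*}$ ensures that the combinatorial structure of $\{\oo{w}{J_*}\}_{w\in W}$ persists under every sufficiently small perturbation, and hence for all $f_n$ eventually.
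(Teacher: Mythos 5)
Your argument is correct, and it is essentially the proof the paper implicitly intends: the paper leaves this corollary without a written proof, but the remark immediately preceding Proposition~\ref{prop:1d-scope-conv} (``the renormalisation interval $\oo{0}{J}$ of $f$ is determined by a $p$-periodic point and its preimage\ldots and all of these vary continuously with $f$'') is precisely your implicit-function-theorem step, and combining this Lipschitz dependence with exponential convergence of $f_n\to f_*$ is exactly the intended argument. One small remark: invoking Theorem~\ref{thm:1d-exp-convergence} requires knowing that the orbit $\{f_n\}$ eventually lies in the adapted space $\U_{adapt}$, which the paper does not spell out; you can sidestep this entirely by iterating Theorem~\ref{thm:1d-weak-convergence}, whose quarter-contraction after $N$ steps already gives $|f_n-f_*|_{\Omegax'}\leq C\rho^n$ with $\rho=4^{-1/N}$, which is all the rest of your argument needs.
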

\subsection{Scope Maps and Presentation Functions}\label{sect:unimodal-scopemaps}
Now we rephrase the renormalisation of unimodal maps in terms of convergence of their \emph{scope maps} defined below. Again the statements here are well known and we make no claim to the originality of their proofs. We merely collect them here for completeness. 

Scope maps were studied initially by Rand~\cite{Rand1}, then by Sullivan~\cite{Sull2}, Feigenbaum~\cite{F3} and Birkhoff, Martens and Tresser~\cite{BMT}. Mostly this was in the case of the so-called \emph{presentation function}, which is the scope map of the renormalisation fixed point (so called because they permute the presentation of the invariant Cantor set). See also the paper of Jiang, Sullivan and Morita~\cite{JMS}. 
\begin{figure}[htbp]
\centering
\psfrag{A}{$\ldots$}

\psfrag{MT1}{$\mt_0$}
\psfrag{f1}{$f_0$}
\psfrag{aj0}{$\oo{0}{J_0}$}
\psfrag{aj1}{$\oo{1}{J_0}$}
\psfrag{aj2}{$\oo{2}{J_0}$}

\psfrag{MT2}{$\mt_1$}
\psfrag{f2}{$f_1$}
\psfrag{bj0}{$\oo{0}{J_1}$}
\psfrag{bj1}{$\oo{1}{J_1}$}
\psfrag{bj2}{$\oo{2}{J_1}$}

\psfrag{MT3}{$\mt_2$}
\psfrag{f3}{$f_2$}
\psfrag{cj0}{$\oo{0}{J_2}$}
\psfrag{cj1}{$\oo{1}{J_2}$}
\psfrag{cj2}{$\oo{2}{J_2}$}
\includegraphics[width=1.0\textwidth]{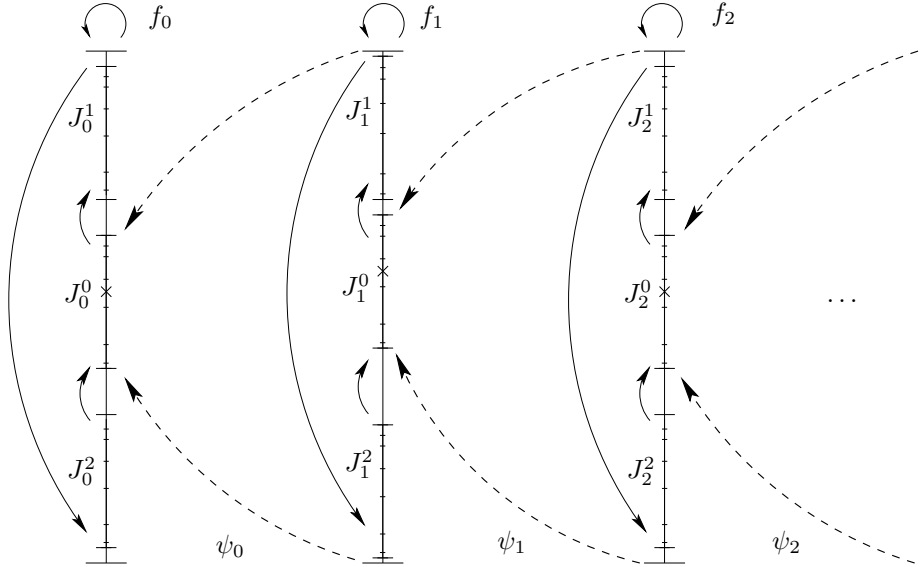}
\caption{The collection of scope maps $\oo{w}{\mt}_n$ for an infinitely renormalisable period-tripling unimodal map. Here $f_n$ denotes the $n$-th renormalisation of $f$}\label{fig:scopemaps-unimodal}
\end{figure}

Let $f\in\U_{\Omegax,\perm}$ have cycle $\{\oo{w}{J}\}_{w\in W}$. Consider the functions
\begin{equation}
\begin{array}{rl}
h_0^{-1}\colon \oo{0}{J}\to J, & \quad w=0, \\
h_0^{-1}\circ \o{p-w}{f}\colon \oo{w}{J}\to J, & \quad w=1,\ldots,p-1,
\end{array}
\end{equation}
where $h_0$ is the affine map satisfying $\RU f=h_0^{-1}\circ \o{p}{f}\circ h_0$.
The inverses of these maps are called the \emph{scope maps} of $f$ which we denote by $\oo{w}{\mt_f}\colon J\to \oo{w}{J}$.
 For each $w\in W$ we call $\oo{w}{\mt_f}\colon J\to \oo{w}{J}$ the \emph{$w$-scope map}. We denote the multi-valued function they
form by $\pf_f\colon J\to\bigcup_{w\in W} \oo{w}{J}$. Similarly, given an $n$-times renormalisable $f\in \U_{\Omegax,\perm}$ let $\oo{w}{\mt_n}=\oo{w}{\mt_{f_n}}$ denote the $w$-th scope function of $f_n$ and the multi-valued function they form by $\pf_n$.  The multi-valued function $\pf_*=\pf_{f_*}$ associated to the renormalisation fixed point $f_*$ is called the \emph{presentation function}.

If $f\in\U_{\Omegax,\perm}$ is infinitely renormalisable we can extend this construction by considering, for each $\word{w}{}=w_0\ldots w_n\in W^*$, the function $\oo{\word{w}{}}{\mt_f}=\oo{w_0}{\mt_0}\circ\cdots\circ\oo{w_n}{\mt_n}\colon J\to \oo{\word{w}{}}{J}$ and we set $\uline{\mt_f}=\{\oo{\word{w}{}}{\mt_f}\}_{\word{w}{}\in W^*}$.

\begin{prop}\label{prop:stepm}
Let $f_*$ denote the unimodal fixed point of renormalisation with presentation function $\pf_*$. Then, for each $\word{w}{}\in W^m$, the following properties hold,
\begin{enumerate}
\item\label{cor:stepm-switch} 
$\oo{\word{w}{}}{\mt_*}=\oo{-\word{q}{}(\word{w}{})}{f_*}\circ\ii_{J\to \oo{0^n}{J}}$;
\item 
$\oo{\word{w}{}}{\mt_*}(\bigcup_{\word{w}{}\in W^n}\oo{\word{w}{}}{J_*})\subset \bigcup_{\word{w}{}\in W^{n+m}}\oo{\word{w}{}}{J_*}$;
\end{enumerate}
\end{prop}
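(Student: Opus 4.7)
The plan is to prove (i) by induction on $m = |\word{w}{}|$ and then to derive (ii) directly from it.

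For the base case $m = 1$, one checks the formula against the definition of the scope maps. At the fixed point $h_0 = \ii_{J \to \oo{0}{J_*}}$, so $\oo{0}{\mt_*} = h_0$ matches the claim with $\word{q}{}(0) = 0$, while for $w \geq 1$ we have $\oo{w}{\mt_*} = \o{-(p-w)}{f_*} \circ h_0$, matching with $\word{q}{}(w) = p - w$.

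For the inductive step, decompose $\word{w}{} = w \word{w}{}'$ with $\word{w}{}' \in W^m$. Because $\RU f_* = f_*$, the iterated scope map satisfies $\oo{\word{w}{}}{\mt_*} = \oo{w}{\mt_*} \circ \oo{\word{w}{}'}{\mt_*}$. Applying the inductive hypothesis and the explicit form of $\oo{w}{\mt_*}$, this becomes $\o{-(p-w)}{f_*} \circ h_0 \circ \o{-\word{q}{}(\word{w}{}')}{f_*} \circ \ii_{J \to \oo{0^m}{J_*}}$. The renormalisation equation $\o{p}{f_*}|_{\oo{0}{J_*}} = h_0 \circ f_* \circ h_0^{-1}$, iterated and inverted along the central branches, yields the commutation $h_0 \circ \o{-k}{f_*} = \o{-kp}{f_*} \circ h_0$ for all $k \geq 0$. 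Using this, together with the identity $h_0 \circ \ii_{J \to \oo{0^m}{J_*}} = \ii_{J \to \oo{0^{m+1}}{J_*}}$ (which follows because the renormalisation cycle inside $\oo{0}{J_*}$ is the $h_0$-image of the cycle on $J$, so $h_0$ carries the depth-$m$ central interval onto the depth-$(m+1)$ one), the composition collapses to $\o{-[(p-w) + p\cdot\word{q}{}(\word{w}{}')]}{f_*} \circ \ii_{J \to \oo{0^{m+1}}{J_*}}$, and the exponent identity $(p-w) + p\cdot\word{q}{}(\word{w}{}') = \word{q}{}(w\word{w}{}')$ is immediate from the base-$p$ formula for $\word{q}{}$.

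For (ii), one applies (i). The affine map $\ii_{J \to \oo{0^m}{J_*}}$ sends the depth-$n$ partition $\{\oo{\word{w}{}'}{J_*}\}_{\word{w}{}' \in W^n}$ of $J$ into the family of depth-$(n+m)$ intervals contained in $\oo{0^m}{J_*}$; this is the self-similarity of $\uline{J_*}$ at the fixed point, arising from $\RU f_* = f_*$. The inverse branch $\o{-\word{q}{}(\word{w}{})}{f_*}$ then carries each of these intervals into $\oo{\word{w}{}}{J_*}$ and preserves depth, since $f_*$ permutes the collection $\{\oo{\word{w}{}''}{J_*}\}_{\word{w}{}'' \in W^{n+m}}$ according to the adding machine.

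The main obstacle, in my view, is the combinatorial bookkeeping, in particular checking that the branches of $\o{-k}{f_*}$ implicit in the scope map construction are precisely those produced by the iterated commutation $h_0 \circ \o{-k}{f_*} = \o{-kp}{f_*} \circ h_0$, which requires staying in the central renormalisation cycle throughout. Once this is pinned down, both parts reduce to a formal unfolding of definitions.
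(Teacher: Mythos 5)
Your proof takes essentially the same approach as the paper's: induction on the word length, using the fixed-point relation $\RU f_* = f_*$ to commute the central rescaling $h_0$ past the inverse branches of $f_*$, and then deducing (ii) from (i) via the self-similarity of the fixed point's cycle structure. You make explicit the two commutation identities $h_0\circ\o{-k}{f_*}=\o{-kp}{f_*}\circ h_0$ and $h_0\circ\ii_{J\to\oo{0^m}{J_*}}=\ii_{J\to\oo{0^{m+1}}{J_*}}$ that the paper uses implicitly in its displayed chain, but the underlying computation is identical.
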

\begin{proof}
We show the first item inductively. Trivially it is true for $m=0$. Assume it holds for $\word{w}{}\in W^m$, some $m\geq 0$, and consider $w\word{w}{}\in W^{m+1}$. Since $\RU f_*=f_*$ implies $\oo{p}{f_*}\circ\ii_{J\to\oo{0}{J_*}}=\ii_{J\to\oo{0}{J_*}}\circ f_*$, we find
\begin{align}
\oo{w\word{w}{}}{\mt_*}
&=\oo{w}{\mt_*}\circ\oo{\word{w}{}}{\mt_*} \notag \\ 
&=\o{-(p-w)}{f_*}\circ\ii_{J\to \oo{0}{J_*}}\circ \o{-\word{q}{}(\word{w}{})}{f_*}\circ\ii_{J\to\oo{0^m}{J}} \notag \\
&=\o{-(p-w)}{f_*}\circ \o{-p\word{q}{}(\word{w}{})}{f_*}\circ\ii_{J\to\oo{0}{J}}\circ\ii_{J\to\oo{0^m}{J}} \notag \\
&=\o{-\word{q}{}(w\word{w}{})}{f_*}\circ\ii_{J\to\oo{0^{m+1}}{J}}
\end{align}
This proves the first statement. The second statement then follows from the first since, given $\word{w}{}\in W^n$ and $\word{\tilde w}{}\in W^m$, the image of $\oo{\word{w}{}}{J_*}$ under $\oo{\word{w}{}}{\mt_*}$ can be expressed as a preimage of $\oo{0^{m+n}}{J_*}$ under $f_*$.
\end{proof}
\begin{cor}\label{cor:step*}
Let $f_*$ denote the unimodal fixed point of renormalisation with presentation function $\pf_*$. 
Let $\Cantor_*$ denote the invariant Cantor set for $f_*$. 
Let $\uline{J}_*=\{\oo{\word{w}{}}{J}_*\}_{\word{w}{}\in W^*}$ denote the collection of all central cycles of all depths.
Then, for each $\word{w}{}\in W$, the following properties hold,
\begin{enumerate}
\item $\oo{\word{w}{}}{\mt_*}(\uline{J}_*)\subset \uline{J}_*$;
\item $\oo{\word{w}{}}{\mt_*}(\Cantor_*)\subset \Cantor_*$;
\end{enumerate}
\end{cor}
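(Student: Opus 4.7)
The plan is to derive both statements directly from Proposition \ref{prop:stepm}, since at the fixed point the scope functions enjoy a clean semigroup structure under composition. The crucial observation is the concatenation identity
\begin{equation}
\oo{\word{w}{}}{\mt_*}\circ\oo{\word{v}{}}{\mt_*}=\oo{\word{w}{}\word{v}{}}{\mt_*} \qquad \text{for all } \word{w}{},\word{v}{}\in W^*,
\end{equation}
which is immediate from the recursive definition of $\oo{\word{w}{}}{\mt_f}$: since $\RU^n f_*=f_*$ for all $n$, every scope function $\oo{w}{\mt_n}$ of $f_*$ coincides with $\oo{w}{\mt_*}$, so the telescoping composition of length $|\word{w}{}|+|\word{v}{}|$ assembles into $\oo{\word{w}{}\word{v}{}}{\mt_*}$. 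One also uses the tautological identity $\oo{\word{v}{}}{\mt_*}(J)=\oo{\word{v}{}}{J_*}$, which holds by construction of the scope map as the bijection $J\to\oo{\word{v}{}}{J_*}$.

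For statement (i), fix $\word{w}{}\in W^m$ and an arbitrary element $\oo{\word{v}{}}{J_*}\in\uline{J}_*$ with $\word{v}{}\in W^n$. Then
\begin{equation}
\oo{\word{w}{}}{\mt_*}\bigl(\oo{\word{v}{}}{J_*}\bigr)=\oo{\word{w}{}}{\mt_*}\bigl(\oo{\word{v}{}}{\mt_*}(J)\bigr)=\oo{\word{w}{}\word{v}{}}{\mt_*}(J)=\oo{\word{w}{}\word{v}{}}{J_*},
\end{equation}
which is again an element of $\uline{J}_*$ (of depth $n+m$). This establishes the first inclusion and in fact identifies the image interval explicitly. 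Alternatively, one could cite Proposition \ref{prop:stepm}(ii) together with the pairwise disjointness of depth-$(n+m)$ intervals to conclude that the image, being a connected interval inside the disjoint union, must coincide with exactly one $\oo{\word{u}{}}{J_*}$.

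For statement (ii), recall $\Cantor_*=\bigcap_{n\geq 0}\bigcup_{\word{v}{}\in W^n}\oo{\word{v}{}}{J_*}$. Given $z\in\Cantor_*$ and $\word{w}{}\in W^m$, for each $n$ choose $\word{v}{}_n\in W^n$ with $z\in\oo{\word{v}{}_n}{J_*}$. By (i),
\begin{equation}
\oo{\word{w}{}}{\mt_*}(z)\in\oo{\word{w}{}\word{v}{}_n}{J_*}\subset\bigcup_{\word{u}{}\in W^{n+m}}\oo{\word{u}{}}{J_*},
\end{equation}
and taking the intersection over $n$ yields $\oo{\word{w}{}}{\mt_*}(z)\in\Cantor_*$. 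There is no substantive obstacle here: the corollary is a direct bookkeeping consequence of Proposition \ref{prop:stepm}, with the only mildly nontrivial point being the clean identification of images of central-cycle intervals as central-cycle intervals of greater depth via the concatenation rule.
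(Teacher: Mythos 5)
Your proof is correct and takes essentially the same route as the paper, which disposes of the corollary in one line ("take limits in the previous Proposition"); you simply make explicit the fixed-point concatenation identity $\oo{\word{w}{}}{\mt_*}\circ\oo{\word{v}{}}{\mt_*}=\oo{\word{w}{}\word{v}{}}{\mt_*}$ that underlies both Proposition~\ref{prop:stepm} and the limit argument, and then pass to the nested intersection defining $\Cantor_*$.
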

\begin{proof}
Simply take limits in the previous Proposition.
\end{proof}
\begin{lem}\label{lem:univ-linearisation} 
Let $f_*$ denote the unimodal fixed point of renormalisation with presentation function $\pf_*$. Then, for each $w\in W$, the following properties hold,
\begin{enumerate}
\item $\oo{w}{\mt_*}$ has a unique attracting fixed point $\alpha$;
\item if $[\oo{w^n}{\mt_*}]$ denotes the orientation preserving affine rescaling of $\oo{w^n}{\mt_*}$ to $J$ then $\oo{w}{u_*}=\lim_{n\to\infty}[\oo{w^n}{\mt_*}]$ exists, and
the convergence is exponential in the sup-norm on some complex domain containing $J$.
\end{enumerate}
\end{lem}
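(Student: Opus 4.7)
The plan is to extend the scope map to a holomorphic self-map of a complex neighbourhood of $J$, exhibit it as a uniform contraction via the Schwarz lemma, and then apply Koenigs's linearisation theorem to identify the limit of the rescaled iterates.

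First I would upgrade $\oo{w}{\mt_*}$ to a univalent holomorphic map on some complex neighbourhood $U\supset J$ with $\oo{w}{\mt_*}(\overline U)\Subset U$. By Proposition~\ref{prop:stepm}(\ref{cor:stepm-switch}), $\oo{w}{\mt_*}=\o{-(p-w)}{f_*}\circ \ii_{J\to\oo{0}{J_*}}$, where the inverse branch is the one taking $\oo{0}{J_*}$ back into $\oo{w}{J_*}$. Since $f_*^{p-w}$ is a diffeomorphism from the maximal extension $\oo{w}{J'_*}$ onto its image, and $f_*$ is holomorphic on $\Omegax$ by definition of $\U_{\Omegax,\perm}$, the inverse branch extends univalently to some complex neighbourhood of $\oo{0}{J_*}$; pre-composing with the affine contraction $\ii_{J\to\oo{0}{J_*}}$ produces the desired $U$. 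The Schwarz lemma then makes $\oo{w}{\mt_*}\colon U\to U$ a strict contraction in the hyperbolic metric of $U$, so there is a unique fixed point $\alpha\in U$; $\RR$-symmetry forces $\alpha\in J\cap\oo{w}{J_*}$, and the multiplier $\lambda=(\oo{w}{\mt_*})'(\alpha)$ satisfies $0<|\lambda|<1$. This gives~(i).

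For (ii) I would apply Koenigs's theorem at $\alpha$, producing a univalent holomorphic linearisation $\phi$ with $\phi(\alpha)=0$, $\phi'(\alpha)=1$, and $\phi\circ\oo{w}{\mt_*}=\lambda\phi$. Because all of $U$ lies in the basin of $\alpha$, the identity $\phi=\lambda^{-n}\phi\circ \oo{w^n}{\mt_*}$ (valid for large $n$ near $\alpha$) extends $\phi$ univalently to $U$, and iteration of the functional equation yields the closed form $\oo{w^n}{\mt_*}(z)=\phi^{-1}(\lambda^n\phi(z))$ on $U$. Expanding $\phi^{-1}(u)=\alpha+u+O(u^2)$ near $0$ one obtains
\begin{equation}
\oo{w^n}{\mt_*}(z)-\alpha=\lambda^n\phi(z)+O(\lambda^{2n})
\end{equation}
uniformly on compact subsets of $U$. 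Hence the endpoints of $\oo{w^n}{J_*}$ are $\alpha+\lambda^n\phi(\del^{\pm}J)+O(\lambda^{2n})$, so the orientation-preserving affine bijection $A_n\colon J\to\oo{w^n}{J_*}$ has the form $A_n(t)=\alpha+\lambda^n L(t)+O(\lambda^{2n})$, where $L\colon J\to\phi(J)$ is the orientation-preserving affine bijection. Consequently
\begin{equation}
[\oo{w^n}{\mt_*}]=A_n^{-1}\circ \oo{w^n}{\mt_*}\longrightarrow L^{-1}\circ\phi
\end{equation}
in the sup-norm on a complex neighbourhood of $J$ contained in $U$, with geometric rate $|\lambda|$; the universal limit is $\oo{w}{u_*}=L^{-1}\circ\phi|_J$.

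The main technical obstacle I anticipate is the very first geometric step: checking that the univalent inverse branch of $f_*^{p-w}$ extends to a complex neighbourhood of $\oo{0}{J_*}$ whose $\ii_{J\to\oo{0}{J_*}}$-preimage genuinely contains $J$ and on which $\oo{w}{\mt_*}$ has the compact-containment property $\oo{w}{\mt_*}(\overline U)\Subset U$. This requires complex \emph{a priori} bounds for $f_*$ — a polynomial-like structure with definite moduli at the renormalisation fixed point, which is inherent in the set-up of $\U_{\Omegax,\perm}$. Once this geometric input is in hand, the Schwarz--Koenigs machinery sketched above delivers both assertions in a routine manner.
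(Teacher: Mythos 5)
Your approach is correct but follows a genuinely different route from the paper's, and it is worth recording the contrast. The paper proves (i) purely with the real $C^1$ \emph{a priori} bounds (Theorem~\ref{thm:real-ap-bounds}): the nesting $|\oo{w^{n+1}}{J_*}|/|\oo{w^n}{J_*}|<k_1<1$ forces a unique fixed point, and the Mean Value Theorem produces points $\alpha_n\to\alpha$ with $|(\oo{w}{\mt_*})'(\alpha_n)|<k_1$, so analyticity gives $|(\oo{w}{\mt_*})'(\alpha)|\leq k_1<1$. For (ii) the paper factors the rescaled iterate as a composition of zooms, $\oo{w}{u_n}=\oo{w}{v_n}\circ\cdots\circ\oo{w}{v_0}$, and shows $|\id-\oo{w}{v_n}|\to 0$ exponentially from the exponential decay of $|\oo{w^n}{J_*}|$ together with analyticity (which makes the distortion of $\oo{w}{\mt_*}$ on $\oo{w^n}{J_*}$ decay exponentially). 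Univalence on a complex neighbourhood of $J$ is then obtained not by compact containment of a single scope map, but by arranging that the finite initial segment $\o{n}{(\oo{w}{\mt_*})}(U)$, $n<N$, stays bounded away from critical values, and that the tail lands in an invariant neighbourhood of $\alpha$. Your route replaces this by the classical Schwarz--Koenigs machinery: once $\oo{w}{\mt_*}(\overline U)\Subset U$, the Schwarz lemma gives uniqueness and attraction simultaneously, and Koenigs's theorem identifies the limit $\oo{w}{u_*}$ explicitly as $L^{-1}\circ\phi$ for the linearising coordinate $\phi$, with exponential rate $|\lambda|$. That is slicker and more informative, and in particular makes the ``universal function'' concrete. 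It also does not conflict with the paper's remark that the branches of $\pf_*$ need not be real contractions on $J$ for general combinatorics: contraction in the hyperbolic metric of a complex $U$ with $\oo{w}{\mt_*}(\overline U)\Subset U$ is compatible with the Euclidean derivative exceeding $1$ at real points.

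The one place you underestimate the work is the final paragraph. Compact containment of the complex extension is not ``inherent in the set-up of $\U_{\Omegax,\perm}$''; membership in $\U_{\Omegax,\perm}$ only guarantees a holomorphic extension of $f_*$ to $\Omegax$, and the polynomial-like structure with definite moduli at $f_*$ is a theorem (Sullivan, McMullen, Lyubich), not a definition. What is actually needed, and is enough, is more modest: the real \emph{a priori} bounds supply definite space $\oo{w}{J_*}\Subset\oo{w}{J'_*}$ and $\oo{0}{J_*}\Subset\o{p-w}{f_*}(\oo{w}{J'_*})$, and Koebe distortion then gives a univalent extension of the inverse branch $\o{-(p-w)}{f_*}$ to a complex neighbourhood of $\oo{0}{J_*}$; shrinking $U$ until $\oo{w}{\mt_*}(\overline U)$ is close to $\oo{w}{J_*}\Subset J$ yields the compact containment. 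So your argument is correct, but it rests on the same real \emph{a priori} bounds the paper uses, not on an additional complex \emph{a priori} bound hypothesis. You should also note the sign of $\lambda$ depends on combinatorics (as in Remark~\ref{rmk:sigma}); when $\lambda<0$ the orientation-preserving rescaling $A_n$ alternates a flip of $J$ with parity of $n$, so either restrict to a subsequence or normalise the rescaling to make the limit orientation-preserving — this is the same bookkeeping implicit in the paper's choice of zoom operator.
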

\begin{proof}
It is clear that there exists a fixed point $\alpha$, as $\oo{w}{\mt_*}$ maps $J$ into itself. It is also unique, since
by construction $\oo{w^{n+1}}{J_*}=\oo{w}{\mt_*}(\oo{w^{n}}{J_*})$, and all images of $J$ must contain all fixed points. However, Theorem~\ref{thm:real-ap-bounds} implies
$|\oo{w^{n}}{J_*}|\to 0$ as $n\to\infty$, and hence there can be only one fixed point.

Now let us show $\alpha$ is attracting.
Theorem~\ref{thm:real-ap-bounds} tells us, since $\oo{w^{n+1}}{J_*}\subset\oo{w^{n}}{J_*}$, that $|\oo{w^{n+1}}{J_*}|/|\oo{w^{n}}{J_*}|<k_1<1$. By the Mean Value Theorem this
implies there are points $\alpha_n\in \oo{w^n}{J_*}$ such that $|(\oo{w}{\mt_*})'(\alpha_n)|=|\oo{w^{n+1}}{J_*}|/|\oo{w^{n}}{J_*}|<k_1$. Also, since $\alpha\in \oo{w^n}{J_*}$
for all $n>0$ and $|\oo{w^{n}}{J_*}|\to 0$ as $n\to\infty$ , we have $\alpha_n\to \alpha$. As $\oo{w}{\mt_*}$ is analytic we must have $|(\oo{w}{\mt_*})'(\alpha)|<k_1$. Hence
$\alpha$ if $\alpha$ has multiplier $\sigma_w$, $|\sigma_w|<1$ and so $\alpha$ is attracting.

For the second item let $\oo{w}{u_n}=\ii_{\oo{w^n}{J_*}\to J}\circ \oo{w^n}{\mt_*}\colon J\to J$. 
First we claim that there is a domain $U\subset\CC$ containing $J$ on which $\oo{w}{u_n}$ has a univalent extension. This follows as $\alpha$ is an attracting fixed point and
$\oo{w}{\mt_*}$ is real-analytic on $J$, so there exists a domain $V\subset\CC$ containing $\alpha$ on which $\oo{w}{\mt_*}$ is univalent and $\oo{w}{\mt_*}(V)\subset V$. By Theorem~\ref{thm:real-ap-bounds} there exists an integer $N>0$ such that $\o{n}{(\oo{w}{\mt_*})}(J)\subset V$ for all $n\geq N$. 
Therefore take any domain $U$ containing $J$ such that $\o{n}{(\oo{w}{\mt_*})}(U)$ is bounded away from the set of the first $p^N$ critical values of $f_*$ for all $n<N$.
Then $\oo{w}{u_n}$ will be univalent on $U$.

Observe that, letting $\oo{w}{v_n}=\Z_{\oo{w^n}{J_*}}\oo{w}{\mt_*}$ where $\Z_T$ denotes the zoom operator on the interval $T$, we can write
\begin{equation}
\oo{w}{u_n}=\oo{w}{v_n}\circ\cdots\circ \oo{w}{v_0}
\end{equation}
Also observe that the argument above gives a domain $W$ containing $J$ on which each of these composants has a univalent extension. 
Therefore
\begin{align}
|\oo{w}{u_n}-\oo{w}{u_{n+1}}|_W
&=|\oo{w}{v_n}\circ\cdots\circ \oo{w}{v_0}-\oo{w}{v_{n+1}}\circ\oo{w}{v_n}\circ\cdots\circ \oo{w}{v_0}|_W \notag \\
&= |\id-\oo{w}{v_{n+1}}|_{\oo{w}{u_n}(W)}.
\end{align}
Theorem~\ref{thm:real-ap-bounds} implies $|\oo{w^n}{J_*}|\to 0$ exponentially as $n\to\infty$. Analyticity of $\oo{w}{\mt_*}$ then implies $\dis{\oo{w}{\mt_*}}{\oo{w^n}{J_*}}\to 0$ exponentially as well. Moreover, also by analyticity, this holds on a subdomain $W^n$ of $W$ containing $\oo{w^n}{J_*}$. Hence, by the Mean Value Theorem,
\begin{equation}
\left|1-\frac{|\oo{w^{n}}{J_*}|}{|\oo{w^{n+1}}{J_*}|}(\oo{w}{\mt_*})'\right|_{\oo{w^n}{J}}\to 0
\end{equation}
exponentially, and this will also hold on $W^n$ if $n>0$ is sufficiently large. Integrating then gives us
\begin{equation}
|\id-\oo{w}{v_n}|_{\oo{w}{u_n}(W)}\to 0
\end{equation}
exponentially, and hence $|\oo{w}{u_n}-\oo{w}{u_{n+1}}|_W\to 0$ exponentially. This implies the limit $\oo{w}{u_*}$ exists and is univalent on $W$.
\end{proof}
\begin{rmk}
In the period-doubling case more precise information was given by Birkhoff, Martens and Tresser in~\cite{BMT}. Since $f_*$ is convex in this case (see~\cite{Dav1}), we know $f_*|\oo{1}{J_*}$ is expanding and hence $\oo{1}{\mt_*}$ is contracting. This
simplified the construction of the renormalisation Cantor set for a strongly dissipative nondegenerate H\'enon-like map given by de Carvalho, Lyubich and Martens in~\cite{dCML}. 
\end{rmk}
\begin{rmk}
For $(ii)$ of the above it is clear this can be extended from words $w^\infty$ to words $\word{w}{}=w_0w_1\ldots$ which are eventually periodic. For arbitrary words a different strategy must be used, see~\cite{BMT}.
\end{rmk}
\begin{prop}\label{prop:inducedpermutation}
Let $\perm$ be a unimodal permutation and let $\perm(n)$ be the unimodal permutation satisfying $\RUgen{\perm}^n=\RUgen{\perm(n)}$.
Given an $n$-times renormalisable $f\in\U_{\Omegax,\perm}$ let 
\begin{equation}
f_{\perm,i}=\RUgen{\perm}^i f \ \ \mbox{and} \ \ f_{\perm(n)}=\RUgen{\perm(n)}f
\end{equation}
for all $i=0,1,\ldots,n$. Let $\pf_{\perm,i}$ denote the presentation function for $f_{\perm,i}$ with respect to $\RUgen{\perm}$ and let $\pf_{\perm(n)}$ denote the presentation function for $f_{\perm(n)}$ with respect to $\RUgen{\perm(n)}$. 
Then
\begin{equation}
\pf_{\perm(n)}=\{\oo{w_0}{\mt}_{\perm,0}\circ\ldots\circ\oo{w_n}{\mt}_{\perm,n}\}_{w_0,\ldots,w_n\in W}.
\end{equation}  
\end{prop}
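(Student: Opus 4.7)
My plan is to argue by induction on $n$, with the base case $n=1$ (where $\perm(1)=\perm$) being tautological. The first and really substantive observation is that the identity $\RUgen{\perm}^n=\RUgen{\perm(n)}$ forces a concrete relation between the affine rescalings: the unique rescaling $h_{\perm(n),0}$ used to form $\RUgen{\perm(n)}f$ from $f^{p^n}$ must equal the composition $h_{\perm,0}\circ h_{\perm,1}\circ\cdots\circ h_{\perm,n-1}$ of the successive rescalings defining $\RUgen{\perm}f_{\perm,i}$, since both are affine bijections $J\to \oo{0^n}{J}$ with matching orientation (the orientation being pinned down, as in the definition of the scope map, by the requirement that the rescaled map lie in $\U_{\Omegax,\perm(n)}$). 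As a consequence, the depth-one central cycle of $\RUgen{\perm(n)}f$ coincides as a collection of subintervals with the depth-$n$ cycle $\{\oo{\word{w}{}}{J}\}_{\word{w}{}\in W^n}$ of $f$ under $\perm$-renormalisation, via the natural identification of $W_{p^n}$ with $W^n$ that reads off a word $w_0\ldots w_{n-1}$.

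For the inductive step I would use $\RUgen{\perm}^{n+1}=\RUgen{\perm}\circ\RUgen{\perm(n)}$, applied at the level of definitions of the scope maps: the $w$-scope map of $f$ under $\perm(n+1)$ is by definition the inverse of a map of the form $h_{\perm(n+1),0}^{-1}\circ f^{p^{n+1}-q}$ for the appropriate transfer time $q$, and using the factorisation of $h_{\perm(n+1),0}$ from the previous step this inverse branch decomposes into an outer $\perm$-scope map $\oo{w_0}{\mt}_{\perm,0}$ of $f$ followed by a scope map of $f_{\perm,1}=\RUgen{\perm}f$ under $\perm(n)$. The induction hypothesis then identifies this inner scope map with the composition $\oo{w_1}{\mt}_{\perm,1}\circ\cdots\circ\oo{w_n}{\mt}_{\perm,n}$, completing the step. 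This is morally the same kind of inductive unravelling used in Proposition~\ref{prop:stepm}, with the fixed-point identity there replaced by the conjugacy relation $h_{\perm,0}^{-1}\circ f^p\circ h_{\perm,0}=f_{\perm,1}$ at each level.

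The only real obstacle is bookkeeping, and it has two components: first, matching the bijection $W_{p^{n+1}}\cong W^{n+1}$ against the left-to-right order in the composition $\oo{w_0}{\mt}_{\perm,0}\circ\cdots\circ\oo{w_n}{\mt}_{\perm,n}$ (this is fixed by checking that both sides land in the same element $\oo{w_0\ldots w_n}{J}$ of the depth-$(n{+}1)$ cycle, using the transfer-time formula $\word{q}{}(\word{w}{})=\sum p^i(p-w_i)$ already used in the spatial/dynamical correspondence); and second, verifying at each stage that the correct inverse branch of the relevant iterate of $f$ is selected, which follows because the scope maps are uniquely characterised by their image interval in the renormalisation cycle and by the orientation convention on the affine rescalings. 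Once these two indexing issues are settled, the factorisation claim is an immediate consequence of the factorisation of $h_{\perm(n+1),0}$ established in the first step.
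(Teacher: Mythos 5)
Your proof is correct and matches the paper's approach: the paper compresses the same argument into one line, citing the conjugacy $\oo{p}{f_{\perm,i}}\circ\ii_{J\to\oo{0}{J_i}}=\ii_{J\to\oo{0}{J_i}}\circ f_{\perm,i+1}$ and the identification of the central interval of $f$ under $\RUgen{\perm(n)}$ with $\oo{0^n}{J}$, which are precisely the two facts you exploit through the factorisation $h_{\perm(n),0}=h_{\perm,0}\circ\cdots\circ h_{\perm,n-1}$ and the inductive unwinding. Your indexing of the composition over $i=0,\dots,n-1$ is the consistent one given $\RUgen{\perm}^n=\RUgen{\perm(n)}$ (so there are $p^n$ branches and the composite scope maps land in the depth-$n$ cycle), which quietly repairs an off-by-one in the displayed formula of the statement.
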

\begin{proof}
This follows from the fact that $\RUgen{\perm} f_{\perm,i}=f_{\perm,i+1}$ implies $\oo{p}{f_{\perm,i}}\circ\ii_{J\to\oo{0}{J_i}}=\ii_{J\to\oo{0}{J_i}}\circ f_{\perm,i+1}$ and the fact that the central interval of $f$ under $\RUgen{\perm(n)}$ is equal to $\oo{0^n}{J}$.
\end{proof}
Since the renormalisation interval $\oo{0}{J}$ of $f$ is determined by a $p$-periodic point and its preimage and the scope maps $\oo{w}{\mt}$ are compositions of preimages of $f$, and all of these vary continuously with $f$ we get the following Proposition.
\begin{prop}\label{prop:1d-scope-conv} 
Let $\perm$ be a unimodal permutation. There exists a constant $C>0$ such that for any $f_0,f_1\in \U_{\Omegax,\perm}$, and any $w\in W$,
\begin{equation}
|\oo{w}{\mt_{f_0}}-\oo{w}{\mt_{f_{1}}}|_{\Omegax}\leq C|f_0-f_1|_{\Omegax}
\end{equation}
\end{prop}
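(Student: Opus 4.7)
The plan is to decompose each scope map into factors whose dependence on $f$ can be controlled separately. For $w \in W$ with $w \neq 0$, one has $\oo{w}{\mt_f} = \phi_{w,f} \circ h_{0,f}$, where $h_{0,f}\colon J \to \oo{0}{J}$ is the affine rescaling onto the central interval and $\phi_{w,f}\colon \oo{0}{J} \to \oo{w}{J}$ is the inverse branch of $\o{p-w}{f}$ whose image is $\oo{w}{J}$. For $w = 0$ only the affine factor appears. It therefore suffices to prove Lipschitz dependence on $f$ of both $h_{0,f}$ and $\phi_{w,f}$ separately.

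First, I would show that the endpoints of $\oo{0}{J}$ depend Lipschitz continuously on $f$. By the remark following the definition of renormalisability, these endpoints consist of a hyperbolic $p$-periodic point $\alpha_p$ of $f$ together with one of its $f$-preimages. Both are solutions of equations of the form $\o{k}{f}(x) - x = 0$ or $\o{k}{f}(x) - \alpha_p = 0$ whose linearisations at the solution are invertible (by hyperbolicity, which is built into the definition of $\U_{\Omegax,\beta}$). The implicit function theorem, applied on the Banach space of holomorphic functions on $\Omegax$, then yields Lipschitz dependence of both endpoints on $f$ in sup-norm on $\Omegax$, and hence Lipschitz dependence of $h_{0,f}$.

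Second, I would establish Lipschitz dependence of $\phi_{w,f}$ on $f$. By definition of the maximal extension $\oo{w}{J'}$, the restriction of $\o{p-w}{f}$ to $\oo{w}{J'}$ is a diffeomorphism; by the factorisation $f = \psi \circ Q$ in the definition of $\U_{\Omegax,\beta}$, this iterate extends to a univalent holomorphic map on a neighbourhood of $\oo{w}{J'}$ inside $\Omegax$, on which the lower bound $|(\o{p-w}{f})'| \geq \kappa > 0$ persists uniformly for $f$ in a small sup-norm neighbourhood of any fixed map. The identity $\o{p-w}{f_j}(\phi_{w,f_j}(z)) = z$ for $j = 0, 1$ combined with the mean value inequality applied to $\o{p-w}{f_0}$ yields
\begin{equation}
|\phi_{w,f_0}(z) - \phi_{w,f_1}(z)| \leq \kappa^{-1} |\o{p-w}{f_0} - \o{p-w}{f_1}|_{\Omegax},
\end{equation}
and expanding the iterates via the chain rule and a telescoping argument gives $|\o{p-w}{f_0} - \o{p-w}{f_1}|_{\Omegax} \leq C'|f_0 - f_1|_{\Omegax}$. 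Combining with the first step yields the claim.

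The main obstacle I expect is making the estimates work uniformly on a common complex domain rather than only on $J$. This requires selecting a subdomain of $\Omegax$ on which all inverse branches of the relevant iterates of nearby maps admit univalent extensions with uniformly controlled derivatives. Such a domain is available here because the hyperbolic periodic orbit bounding $\oo{0}{J}$ varies continuously with $f$, and the univalent factorisation in the definition of $\U_{\Omegax,\beta}$ is stable under small sup-norm perturbations.
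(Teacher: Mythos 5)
Your proposal is correct and follows exactly the route the paper indicates: the paper precedes the proposition with the observation that the renormalisation interval is determined by a hyperbolic $p$-periodic point and one of its preimages, and that the scope maps are compositions of this affine rescaling with inverse branches of iterates of $f$, all of which vary continuously with $f$; your two steps (implicit-function-theorem control of the endpoints, and a telescoping chain-rule estimate for the inverse branches on a slightly shrunken complex domain) spell out precisely this sketch.
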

The following corollary follows directly from the above Proposition~~\ref{prop:1d-scope-conv} and convergence of renormalisation, Theorem~\ref{thm:1d-exp-convergence}.
\begin{cor}\label{prop:1d-scope-conv2}
There exist constants $C>0$ and $0<\rho<1$ such that the following holds: given any infinitely renormalisable $f\in\U_{adapt}$,
\begin{equation}
|\oo{w}{\mt_{n}}-\oo{w}{\mt_*}|_{\Omegax}\leq C\rho^n.
\end{equation}
 \end{cor}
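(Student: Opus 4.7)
The plan is to chain the two preceding results: the corollary asks for nothing more than feeding exponential convergence of renormalisation into the Lipschitz dependence of scope maps on the underlying unimodal map, so the proof is essentially two lines.

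First I would apply Proposition~\ref{prop:1d-scope-conv} with $f_0 = f_n = \RU^n f$ and $f_1 = f_*$. Since $f$ is infinitely renormalisable, each $f_n$ lies in $\U_{\Omegax,\perm}$, and the proposition yields a constant $C_1>0$, independent of $n$, with
\begin{equation}
|\oo{w}{\mt_n} - \oo{w}{\mt_*}|_{\Omegax} \leq C_1 |f_n - f_*|_{\Omegax}.
\end{equation}
Next I would invoke Theorem~\ref{thm:1d-exp-convergence}. Iterating the contraction in $d_{adapt}$ gives $d_{adapt}(f_n,f_*) \leq \rho^n d_{adapt}(f,f_*)$, and the Lipschitz equivalence of $d_{adapt}$ with the sup-norm on $\Omegax'$ then produces a bound $|f_n - f_*|_{\Omegax'} \leq C_2 \rho^n$ for a constant $C_2 = C_2(f)$. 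Composing the two estimates yields the claim.

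The only snag I foresee is the mismatch of domains: Proposition~\ref{prop:1d-scope-conv} is stated on $\Omegax$, while Theorem~\ref{thm:1d-exp-convergence} delivers the exponential rate only on the strictly smaller domain $\Omegax' \cc \Omegax$ produced by that theorem. This is not a genuine obstacle, because Proposition~\ref{prop:1d-scope-conv} applies equally well to any simply connected neighbourhood of $J$ on which the relevant preimage branches extend univalently; in particular it applies on $\Omegax'$. Accordingly the corollary should be read, and would be used in the sequel, on this $\Omegax'$, with the constants $C,\rho$ depending on the choice of $\U_{adapt}$ supplied by Theorem~\ref{thm:1d-exp-convergence}.
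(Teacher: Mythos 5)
Your proposal is exactly the paper's argument: the corollary is stated there with only the one-line justification that it ``follows directly from Proposition~\ref{prop:1d-scope-conv} and convergence of renormalisation, Theorem~\ref{thm:1d-exp-convergence},'' which is precisely the chain you spell out. Your observation about the domain mismatch ($\Omegax$ versus the smaller $\Omegax'$ furnished by Theorem~\ref{thm:1d-exp-convergence}) is a fair point of care that the paper leaves implicit, and your resolution is the right one.
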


\subsection{A Reinterpretation of the Operator}\label{sect:1d-construction-revised}
Let us now consider $\H_{\Omega}(0)$, defined to be the space of maps $F\in C^\omega(B,B)$ of the form $F=(f\circ\pi_x,\pi_x)$ where $f\in \U_{\Omegax}$. Let us also consider
the subspace $\H_{\Omega,\perm}(0)$ of maps $F=(f\circ\pi_x,\pi_x)$ where $f\in \U_{\Omegax,\perm}$. These will be called the space of \emph{degenerate H\'enon-like maps} and
the space of \emph{renormalisable degenerate H\'enon-like maps of type $\perm$} respectively. The reasons for this will become apparent in Section~\ref{sect:henonlike} when we introduce
non-degenerate H\'enon-like maps.
Observe there is an imbedding $\uline{\i}\colon \U_{\Omegax}\to \H_{\Omega}(0)$, given by $\uline{\i}(f)=(f\circ\pi_x,\pi_x)$, which restricts to an imbedding $\uline{\i}\colon
\U_{\Omegax,\perm}\to \H_{\Omega,\perm}(0)$. We will construct an operator $\RH{}$, defined on $\H_{\Omega}(0)$, such that the
following diagram commutes.
\begin{equation}
\xymatrix{
\U_{\Omegax,\perm}\ar[d]_{\uline{\i}}  \ar[r]^\RU                & \U_{\Omegax}\ar[d]^{\uline{\i}} & \\
\H_{\Omega,\perm}(0)         \ar[r]_\RH & \H_{\Omega}(0)
}
\end{equation}
Let $f\in\U_{\Omegax,\perm}$, let $\{\oo{w}{J}\}_{w\in W}$ be its renormalisation cycle and let $\{\oo{w}{J'}\}_{w\in W}$ be the set of corresponding maximal
extensions.
Let $F=\uline{\i}(f)$ be the corresponding degenerate H\'enon-like map, 
let
\begin{equation}
\oo{w}{B}=\oo{w+1}{J}\times \oo{w}{J}, \quad \oo{w}{B'}=\oo{w+1}{J'}\times\oo{w}{J'}
\end{equation}
and let
\begin{equation}
\oo{w}{B_\diag}=\oo{w}{J}\times\oo{w}{J}, \quad \oo{w}{B_\diag'}=\oo{w}{J'}\times\oo{w}{J'},
\end{equation}
where $w\in W$ is taken modulo $p$.
Observe $\oo{w}{B}$ is invariant under $\o{p}{F}$ for each $w\in W$.

Consider the map $H\colon B\to B$ defined by $H=(\o{p-1}{f},\pi_y)$.
Since $H$ preserves horizontal lines, if it is not injective Rolles' theorem implies between any two points with the same image there exists a solution to $(\o{p-1}{f})'=0$. By the Inverse Function Theorem this will be locally invertible on any connected open set bounded away from set of points for which $DH$ is singular. This set coincides the \emph{critical locus} $\oo{p-1}{\Curve}=\{(x,y):(\o{p-1}{f})'(x)=0\}$. Hence $H$ is a diffeomorphism onto its image on any connected open set bounded away from $\oo{p-1}{\Curve}$.
In particular, since the box $\oo{0}{B}$ is bounded away from $\oo{p-1}{\Curve}$ whenever the maximal extensions are proper extensions, the map $H$ will be a
diffeomorphism there. We call $B^0$ the \emph{central box}. We call the map $H$ the \emph{horizontal diffeomorphism}. Observe that $\oo{0}{B_\diag}=H(\oo{0}{B})$. Define $\bar H\colon \oo{0}{B_\diag}\to
\oo{0}{B}$ to be the inverse of $H$ restricted to $\oo{0}{B_\diag}$.
\begin{rmk}
More generally given any map $T$ which we do not iterate but which is related to the dynamics of a map $F\colon B\to B$, if it is invertible we denote its inverse by $\bar T$.
We use this unconventional notation for following reason: later we consider maps of the form $F=(\phi,\pi_x)\colon B\to B$ and we define $\oo{w}{\phi}\colon B\to J$ by $\o{w}{F}(x,y)=(\oo{w}{\phi}(x,y),\oo{w-1}{\phi}(x,y))$ for all $w>0$ and for $w<0$ whenever they exist. Plain superscripts index objects related to the number of iterates of $F$ and plain subscripts index objects related to the number of renormalisations performed. However, we define $H=(\oo{p-1}{\phi},\pi_y)$, whose inverse is related to the $p-1$-st preimage of $F$, not the first preimage.
\end{rmk}
Since $\bar H$ is well-defined on $\oo{0}{B_\diag}$ the map
\begin{equation}
G=H\circ \o{p}{F}\circ \bar H\colon \oo{0}{B_\diag}\to \oo{0}{B_\diag},
\end{equation}
is also well defined.
We call this map the \emph{pre-renormalisation} of $F$ around $\oo{0}{B}$. 
Let $\III$ denote the affine bijection from $\oo{0}{B_\diag}$ onto $B$ such that the map 
\begin{equation}
\RH F=\III\circ G\circ \bar{\III}\colon B\to B
\end{equation}
is again a degenerate H\'enon-like map where $\bar\III$ denotes the inverse of $\III$. Then $\RH F$ is called the \emph{H\'enon renormalisation} of $F$ around $\oo{0}{B}$ and the operator $\RH$ is called the
\emph{renormalisation operator} on $\H_{\Omegax,\perm}(0)$. Observe that $\RH F=(\RU f\circ\pi_x,\pi_x)$ .
\begin{rmk}
By the same argument as above, $H$ will be a diffeomorphism onto its image when restricted to any $\oo{w}{B}$. 
Since $\oo{w}{B_\diag}=H(\oo{w}{B})$ and $\oo{w}{B}$ is invariant under $\o{p}{F}$ by construction, the maps
\begin{equation}
\oo{w}{G}=H\circ \o{p}{F}\circ \bar{H}\colon \oo{w}{B_\diag}\to \oo{w}{B_\diag}.
\end{equation}
are well defined.
We call $\oo{w}{G}$ the $w$-th \emph{pre-renormalisation}. There are affine bijections $\oo{w}{\III}$ from $\oo{w}{B_\diag}$ to $B$ such that
\begin{equation}
\RHgen{w} F= \oo{w}{\III}\circ \oo{w}{G}\circ\oo{w}{\bar{\III}}\colon B\to B
\end{equation}
is again a degenerate H\'enon-like map (where, as above, $\oo{w}{\bar{\III}}$ denotes the inverse of $\oo{w}{\III}$). Then the map $\RHgen{w} F$ is called the \emph{H\'enon
renormalisation} of $F$ around $\oo{w}{B}$ and the operator $\RHgen{w}$ is called the $w$-th \emph{renormalisation operator} on $\H_{\Omegax,\perm}(0)$. 
Observe that $\RHgen{w} F=(\RUgen{w} f\circ\pi_x,\pi_x)$, where $\RUgen{w}$ denotes the renormalisation around $\oo{w}{J}$.
\end{rmk}
\begin{landscape}
\begin{figure}[htp]
\centering

\psfrag{j0}{$\oo{0}{J}$}
\psfrag{j1}{$\oo{1}{J}$}
\psfrag{j2}{$\oo{2}{J}$}
\psfrag{b0}{$\oo{0}{B}$}
\psfrag{b1}{$\oo{1}{B}$}
\psfrag{b2}{$\oo{2}{B}$}
\psfrag{b0diag}{$\oo{0}{B}_\diag$}
\psfrag{b1diag}{$\oo{1}{B}_\diag$}
\psfrag{b2diag}{$\oo{2}{B}_\diag$}
\psfrag{H}{$H$}
\psfrag{Hi}{$\bar H$}

\psfrag{I}{$I$}
\psfrag{line0}{$Im(F) \ \mbox{or} \ Im(\RH F)$}
\psfrag{line1}{$Im(G)$}
\psfrag{line2}{the strip $S$}
\psfrag{line3}{the renormalisation cycle}

\includegraphics[scale=0.37]{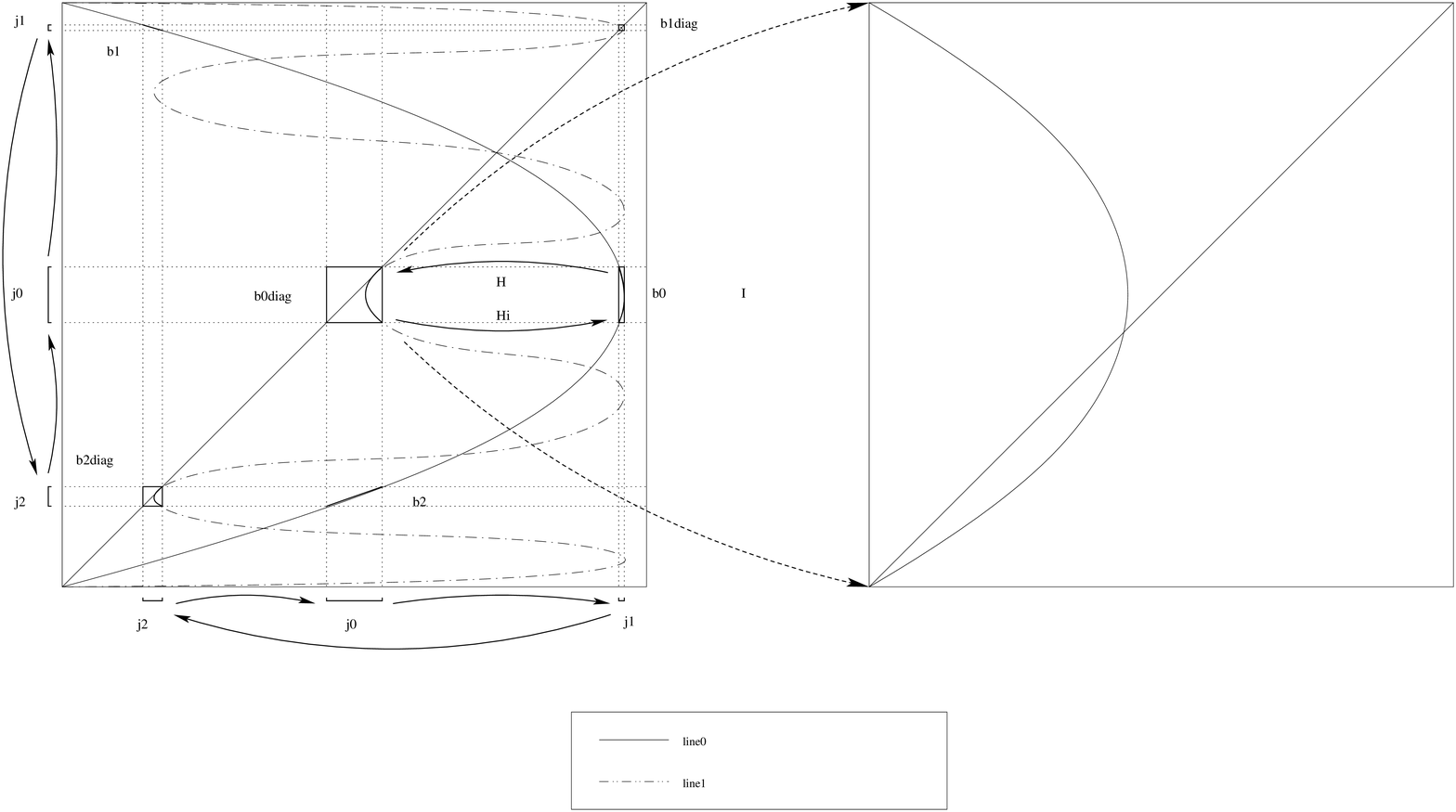}
\caption{A period-three renormalisable unimodal map considered as a degenerate H\'enon-like map. In this case the period is three. Observe that the image of the pre-renormalisation lies on the smooth curve $(\o{3}{f}(x),x)$.}\label{fig:renorm-degnenerate-henon}
\end{figure}
\end{landscape}

\begin{rmk}
The affine bijections $\oo{w}{\III}$ in the remark above map squares to squares. Hence the linear part of $\oo{w}{\III}$ has the form
\begin{equation}
\pm\iibyii{\oo{[w]}{\sigma}}{0}{0}{\pm \oo{[w]}{\sigma}}
\end{equation}
for some $\oo{[w]}{\sigma}>0$. Here the sign depends upon the combinatorial type of $\perm$ only. We call the quantity $\oo{[w]}{\sigma}$ the $w$-th \emph{scaling ratio} of $F$.   
\end{rmk}
\begin{rmk}\label{rmk:degenerate-henon-fixpoint}
Since $\uline{\ii}$ is an imbedding preserving the actions of $\RU$ and $\RH$ it is clear that $\RH$ also has a unique fixed point $F_*$. It must have the form
$F_*=(f_*\circ\pi_x,\pi_x)$ where $f_*$ is the fixed point of $\RU$. Then $F_*$ also has a codimension one stable manifold and dimension one local unstable manifold in $\H_{\Omegax}(0)$.
\end{rmk}
Given $F=\uline{\i}(f)\in\H_{\Omega,\perm}(0)$ let $\MT=\bar{H}\circ\bar{\III}\colon B\to \oo{0}{B}$ and $\oo{w}{\MT}=\o{w}{F}\circ\MT\colon B\to\oo{w}{B}$. Then
$\oo{w}{\MT}$ is called the \emph{$w$-th scope map} of $F$. The reason for this terminology is given by the following Proposition.
\begin{prop}
Let $F=\uline{\i}(f)\in\H_{\Omega,\perm}(0)$. Then
\begin{equation}
\oo{w}{\MT_{F}}(x,y)=
\left\{\begin{array}{ll}
(\oo{0}{\mt}_f(x),\oo{p-1}{\mt}) & w=p-1 \\
(\oo{w+1}{\mt}_f(x),\oo{w}{\mt}_f(x)) & 0<w<p-1 \\ 
(\oo{1}{\mt}_f(x),\oo{0}{\mt}_f(y)) & w=0\end{array}\right.,
\end{equation}
where $\oo{w}{\mt_f}$ denotes the $w$-th scope function for $f$.
\end{prop}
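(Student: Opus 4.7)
The plan is a direct computation: unfold the definitions of $\MT$, $\bar H$ and $\bar{\III}$, compute the $w=0$ case, and then propagate via $\oo{w}{\MT} = \o{w}{F}\circ\MT$, using the identity $\o{j}{f}\circ \oo{1}{\mt_f} = \oo{j+1}{\mt_f}$ ($0 \le j \le p-2$) that is immediate from the unimodal definition of the scope maps.

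First I would compute $\bar{\III}$ explicitly. Since $\III$ is the affine bijection preserving horizontal and vertical lines from $\oo{0}{B}_\diag = \oo{0}{J}\times\oo{0}{J}$ onto $B=J\times J$, its inverse is $\bar{\III}(x,y) = (h_0(x), h_0(y)) = (\oo{0}{\mt_f}(x),\oo{0}{\mt_f}(y))$, where $h_0$ is the affine bijection appearing in the definition of $\RU f$, so that $h_0 = \oo{0}{\mt_f}$. Next I would compute $\bar H$. By construction $H(x,y) = (\o{p-1}{f}(x), y)$, and on $\oo{0}{B} = \oo{1}{J}\times\oo{0}{J}$ the map $\o{p-1}{f}$ is a diffeomorphism onto $\oo{0}{J}$. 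Its inverse branch, viewed as a map from $\oo{0}{J}$ to $\oo{1}{J}$, is precisely $\oo{1}{\mt_f}\circ h_0^{-1}$, by the definition $\oo{1}{\mt_f} = \o{-(p-1)}{f}\circ h_0$. Hence $\bar H(x,y) = (\oo{1}{\mt_f}(h_0^{-1}(x)), y)$, and composing with $\bar{\III}$ gives
\begin{equation}
\MT(x,y) = \bar H \circ \bar{\III}(x,y) = (\oo{1}{\mt_f}(x),\, \oo{0}{\mt_f}(y)),
\end{equation}
which establishes the $w=0$ case since $\oo{0}{\MT} = F^0\circ\MT = \MT$.

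For the remaining cases I would use $\oo{w}{\MT} = \o{w}{F}\circ\MT$ together with the explicit form $\o{w}{F}(x,y) = (\o{w}{f}(x), \o{w-1}{f}(x))$ for $w\ge 1$, valid because $F(x,y)=(f(x),x)$. Applied to $\MT(x,y) = (\oo{1}{\mt_f}(x), \oo{0}{\mt_f}(y))$ this yields
\begin{equation}
\oo{w}{\MT}(x,y) = \bigl(\o{w}{f}(\oo{1}{\mt_f}(x)),\, \o{w-1}{f}(\oo{1}{\mt_f}(x))\bigr).
\end{equation}
Now from $\oo{j+1}{\mt_f} = \o{-(p-j-1)}{f}\circ h_0$ and $\oo{1}{\mt_f} = \o{-(p-1)}{f}\circ h_0$ one reads off $\o{j}{f}\circ \oo{1}{\mt_f} = \o{-(p-j-1)}{f}\circ h_0 = \oo{j+1}{\mt_f}$ for $0\le j\le p-2$, while for $j = p-1$ the wraparound gives $\o{p-1}{f}\circ\oo{1}{\mt_f} = h_0 = \oo{0}{\mt_f}$. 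Substituting these identities produces the claimed expressions in the two remaining ranges $0<w<p-1$ and $w=p-1$.

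There is no serious obstacle here; the only care needed is in tracking which inverse branch of $\o{p-1}{f}$ appears in $\bar H$ and in verifying the wraparound $p\equiv 0\pmod p$ at $w=p-1$. Both points are controlled by the choice of $\oo{1}{\mt_f}$ as the inverse branch of $h_0^{-1}\circ\o{p-1}{f}$ on $\oo{1}{J}$, which is exactly the branch selected by working inside the central box $\oo{0}{B}$ bounded away from the critical locus $\oo{p-1}{\Curve}$.
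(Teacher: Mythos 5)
Your proposal is correct and takes essentially the same route as the paper: both are direct computations that unfold $\bar{\III}$, $\bar H$, and $\o{w}{F}$ and then identify the resulting preimage branches of $f$ with the scope functions $\oo{w}{\mt_f}$. The only cosmetic difference is that you first assemble $\MT=\bar H\circ\bar\III$ and then compose with $\o{w}{F}$, whereas the paper computes $\o{w}{F}\circ\bar H$ first and then appends $\bar\III$; the algebra and the branch bookkeeping are identical, and your identity $\o{j}{f}\circ\oo{1}{\mt_f}=\oo{j+1}{\mt_f}$ (with the wraparound at $j=p-1$) is exactly what the paper is implicitly invoking when it rewrites $\o{-p+w+1}{f}$ composed with $\ii_{J\to\oo{0}{J}}$ as a scope function.
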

\begin{proof}
Observe that $\bar{H}(x,y)=(\o{-p+1}{f}(x),y)$ and 
\begin{equation}
\o{w}{F}(x,y)=\left\{\begin{array}{ll}(\o{w}{f}(x),\o{w-1}{f}(x))& w>0 \\ (x,y)& w=0\end{array}\right.
\end{equation}
which implies
\begin{align}
\o{w}{F}\circ \bar{H}(x,y)
&=\left\{\begin{array}{ll}(\o{-p+w+1}{f}(x),\o{-p+w}{f}(x))& w>0 \\ (\o{-p+1}{f}(x),y)& w=0\end{array}\right.
\end{align}
where appropriate branches of $\o{-p+w+1}{f}$ and $\o{-p+w}{f}$ are chosen. Also observe $\bar{\III}(x,y)=(\ii_{J\to\oo{0}{J}}(x),\ii_{J\to\oo{0}{J}}(y))$. Composing these gives
us the result.
\end{proof}
\begin{rmk}
Only the zero-th scope map $\MT=\oo{0}{\MT}$ is a diffeomorphism onto its image.
\end{rmk}

Now assume $F\in \H_{\Omega,\perm}$ is $n$-times renormalisable and denote its $n$-th renormalisation $\RH^n F$ by $F_n$. For each $F_n$ we can construct the $w$-th scope
map $\oo{w}{\MT_n}=\oo{w}{\MT}(F_n)\colon \Dom(F_{n+1})\to\Dom(F_n)$, where $\Dom(F_n)=B$ denotes the domain of $F_n$. For $\word{w}{}=w_0\ldots w_n\in W^*$ the
function
\begin{equation}
\oo{\word{w}{}}{\MT}=\MT_0^{w_0}\circ\ldots\circ \MT_n^{w_n}\colon \Dom(F_{n+1})\to \Dom(F_0)
\end{equation}
is called the \emph{$\word{w}{}$-scope map}. Let $\uline\MT=\{\MT^{\word{w}{}}\}$ denote the collection of all scope maps.
The following Corollary is an immediate consequence of the above Proposition.
\begin{cor}
Let $F=\uline{\i}(f)\in\H_{\Omega,\perm}(0)$ be an $n$-times renormalisable degenerate H\'enon-like map. Then given a word $\word{w}{}=w_0\ldots,w_{n-1}\in W^n$,
\begin{equation}
\oo{\word{w}{}}{\MT_{f}}(x,y)=\left\{\begin{array}{ll}(\oo{\word{w}{}+1^n}{\mt}_f(x),\oo{\word{w}{}}{\mt}_f(x))& \word{w}{}\neq 0 \\
(\oo{\word{w}{}+1^n}{\mt}_f(x),\oo{\word{w}{}}{\mt}_f(y)) &
\word{w}{}=0\end{array}\right.,
\end{equation}
where $\oo{\word{w}{}}{\mt_f}$ denotes the $\word{w}{}$-th scope map for $f$.
\end{cor}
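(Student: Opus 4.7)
The plan is induction on the length $n$ of the word $\word{w}{}=w_0\ldots w_{n-1}$. The base case $n=1$ is exactly the preceding Proposition, which records the crucial dichotomy: for $w_0=0$ the scope map $\oo{0}{\MT_F}$ is a diffeomorphism and the $y$-coordinate survives through $\oo{0}{\mt_f}(y)$, while for $w_0\neq 0$ the image is forced onto a one-dimensional curve and both coordinates become functions of $x$ alone.

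For the inductive step I would split $\word{w}{}=w_0\word{w}{}''$ with $\word{w}{}''=w_1\ldots w_{n-1}\in W^{n-1}$ and use the defining composition
\begin{equation}
\oo{\word{w}{}}{\MT_F}=\oo{w_0}{\MT_0}\circ \oo{\word{w}{}''}{\MT_{F_1}},
\end{equation}
where $F_1=\RH F=\uline{\i}(f_1)$ is $(n-1)$-times renormalisable. The inductive hypothesis applied to $F_1$ supplies an explicit formula for $\oo{\word{w}{}''}{\MT_{F_1}}$ in terms of scope functions of $f_1$. Composing on the left with $\oo{w_0}{\MT_0}$ via the $n=1$ formula produces an expression involving $\oo{w_0}{\mt_{f_0}}\circ\oo{\star}{\mt_{f_1}}$, which Proposition~\ref{prop:inducedpermutation} identifies with $\oo{w_0\star}{\mt_f}$; this pushes all scope functions down to the level of the original map $f$ and yields the claimed expression.

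The case split $\word{w}{}=0^n$ versus $\word{w}{}\neq 0^n$ is then verified directly. If $\word{w}{}=0^n$, every composant is the central (diffeomorphic) scope map, so the two coordinates remain functionally independent and the $y$-dependence survives in the second coordinate through $\oo{0^n}{\mt_f}(y)$; otherwise, at some level of the composition a non-central scope map is applied, which collapses $y$ onto a function of $x$, after which both output coordinates depend on $x$ only. The main obstacle, such as it is, is the combinatorial bookkeeping for the adding-machine increment $\word{w}{}+1^n$ appearing in the first coordinate: this reduces to checking that the base-case increment $w_0\mapsto w_0+1$ is compatible with the $(n-1)$-step increment $\word{w}{}''\mapsto\word{w}{}''+1^{n-1}$ supplied by the induction hypothesis, which is essentially a restatement of the adding-machine operation. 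Once Proposition~\ref{prop:inducedpermutation} is in hand, everything else is mechanical unwinding of definitions.
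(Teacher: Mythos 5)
Your induction on word length — splitting $\word{w}{}=w_0\word{w}{}''$ and composing $\oo{w_0}{\MT_0}$ with $\oo{\word{w}{}''}{\MT_{F_1}}$ — is exactly what the paper's terse ``immediate consequence of the above Proposition'' amounts to, so the strategy is right. However, the step you attribute to Proposition~\ref{prop:inducedpermutation} is not supplied by it: the identity $\oo{w_0}{\mt}_{f_0}\circ\oo{\star}{\mt}_{f_1}=\oo{w_0\star}{\mt}_f$ is literally the definition $\oo{\word{w}{}}{\mt}_f=\oo{w_0}{\mt}_0\circ\cdots\circ\oo{w_{n-1}}{\mt}_{n-1}$ from Section~\ref{sect:unimodal-scopemaps}, while Proposition~\ref{prop:inducedpermutation} compares the presentation function under $\RUgen{\perm}$ with the one under $\RUgen{\perm(n)}$ and plays no role in this composition.

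More seriously, the ``mechanical unwinding'' you defer is where the whole content of the proof lives, and your sketch does not track the index in the second coordinate. For $w_0\neq 0$ both output coordinates of $\oo{w_0}{\MT_0}(x',y')=(\oo{w_0+1}{\mt}_{f_0}(x'),\oo{w_0}{\mt}_{f_0}(x'))$ read only the input $x'$, and by the inductive hypothesis that input is $x'=\oo{\word{w}{}''+1^{n-1}}{\mt}_{f_1}(x)$ — it already carries the increment. Composing therefore gives the second output coordinate as
\begin{equation}
\oo{w_0}{\mt}_{f_0}\circ\oo{\word{w}{}''+1^{n-1}}{\mt}_{f_1}(x),
\end{equation}
which is $\oo{w_0(\word{w}{}''+1^{n-1})}{\mt}_f(x)$ and not $\oo{\word{w}{}}{\mt}_f(x)$; already for $p=2$ and $\word{w}{}=11$ the composition produces second coordinate $\oo{10}{\mt}_f(x)$, not $\oo{11}{\mt}_f(x)$. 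An analogous shift enters whenever the first nonzero letter of $\word{w}{}$ is reached, even if $w_0=0$. You discuss only the increment in the first coordinate and call the rest a restatement of the adding machine (note also that coordinate-wise addition of $1^n$ is not the adding-machine increment $1+\word{w}{}$, which carries). Before calling the proof complete you must carry out the bookkeeping for the second coordinate explicitly and reconcile it with the formula as stated.
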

In particular we may do this for $F_*$, the renormalisation fixed point. This gives 
\begin{align}
\oo{w}{\MT_*}(x,y)
&=\left\{\begin{array}{ll}(\oo{w+1}{\mt}_*(x),\oo{w}{\mt}_*(x))& w>0 \\ (\oo{w+1}{\mt}_*(x),\oo{w}{\mt}_*(y)) & w=0\end{array}\right.,
\end{align}
where $\oo{w}{\mt}_*$ are the branches of the presentation function.
We will denote the family of scope maps for $F_*$ by $\uline\MT_*=\{\oo{\word{w}{}}{\MT_*}\}_{\word{w}{}\in W^*}$ where $\oo{\word{w}{}}{\MT_*}\colon B\to \oo{\word{w}{}}{B_*}$ is constructed as above.

\section{H\'enon-like Maps}\label{sect:henonlike}
In this section we construct a space of H\'enon-like maps and a Renormalisation operator acting on it which coincides with the renormalisation operator on degenerate maps. We show that the standard unimodal renormalisation picture can be extended to the space of such maps if the H\'enon-like maps are sufficiently dissipative. We then examine the dynamics of infinitely renormalisable maps  in more detail.
\subsection{The Space of H\'enon-like Maps}
Let $\bar\e>0$. Let $\T_{\Omega}(\bar\e)$ denote the space of maps $\e\in C^\omega(B,\RR)$, which satisfy 
\begin{enumerate}
\item $\e(x,0)=0$ for all $x\in J$;
\item $\e(x,y)\geq 0$ for all $(x,y)\in B$;
\item $\e$ admits a holomorphic extension to $\Omega$;
\item $|\e|_{\Omega}\leq \bar\e$, where $|\!-\!|_{\Omega}$ denotes the sup-norm on $\Omega$. 
\end{enumerate}
Such maps will be called \emph{thickenings} or \emph{$\bar\e$-thickenings}. Let $B'=J'\times J'\subset \RR^2$ for some closed interval $J'\subset \RR$. 
Given $\e'\in C^\omega(B',\RR)$ let $E'(x,y)=(x,\e'(x,y))$. If there is an affine bijection $\III\colon B'\to B$ such that $E(x,y)=\III\circ E'\circ \bar{\III}(x,y)=(x,\e(x,y))$ where $\e$ is a thickening, then we say $\e'$ is a \emph{thickening on $B'$}.

Let $f\in\U_{\Omegax}$ be a unimodal map and let $\bar\e>0$ be a constant. Let
\begin{equation}
\H_{\Omega}^{-}(f,\bar\e)=\{F\in\Emb^\omega(B,\RR^2): F(x,y)=(f(x)+\e(x,y),x), \e\in\T_{\Omega}(\bar\e)\}
\end{equation}
and
\begin{equation}
\H_{\Omega}^{+}(f,\bar\e)=\{F\in\Emb^\omega(B,\RR^2): F(x,y)=(f(x)-\e(x,y),x), \e\in\T_{\Omega}(\bar\e)\}.
\end{equation}
Note that such maps to not necessarily leave $B$ invariant. Typically we take extensions of these $F$ to some rectangular domain $B'\subset \Omega$ containing $B$ on which $F$ is invariant. Also note that $\H_{\Omega}^{-}(f,\bar\e)$ consists of orientation reversing maps and $\H_{\Omega}^{+}(f,\bar\e)$ consists of orientation preserving maps.

Given these spaces, let
\begin{equation}
\H_{\Omega}^{\pm}(\bar\e)=\bigcup_{f\in\U_{\Omegax}}\H_{\Omega}^{\pm}(f,\bar\e)
\end{equation}
and
\begin{equation}
\H_{\Omega}^{\pm}=\bigcup_{\bar\e>0}\H_{\Omega}^{\pm}(\bar\e).
\end{equation}
and finally set $\H_{\Omega}=\H_{\Omega}^{+}\cup\H_{\Omega}^{-}$.
Observe that the condition $\e(x,0)=0$ ensures that each H\'enon-like map $F$ has a unique representation as $F=(f-\e,\pi_x)$. We will call this representation the \emph{parametrisation} of $F$. We will write $F=(\phi,\pi_x)$ when the
parametrisation is not explicit. Observe that the degenerate H\'enon-like maps considered in
Section~\ref{sect:unimodal} lie in a subset of the boundary of $\H_{\Omega}(\bar\e)$ for all $\bar\e>0$. Given a square $B'\subset \RR^2$ a map $F\in\Emb^\omega(B',B')$ is 
\emph{H\'enon-like on $B'$} if there exists an affine bijection $\III\colon B'\to B$ such that $\III\circ F\circ\bar{\III}\colon B\to B$ is a H\'enon-like map.

Given a H\'enon-like map $F=(\phi,\pi_x)\colon B\to F(B)$ its inverse will have the form $\o{-1}{F}=(\pi_y,\oo{-1}{\phi})\colon F(B)\to B$ where $\oo{-1}{\phi}\colon F(B)\to J$ satisfies
\begin{equation}
\pi_y=\oo{-1}{\phi}(\phi,\pi_x);\quad \pi_x=\phi(\pi_y,\oo{-1}{\phi}).
\end{equation}
More generally, given an integer $w>0$ let us denote the $w$-th iterate of $F$ by $\o{w}{F}\colon B\to B$, and the $w$-th preimage by $\o{-w}{F}\colon \o{w}{F}(B)\to B$. Observe that  they have the respective forms $\o{w}{F}=(\oo{w}{\phi},\oo{w-1}{\phi})$  and $\o{-w}{F}=(\oo{-w+1}{\phi},\oo{-w}{\phi})$ for some functions $\oo{w}{\phi}\colon B\to J$ and $\oo{-w}{\phi}\colon \o{w}{F}(B)\to J$.
We then define the \emph{$w$-th critical curve} or \emph{critical locus} to be the set $\oo{w}{\Curve}=\oo{w}{\Curve}(F)=\{\del_x\oo{w}{\phi}(x,y)=0\}$.

\subsection{The Renormalisation Operator}
Let us consider the operators $\RU$ and $\RH$ from Section~\ref{sect:1d-construction-revised}.
Observe that $\RU$ is constructed as some iterate under an affine coordinate change whereas $\RH$ uses non-affine coordinate changes
. That they are equivalent is a coincidence that we now exploit.

Our starting point is that non-trivial iterates of non-degenerate $F\in \H_\Omega$ will most likely not have the form
$(f\circ\pi_x\pm\e,\pi_x)$ after affine rescaling. Therefore, unlike the one dimensional case, we will need to perform a `straightening' via a non-affine change of coordinates. 

\begin{defn}
Let $p>1$ be an integer. A map $F\in \H_{\Omega}$ is \emph{pre-renormalisable with period $p$} if there exists a closed topological disk $\oo{0}{B}\subset B$ with $\o{p}{F}(\oo{0}{B})\subset \oo{0}{B}$.
The domain $\oo{0}{B}$ is called the \emph{central box}. The topological disks $\oo{w}{B}=\o{w}{F}(\oo{0}{B})$, $w\in W$, will be called the \emph{boxes} and the collection $\uline B=\{\oo{w}{B}\}_{w\in W}$ will be called the \emph{cycle}.
\end{defn}
\begin{defn}\label{def:henon-renormalisation}
Let $p>1$ be an integer. A map $F\in\H_{\Omega}$ is \emph{renormalisable with period $p$} if the following properties hold,
\begin{enumerate}
\item\label{property:pre-renorm}
$F$ is pre-renormalisable with period $p$;
\item\label{property:conjugacy}
there exists diffeomorphism onto its image $H\colon \oo{0}{B}\to \oo{0}{B_\diag}$, where $\oo{0}{B}$ is the pre-renormalisation domain of $F$ and $\oo{0}{B_\diag}$ is a square symmetric about the diagonal, such that
\begin{equation}
G=H\circ \o{p}{F}\circ\bar H\colon \oo{0}{B_\diag}\to\oo{0}{B_\diag}
\end{equation}
is H\'enon-like on $\oo{0}{B_\diag}$.
\end{enumerate}
The map $G$ is called the \emph{pre-renormalisation} of $F$ with respect to $H$.
By definition, there exists an affine map $\III\colon \oo{0}{B_\diag}\to B$ such that
\begin{equation}
\RH F=\III\circ G\circ \bar{\III}\colon B\to B
\end{equation}
is an element of $\H_\Omega$, where $G$ denotes the pre-renormalisation of $F$.
The map $\RH F$ is called the \emph{H\'enon-renormalisation} of $F$. We denote the space of all renormalisable maps by $\H_{\Omega,p}$.  The operator $\RH\colon
\H_{\Omega,p}\to\H_\Omega$ is called the \emph{H\'enon-renormalisation operator} or simply the \emph{renormalisation operator} on $\H_\Omega$. The
absolute value of the eigenvalues
of the linear part of $\bar\III$ (which coincide as it maps a square box to a square box) is called the \emph{scaling ratio} of $F$.
\end{defn}
\begin{rmk}
We denote the subspace of $\H_{\Omega,p}$ consisting of renormalisable maps expressible as $F=(f\pm\e,\pi_x)$, where $|\e|_\Omega<\bar\e$, by $\H_{\Omega,p}(f,\bar\e)$ and let $\H_{\Omega,p}(\bar\e)=\bigcup_{f\in\U_{\Omegax}}\H_{\Omega,p}(f,\bar\e)$ denote their union.
\end{rmk}
There are, a priori, many coordinate changes $H$ satisfying these properties. However, we now choose one canonically which has sufficient dynamical meaning.  
By analogy with the degenerate case, consider the map $H=(\oo{p-1}{\phi},\pi_y)$. As $H$ preserves horizontal lines between two preimages of the same point there lies a solution to $\del_x\oo{p-1}{\phi}=0$. The Inverse Function Theorem then tells us this is a local diffeomorphism away from the
\emph{critical locus} $\oo{p-1}{\Curve}=\{(x,y)\in B : \del_x\oo{p-1}{\phi}(x,y)=0\}$. Also, since it maps horizontal lines to horizontal lines, it must be a diffeomorphism onto its image. Hence, abusing terminology slightly, we will call this map the \emph{horizontal diffeomorphism}
associated to $F$.

Also consider the map $V=\o{p-1}{F}\circ \bar{H}\colon H(\oo{0}{B})\to \oo{p-1}{B}$. Since $\o{p-1}{F}$ is a diffeomorphism onto its image everywhere and $H$
is a diffeomorphism onto its image when restricted to $\oo{0}{B}$ we find that $V$ is also a diffeomorphism onto its image. We will call $V$ the \emph{vertical diffeomorphism}.
The reason for considering the maps $H$ and $V$ is given by the following Proposition. 
\begin{prop}\label{prop:horizdiffeo}
Let $F=(\phi,\pi_x)\in \H_{\Omega}$. Assume that, for some integer $p>1$, the following properties hold,
\begin{enumerate}
\item $\oo{0}{B}\subset B$ is a subdomain on which $\o{p}{F}$ is invariant;
\item the horizontal diffeomorphism $H=(\oo{p-1}{\phi},\pi_y)$ is a
diffeomorphism onto its image when restricted to $\oo{0}{B}$.
\end{enumerate}
Then $H\circ\o{p}{F}\circ\bar H\colon H(\oo{0}{B})\to H(\oo{0}{B})$ has the form
\begin{equation}
H\circ\o{p}{F}\circ\bar H(x,y)=(\oo{p}{\phi}\circ V(x,y),x)
\end{equation}
where $V$ is the vertical diffeomorphism described above. Moreover, the vertical diffeomorphism has the form $V(x,y)=(x,v(x,y))$ for some $v\in C^\omega (B,J)$. 
\end{prop}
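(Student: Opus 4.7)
The proof reduces to a direct computation using the forms of the iterates of $F$ expressed in terms of $\phi$ together with the defining property that $H$ preserves horizontal lines. Let me outline the plan.

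First, I would unpack the notation: since $F=(\phi,\pi_x)$, the introduction tells us that for $w>0$ we have $\o{w}{F}=(\oo{w}{\phi},\oo{w-1}{\phi})$, so in particular $\o{p}{F}(x,y)=(\oo{p}{\phi}(x,y),\oo{p-1}{\phi}(x,y))$. The horizontal diffeomorphism is $H=(\oo{p-1}{\phi},\pi_y)$, so it preserves the second coordinate; consequently its inverse has the form $\bar H(x,y)=(\eta(x,y),y)$, where $\eta$ is the unique function on $H(\oo{0}{B})$ satisfying $\oo{p-1}{\phi}(\eta(x,y),y)=x$.

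Next I would directly compute $H\circ\o{p}{F}\circ\bar H(x,y)$. Writing $(x_0,y_0)=\bar H(x,y)$, so that $y_0=y$ and $\oo{p-1}{\phi}(x_0,y_0)=x$, one has
\begin{equation}
H\circ\o{p}{F}(x_0,y_0)=(\oo{p-1}{\phi}(\oo{p}{\phi}(x_0,y_0),\oo{p-1}{\phi}(x_0,y_0)),\,\oo{p-1}{\phi}(x_0,y_0))=(\oo{2p-1}{\phi}(x_0,y_0),x),
\end{equation}
since the semigroup property $\pi_x\circ\o{p-1}{F}\circ\o{p}{F}=\oo{2p-1}{\phi}$ holds by construction. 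This already gives the correct second coordinate $x$. To recognize the first coordinate as $\oo{p}{\phi}\circ V$, I observe that $V=\o{p-1}{F}\circ\bar H$, so $\oo{p}{\phi}\circ V(x,y)=\pi_x\circ\o{p}{F}\circ\o{p-1}{F}(x_0,y_0)=\oo{2p-1}{\phi}(x_0,y_0)$, matching the expression above.

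Finally, to show $V(x,y)=(x,v(x,y))$, I compute $V(x,y)=\o{p-1}{F}(x_0,y_0)=(\oo{p-1}{\phi}(x_0,y_0),\oo{p-2}{\phi}(x_0,y_0))=(x,\oo{p-2}{\phi}(x_0,y_0))$, so setting $v(x,y)=\oo{p-2}{\phi}\circ\bar H(x,y)$ does the job, and $v\in C^\omega$ since $\bar H$ and $\oo{p-2}{\phi}$ are real-analytic on the relevant domains. There is no real obstacle here; the proposition is essentially a bookkeeping statement that exploits the fact that $H$ preserves horizontal lines while the $(p-1)$-fold iterate of $F$ contributes a leading factor of $\oo{p-1}{\phi}$ that exactly cancels with $\bar H$. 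The only thing worth emphasizing is the well-definedness of $\bar H$, which is guaranteed by the hypothesis that $H$ is a diffeomorphism onto its image on $\oo{0}{B}$ (i.e., that $\oo{0}{B}$ is bounded away from the critical locus $\oo{p-1}{\Curve}$).
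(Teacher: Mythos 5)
Your proof is correct and takes essentially the same approach as the paper: both rely on $\bar H$ preserving horizontal lines and on the semigroup structure of the iterate functions $\oo{w}{\phi}$. The paper is slightly more streamlined, factoring $H\circ\o{p}{F}\circ\bar H=(H\circ F)\circ V$ and computing each piece separately rather than passing through $\oo{2p-1}{\phi}$, but the underlying bookkeeping is identical.
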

\begin{proof}
Observe $\bar H$ has the form $\bar H=(\oo{p-1}{\bar\phi},\pi_y)$ for some $\oo{p-1}{\bar\phi}\colon H(\oo{0}{B})\to \RR$. 
Equating $\o{p-1}{F}\circ F$ with $\o{p}{F}$ implies $\oo{p-1}{\phi}(\phi,\pi_x)=\oo{p}{\phi}$. Equating $H\circ\bar H$ and $\bar H\circ H$ with the identity implies
\begin{equation}
\pi_x=\oo{p-1}{\phi}(\oo{p-1}{\bar{\phi}},\pi_y)=\oo{p-1}{\bar{\phi}}(\oo{p-1}{\phi},\pi_y).
\end{equation}
Hence, by definition of $H$ and $V$,
\begin{equation}
H\circ F=(\oo{p-1}{\phi}(\phi,\pi_x),\pi_x) =(\oo{p}{\phi},\pi_x)
\end{equation}
and
\begin{equation}
V=\o{p-1}{F}\circ\bar H=(\oo{p-1}{\phi}(\oo{p-1}{\bar\phi},\pi_y),\oo{p-2}{\phi}(\oo{p-1}{\bar\phi},\pi_y))=(\pi_x,\oo{p-2}{\phi}(\oo{p-1}{\bar\phi},\pi_y)). 
\end{equation}
Therefore, if we set $v(x,y)=\oo{p-2}{\phi}(\oo{p-1}{\bar\phi},\pi_y)$ the result is shown.
\end{proof}
We now show that maps satisfying the hypotheses of the above Proposition exist, are numerous and in fact renormalisable in the sense described above.
More precisely, we show that $\RH$ is defined on a tubular neighbourhood of $\H_{\Omega,\upsilon}(0)$ in the closure of $\H_{\Omega}$.
This is essentially a perturbative result. To do this we need the following.
\begin{prop}[variational formula of the first order]\label{prop:var-formula} 
Let $F\in\H_\Omega$ be expressible as $F=(\phi,\pi_x)$ where $\phi(x,y)=f(x)+\e(x,y)$. 
Then, for all $w\in W$,
\begin{align}
\oo{w}{\phi}(x,y)
&=\o{w}{f}(x)+\oo{w}{L}(x)+\e(x,y)(\o{w}{f})'(x)+\bigo(\bar\e^2)
\end{align}
where
\begin{align}
\oo{w}{L}(x)=
&\e(\o{w-1}{f}(x),\o{w-2}{f}(x))+\e(\o{w-2}{f}(x),\o{w-3}{f}(x))f'(\o{w-1}{f}(x)) \notag \\
&+\ldots + \e(f(x),x)\prod_{i=1}^{w-1}f'(\o{i}{f}(x))
\end{align}
\end{prop}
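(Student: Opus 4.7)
The approach is a direct induction on $w \geq 1$, using the first-order Taylor expansion of $f$ together with the smallness of $\e$. The base case $w=1$ holds on the nose from the definition $F(x,y)=(f(x)+\e(x,y),x)$: the sum defining $\oo{1}{L}$ is empty (hence $\oo{1}{L}(x)=0$) and $(\o{1}{f})'(x)=f'(x)$, so the asserted identity reduces to $\oo{1}{\phi}(x,y)=f(x)+\e(x,y)$ with no error term at all.

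For the inductive step at level $w+1$, I will use that $\o{w+1}{F}=F\circ\o{w}{F}$ together with $\o{w}{F}=(\oo{w}{\phi},\oo{w-1}{\phi})$ and the explicit form of $F$ to obtain the recursion
\begin{equation}
\oo{w+1}{\phi}(x,y)=f\bigl(\oo{w}{\phi}(x,y)\bigr)+\e\bigl(\oo{w}{\phi}(x,y),\oo{w-1}{\phi}(x,y)\bigr).
\end{equation}
By the inductive hypothesis, both $\oo{w}{\phi}(x,y)$ and $\oo{w-1}{\phi}(x,y)$ differ from their degenerate counterparts $\o{w}{f}(x)$ and $\o{w-1}{f}(x)$ by $\bigo(\bar\e)$. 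Taylor expanding $f$ about $\o{w}{f}(x)$ and invoking the chain rule $(\o{w+1}{f})'(x)=f'(\o{w}{f}(x))(\o{w}{f})'(x)$ yields
\begin{equation}
f\bigl(\oo{w}{\phi}(x,y)\bigr)=\o{w+1}{f}(x)+f'(\o{w}{f}(x))\,\oo{w}{L}(x)+\e(x,y)(\o{w+1}{f})'(x)+\bigo(\bar\e^2).
\end{equation}
Since $\e$ is itself of order $\bar\e$ and its arguments are perturbed by $\bigo(\bar\e)$, the second summand in the recursion simplifies to $\e(\o{w}{f}(x),\o{w-1}{f}(x))+\bigo(\bar\e^2)$.

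Adding the two contributions produces the one-step recursion
\begin{equation}
\oo{w+1}{L}(x)=\e(\o{w}{f}(x),\o{w-1}{f}(x))+f'(\o{w}{f}(x))\,\oo{w}{L}(x),
\end{equation}
which, unrolled with the base case, reproduces the telescoping closed-form expression claimed for $\oo{w+1}{L}$. There is no substantive obstacle here; the verification is essentially bookkeeping of the chain-rule cascade of factors $f'(\o{i}{f}(x))$. The only care needed is to ensure the remainder $\bigo(\bar\e^2)$ is uniform in $(x,y)\in B$, which follows from the uniform boundedness of $f$ and $\e$ on the compact polydisk $\Omega$ together with Cauchy estimates providing uniform control of $f''$ and of the partial derivatives of $\e$.
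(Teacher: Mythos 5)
Your approach is the same as the paper's: induction on $w$ using the recursion $\oo{w+1}{\phi}=f\circ\oo{w}{\phi}+\e(\oo{w}{\phi},\oo{w-1}{\phi})$, Taylor expansion of $f$, and the fact that $\e$ and all its derivatives are $\bigo(\bar\e)$. Your inductive step is a faithful version of the paper's (the paper goes from $w-1,w-2$ to $w$; you go from $w,w-1$ to $w+1$), and the recursion $\oo{w+1}{L}(x)=\e(\o{w}{f}(x),\o{w-1}{f}(x))+f'(\o{w}{f}(x))\,\oo{w}{L}(x)$ you derive is correct.

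However, the base case you assert does not actually hold with the coefficient as stated in the proposition. For $w=1$, $\oo{1}{L}=0$ and $(\o{1}{f})'(x)=f'(x)$, so the formula claims $\oo{1}{\phi}(x,y)=f(x)+\e(x,y)f'(x)+\bigo(\bar\e^2)$; the true value $\oo{1}{\phi}(x,y)=f(x)+\e(x,y)$ differs from this by $\e(x,y)\bigl(1-f'(x)\bigr)$, which is $\bigo(\bar\e)$, not $\bigo(\bar\e^2)$. So your claim that ``the asserted identity reduces to $\oo{1}{\phi}=f+\e$ with no error term at all'' is not what the formula gives. This is not a flaw in your method so much as a discrepancy in the proposition itself: unrolling the (correct) recursion from $\oo{1}{L}=0$ shows the coefficient of $\e(x,y)$ should be $\prod_{i=1}^{w-1}f'(\o{i}{f}(x))$, whereas $(\o{w}{f})'(x)=\prod_{i=0}^{w-1}f'(\o{i}{f}(x))$ carries an extra factor of $f'(x)$, and the last summand of $\oo{w}{L}$ should likewise have $\prod_{i=2}^{w-1}$ rather than $\prod_{i=1}^{w-1}$. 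The paper's own proof never verifies a base case and so does not surface this; your write-up, by attempting one, is the right instinct, but the verification should have flagged the inconsistency rather than passing it as exact.
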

\begin{proof}
We proceed by induction. Assume this holds for all integers $0<i<w$ and let $\oo{w}{L}(x)$ be as above.
Then
\begin{align}
\oo{w}{\phi}(x,y)
&=\phi(\oo{w-1}{\phi}(x,y),\oo{w-2}{\phi}(x,y)) \notag \\
&=f(\oo{w-1}{\phi}(x,y))+\e(\oo{w-1}{\phi}(x,y),\oo{w-2}{\phi}(x,y))
\end{align}
but observe, by Taylors' Theorem,
\begin{align}
f(\oo{w-1}{\phi}(x,y))
&=f(\o{w-1}{f}(x)+\oo{w-1}{L}(x)+\e(x,y)(\o{w-1}{f})'(x)+\bigo(\bar\e^2)) \notag \\
&=\o{w}{f}(x)+f'(\o{w-1}{f}(x))\oo{w-1}{L}(x)+\e(x,y)(\o{w}{f})'(x)+\bigo(\bar\e^2)
\end{align}
\begin{landscape}
\begin{figure}[htp]
\centering
\psfrag{j0}{$\oo{0}{J}$}
\psfrag{j1}{$\oo{1}{J}$}
\psfrag{j2}{$\oo{2}{J}$}
\psfrag{b0}{$\oo{0}{B}$}
\psfrag{b1}{$\oo{1}{B}$}
\psfrag{b2}{$\oo{2}{B}$}
\psfrag{b0diag}{$\oo{0}{B}_\diag$}
\psfrag{b1diag}{$\oo{1}{B}_\diag$}
\psfrag{b2diag}{$\oo{2}{B}_\diag$}
\psfrag{H}{$H$}
\psfrag{Hi}{$\bar H$}

\psfrag{I}{$I$}
\psfrag{line0}{$Im(F) \ \mbox{or} \ Im(\RH F)$}
\psfrag{line1}{$Im(G)$}
\psfrag{line2}{the strip $S$}
\psfrag{line3}{the renormalisation cycle}

\includegraphics[scale=0.385]{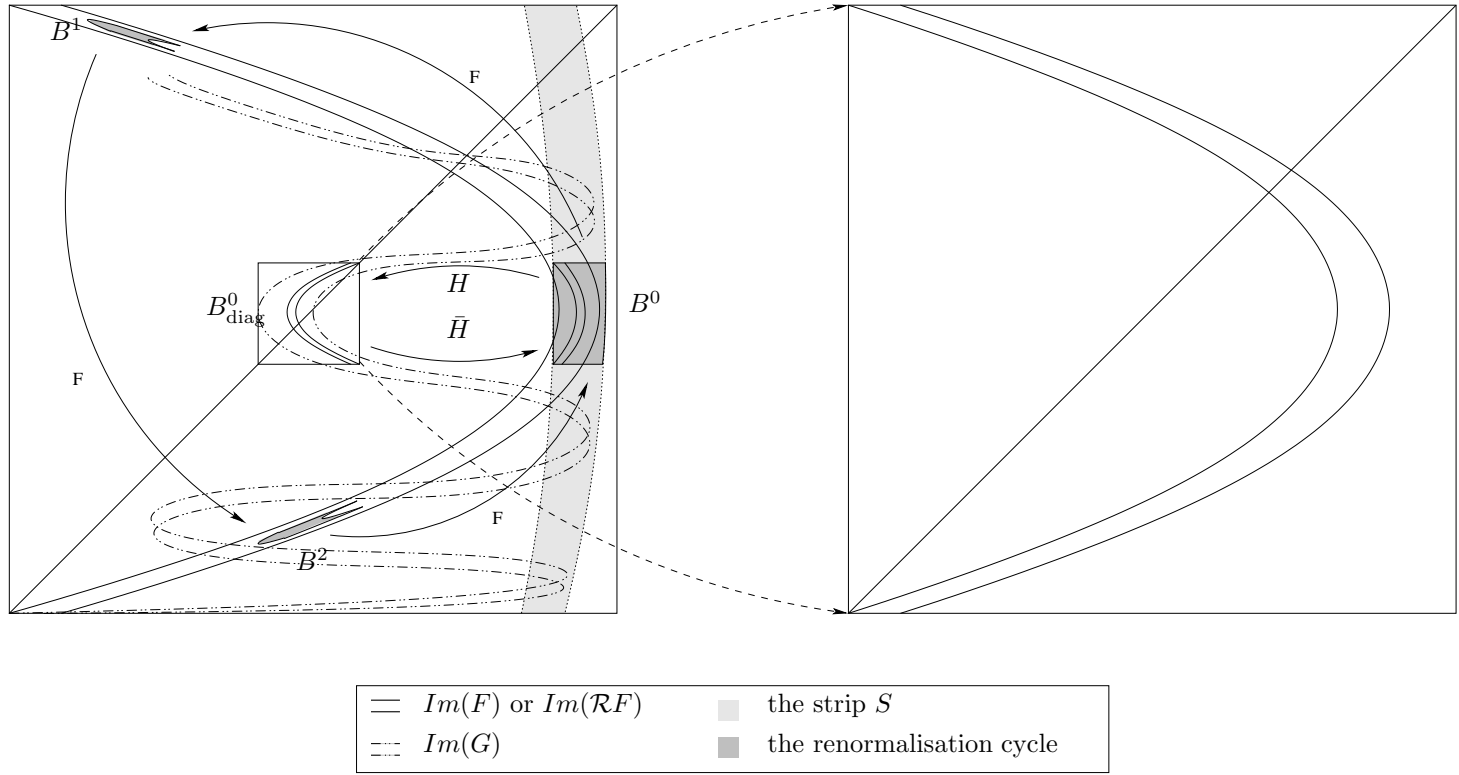}
\vspace*{0.1cm}
\caption{A renormalisable H\'enon-like map which is a small perturbation of a degenerate H\'enon-like map. In this case the combinatorial type is period tripling. Here the lightly shaded region is the preimage of the vertical strip through $\oo{0}{B}_\diag$. The dashed lines represent the image of the square $B$ under the pre-renormalisation $G$. If the order of all the critical points of $\o{2}{f}$ is the same it can be shown that $G$ can be extended to an embedding on the whole of $B$, giving the picture above.}\label{fig:renorm-henon}
\end{figure}
\end{landscape}

and
\begin{align}
\e(\oo{w-1}{\phi}(x,y),\oo{w-2}{\phi}(x,y))
&=\e((\o{w-1}{f}(x),\o{w-2}{f}(x))+\bigo(\bar\e)) \notag \\
&=\e(\o{w-1}{f}(x),\o{w-2}{f}(x))+\bigo(\bar\e^2),
\end{align}
where we have used, since $\e$ is analytic, that all derivatives of $\e$ are of the order $\bar\e$. Combining these gives us the result.
\end{proof}
\begin{prop}\label{prop:henon-perturb-invariantdomain}
Let $p>1$ be an integer. Let $F\in\H_{\Omega}$, let $\oo{0}{B}\subset B$ be a pre-renormalisation domain of period $p$ and let $G$ be its pre-renormalisation. Assume
\begin{itemize}
\item $\pi_xG(\oo{0}{B}_\diag)\subsetneq\pi_x(\oo{0}{B}_\diag)$;
\item $G$ is H\'enon-like on $\oo{0}{B}_\diag$.
\end{itemize} 
Then there exists a neighbourhood $\mathcal N\subset \H_{\Omega}$ of $F$ such that $\tilde F\in \mathcal N$ implies 
\begin{enumerate}
\item $\tilde F$ has a pre-renormalisation domain with the same properties; 
\item there exists a constant $C>0$, depending upon $f$ only, such that
\begin{equation}
\dist_{Haus}(\oo{0}{B}_\diag,\oo{0}{\tilde B}_\diag)<C|F-\tilde F|_{\Omega};
\end{equation}
and
\begin{equation}
\dist_{Haus}(\oo{0}{\Omega}_\diag,\oo{0}{\tilde \Omega}_\diag)<C|F-\tilde F|_{\Omega};
\end{equation}
\end{enumerate}
\end{prop}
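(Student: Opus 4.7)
The plan is a perturbation argument based on two observations: the pre-renormalisation box is anchored by a hyperbolic periodic orbit that persists under small perturbations, and the hypotheses defining renormalisability are open conditions. First I would analyse the structure of $\oo{0}{B}$. Because $G$ is H\'enon-like on $\oo{0}{B}_\diag$ with $\pi_x G(\oo{0}{B}_\diag) \subsetneq \pi_x(\oo{0}{B}_\diag)$, the box $\oo{0}{B}$ contains a hyperbolic $p$-periodic saddle of $F$ whose boundary arcs are pinned by this periodic point together with finitely many of its preimages, in analogy with the degenerate case where $\del \oo{0}{J}$ consists of a periodic point and a preimage (both expanding by the definition of $\U_{\Omegax,\beta}$). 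The implicit function theorem then yields a neighbourhood $\mathcal{N}$ of $F$ in $\H_{\Omega}$ on which these periodic points and preimages persist as real-analytic functions of $\tilde F$, with Lipschitz constants depending only on the hyperbolicity data at $F$ and on Cauchy bounds on $\Omega$.

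Second, I would define $\oo{0}{\tilde B}$ to be the topological disk bounded by the continuations of the boundary arcs of $\oo{0}{B}$, and verify the two hypotheses persist for $\tilde F$. The H\'enon-like condition on $\tilde G = \tilde H \circ \o{p}{\tilde F} \circ \bar{\tilde H}$ follows from Proposition~\ref{prop:horizdiffeo}, provided $\tilde H = (\oo{p-1}{\tilde\phi}, \pi_y)$ remains a diffeomorphism on $\oo{0}{\tilde B}$; this is an open condition since $\del_x \oo{p-1}{\phi}$ is bounded away from zero on a neighbourhood of $\overline{\oo{0}{B}}$, and Proposition~\ref{prop:var-formula} gives sup-norm control of $\del_x \oo{p-1}{\tilde \phi} - \del_x \oo{p-1}{\phi}$ in terms of $|F - \tilde F|_\Omega$. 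The strict inclusion $\pi_x \tilde G(\oo{0}{\tilde B}_\diag) \subsetneq \pi_x(\oo{0}{\tilde B}_\diag)$ is then immediate by continuity and compactness of the image.

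Third, to obtain the Hausdorff estimates, I would combine the Lipschitz estimate on the periodic points and preimages with derivative bounds on the boundary arcs. Since each boundary arc is a short invariant or pre-invariant curve through one of these orbits and the derivatives of $\tilde F$ on $\Omega$ are controlled by Cauchy estimates in terms of $|F - \tilde F|_\Omega$, one gets $\dist_{Haus}(\oo{0}{B}, \oo{0}{\tilde B}) \leq C|F - \tilde F|_\Omega$. Postcomposing with the horizontal diffeomorphism, which is sup-norm close to $\tilde H$ by Proposition~\ref{prop:var-formula}, yields the analogous bound on $\oo{0}{B}_\diag$; the complex-domain statement for $\oo{0}{\Omega}_\diag$ runs verbatim using holomorphic extensions, since everything is set up analytically on $\Omega$.

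The main obstacle I anticipate is pinning down precisely how $\oo{0}{B}$ is constructed from the hyperbolic periodic data so that the Lipschitz dependence is genuinely canonical. Definition~\ref{def:henon-renormalisation} only requires $\oo{0}{B}$ to be some closed topological disk with $\o{p}{F}$-invariance, leaving ambiguity in how to thicken the unimodal picture vertically; a careful treatment needs to fix a specific construction (for instance, a rectangle whose sides are determined by the continuations of the corners of a rectangle around the periodic orbit) and verify that the chosen rule preserves both the H\'enon-like form of $\tilde G$ and the strict horizontal inclusion under perturbation, without introducing spurious intersections with the critical locus $\oo{p-1}{\Curve}(\tilde F)$.
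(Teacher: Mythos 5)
Your approach is essentially the one the paper takes: the pre-renormalisation box is anchored by a hyperbolic $p$-periodic point (equivalently, a fixed point of the pre-renormalisation $G$) together with a preimage, the defining conditions are open, and Cauchy estimates give the Lipschitz dependence. The canonicality concern you raise at the end is exactly the point the paper settles, and it does so by constructing the perturbed box \emph{directly in the straightened coordinates} rather than first building $\oo{0}{\tilde B}$ and then straightening. Writing $G=(\varphi,\pi_x)$ and letting $g_{\pm}(x)=\varphi(x,\del^{\pm}\oo{0}{J})$ be the bounding curves of the image of $G$, the boundary of $\oo{0}{J}$ consists of the expanding fixed point $\alpha$ of $g_{-}$ together with a preimage $\beta$ at which $g_{-}'$ is nonzero. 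Both persist, so one sets $\oo{0}{\tilde J}=[\tilde\alpha,\tilde\beta]$ and $\oo{0}{\tilde B}_{\diag}=\oo{0}{\tilde J}\times\oo{0}{\tilde J}$, a literal square; the strict inclusion $\pi_x\tilde G(\oo{0}{\tilde B}_{\diag})\subsetneq\pi_x(\oo{0}{\tilde B}_{\diag})$ then amounts to the critical value of $\tilde g_{+}(x)=\tilde\varphi(x,\tilde\beta)$ lying in $\interior(\oo{0}{\tilde J})$, which is again an open condition. The box in the original coordinates is recovered as $\oo{0}{\tilde B}=\bar{\tilde H}(\oo{0}{\tilde B}_{\diag})$. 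This removes the vertical-thickening ambiguity in Definition~\ref{def:henon-renormalisation} that you worried about, and the Hausdorff estimate then follows from the Lipschitz dependence of $\tilde\alpha$, $\tilde\beta$ and $\tilde H$ on $|F-\tilde F|_{\Omega}$, just as you anticipated.
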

\begin{proof}
Given $F=(\phi,\pi_x)$ satisfying our hypotheses let $H$ denote its horizontal diffeomorphism and $V$ its vertical diffeomorphism.
Let $G=(\varphi,\pi_x)$ denote its pre-renormalisation. Let $\oo{0}{B_\diag}=\oo{0}{J}\times\oo{0}{J}$.
Let $g_{\pm}(x)=\varphi(x,\del^{\pm}\oo{0}{J})$ be the two bounding curves of the image of $G$.

Similarly, given $\tilde F=(\tilde\phi,\pi_x)\in \H_{\Omega}$ let $\tilde H$ denote its horizontal diffeomorphism and $\tilde V$  its vertical diffeomorphism.
Let $\tilde G=(\tilde\varphi,\pi_x)$ denote its pre-renormalisation. These all vary continuously with $\tilde F$.

Observe that $G$ has a fixed point $(\alpha,\alpha)\in\del\oo{0}{B}_\diag$. 
Observe also that $\alpha\in\del\oo{0}{J}$ is a fixed point for $g_{-}$ which, by assumption, is expanding.
Let $\beta\in\del\oo{0}{J}$ be the other boundary component. Then $\beta$ is a preimage of $\alpha$ under $g_{-}$ and has non-zero derivative.  
The image of the horizontal line through $(\alpha,\alpha)$ intersects the diagonal $\{x=y\}$ tranversely at $(\alpha,\alpha)$. 
These properties are all open conditions.
Hence there exists a neighbourhood $\mathcal N_0\subset \H_\Omega$ of $F$ such that $\tilde F\in \mathcal N_0$ implies $\tilde F$ also has these properties once we set $\tilde
g_{-}(x)=\tilde\varphi(x,\tilde\alpha)$. If we let $\oo{0}{\tilde J}=[\tilde\alpha,\tilde\beta]$ then it is clear $\tilde{g}_{-}$ is unimodal on $\oo{0}{\tilde J}$.

Now let $\oo{0}{\tilde B}_\diag=\oo{0}{\tilde J}\times\oo{0}{\tilde J}$ and $g_{+}(x)=\tilde\varphi(x,\tilde\beta)$. Since $\pi_x(G(\oo{0}{B}_\diag))\subsetneq
\pi_x(\oo{0}{B}_\diag)$, the critical value of $g_{+}$ lies in $\interior(\oo{0}{J})$. Since the critical value of $g_{+}$ and $\del\oo{0}{J}$ depend continuously
on $F$, there exists a neighbourhood $\mathcal N_1\subset \mathcal N_0$ such that $\tilde F\in \mathcal N_1$ implies the critical value of $\tilde g_{+}$ lies in $\interior(\oo{0}{J})$. Hence
$\oo{0}{\tilde B}_\diag$ is $\tilde G$-invariant and $\pi_x(\tilde G(\oo{0}{\tilde B}_\diag))\subset\pi_x(\oo{0}{\tilde B}_\diag)$.

For the second statement observe the horizontal diffeomorphisms $H$ and $\tilde H$ will map diffeomorphically onto $\oo{0}{B}_\diag$ and $\oo{0}{\tilde B}_\diag$. Hence the Hausdorff distance will only depend on the distance between $\oo{0}{B}$ and $\oo{0}{\tilde B}$ and on the distance between $H$ and $\tilde H$. Both of these, in turn depend on $|F-\tilde F|_\Omega$.  Finally, the existence of the affine bijection $\tilde I\colon \oo{0}{\tilde B}_\diag\to B$ is clear.  
\end{proof}
\begin{prop}\label{prop:henon-perturb-renormalisation}
Let $p>1$ be an integer.
Let $F\in\H_{\Omega,p}$ be renormalisable of combinatorial type $p$. Let $\oo{0}{B}\subset B$ be the pre-renormalisation domain of period $p$ and let $G$ be its pre-renormalisation. Assume
\begin{itemize}
\item $\pi_xG(\oo{0}{B}_\diag)\subsetneq\pi_x(\oo{0}{B}_\diag)$;
\item $G$ is H\'enon-like on $\oo{0}{B}_\diag$.
\end{itemize}
Then there exists a neighbourhood $\mathcal N\subset \H_\Omega$ of $F$ and a constant $C>0$, depending upon $F$ only, such that $F\in \mathcal N$ implies
\begin{enumerate}
\item
$\tilde F$ is $p$-renormalisable with the same properties;
\item
there exists a constant $C>0$, depending upon $f$ only, such that
\begin{equation}
|\RH F-\RH\tilde F|_{\Omega}<C|F-\tilde F|_{\Omega}.
\end{equation}
\end{enumerate}
\end{prop}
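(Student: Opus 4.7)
The plan is to extract the first claim from the preceding Proposition and then expand $\RH F - \RH\tilde F$ as a telescoping sum along the composition $\III\circ H\circ \o{p}{F}\circ \bar H\circ\bar\III$, controlling each factor by $|F-\tilde F|_\Omega$.

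First, for $(i)$, I invoke the previous Proposition to obtain a neighbourhood $\mathcal N\subset\H_\Omega$ of $F$ on which every $\tilde F$ has a pre-renormalisation domain $\oo{0}{\tilde B}_\diag$ whose Hausdorff distance from $\oo{0}{B}_\diag$ is $\bigo(|F-\tilde F|_\Omega)$, and on which $\tilde G$ is H\'enon-like. By Proposition~\ref{prop:horizdiffeo} the pre-renormalisation $\tilde G$ is H\'enon-like on $\oo{0}{\tilde B}_\diag$ and the affine bijection $\tilde\III\colon\oo{0}{\tilde B}_\diag\to B$ is well defined, so $\tilde F\in\H_{\Omega,p}$.

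For $(ii)$, shrinking $\mathcal N$ if necessary, the horizontal diffeomorphisms $H=(\oo{p-1}{\phi},\pi_y)$ and $\tilde H=(\oo{p-1}{\tilde\phi},\pi_y)$ extend as diffeomorphisms to a common complex neighbourhood of $\oo{0}{B}$ with $|\det DH|$ bounded away from zero uniformly. Since $\oo{p-1}{\phi}$ is a composition of $p-1$ coordinates of $F$, the chain rule gives
\begin{equation}
|H-\tilde H|_\Omega\leq C_1|F-\tilde F|_\Omega,\quad |\o{p}{F}-\o{p}{\tilde F}|_\Omega\leq C_2|F-\tilde F|_\Omega,
\end{equation}
and the uniform non-degeneracy of $DH$ together with the Cauchy estimates on $\Omega$ yields the corresponding bound for the inverses, $|\bar H-\bar{\tilde H}|_\Omega\leq C_3|F-\tilde F|_\Omega$, on a common complex neighbourhood of $\oo{0}{B}_\diag$ provided by the Hausdorff estimate of the previous Proposition. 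The same Hausdorff estimate also gives $|\III-\tilde\III|_\Omega\leq C_4|F-\tilde F|_\Omega$ as bounded affine rescalings of nearby square boxes.

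Putting the pieces together, I expand
\begin{equation}
\RH F-\RH\tilde F=\III\circ H\circ\o{p}{F}\circ \bar H\circ\bar\III-\tilde\III\circ \tilde H\circ\o{p}{\tilde F}\circ \bar{\tilde H}\circ\bar{\tilde\III}
\end{equation}
as a telescoping sum of five terms in which a single factor is replaced by the tilded one at a time. Each summand is estimated in sup-norm on $B$ by the product of uniform Lipschitz/sup-norm bounds for the unchanged factors (which are locally uniform on $\mathcal N$ by shrinking if needed) and the sup-norm difference of the factor being swapped. The latter is $\bigo(|F-\tilde F|_\Omega)$ by the four estimates above. This yields $|\RH F-\RH\tilde F|_\Omega\leq C|F-\tilde F|_\Omega$ for some $C=C(F)>0$, which is the claim. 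The main obstacle is making the estimate for $|\bar H-\bar{\tilde H}|$ and the telescoping compositions live on a common complex domain containing $\oo{0}{B}_\diag$; this is handled by combining the uniform non-degeneracy of $DH$ with the Hausdorff control on the diagonal boxes from the previous Proposition, shrinking $\mathcal N$ and $\Omega$ slightly if necessary.
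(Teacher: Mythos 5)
Your proof is correct and follows essentially the same strategy as the paper: both rely on Proposition~\ref{prop:henon-perturb-invariantdomain} for the domain control and then on the continuous/Lipschitz dependence of $\tilde H$, $\oo{0}{\tilde B}_\diag$ and $\tilde\III$ on $\tilde F$, which you make explicit via a telescoping decomposition of $\III\circ H\circ\o{p}{F}\circ\bar H\circ\bar\III$. The paper phrases the key point for (i) as the persistence of the separation between $\oo{0}{B}_\diag$ and the critical locus $\oo{p-1}{\Curve}$, which in your version is subsumed into the assertion that $DH$, $D\tilde H$ are uniformly nondegenerate on a common complex neighbourhood; note that Proposition~\ref{prop:horizdiffeo} alone only gives the form of the pre-renormalisation once $\tilde H$ is known to be a diffeomorphism, so the H\'enon-like conclusion really comes from Proposition~\ref{prop:henon-perturb-invariantdomain} together with that separation.
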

\begin{proof}
Given $F\in \H_{\Omega,p}$ let $\oo{0}{B}$ denote its pre-renormalisation domain and $H$ denote its horizontal diffeomorphism. For each $w\in W$ let $\oo{w}{B}=\o{w}{F}(\oo{0}{B})$. 
Let $\mathcal N_0$ denote the neighbourhood of $F$ from Proposition~\ref{prop:henon-perturb-invariantdomain}. 
Given $\tilde F\in \mathcal N_0$  let $\oo{0}{\tilde B}$ denote its pre-renormalisation domain, let $\tilde H$ denote the horizontal diffeomorphism and for each $w\in W$ let $\oo{w}{\tilde B}=\o{w}{\tilde F}(\tilde H(\oo{0}{\tilde B}))$. 
Let $\tilde\Curve^{p-1}$ denote its critical curve.

For the first assertion observe the set $\tilde\Curve^{p-1}$ and the domain $\oo{0}{\tilde B}$ vary continuously with $\tilde F$. As $\Curve^{p-1}$ and the domain 
$\oo{0}{B}_\diag$ are separated by some distance $\gamma$, there is a neighbourhood $\mathcal N_1\subset \mathcal N_0$ of $F$ such that $\tilde F\in \mathcal N_1$ implies $\tilde\Curve^{p-1}$ and $\oo{0}{\tilde B}_\diag$
are separated by a distance of $\gamma/2$ or greater.

For the second assertion observe that $\tilde H$ and $\oo{0}{\tilde B}$ vary continuously with $\tilde F$. Hence $\oo{0}{\tilde B}_\diag$, and therefore $\tilde I$, will also vary continuously with $\tilde F$. As $\oo{0}{B}$ is bounded away from $\tilde\Curve^{p-1}$ for $\bar\e>0$ sufficiently small the result follows.
\end{proof}
The previous two results are quite general, in that they deal with perturbations of any renormalisable H\'enon-like map, not just perturbations of the degenerate maps. However, now we turn our attention to this particular case.
\begin{cor}\label{cor:henon-perturbedunimodal}
Let $\perm$ be a unimodal permutation of length $p>1$.
Let $F=\uline{\i}(f)\in \H_{\Omegax,\perm}(0)$.
Then there exist constants $C, \bar\e_f>0$ and a domain $\Omega'\subset\CC^2$ such that  for any $0<\bar\e<\bar\e_f$,  $\tilde F\in \H_{\Omega}(f,\bar\e)$ implies: 
\begin{enumerate}
\item  $\tilde F\in \H_{\Omega,p}(f,\bar\e)$;
\item $\RH \tilde F\in \H_{\Omega'}(C\bar\e^p)$.
\end{enumerate}
\end{cor}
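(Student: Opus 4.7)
The plan is to establish the two assertions in turn. For the first, I would apply Proposition \ref{prop:henon-perturb-renormalisation} directly to $F = \uline{\i}(f)$. Its hypotheses hold: since $f \in \U_{\Omegax,\perm}$ is renormalisable of type $\perm$, we have $\o{p}{f}(\oo{0}{J}) \subsetneq \oo{0}{J}$, and consequently $\pi_x G(\oo{0}{B_\diag}) \subsetneq \pi_x(\oo{0}{B_\diag})$; moreover the pre-renormalisation $G = \uline{\i}(\RU f)$ is trivially H\'enon-like on $\oo{0}{B_\diag}$. The Proposition then furnishes a neighbourhood $\mathcal N \subset \H_\Omega$ of $F$ on which $\RH$ is defined and Lipschitz-continuous, and choosing $\bar\e_f > 0$ sufficiently small ensures $\H_\Omega(f, \bar\e_f) \subset \mathcal N$.

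For the second assertion, the strategy is to reduce the sup-norm bound on $\e_1$ to a product of Jacobians of $\tilde F$, each of order $\bar\e$ by Cauchy's estimate. Writing $\RH \tilde F$ in its canonical H\'enon-like parametrisation $(\phi_1, \pi_x)$ with $\phi_1 = f_1 \pm \e_1$ and $\e_1(x, 0) = 0$, direct expansion of the determinant of $D\RH \tilde F$ gives $|\jac{\RH \tilde F}{z}| = |(\e_1)_y(z)|$. Integrating in $y$ from $0$ on a bounded complex domain $\Omega'$ then yields
\begin{equation}
|\e_1|_{\Omega'} \leq C_0 \sup_{z \in \Omega'} |\jac{\RH \tilde F}{z}|,
\end{equation}
where $C_0$ bounds the diameter of $\Omega'$ in the $y$-direction. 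By construction $\RH \tilde F = \III \circ H \circ \o{p}{\tilde F} \circ \bar H \circ \bar{\III}$, and since $\III$ is a scalar affine scaling the Jacobian contributions of $\III$ and $\bar{\III}$ cancel. The chain rule then gives
\begin{equation}
|\jac{\RH \tilde F}{z}| = \frac{|\jac{H}{\zeta_1}|}{|\jac{H}{\zeta_0}|} \cdot |\jac{\o{p}{\tilde F}}{\zeta_0}|,
\end{equation}
where $\zeta_0 = \bar H \bar{\III}(z)$ and $\zeta_1 = \o{p}{\tilde F}(\zeta_0)$. The distortion prefactor is bounded by a constant $C_1$ depending only on $F$, because $\oo{0}{B}$ sits at positive distance from the critical locus $\oo{p-1}{\Curve}$ of the degenerate map, and both the locus and the box vary continuously in $\tilde F$. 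Finally, writing $\tilde F = (f \pm \tilde{\e}, \pi_x)$ gives $|\jac{\tilde F}{z}| = |\tilde{\e}_y(z)|$, and Cauchy's estimate on a suitable sub-polydisk bounds $|\tilde{\e}_y|_{\Omega'} \leq C_2 \bar\e$, so that
\begin{equation}
|\jac{\o{p}{\tilde F}}{\zeta_0}| = \prod_{i=0}^{p-1} |\jac{\tilde F}{\o{i}{\tilde F}(\zeta_0)}| \leq (C_2 \bar\e)^p.
\end{equation}
Combining these estimates yields $|\e_1|_{\Omega'} \leq C \bar\e^p$ with $C = C_0 C_1 C_2^p$, as required.

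The principal technical obstacle will be a careful nesting of complex domains. I will need to choose $\Omega'$ strictly inside an intermediate domain $\Omega'' \cc \Omega$ so that: (i) Cauchy's estimate furnishes a uniform constant $C_2$; (ii) the horizontal diffeomorphism $H$ extends holomorphically to $\Omega''$ with distortion bounded uniformly in $\tilde F$; and (iii) the forward $\tilde F$-orbit of every point in $\bar H \bar{\III}(\Omega')$ remains inside $\Omega''$ for all $p$ iterations. Point (iii) is the most delicate but follows from the corresponding containment $\o{i}{f}(\oo{0}{\Omegax}) \cc \Omegax$ enjoyed by the degenerate map, together with continuity of finite orbits under perturbations of size at most $\bar\e$.
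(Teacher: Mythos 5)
Your argument for both assertions follows essentially the same route as the paper's own proof: the first part invokes Proposition \ref{prop:henon-perturb-renormalisation} at $F = \uline{\i}(f)$ and shrinks $\bar\e_f$, and the second factors $\jac{\RH\tilde F}{z}$ into the $\bigo(\bar\e^p)$ Jacobian of $\o{p}{\tilde F}$ times a distortion factor of the horizontal diffeomorphism bounded via the distance from $\oo{0}{B}$ to $\oo{p-1}{\Curve}$, then integrates the $y$-derivative using $\e_1(x,0)=0$. The only cosmetic point is that in the formula for $\RH\tilde F$ you should write $\tilde H$ and $\tilde\III$ for the horizontal diffeomorphism and rescaling of $\tilde F$ (the paper establishes these are $\bigo(\bar\e)$-close to $H$ and $\III$ via the variational formula, which is implicit in your continuity appeal), but this does not affect the substance of the argument.
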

\begin{notn}
Given a unimodal permutation $\perm$ of length $p>1$ let 
\begin{equation}
\H_{\Omega,\perm}=\bigcup_{f\in \U_{\Omegax,\perm}}\H_{\Omega,p}(f,\bar\e_f).
\end{equation}  
When restricting to $\bar\e$-thickenings we will use the notation $\H_{\Omega,\perm}(\bar\e)$.
\end{notn}
\begin{proof}
The first property follows from Proposition~\ref{prop:henon-perturb-renormalisation}. We now show the second property. Let $F\in \H_{\Omega,p}(0)$ be as above and let $\tilde F\H_{\Omega,\perm}$ denote the thickening of $F$ by $\e\in\T_\Omega(\bar\e)$. Let $H$ and $\tilde H$ denote their respective horizontal diffeomorphisms. Let 
\begin{equation}
G(x,y)=(g(x)\pm \delta(x,y),x), \quad \tilde G(x,y)=(\tilde g(x)\pm\tilde\delta(x,y),x)
\end{equation}
denote their respective pre-renormalisations. To simplify notation we only consider the orientation preserving case. The orientation reversing case is the same. Observe
\begin{equation}
\del_y\delta(x,y)=\jac{G}{(x,y)}=\jac{\o{p}{F}}{\bar H(x,y)}\frac{\jac{H}{\o{p}{F}(\bar H(x,y))}}{\jac{H}{\bar H(x,y)}}
\end{equation}
and
\begin{equation}
\del_y\tilde\delta(x,y)=\jac{\tilde G}{(x,y)}=\jac{\o{p}{\tilde F}}{\bar{\tilde H}(x,y)}\frac{\jac{\tilde H}{\o{p}{\tilde F}(\bar{\tilde H}(x,y))}}{\jac{\tilde H}{\bar{\tilde H}(x,y)}}.
\end{equation}
Now observe that $|\jac{\o{p}{F}}{\bar H(x,y)}|_\Omega=0$ and $|\jac{\o{p}{\tilde F}}{\bar \tilde H(x,y)}|_\Omega\leq |\e|_{\Omega}^p$. Next recall $\jac{H}{(x,y)}=\del_x\oo{p-1}{\phi}(x,y)$, so by the Variational Formula in Proposition~\ref{prop:var-formula} there is a constant $C_0>0$ such that, for $|\e|_\Omega$ sufficiently small,
\begin{equation}
\left|\frac{\jac{H}{\o{p}{F}(\bar H(x,y))}}{\jac{H}{\bar H(x,y)}}-\frac{\jac{\tilde H}{\o{p}{\tilde F}(\bar{\tilde H}(x,y))}}{\jac{\tilde H}{\bar{\tilde H}(x,y)}}\right|\leq C_0|\e|_{\Omega}.
\end{equation}
Since $f$ is renormalisable, $\o{p-1}{f}$ is a diffeomorphism on $\oo{1}{J}$. Therefore
\begin{equation}
\left|\frac{\jac{H}{\o{p}{F}(\bar{H}(x,y))}}{\jac{H}{\bar H(x,y)}}\right|
\leq \exp(\dis{H}{\oo{0}{B}_\diag})
\leq \exp(\dis{\o{p-1}{f}}{\oo{1}{J}})
\end{equation}
is bounded and we find there exists a constant $C_1>0$ such that, for $|\e|_{\Omega}$ sufficiently small,
\begin{equation}
\left|\frac{\jac{\tilde{H}}{\o{p}{\tilde{F}}(\bar{\tilde{H}}(x,y))}}{\jac{\tilde{H}}{\bar{\tilde H}(x,y)}}\right|<C_1.
\end{equation}
Hence $|\del_y\tilde\delta(x,y)|<C_1|\e|_{\Omega}^p$. By construction the renormalisation, $\tilde{F}_1$, of $\tilde F$ has parametrisation $(\tilde f_1,\tilde \e_1)$ which is an affine rescaling of $(\tilde g, \tilde \delta)$. There exists a constant $C_2>0$ such that the affine rescaling has scaling ratio $\sigma+C_2|\e|_\Omega$, where $\sigma$ is the scaling ratio for $F$. This implies there exists a constant $C_3>0$ such that $|\del_y\tilde\e_1|_{\Omega'}\leq C_3|\e|_\Omega^p$. Moreover, $\tilde\e_1$ satisfies $\tilde\e_1(x,0)=0$ by construction. Therefore $|\tilde\e_1|_{\Omega'}\leq C_3|\e|_\Omega^p$ and the result is shown.
\end{proof}

\begin{thm}\label{thm:R-construction}
Let $\perm$ be a unimodal permutation of length $p>1$.
Then there are constants $C,\bar\e_0>0$ and a domain $\Omega'\subset\CC$, depending upon $\upsilon$ and $\Omega$, such that the following holds: 
for any $0<\bar\e<\bar\e_0$ there is a subspace $\H_{\Omega,\upsilon}(\bar\e)\subset\H_{\Omega}(\bar\e)$ containing $\H_{\Omega,\perm}(0)$ and a dynamically defined continuous operator, 
\begin{equation}
\RH\colon \H_{\Omega,\upsilon}(\bar\e)\to \H_{\tilde\Omega}(C\bar\e^p),
\end{equation}
which extends continuously to $\RH$ on $\H_{\Omega,\perm}(0)$. Moreover $\bar\e_0>0$ can be chosen so that 
\begin{equation}
\RH\colon \H_{\Omega,\upsilon}(\bar\e)\to \H_{\tilde\Omega}(\bar\e).
\end{equation}
\end{thm}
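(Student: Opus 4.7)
The strategy is to upgrade the pointwise construction of Corollary~\ref{cor:henon-perturbedunimodal} to a uniform one via a compactness argument. My starting point is that Corollary~\ref{cor:henon-perturbedunimodal} provides, for each $f\in\U_{\Omegax,\perm}$, constants $\bar\e_f,C_f>0$ and a subdomain $\Omega'_f\cc\Omega$ such that every $\bar\e$-thickening $\tilde F$ of $F_f=\uline{\i}(f)$ with $\bar\e<\bar\e_f$ is $p$-renormalisable and, after renormalisation, lies in $\H_{\Omega'_f}(C_f\bar\e^p)$. Combined with Propositions~\ref{prop:henon-perturb-invariantdomain} and~\ref{prop:henon-perturb-renormalisation}, this already gives a local continuous definition of $\RH$ on a tubular neighbourhood of each point of $\H_{\Omega,\perm}(0)$. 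What remains is to glue these local pieces into a single operator with constants independent of $f$.

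The uniformity step proceeds by compactness. By Montel's theorem applied on a slightly smaller polydisk $\Omega^\sharp\cc\Omega$, the set of unimodal maps in $\U_{\Omegax,\perm}$ is a normal family and has compact closure in the sup-norm on $\Omega^\sharp$. The pointwise quantities $\bar\e_f,C_f,\Omega'_f$ depend continuously on $f$ through the horizontal diffeomorphism $H_f$, the critical locus $\oo{p-1}{\Curve}$, the pre-renormalisation domain $\oo{0}{B_f}$ and the scaling ratio, as follows from Propositions~\ref{prop:henon-perturb-invariantdomain} and~\ref{prop:henon-perturb-renormalisation}. Moreover, the real $C^1$ \emph{a priori} bounds (Theorem~\ref{thm:real-ap-bounds}) give uniform control on $\dis{\o{p-1}{f}}{\oo{1}{J}}$ and on the relative sizes of the renormalisation cycle, both of which enter the estimate for $C_f$ through the Jacobian computation in the proof of Corollary~\ref{cor:henon-perturbedunimodal}. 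A finite subcovering argument on the compactified parameter space then produces uniform constants $\bar\e_1,C>0$ and a common domain $\tilde\Omega\cc\Omega$ that work simultaneously for every $f\in\U_{\Omegax,\perm}$.

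Given these uniform constants, I would define
$$
\H_{\Omega,\upsilon}(\bar\e)=\bigcup_{f\in\U_{\Omegax,\perm}}\H_{\Omega}(f,\bar\e),\qquad 0<\bar\e<\bar\e_1,
$$
which evidently contains $\H_{\Omega,\perm}(0)$. The H\'enon renormalisation $\RH$ is well defined on this subspace by the first item of Corollary~\ref{cor:henon-perturbedunimodal}, continuous by Proposition~\ref{prop:henon-perturb-renormalisation}, and takes values in $\H_{\tilde\Omega}(C\bar\e^p)$ by the second item. Continuity of the extension to $\H_{\Omega,\perm}(0)$ follows by letting $\bar\e\to 0$ in the Lipschitz estimate of Proposition~\ref{prop:henon-perturb-renormalisation}. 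For the final assertion, it is enough to shrink $\bar\e_0\leq\bar\e_1$ so that $C\bar\e_0^{p-1}\leq 1$; since $p\geq 2$, this yields $C\bar\e^p\leq\bar\e$ for every $0<\bar\e<\bar\e_0$, giving the self-mapping $\RH\colon\H_{\Omega,\upsilon}(\bar\e)\to\H_{\tilde\Omega}(\bar\e)$.

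The main obstacle I expect is securing the uniform lower bound $\inf_f\bar\e_f>0$. The pointwise $\bar\e_f$ depends on the separation between the critical curve $\oo{p-1}{\Curve}$ and the pre-renormalisation domain $\oo{0}{B_f}$, a quantity that could in principle degenerate along sequences in $\U_{\Omegax,\perm}$ whose limits exit the renormalisable regime. Overcoming this requires verifying that along any convergent sequence $f_n\to f_\infty$ in the normal-family closure, either $f_\infty$ remains in $\U_{\Omegax,\perm}$, or else renormalisability fails so mildly that the separation stays bounded below uniformly; this ultimately rests on the openness of the renormalisability condition and on the beau character of the geometric bounds in Theorem~\ref{thm:real-ap-bounds}, which prevent the relevant geometry from collapsing in the limit.
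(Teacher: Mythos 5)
Your proposal follows essentially the same approach as the paper: apply Corollary~\ref{cor:henon-perturbedunimodal} pointwise, make the constants $\bar\e_f, C_f$ uniform via compactness of $\U_{\Omegax,\perm}$, take the union over $f$ to define the subspace, and shrink $\bar\e_0$ so that $C\bar\e^p\leq\bar\e$. The paper's proof is considerably terser (it simply invokes compactness without the Montel/finite-subcovering elaboration or the discussion of possible degeneration of $\bar\e_f$), so your version fills in justifications the paper leaves implicit.
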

\begin{proof}
By Corollary~\ref{cor:henon-perturbedunimodal}, for each $f\in\U_{\Omega,\perm}$ there exists a $\bar\e_f>0$ and a $C_f>0$ such that $\RH$ has an extension $\RH\colon\H_{\Omega}(f,\bar\e_f)\to \H_{\Omega}(C_f\bar\e_f^p)$. By compactness of $\U_{\Omega,\perm}$ these constants can be chosen uniformly, so setting
\begin{equation}
\H_{\Omega,\perm}(\bar\e)=\bigcup_{f\in\U_{\Omega,\perm}}\H_{\Omega}(f,\bar\e_f)
\end{equation}
we find that $\RH\colon \H_{\Omega,\perm}(\bar\e)\to\H_{\Omega}(C\bar\e^p)$. Choosing $\bar\e_0>0$ sufficiently small so that $\bar\e<C\bar\e^p$ for all $0<\bar\e<\bar\e_0$ gives the final claim.
\end{proof}

\subsection{The Fixed Point and Hyperbolicity}\label{sect:fixpoint}

We now consider H\'enon-like maps that are infinitely renormalisable. 
Throughout the rest of this section we fix a unimodal permutation $\perm$ of length $p$.
Denote by $\I_{\Omega,\perm}(\bar\e)$ the subspace of $\H_{\Omega}(\bar\e)$ consisting of infinitely renormalisable H\'enon-like maps, where each renormalisation has the same combinatorial type $\perm$. 
We call $\I_{\Omega,\perm}(\bar\e)$ the space of infinitely renormalisable H\'enon-like maps with \emph{stationary combinatorics} $\perm$.
Given any $F\in \I_{\Omega,\perm}(\bar\e)$ we write $F_n=\RH^n F$. We will use subscripts to denote quantities associated
with the $n$-th H\'enon-renormalisation. (For example, $\phi_n=\phi(F_n)$ will denote the function satisfying $F_n=(\phi_n,\pi_x)$.)

\begin{thm}\label{thm:R-convergence}
Let $\perm$ be a unimodal permutation of length $p>1$.
Let $\Omega=\Omegax\times\Omegay\cc\CC^2$ be a polydisk containing $B$.
Let $f_*\in\U_{\Omegax,\perm}$ denote the unimodal renormalisation fixed point of type $\perm$, 
and let $F_*=(f_*\circ\pi_x,\pi_x)$ denote the associated degenerate H\'enon-like map. 

Then $F_*$ is a hyperbolic fixed point of $\RH\colon \H_{\Omegax,\perm}\to\H_{\Omegax}$ with codimension-one stable manifold and dimension-one unstable manifold.
\end{thm}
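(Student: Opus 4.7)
The plan is to show that the linearisation $D\RH_{F_*}$ is hyperbolic on the tangent space at $F_*$, with a one-dimensional unstable subspace inherited from unimodal renormalisation and a codimension-one stable subspace, and then invoke the stable/unstable manifold theorem in Banach spaces. That $F_*$ is a fixed point is immediate from Remark~\ref{rmk:degenerate-henon-fixpoint}: the embedding $\uline{\i}\colon \U_{\Omegax,\perm}\to\H_{\Omega,\perm}(0)$ intertwines $\RU$ and $\RH$ and sends the unimodal fixed point $f_*$ to $F_*$.

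First I would decompose the tangent space at $F_*$ into a \emph{degenerate} direction, tangent to $\H_{\Omega,\perm}(0)$ and parametrised by variations $\delta f\in T_{f_*}\U_{\Omegax,\perm}$, and a transverse \emph{thickening} direction, parametrised by variations $\delta\e\in \T_\Omega(0)$. On the degenerate subspace, $\RH$ is conjugate via $\uline{\i}$ to $\RU$ acting on $\U_{\Omegax,\perm}$. Hence by Theorem~\ref{thm:1d-exp-convergence} together with the codimension-one stable manifold theorem for unimodal renormalisation, the restriction of $D\RH_{F_*}$ to this subspace has a single unstable eigenvalue $\sigma_u>1$ together with stable spectrum contained in the disk of radius $\rho<1$.

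For the thickening direction I would exploit the super-contractive estimate $|\tilde\e_1|_{\Omega'}\leq C|\e|_\Omega^p$ from Corollary~\ref{cor:henon-perturbedunimodal}. Writing $\e = t\,\delta\e$ for a fixed direction $\delta\e$ and small $t$ gives $|\tilde\e_1|_{\Omega'}\leq Ct^p\|\delta\e\|^p$, and since $p\geq 2$, differentiating in $t$ at $t=0$ shows that $D\RH_{F_*}$ vanishes identically on the entire thickening subspace: this direction is super-stable. Combining the two pieces, the spectrum of $D\RH_{F_*}$ consists of the isolated eigenvalue $\sigma_u>1$ and a stable part contained in the closed disk of radius $\rho<1$. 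This gives hyperbolicity, and an application of the Hadamard--Perron theorem in the Banach space of H\'enon-like maps (with the sup-norm on $\Omega$) yields the codimension-one local stable manifold and the dimension-one local unstable manifold.

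The main obstacle will be functional-analytic: one must justify that $\RH$ is Fr\'echet differentiable on an appropriate Banach space of H\'enon-like maps and verify the required spectral gap in operator norm, not merely the pointwise estimate from the Corollary. This requires continuous control, via Propositions~\ref{prop:henon-perturb-invariantdomain} and~\ref{prop:henon-perturb-renormalisation}, of how the non-affine coordinate change $H$ from Definition~\ref{def:henon-renormalisation} depends on $F$, so that the first-order variational computation in Proposition~\ref{prop:var-formula} promotes to a genuine Fr\'echet derivative rather than merely directional differentiability.
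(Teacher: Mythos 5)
Your proposal follows essentially the same route as the paper: identify $F_*$ as a fixed point via the embedding $\uline{\i}$, observe that on the degenerate subspace $\H_{\Omega,\perm}(0)$ the operator is conjugate to $\RU$ and so inherits the unimodal spectral picture, use the super-exponential contraction estimate from Theorem~\ref{thm:R-construction} to show the derivative vanishes in the thickening direction, and conclude hyperbolicity plus the stable/unstable manifold theorem (the paper cites~\cite{dMP} where you cite Hadamard--Perron). Your closing caveat about Fr\'echet differentiability is a fair one, but the paper does not treat it in any greater detail either, so the two arguments stand at the same level of rigour.
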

\begin{proof}
As was noted in Remark~\ref{rmk:degenerate-henon-fixpoint}  we can canonically embed $\U_{\Omegax,\perm}$ into $\H_{\Omegax,\perm}(0)$. By definition, $\RH$ restricted to
$\H_{\Omegax,\perm}(0)$ is given by $\RH (f\circ\pi_x,\pi_x)=(\RU f\circ\pi_x,\pi_x)$. Therefore it is clear that the fixed point of $\RU$ induces a fixed point of $\RH$. That is, if $f_*$ denotes the fixed point of $\RU$ then the
point $F_*=(f_*\circ\pi_x,\pi_x)$ in $\H_{\Omegax,\perm}(0)$ is a fixed point of $\RH$. It is also clear that, when restricted to $\H_{\Omega}(0)$, the fixed point is unique and hyperbolic, with codimension one stable manifold and dimension-one unstable manifold.

Next observe that, by Theorem~\ref{thm:R-construction}, $\RH\colon \H_{\Omegax,\perm}\cap B_{\bar\e}(F_*)\to\H_{\Omegax}(C\bar\e^p)$ is super-exponentially contracting in the $\e$-direction if $\bar\e=\bar\e(\Omegax,\perm)>0$ is sufficiently small. This implies $D_{F_*}\RH$ is hyperbolic in the $\e$-direction, and moreover has zero spectrum.

Combining the conclusions of each of the two preceding paragraphs we find $F_*$ is hyperbolic, with a unique expanding direction. Therefore the Stable Manifold Theorem in~\cite{dMP} implies there exists a codimension-one stable manifold and dimension-one unstable manifold. (The Stable Manifold Theorem in~\cite{dMP} is stated for diffeomorphisms but the argument holds for endomorphisms as well.)
\end{proof}

\subsection{Scope Maps and The Renormalisation Cantor Set}\label{sect:scope}
We now recast the renormalisation theory we have just developed for H\'enon-like maps in terms of scope maps and presentation functions (defined below) in a way analogous to that in Section~\ref{sect:unimodal-scopemaps}. We show using these that, similar to the unimodal case, infinitely renormalisable H\'enon-like maps also possess an invariant Cantor set on which the H\'enon-like map acts as the adding machine.

Throughout this section, $\perm$ will be a fixed unimodal permutation of length $p>1$, 
$\bar\e_0>0$ will be a constant and $\Omega\subset \CC^2$ will be a complex polydisk containing the square $B$ in its interior such that $\I_{\Omega,\perm}(\bar\e)$ is invariant under renormalisation for all $0<\bar\e<\bar\e_0$.

If $F\in\H_{\Omega,\upsilon}(\bar\e)$ let $\{\oo{w}{B}\}_{w\in W}$ denote its cycle. 
Let $H\colon \oo{0}{B}\to \oo{0}{B_\diag}$ denote its horizontal diffeomorphism and 
$G\colon\oo{0}{B_\diag}\to \oo{0}{B_\diag}$ its pre-renormalisation. 
Let $\III\colon\oo{0}{B_\diag}\to B$ denote the affine rescaling such that $\RH F=\III G\bar\III$. 
Then we call the coordinate change $\MT=\MT(F)\colon B\to \oo{0}{B}$, given by $\MT=\bar{H}\circ\bar\III$, the \emph{scope map} of $F$. 
More generally, for $w\in W$ we will call the map $\oo{w}{\MT}=\o{w}{F}\circ \MT\colon B\to \oo{w}{B}$ the \emph{$w$-scope map} of $F$.

If $F$ is $n$-times renormalisable we denote the $n$-th renormalisation $\RH^n F$ by $F_n$. 
For $w\in W$ let $\oo{w}{\MT}_n=\oo{w}{\MT}(F_n)\colon \Dom(F_{n+1})\to \Dom(F_{n})$ be the $w$-scope map for $F_n$. 
Then, if $\word{w}{}=w_0\ldots w_n\in W^*$, the function
\begin{equation}
\MT^{\word{w}{}}=\MT_0^{w_0}\circ\ldots\circ \MT_n^{w_n}\colon \Dom(F_{n+1})\to \Dom(F_0)
\end{equation}
is called the \emph{$\word{w}{}$-scope map} for $F$. Let $\uline\MT=\{\MT^{\word{w}{}}\}_{\word{w}{}\in W^n}$ denote the collection of all scope functions for $F$.

Similarly if $F\in\I_{\Omega,\perm}(\bar\e)$, let $\uline{\MT}=\{\oo{\word{w}{}}{\MT}\}_{w\in W^*}$ denote the family of scope maps associated to $F$. 
Let $\uline{\MT_n}=\{\oo{\word{w}{}}{\MT_n}\}_{\word{w}{}\in W^*}$ denote the family of scope maps associated with $F_n$. 
For any $n\geq 0$, let $\oo{\word{w}{}}{B_n}=\oo{\word{w}{}}{\MT_n}(B)$. 
These are closed simply-connected domains which we call the \emph{pieces} for $F_n$. 
Finally let $\oo{\word{w}{}}{B_*}=\oo{\word{w}{}}{\MT_*}(B)$.
\begin{landscape}
\vspace*{2cm}
\begin{figure}[tp]
\centering

\psfrag{...}{$\cdots$}

\psfrag{a}{$0$}
\psfrag{amt}{$\oo{0}{\MT_0}$}
\psfrag{ab0}{$\oo{0}{B_0}$}
\psfrag{ab1}{$\oo{1}{B_0}$}
\psfrag{ab2}{$\oo{2}{B_0}$}
\psfrag{af0}{$F_0$}
\psfrag{af1}{$F_0$}
\psfrag{af2}{$F_0$}

\psfrag{b}{$1$}
\psfrag{bmt}{$\oo{0}{\MT_1}$}
\psfrag{bb0}{$\oo{0}{B_1}$}
\psfrag{bb1}{$\oo{1}{B_1}$}
\psfrag{bb2}{$\oo{2}{B_1}$}
\psfrag{bf0}{$F_1$}
\psfrag{bf1}{$F_1$}
\psfrag{bf2}{$F_1$}

\psfrag{c}{$2$}
\psfrag{cmt}{$\oo{0}{\MT_2}$}
\psfrag{cb0}{$\oo{0}{B_2}$}
\psfrag{cb1}{$\oo{1}{B_2}$}
\psfrag{cb2}{$\oo{2}{B_2}$}
\psfrag{cf0}{$F_2$}
\psfrag{cf1}{$F_2$}
\psfrag{cf2}{$F_2$}

\includegraphics[scale=0.21]{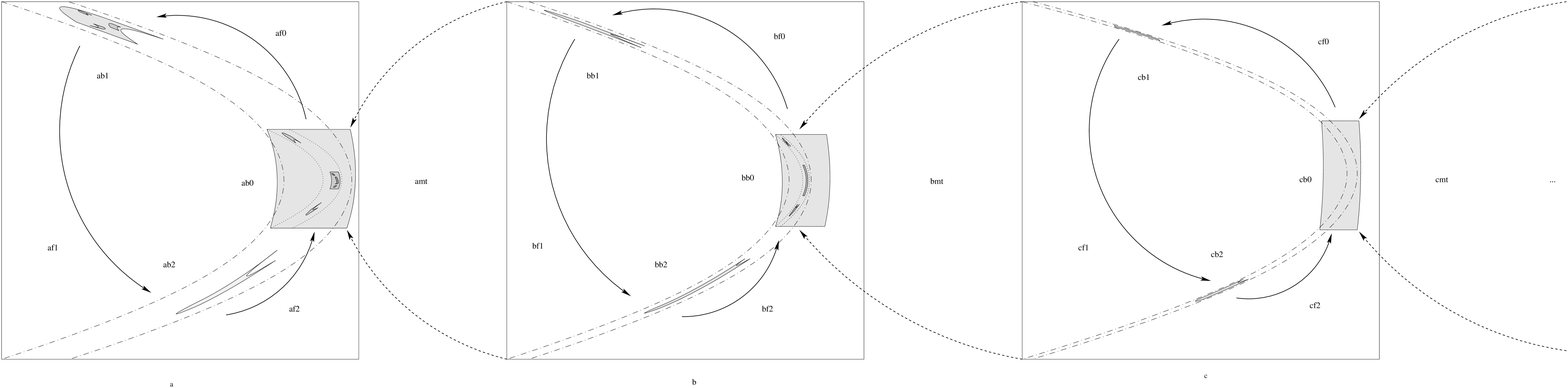}
\vspace*{0.2cm}
\caption{The sequence of scope maps for a period-three infinitely renormalisable H\'enon-like map. In this case the maps has stationary combinatorics of period-tripling type. Here the dashed line represents the bounding arcs of the image of the square $B$ under consecutive renormalisations $F_n$.}\label{fig:scopemaps-henon}
\end{figure}
\end{landscape}

\begin{prop}\label{prop:unimodal-cantorset-stable}
Let $f_n\in \U_{\Omegax,\perm}$ be a sequence of renormalisable unimodal maps and let $\pf_n=\{\oo{w}{\mt_n}\}_{w\in W}$ denote the presentation function of $f_n$. Assume
\begin{enumerate}
\item the central cycle $\{\oo{w}{J_n}\}_{w\in W}$ has uniformly bounded geometry for all $n>0$;
\item $\dis{\oo{w}{\mt_n}}{z}$ is uniformly bounded;
\item there exists an integer $N>0$ such that the Schwarzian derivative $\S_{\oo{w}{\mt}_n}>0$ for all integers $n>N$ and $w\in W$. 
\end{enumerate}
Then
\begin{equation}
\Cantor=\bigcap_{n\geq 0}\bigcup_{\word{w}{}\in W^n}\oo{\word{w}{}}{\mt}(J)
\end{equation}
is a Cantor set.
\end{prop}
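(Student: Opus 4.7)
The plan is to establish the three defining properties of a Cantor set for $\Cantor$ — nonempty and compact, totally disconnected, and perfect. Write $\Cantor_n = \bigcup_{\word{w}{} \in W^n} \oo{\word{w}{}}{\mt}(J)$ with $\oo{\word{w}{}}{\mt} = \oo{w_0}{\mt_0} \circ \cdots \circ \oo{w_{n-1}}{\mt_{n-1}}$, so that $\Cantor = \bigcap_n \Cantor_n$. Since $\oo{w}{\mt_n}(J) = \oo{w}{J_n} \subset J$ the sequence $\Cantor_n$ is decreasing, making $\Cantor$ a nested intersection of compact sets, hence compact and nonempty. Because $\{\oo{w}{J_n}\}_{w \in W}$ has pairwise disjoint interiors by definition of the renormalisation cycle, and each $\oo{\word{w}{}}{\mt}$ is a homeomorphism, an induction on $n$ shows that the $p^n$ pieces comprising $\Cantor_n$ have pairwise disjoint interiors. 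Perfectness follows because each level-$n$ piece strictly contains $p \geq 2$ disjoint subpieces at level $n+1$, so every $z \in \Cantor$ has nearby distinct points of $\Cantor$ in each successive piece.

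The key technical step is to show the pieces contract uniformly: $\max_{\word{w}{} \in W^n} |\oo{\word{w}{}}{\mt}(J)| \to 0$ as $n \to \infty$. By the Schwarzian chain rule, positive Schwarzian is preserved under composition, so hypothesis (iii) implies $G_n := \oo{\word{w}{}}{\mt}$ has positive Schwarzian on $J$ whenever only letters beyond depth $N$ appear. The Maximum Principle for positive Schwarzian diffeomorphisms then gives $\sup_J |G_n'| = \max(|G_n'(0)|, |G_n'(1)|)$, whence $|G_n(J)| \leq \sup_J |G_n'|$. Expanding the boundary derivatives via the chain rule produces a product $\prod_{i} (\oo{w_i}{\mt_i})'(z_i)$ with $z_i \in \del J$; by hypothesis (ii) each such factor is comparable to the corresponding average derivative $|\oo{w_i}{J_i}|/|J|$ up to a uniform bounded-distortion constant, which by hypothesis (i) is bounded above by some $k_1 < 1$ uniformly in $n$ and $w_i$. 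After absorbing the first $N$ (potentially non-Schwarzian-positive) steps into a multiplicative constant, this yields $|G_n(J)| \leq C \lambda^{n}$ for some $\lambda < 1$, which combined with Steps~1 and~2 gives total disconnectedness and completes the proof.

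The main obstacle is precisely this contraction estimate. A naive bound on the distortion of a composition is the sum of the individual distortions, which grows linearly in the length of the word, so bounded distortion of single scope maps alone is insufficient for uniform control on compositions. The positive-Schwarzian hypothesis is exactly the ingredient that bypasses this accumulation: via the Maximum Principle it reduces $\sup_J |G_n'|$ to a chain-rule product evaluated at the two endpoints of $J$, which is then controlled term-by-term by per-level distortion and bounded geometry. The subtle point I would pay the most attention to is ensuring that the combined constant $e^L k_1$ is genuinely $< 1$; if the naive single-step estimate does not suffice, one can refine by iterating the Maximum Principle on the shrinking subinterval $\oo{w_{n-1}}{\mt_{n-1}}(J)$ inside $J$, so that the effective distortion of the composition on this smaller domain becomes negligible compared to the geometric shrinking rate forced by bounded geometry.
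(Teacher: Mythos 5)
Your approach is genuinely different from the paper's, and it contains a gap that you partly acknowledge but do not close. You try to bound $\sup_J |G_n'|$ for the composition $G_n=\oo{w_N}{\mt_N}\circ\cdots\circ\oo{w_{n-1}}{\mt_{n-1}}$ by the Schwarzian Maximum Principle (which correctly puts the maximum on $\del J$) and then by the chain rule as a product of $n-N$ factors, each controlled via per-level bounded distortion (hypothesis (ii)) to be $\leq e^{L}k_1$ where $k_1<1$ is the bounded-geometry ratio. As you yourself flag, this only yields contraction if $e^{L}k_1<1$, which the hypotheses do not guarantee: the distortion constant of a single scope map need not be small. Your proposed fix --- iterating the Maximum Principle on the shrinking image intervals so that ``effective distortion becomes negligible'' --- is circular: to argue that the distortion on the nested subintervals shrinks, you already need to know that those subintervals shrink, which is precisely what is being proved. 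So the contraction step is not established.

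The paper's proof routes around exactly this accumulation problem by using the cross-ratio rather than pointwise derivative bounds. Positive Schwarzian maps contract the cross-ratio $D(J,T)=|J||T|/(|L||R|)$, so the cross-ratio of the level-$(n+1)$ piece inside the level-$n$ piece is bounded by the cross-ratio $D(\oo{w}{J}_n,J)$ of the first-generation piece, which bounded geometry (hypothesis (i)) controls uniformly by some $K$. This bound is stable under composition --- there is no per-step distortion constant to accumulate --- and then the elementary fact that $D(J,T)<K$ forces $|J|/|T|<K'<1$ gives a uniform geometric shrinking ratio at every level beyond $N$. Hypothesis (ii) is then used only once, at the end, to push the tail Cantor set forward through the finitely many non-Schwarzian-positive levels $0,\dots,N-1$ by a bounded-distortion map. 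So the hypotheses in the statement are tailored to this cross-ratio argument; your Maximum-Principle route uses the same hypotheses but assigns them roles that do not fit together without the additional, unjustified assumption $e^{L}k_1<1$.
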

\begin{proof}
Given closed intervals $J\subset T$, with $J$ properly contained in $T$, consider their cross-ratio,
\begin{equation}
D(J,T)=\frac{|J||T|}{|L||R|}
\end{equation}
where $L$ and $R$ are the left and right connected components of $T\setminus J$ respectively. We recall the following properties:
\begin{enumerate}
\item maps with positive Schwarzian derivative contract the cross-ratio;
\item for all $K>0$ there exists a $0<K'<1$ such that $D(J,T)<K$ implies $\frac{|J|}{|T|}<K'$.
\end{enumerate}
The first assumption implies there is some $K>0$ such that $D(\oo{w}{J}_n,J)<K$ for all $w\in W$, and $n\in\NN$. The third assumption implies the intervals $\oo{w_N\ldots w_n}{J}_N=\oo{w_N}{\mt_N}\circ\cdots\circ\oo{\mt_n}{w_n}(J)$ are images of $\oo{w}{J_n}$ under positive Schwarzian maps for all $n>N$ . Hence the first property of the cross-ratio implies  $D(\oo{w_N\ldots w_n}{J}_N,\oo{w_N\ldots w_{n-1}}{J}_N)<K$ for all $n>N$. By the second property of the cross ratio this implies $\frac{|\oo{w_N\ldots w_n}{J}_N|}{|\oo{w_N\ldots w_{n-1}}{J}_N|}<K'<1$. The same argument applies to the images of the gaps between the $\oo{w}{J}_n$. Therefore
\begin{equation}
\Cantor_N=\bigcap_{n\geq N}\bigcup_{\word{w}{}\in W^n}\oo{w_N\ldots w_n}{\mt}_N(J)
\end{equation}
is a Cantor set. By the second assumption $\oo{w_0\ldots w_{N-1}}{\mt}_0$ has bounded distortion for all $w_0\ldots w_{N-1}\in W^N$. The image of a Cantor set under a map with bounded distortion is still a Cantor set. Hence
\begin{equation}
\Cantor=\bigcap_{n\geq 0}\bigcup_{\word{w}{}\in W^n}\oo{\word{w}{}}{\mt}(J)
\end{equation}
is a Cantor set and the result is shown.
\end{proof}
We rephrase the above Proposition in terms of scope maps for degenerate H\'enon-like maps in the following.
\begin{cor}\label{cor:deghenon-cantorset-stable}
Let $F_n=\uline{\i}(f_n)\in \H_{\Omega,\perm}$ be a sequence of renormalisable degenerate H\'enon-like maps, let $\PF_n=\{\oo{w}{\MT_n}\}_{w\in W}$ denote the scope function of $F_n$ and let $\pf_n=\{\oo{w}{\mt_n}\}_{w\in W}$ denote the scope function for $f_n$. Assume
\begin{enumerate}
\item the central cycle $\{\oo{w}{B_n}\}_{w\in W}$ has uniformly bounded geometry;
\item $\dis{\oo{w}{\MT_n}}{z}$ is uniformly bounded;
\item there exists an integer $N>0$ such that $\S_{\oo{w}{\mt}_n}>0$ for all $n>N, w\in W$. 
\end{enumerate}
Then
\begin{equation}
\Cantor=\bigcap_{n\geq 0}\bigcup_{\word{w}{}\in W^n}\oo{\word{w}{}}{\MT}(B)
\end{equation}
is a Cantor set.
\end{cor}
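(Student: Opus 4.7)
The plan is to reduce to the one-dimensional case already handled by Proposition~\ref{prop:unimodal-cantorset-stable}, exploiting that for a degenerate map $F=\uline{\i}(f)$ the scope map $\oo{w}{\MT}$ is built coordinatewise from the unimodal scope map $\oo{w}{\mt}$, as was made explicit at the end of Section~\ref{sect:1d-construction-revised}. In particular, for each $\word{w}{}\in W^n$ one has the containment
\begin{equation}
\oo{\word{w}{}}{\MT}(B)\subset \oo{\word{w}{}+1^n}{\mt}(J)\times \oo{\word{w}{}}{\mt}(J),
\end{equation}
so the two-dimensional nested covering of $\Cantor$ is controlled by two copies of the one-dimensional nested covering of the unimodal Cantor set of $f$.

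First I would verify the three hypotheses of Proposition~\ref{prop:unimodal-cantorset-stable} for the sequence $f_n$. Bounded geometry of the one-dimensional central cycle $\{\oo{w}{J_n}\}_{w\in W}$ follows from the assumed bounded geometry of $\{\oo{w}{B_n}\}_{w\in W}$, since the projections $\pi_x(\oo{w}{B_n})$ and $\pi_y(\oo{w}{B_n})$ are intervals comparable to $\oo{w}{J_n}$. Bounded distortion of the maps $\oo{w}{\mt_n}$ follows because the unimodal scope functions appear as the components of $\oo{w}{\MT_n}$. Positive Schwarzian is a direct hypothesis. Thus Proposition~\ref{prop:unimodal-cantorset-stable} applies, and its cross-ratio argument shows that the two factor intervals in the display above shrink geometrically, uniformly in $\word{w}{}$.

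Next I would deduce the Cantor set structure of $\Cantor$. Compactness is immediate from being a nested intersection of compacta. The diameter of each piece $\oo{\word{w}{}}{B_n}$ is bounded by the sum of the diameters of its two containing intervals, so pieces shrink uniformly to points. Total disconnectedness follows because for distinct $\word{w}{},\word{\tilde w}{}\in W^n$ the intervals $\oo{\word{w}{}}{\mt}(J)$ and $\oo{\word{\tilde w}{}}{\mt}(J)$ are pairwise disjoint from the one-dimensional Cantor set construction, and the shift $\word{w}{}\mapsto \word{w}{}+1^n$ being a bijection of $W^n$ ensures that both the $x$- and $y$-projections of the containing rectangles separate distinct pieces. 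The natural continuous bijection $\bar W\to \Cantor$ sending an infinite word to the unique point of its nested intersection then identifies $\Cantor$ topologically with the Cantor set $\bar W$.

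The main obstacle is reconciling hypothesis (ii), which is phrased in terms of the two-dimensional scope maps $\oo{w}{\MT_n}$, with the one-dimensional distortion needed for Proposition~\ref{prop:unimodal-cantorset-stable}. For $w\neq 0$ the map $\oo{w}{\MT_n}$ has rank one, so its ``distortion'' must be interpreted as the distortion of its nonconstant component $\oo{w}{\mt_n}$; once this reading is fixed, the reduction is routine, and indeed this subtlety explains why the Corollary separates the Schwarzian hypothesis (stated for $\mt$) from the distortion hypothesis (stated for $\MT$).
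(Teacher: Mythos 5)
Your reduction to Proposition~\ref{prop:unimodal-cantorset-stable} via the coordinatewise formula $\oo{\word{w}{}}{\MT_f}(x,y)=(\oo{\word{w}{}+1^n}{\mt}_f(\cdot),\oo{\word{w}{}}{\mt}_f(\cdot))$ is exactly the intended argument; the paper states the Corollary as an immediate ``rephrasing'' of the unimodal Proposition and gives no separate proof, so your write-up supplies the details the paper leaves implicit. Your remark on interpreting hypothesis (ii) for the rank-one maps $\oo{w}{\MT_n}$ with $w\neq 0$ is a reasonable clarification of the paper's notation and does not change the substance.
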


The following is the main result of this section. It states that, under suitable conditions, a perturbation of a family of scope maps
whose limit set is a Cantor set will also have a limit set which is a Cantor set.
\begin{prop}\label{prop:henon-cantorset-stable}
Let $F_n\in \H_{\Omega,\perm}$ be a sequence of renormalisable H\'enon-like maps and let $\PF_n=\{\oo{w}{\MT_n}\}_{w\in W}$ denote the presentation function of $F_n$. Assume
\begin{enumerate}
\item the set $\Cantor=\bigcap_{n\geq 0}\bigcup_{\word{w}{}\in W^n}\oo{\word{w}{}}{\MT}(B)$ is a Cantor set;
\item for $\word{w}{}=w_0w_1\ldots\in W^*$ the cylinder sets $\oo{w_0,\ldots w_n}{\MT}(B)$ `nest down exponentially': there exists a constant $0<\delta<1$ such that $\diam(\oo{w_0,\ldots w_n}{\MT}(B))<\delta\diam(\oo{w_0,\ldots w_{n-1}}{\MT}(B))$ for all $n>0$;
\item $\|\D{\oo{w}{\MT}_n}{z}\|<K$ for all $z\in\Omega, w\in W$ and $n>0$.
\end{enumerate}
Then there exists an $\bar\e>0$ such that for any sequence $\tilde F_n\in\H_{\Omega,\perm}$ of renormalisable H\'enon-like maps with presentation functions $\PF_n=\{\oo{w}{\MT_n}\}_{w\in W}$ satisfying $|F_n-\tilde F_n|_{\Omega}<C\bar\e^{p^n}$ the set
\begin{equation}
\tilde\Cantor=\bigcap_{n\geq 0}\bigcup_{\word{w}{}\in W^n}\oo{\word{w}{}}{\tilde\MT}(B)
\end{equation}
is also a Cantor set.
\end{prop}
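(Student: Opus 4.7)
The approach is perturbative: I will verify the two Cantor conditions --- pairwise disjointness of cells at every depth and exponential shrinkage of cell diameters --- directly for the perturbed family, exploiting the gap between the super-exponential decay in the hypothesis $|F_n-\tilde F_n|_\Omega < C\bar\e^{p^n}$ and the merely single-exponential rates in assumptions (ii) and (iii).

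First I would transfer the map perturbation to the scope maps. Proposition~\ref{prop:henon-perturb-renormalisation} yields a bound of the form
\begin{equation*}
|\oo{w}{\MT}_n-\oo{w}{\tilde\MT}_n|_\Omega \leq C'\bar\e^{p^n},
\end{equation*}
which, by Cauchy's estimate on the polydisk $\Omega$, upgrades to closeness in $C^1$ on $B$. Telescoping across a composition of length $n+1$, swapping $\oo{w_k}{\MT}_k$ for $\oo{w_k}{\tilde\MT}_k$ one index at a time, and using the uniform bound $\|D\oo{w}{\MT}_k\|_B < K$ from assumption (iii), I would derive
\begin{equation*}
|\oo{\word{w}{}}{\MT}-\oo{\word{w}{}}{\tilde\MT}|_B \leq C''\sum_{k=0}^{n} K^k \bar\e^{p^k} \leq C'''\bar\e
\end{equation*}
uniformly in $n\geq 0$ and $\word{w}{}\in W^{n+1}$, once $\bar\e>0$ is chosen small enough (the super-exponential decay of $\bar\e^{p^k}$ easily beats the geometric $K^k$).

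Disjointness of the level-$n$ cells would then follow by induction on $n$. Since every $\tilde F_n\in\H_{\Omega,\perm}$ is renormalisable of combinatorial type $\perm$, the images $\{\oo{w}{\tilde\MT}_n(B)\}_{w\in W}$ are pairwise disjoint by construction, and each $\oo{w}{\tilde\MT}_n$ is an embedding onto its image; hence disjointness at depth $n$ for $\tilde F$ propagates from disjointness at depth $n-1$ for $\tilde F_1$. For shrinkage, the diameter of the image $\oo{w}{\tilde\MT}_n(B)$ differs from that of $\oo{w}{\MT}_n(B)$ by at most $C\bar\e^{p^n}$; in particular, for $\bar\e$ small, the perturbed system satisfies its own depth-one nesting with some factor $\tilde\delta\in(\delta,1)$, and the argument of Corollary~\ref{cor:deghenon-cantorset-stable} would adapt to yield $\mathrm{diam}(\oo{\word{w}{}}{\tilde\MT}(B))\leq K_0\tilde\delta^n \to 0$.

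The main obstacle is the shrinkage step. Assumption (iii) supplies only a pointwise upper bound on $\|D\oo{w}{\MT}_n\|$, whereas converting exponential nesting of images into contraction rates for compositions requires some form of bounded distortion. In the unimodal setting this came from the positive-Schwarzian hypothesis of Proposition~\ref{prop:unimodal-cantorset-stable}; in the H\'enon-like setting I would need to extract distortion control either as an additional property of the original $F_n$ (a consequence of strong dissipation and analyticity) or from the $C^1$-closeness established in the first step. Once that distortion control is available, the inductive shrinkage argument runs parallel to the unimodal case and shows that $\tilde\Cantor$ is Cantor.
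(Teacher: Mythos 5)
There is a genuine gap, and you have correctly located it yourself: the shrinkage step. Your telescoping bound $|\oo{\word{w}{}}{\MT}-\oo{\word{w}{}}{\tilde\MT}|_B\leq C'''\bar\e$ is correct (and it is essentially what Proposition~\ref{prop:variation-composition} packages), but it is \emph{uniform in $n$} and does not decay. It therefore only yields $\diam(\oo{\word{w}{}}{\tilde\MT}(B))\leq \diam(\oo{\word{w}{}}{\MT}(B))+2C'''\bar\e\to 2C'''\bar\e>0$, which is useless for total disconnectedness. Similarly, depth-one closeness of diameters does not propagate: $\diam(\oo{w_0w_1}{\tilde\MT}(B))\leq 2K\,\diam(\oo{w_1}{\tilde\MT}_1(B))$ is the best that assumption (iii) alone provides, and iterating gives $(2K)^n$, not $\tilde\delta^n$. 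Your proposed repair --- ``extract distortion control'' --- is exactly the tool the paper is designed to \emph{avoid} for general combinatorics: the introduction explains that the presentation function need not be a contraction outside period-doubling (Eckmann--Wittwer), which is why Proposition~\ref{prop:unimodal-cantorset-stable} and Corollary~\ref{cor:deghenon-cantorset-stable} are used only to verify the hypotheses (i)--(iii) for the unperturbed family, not to be re-imported here for the perturbed one.

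The paper resolves it by a split-and-balance argument that needs no distortion. Fix an intermediate depth $m<n$ and write, for $z,z'\in B$,
\begin{equation*}
|\oo{w_0\ldots w_n}{\tilde\MT}(z)-\oo{w_0\ldots w_n}{\tilde\MT}(z')|
\leq \sup_\xi\|\D{\oo{w_0\ldots w_{m-1}}{\tilde\MT}}{\xi}\|\;
|\oo{w_m\ldots w_n}{\tilde\MT}(z)-\oo{w_m\ldots w_n}{\tilde\MT}(z')|,
\end{equation*}
control the head by the crude Lipschitz factor $(2K)^m$ (assumption (iii) plus super-exponential closeness), and split the tail as unperturbed diameter plus error:
\begin{equation*}
|\oo{w_m\ldots w_n}{\tilde\MT}(z)-\oo{w_m\ldots w_n}{\tilde\MT}(z')|
\leq \delta^{n-m}+C(2K)^{n-m}\bar\e^{p^m},
\end{equation*}
where $\delta^{n-m}$ is assumption (ii) applied from depth $m$ and the error term comes from Proposition~\ref{prop:variation-composition} with the crucial feature that the leading factor is $\bar\e^{p^m}$, not $\bar\e$. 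The total bound $(2K)^m\delta^{n-m}+(2K)^n\bar\e^{p^m}$ tends to $0$ by choosing $m\to\infty$ with $m/n\to 0$ (e.g. $m\sim\sqrt{n}$): the first term is small because $m/n$ is small, and the second because $\bar\e^{p^m}$ decays super-exponentially in $m$ while $(2K)^n$ grows only exponentially in $n$ (Proposition~\ref{prop:exp-vs-superexp}). This balance --- head Lipschitz growth against tail nesting and super-exponential perturbation decay --- is the idea your proposal is missing; everything else (closedness, non-emptiness, no isolated points) follows routinely once the diameters of the perturbed cylinders are shown to vanish.
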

\begin{proof}
It is clear that $\tilde\Cantor$ is closed and non-empty, hence we are just required to show it is totally disconnected and contains no isolated points.
Let us first introduce some notation.
Define functions $E_n$ by $\tilde F_n=F_n+E_n$. By hypothesis $|E_n|_{\Omega}\leq C_0\bar\e^{p^n}$.
This implies we can write $\oo{w}{\tilde\MT_n}=\oo{w}{\MT_n}+\oo{w}{\Lambda_n}$ where $|\oo{w}{\Lambda_n}|_{\Omega}\leq C_1\bar\e^{p^n}$ for some constant $C_1>0$.
For $\word{w}{}=w_0\ldots w_n\in W^*$ let 
\begin{equation}
\oo{w_0\ldots w_n}{\MT}=\oo{w_0}{\MT}_0\circ\cdots\circ\oo{w_n}{\MT}_n, \qquad
\oo{w_0\ldots w_n}{\tilde\MT}=\oo{w_0}{\tilde\MT}_0\circ\cdots\circ\oo{w_n}{\tilde\MT}_n.
\end{equation}
Then define $\oo{w_0\ldots w_n}{\Lambda}$ to be the function satisfying $\oo{w_0\ldots w_n}{\tilde\MT}=\oo{w_0\ldots w_n}{\MT}+\oo{w_0\ldots w_n}{\Lambda}$. From Proposition~\ref{prop:variation-composition} we find for each $z\in B$, if we set $z_i=\oo{w_i\ldots w_n}{\MT}(z)$
and $\oo{\emptyset}{\tilde\MT}=\id$, that
\begin{equation}\label{eqn:lambda-variation}
\oo{w_0\ldots w_n}{\Lambda}(z)
=\sum_{i\geq 1}\D{\oo{w_0\ldots w_{i-1}}{\MT}}{z_i}(\oo{w_i}{\Lambda}(z_{i+1}))
+\bigo(|\D{\oo{w_i}{\Lambda}}{}||\oo{w_j}{\Lambda}|,|\oo{w_i}{\Lambda}|^2).
\end{equation}
Now fix $\word{w}{}=w_0w_1\ldots\in \bar W$ and let $z, z'\in B$ be any distinct fixed pair of points.
Observe that for any $0<m<n$,
\begin{equation}
|\oo{w_0\ldots w_n}{\tilde\MT}(z)-\oo{w_0\ldots w_n}{\tilde\MT}(z')|\leq \sup_{\xi\in B}\|\D{\oo{w_0\ldots w_{m-1}}{\tilde\MT}}{\xi}\| |\oo{w_m\ldots w_n}{\tilde\MT}(z)-\oo{w_m\ldots w_n}{\tilde\MT}(z')|.
\end{equation}
Since the derivatives of $\oo{w}{\MT}_n$ are uniformly bounded and $|\tilde F_n-F_n|_\Omega$ decreases super-exponentially there is a constant $\bar\e>0$ such that $\|D_\xi \oo{w}{\MT}_n\|< 2K$ for all $\xi\in B$. Hence
\begin{equation}
\|D_\xi\oo{w_0\ldots w_m}{\tilde \MT}\|\leq \prod_0^{m-1}\|D_{\xi_i}\oo{w_i}{\tilde \MT}_i\|\leq (2K)^m.
\end{equation}  
Now we consider the splitting
\begin{align}
&|\oo{w_m\ldots w_n}{\tilde\MT}(z)-\oo{w_m\ldots w_n}{\tilde\MT}(z')| \notag \\
&\leq  |\oo{w_m\ldots w_n}{\MT}(z)-\oo{w_m\ldots w_n}{\MT}(z')|+|\oo{w_m\ldots w_n}{\Lambda}(z)-\oo{w_m\ldots w_n}{\Lambda}(z')|. 
\end{align}
By hypothesis $|\oo{w_m\ldots w_n}{\MT}(z)-\oo{w_m\ldots w_n}{\MT}(z')|\leq \delta^{n-m}$ while Proposition~\ref{prop:variation-composition} in the appendix implies there exists a constant $C_3>0$ such that
\begin{equation}
|\oo{w_m\ldots w_n}{\Lambda}(z)-\oo{w_m\ldots w_n}{\Lambda}(z')|\leq C(2K)^{n-m}\bar\e^{p^m}.
\end{equation} 
Therefore
\begin{equation}
|\oo{w_0\ldots w_n}{\tilde\MT}(z)-\oo{w_0\ldots w_n}{\tilde\MT}(z')|\leq (2K)^m\left(\delta^{n-m}+C(2K)^{n-m}\bar\e^{p^m}\right).
\end{equation}
By Proposition~\ref{prop:exp-vs-superexp} this can be made arbitrarily small by taking $m,n/m>0$ sufficiently large.

Next we show that $\Cantor$ does not have any isolated points.
Assume there is a word $\word{w}{}=w_0w_1\ldots\in \bar W$ for which the associated cylinder set $\oo{\word{w}{}}{B}$ is isolated. Then for any other word $\word{\tilde w}{}\in\bar W$ we must have $\dist(\oo{\word{w}{}}{B},\oo{\word{\tilde w}{}}{B})>\rho$ for some $\rho>0$ which we may assume satisfies $\rho<1$. We know that for any $0<\rho<1$ there is an integer $N>0$ such that for all $n>N$, $\diam(\oo{w_0\ldots w_n}{B})<\rho$. In particular $\dist(\oo{w_0\ldots w_nw_{n+1}}{B},\oo{w_0\ldots w_n\tilde{w}}{B})<\rho$ for any $\tilde w\in W$, which is a contradiction. Hence $\Cantor$ does not have any isolated points.
\end{proof}
Using these last two results we can now prove the following.
\begin{prop}\label{prop:henon-cantorset-existence}
Let $\perm$ be a unimodal permutation of length $p>1$. There exists a constant $\bar\e_0>0$, depending upon $\perm$, for which the following holds:
given any $F\in\I_{\Omega,\perm}(\bar\e_0)$ let $\uline\MT=\{\oo{\word{w}{}}{\MT}\}_{\word{w}{}\in W^*}$ denote its family of scope maps.
Then the set
\begin{equation}
\Cantor=\bigcap_{n\geq 0}\bigcup_{\word{w}{}\in W_p^n}\MT^{\word{w}{}}(B),
\end{equation}
has the following properties:
\begin{enumerate}
\item it is an $F$-invariant Cantor set;
\item $F$ acts as the adding machine upon $\Cantor$, i.e. there exists a map $h\colon \oline{W}_p\to \Cantor$ such that the following diagram commutes:
\begin{equation}
\xymatrix{
\oline{W}_p\ar[d]_{h}  \ar[r]^{\word{w}{}\mapsto 1+\word{w}{}}                & \oline{W}_p\ar[d]^{h} & \\
\Cantor         \ar[r]_F & \Cantor
}
\end{equation}
\item there is a unique $F$-invariant measure, $\mu$, with support on $\Cantor$.
\end{enumerate}
\end{prop}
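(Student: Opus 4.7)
The plan is to reduce the H\'enon-like setting to the degenerate (unimodal) one, verify the hypotheses of Proposition~\ref{prop:henon-cantorset-stable}, and then derive the adding-machine action directly from the definition of the scope maps. Throughout, write $F_n=\RH^n F=(f_n-\e_n,\pi_x)$ and introduce the degenerate comparisons $F_n^{\deg}=\uline{\i}(f_n)=(f_n\circ\pi_x,\pi_x)$ with associated scope maps $\oo{w}{\MT_n^{\deg}}$.

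First I would establish super-exponential proximity to the degenerate orbit. Iterating Theorem~\ref{thm:R-construction} for $\bar\e_0>0$ sufficiently small gives $|\e_n|_\Omega\leq C\bar\e_0^{p^n}$, so
\begin{equation}
|F_n-F_n^{\deg}|_\Omega\;\leq\;C\bar\e_0^{p^n}.
\end{equation}
Next, using Proposition~\ref{prop:1d-scope-conv} together with the explicit formula for the zero-th scope map of a H\'enon-like map (which is a composition of the horizontal diffeomorphism inverse and an affine rescaling), the same perturbative bound transfers from $F$-data to scope-map data: $|\oo{w}{\MT_n}-\oo{w}{\MT_n^{\deg}}|_\Omega\leq C'\bar\e_0^{p^n}$, with the derivatives $\|\D{\oo{w}{\MT_n}}{z}\|$ remaining uniformly bounded by a constant depending only on $\upsilon$ and $\Omega$.

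Next I would verify that the degenerate sequence satisfies the hypotheses of Proposition~\ref{prop:henon-cantorset-stable}. The cylinder images of $\oo{\word{w}{}}{\MT_n^{\deg}}(B)$ have one factor equal to an interval in the unimodal cycle of $f_n$; Theorem~\ref{thm:real-ap-bounds} (part (iii-b)) says such intervals shrink by a definite factor $k_1<1$ at each deepening, which gives the nesting-down-exponentially hypothesis uniformly in $n$ for the degenerate family. The set
\begin{equation}
\Cantor^{\deg}=\bigcap_{n\geq 0}\bigcup_{\word{w}{}\in W^n}\oo{\word{w}{}}{\MT^{\deg}}(B)
\end{equation}
is then a Cantor set, since it fibres over the classical unimodal Cantor attractor of $f$ via $\pi_x$. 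Applying Proposition~\ref{prop:henon-cantorset-stable} with $F_n^{\deg}$ as the reference sequence and $F_n$ as the super-exponential perturbation yields that $\Cantor=\bigcap_n\bigcup_{\word{w}{}\in W^n}\oo{\word{w}{}}{\MT}(B)$ is a Cantor set, proving (i).

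For (ii) and (iii) I would argue as follows. Directly from the construction in Section~\ref{sect:1d-construction-revised} one has $\oo{w}{\MT_n}=\o{w}{F_n}\circ\oo{0}{\MT_n}$, so for $\word{w}{}=w_0w_1\ldots w_{n-1}$,
\begin{equation}
F\bigl(\oo{\word{w}{}}{\MT}(B)\bigr)=\oo{1+\word{w}{}}{\MT}(B),
\end{equation}
where $1+\word{w}{}$ is the adding-machine increment; this, together with injectivity of $F$, gives $F(\Cantor)=\Cantor$. Define $h\colon\oline{W}\to\Cantor$ by $h(\word{w}{})=\bigcap_n\oo{w_0\ldots w_{n-1}}{\MT}(B)$; the nesting-down-exponentially estimate from Proposition~\ref{prop:henon-cantorset-stable} makes the intersection a single point and $h$ a continuous surjection, which is a bijection because cylinders at the same depth are disjoint (their closures intersect in a measure-zero boundary because of the perturbative distance from the disjoint degenerate cylinders). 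The conjugacy $h\circ(1+\cdot)=F\circ h$ is immediate, giving (ii). Finally, the $p$-adic adding machine on $\oline{W}$ is uniquely ergodic with Haar measure; pushing it forward via $h$ produces the unique $F$-invariant Borel probability supported on $\Cantor$, establishing (iii).

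The main obstacle is the verification of the hypotheses of Proposition~\ref{prop:henon-cantorset-stable} at the degenerate level for \emph{arbitrary} stationary combinatorics. The nesting-down-exponentially condition must hold uniformly in $n$, and in dimension one this is guaranteed by the beau version of real a priori bounds (Theorem~\ref{thm:real-ap-bounds}(iii-b)); obtaining the planar version requires that the $y$-factor of the scope map image also shrink, which ultimately reduces to the same unimodal estimate applied to the shifted interval $\oo{w}{J_n}$. Outside of this book-keeping, the argument is a packaging of the perturbation lemma and classical unimodal results.
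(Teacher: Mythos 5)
There is a genuine gap in the verification of the ``nesting down exponentially'' hypothesis for the degenerate reference sequence. You appeal to Theorem~\ref{thm:real-ap-bounds}(iii-b) to conclude that the cylinders $\oo{\word{w}{}}{\MT^{\deg}}(B)$ shrink geometrically, and you say $\Cantor^{\deg}$ ``fibres over the classical unimodal Cantor attractor of $f$ via $\pi_x$''. But the degenerate comparison sequence $F_n^{\deg}=\uline{\i}(f_n)$ is built from the unimodal parts $f_n$ of $F_n=\RH^n F$, and these do \emph{not} form the $\RU$-orbit of a single unimodal map: for nondegenerate $F$ one has $f_{n+1}\neq \RU f_n$ in general. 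Consequently the cylinders $\oo{w_0\ldots w_n}{\MT^{\deg}}(B)$ are images under a \emph{non-autonomous} composition $\oo{w_0}{\MT_0^{\deg}}\circ\cdots\circ\oo{w_n}{\MT_n^{\deg}}$ of scope maps of distinct unimodal maps, and there is no single ``classical unimodal Cantor attractor of $f$'' for them to fiber over. Theorem~\ref{thm:real-ap-bounds} is stated precisely for the pieces $\oo{\word{w}{}}{J}$ of the $\RU$-orbit of one infinitely renormalisable $f$, so it does not directly apply; and the paper's introduction explicitly cautions that for arbitrary stationary combinatorics one cannot rely on branch-wise contraction of the presentation function the way one can in the period-doubling case.

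The paper's own route to the nesting estimate is Proposition~\ref{prop:unimodal-cantorset-stable} (packaged for the degenerate planar setting in Corollary~\ref{cor:deghenon-cantorset-stable}), which is designed for exactly this non-autonomous situation: it takes an \emph{arbitrary} sequence $f_n$ of renormalisable unimodal maps and extracts the Cantor-set/nesting conclusion from three hypotheses — bounded geometry of the central cycle, bounded distortion, and eventually positive Schwarzian of the scope-map branches — using cross-ratio contraction rather than a direct diameter bound. These hypotheses are what one actually needs to verify here, and they hold because $f_n\to f_*$ exponentially and the scope maps $\oo{w}{\mt_{f_n}}$ are inverse branches of iterates of negative-Schwarzian maps. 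Your write-up never touches the Schwarzian or cross-ratio condition, which is the piece of the argument that is new for combinatorics other than period doubling, so this is the step that needs to be redone. (One could alternatively try to transfer the nesting from the fixed-point pieces $\oo{\word{w}{}}{J_*}$ to the non-autonomous ones via Proposition~\ref{prop:1d-scope-conv2}, but that comparison argument would itself need to be carried out; it is not a single citation of Theorem~\ref{thm:real-ap-bounds}.) The remainder of your proposal — super-exponential proximity of $F_n$ to the degenerate sequence via Theorem~\ref{thm:R-construction}, uniform derivative bounds on the scope maps, the application of Proposition~\ref{prop:henon-cantorset-stable} for (i), and the adding-machine / unique-ergodicity argument for (ii) and (iii) — matches the paper's approach and is fine.
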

The set $\Cantor$ will be called the \emph{renormalisation Cantor set} for $F$, or simply the \emph{Cantor set} for $F$ when there is no risk of confusion. 
\begin{proof}
The only thing that needs to be shown is that the limit set is actually a Cantor set. The rest follow by standard arguments.

Let $F\in\I_{\Omega,\perm}$ and let $F_n$ denote it's $n$-th renormalisation. Then for $n>0$ sufficiently large the unimodal part $f_n$ of $F_n$ will be renormalisable and they will converge exponentially to $f_*$. Therefore the corresponding degenerate maps $\i(f_n)$ will satisfy the conditions of Corollary~\ref{cor:deghenon-cantorset-stable} and hence the limit set of their scope maps will be a Cantor set that nests down exponentially. Applying the Proposition above shows the limit set of the scope maps for the $F_n$ is also a Cantor set.
\end{proof}
\begin{rmk}
Let us denote the cylinder sets of $\Cantor$ under the action of $F$ by $\Cantor^{\word{w}{}}$. That is, $\Cantor^{\word{w}{}}=\Cantor\cap \MT^{\word{w}{}}(B)$. Then the collection $\uline\Cantor=\{\Cantor^{\word{w}{}}\}_{\word{w}{}\in W^*}$ has the
following structure
\begin{enumerate}
\item $F(\Cantor^{\word{w}{}})=\Cantor^{1+\word{w}{}}$ for all $\word{w}{}\in W^*$;
\item $\Cantor^{\word{w}{}}$ and $\Cantor^{\word{\tilde w}{}}$ are disjoint for all $\word{w}{}\neq \word{\tilde w}{}$ of the same length;
\item the disjoint union of the $\Cantor^{\word{w}{w}}$ is equal to $\Cantor^{\word{w}{}}$, for all $\word{w}{}\in W^*, w\in W$;
\item $\Cantor =\bigcup_{\word{w}{}\in W^n}\Cantor^{\word{w}{}}$ for each $n\geq 1$.
\end{enumerate} 
This will play an important role in studying the geometry of the Cantor set $\Cantor$.
\end{rmk}

\begin{rmk}
For any integer $n>0$ we can construct the functions $\MT^{\word{w}{}}_n=\MT^{\word{w}{}}(F_n)$ and the sets $\Cantor_n^{\word{w}{}}=\Cantor^{\word{w}{}}(F_n)$ in the same way
as above. Let $\uline\MT_n=\{\MT^{\word{w}{}}_n\}_{\word{w}{}\in W^*}$ and $\uline\Cantor_n=\{\Cantor_n^{\word{w}{}}\}_{\word{w}{}\in W^*}$.

The number $n$ is called the \emph{height} of $\MT^{\word{w}{}}_n$ and $\Cantor_n^{\word{w}{}}$ and the length of $\word{w}{}$ is called the \emph{depth}.
We use the terms height and depth to reflect a kind of duality in our construction.
We will also use these adjectives for all associated objects.
\end{rmk}
\begin{cor}\label{cor:cantor-converge}
Let $\perm$ be a unimodal permutation of length $p>1$. There exist constants $C>0$ and $0<\rho<1$ such that the following holds:
Let $F\in\I_{\Omega,\perm}(\bar\e)$ and let $\word{w}{}\in\bar W$ be an arbitrary infinite word. Then the points $\oo{\word{w}{}}{\Cantor_n}$ and $\oo{\word{w}{}}{\Cantor_*}$
satisfy
\begin{equation}
|\oo{\word{w}{}}{\Cantor_n}-\oo{\word{w}{}}{\Cantor_*}|<C\rho^n.
\end{equation}
\end{cor}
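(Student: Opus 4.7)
The plan is to combine three ingredients. First, exponential convergence of renormalisation on the stable manifold, giving $|F_n - F_*|_\Omega \leq C\rho^n$ for some $0<\rho<1$ (a consequence of hyperbolicity of $F_*$, Theorem~\ref{thm:R-convergence}, pulled back from the unimodal statement, Theorem~\ref{thm:1d-exp-convergence}). Second, continuous dependence of the scope maps on $F$: since the horizontal diffeomorphism $H=(\oo{p-1}{\phi},\pi_y)$ and the affine rescaling $\III$ both vary continuously with $F$ (Propositions~\ref{prop:henon-perturb-invariantdomain} and~\ref{prop:henon-perturb-renormalisation}), so does $\MT=\bar H\circ\bar\III$; this H\'enon-like analog of Proposition~\ref{prop:1d-scope-conv} combined with the first ingredient gives $|\oo{w}{\MT_n}-\oo{w}{\MT_*}|_\Omega\leq C_0\rho^n$. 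Third, exponential nesting of cylinder sets, $\diam\,\oo{w_0\ldots w_k}{\MT_m}(B)\leq C_1\delta^k$ for some $0<\delta<1$, uniformly in $m$ sufficiently large (from the proof of Proposition~\ref{prop:henon-cantorset-stable}, using the beau bounds of Theorem~\ref{thm:real-ap-bounds}).

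First I would fix $\word{w}{}=w_0w_1\ldots\in\bar W$ and an auxiliary point $z\in B$. Since $\oo{\word{w}{}}{\Cantor_n}\in\oo{w_0\ldots w_k}{\MT_n}(B)$ and $\oo{\word{w}{}}{\Cantor_*}\in\oo{w_0\ldots w_k}{\MT_*}(B)$ for every $k\geq 0$, the triangle inequality gives
\begin{equation*}
|\oo{\word{w}{}}{\Cantor_n}-\oo{\word{w}{}}{\Cantor_*}|\leq 2C_1\delta^k+|\oo{w_0\ldots w_k}{\MT_n}(z)-\oo{w_0\ldots w_k}{\MT_*}(z)|.
\end{equation*}
For the second term I would run the same telescoping argument as in the proof of Proposition~\ref{prop:henon-cantorset-stable}: write the difference as the sum of $k+1$ terms obtained by swapping, one at a time, $\oo{w_j}{\MT_*}$ for $\oo{w_j}{\MT_{n+j}}$. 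Bounding the Lipschitz constant of each outer $j$-fold composition by $K^j$, with $K$ the uniform sup-norm bound on $\|\D{\oo{w}{\MT_m}}{}\|$, and using the second ingredient for the $j$-th inner difference, each summand is at most $K^j\cdot C_0\rho^{n+j}$. Hence
\begin{equation*}
|\oo{w_0\ldots w_k}{\MT_n}(z)-\oo{w_0\ldots w_k}{\MT_*}(z)|\leq C_0\rho^n\sum_{j=0}^{k}(K\rho)^j\leq C_2\max\!\bigl(\rho^n,\,(K\rho)^k\rho^n\bigr).
\end{equation*}

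The final step is to optimise $k$ as a function of $n$. If $K\rho<1$ the telescope sum is already $O(\rho^n)$ and sending $k\to\infty$ finishes the argument. If $K\rho\geq 1$ I would balance the nesting contribution $\delta^k$ against the telescope contribution $(K\rho)^k\rho^n$ by setting $k=\lfloor n\log\rho/\log(\delta/K\rho)\rfloor$; both terms then become comparable to $\rho_0^n$ for some $\rho_0\in(\rho,1)$. Either way, $|\oo{\word{w}{}}{\Cantor_n}-\oo{\word{w}{}}{\Cantor_*}|\leq C\rho_0^n$, which is the required estimate.

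The main obstacle is that the Lipschitz factor $K^j$ for the outer composition in the telescope grows exponentially in the depth $j$, so the telescoping alone does not yield contraction in the regime $K\rho\geq 1$. What rescues the argument is that the cylinder sets themselves contract geometrically at a rate $\delta$ independent of the height; the depth-versus-height trade-off turns two competing exponentials into a single net exponential decay in $n$. This is the exponential analog of the super-exponential-versus-exponential balance formalised as Proposition~\ref{prop:exp-vs-superexp} in the proof of Proposition~\ref{prop:henon-cantorset-stable}.
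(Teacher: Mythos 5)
The paper states this corollary (and the essentially identical Proposition~\ref{prop:cantor-convergence}) without supplying a proof, so there is no reference argument to compare against; you are genuinely filling a gap.

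Your argument is correct, and it correctly identifies the real subtlety. The triangle inequality at depth $k$ against an auxiliary base point, the telescope through heights $n,n+1,\ldots,n+k$ using $|\oo{w_j}{\MT_{n+j}}-\oo{w_j}{\MT_*}|_\Omega\leq C_0\rho^{n+j}$, and the final depth-versus-height optimisation $k=k(n)$ together give the exponential bound. The obstruction you flag --- that the derivative bound $K$ on the scope maps is not known to satisfy $K<1$ for general stationary combinatorics, so the telescope alone can diverge when $K\rho\geq 1$ --- is exactly the difficulty singled out in the introduction (the branches of the presentation function of $f_*$ need not contract, cf.\ the work of Eckmann and Wittwer), and your resolution, trading the growth $(K\rho)^k$ against the nesting $\delta^k$ by choosing $k$ linear in $n$, is the right one.

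One thing to tighten if you write this out in full: ingredient 3, the uniform-in-height exponential nesting $\diam\,\oo{w_0\ldots w_k}{\MT_m}(B)\leq C_1\delta^k$, is not literally what the proof of Proposition~\ref{prop:henon-cantorset-stable} delivers (its conclusion is only that the limit set is a Cantor set). To extract a rate you need to rerun the estimate
\begin{equation*}
\diam\,\oo{w_0\ldots w_k}{\MT_m}(B)\leq (2K)^{m_0}\bigl(\delta_*^{k-m_0}+C(2K)^{k-m_0}\bar\e^{\,p^{m+m_0}}\bigr)
\end{equation*}
with the splitting depth $m_0$ chosen proportional to $k$, using that the degenerate (unimodal) cylinders at heights $\geq m$ nest at a beau rate $\delta_*<1$ and that the H\'enon perturbation is superexponentially small in $k$; this yields $\delta=(2K)^c\delta_*^{1-c}<1$ for $c$ small, uniform once $m$ is large. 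This is routine but it is a second, separate optimisation (superexponential-versus-exponential, Proposition~\ref{prop:exp-vs-superexp}) from the one you run in your final step (exponential-versus-exponential), and the proof reads more cleanly if the two are kept distinct.
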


\begin{defn}
Let $F\in \I_{\Omega\perm}(\bar\e)$ be an infinitely renormalisable H\'enon-like map with renormalisation Cantor set $\Cantor$ with $F$-invariant measure $\mu$. Then the \emph{Average Jacobian} of $F$ is defined by
\begin{equation}
b_F=\exp \int_\Cantor \log\left|\jac{F}{z}\right| d\mu(z).
\end{equation}
\end{defn}

The remainder of this work can be considered as a study of this quantity. The following result was given in~\cite{dCML} for period doubling, but the proof is valid for any $\perm$. We state it here without proof.
\begin{lem}[Distortion Lemma]\label{lem:distortion}
Let $\perm$ be a unimodal permutation of length $p>1$. Then there exist constants $C>0$, and $0\leq \rho<1$ such that the following holds:
Let $F\in\I_{\Omega,\perm}(\bar\e)$ and let $\oo{\word{w}{}}{B}$ denote the piece associated to the word $\word{w}{}\in W^*$. Then for any $\oo{\word{w}{}}{B}$, where $\word{w}{}\in W^n$, and any $z_0,z_1\in \oo{\word{w}{}}{B}$,
\begin{equation}
\log \left| \frac{\jac{\o{m}{F}}{z_0}}{\jac{\o{m}{F}}{z_1}}\right|\leq C\rho^n
\end{equation}
for all $m=1,p,\ldots,p^n$.
\end{lem}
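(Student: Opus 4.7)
The approach combines the chain-rule expansion of $\log|\jac{\o{m}{F}}{\cdot}|$ along the orbit with the adding-machine structure from Proposition~\ref{prop:henon-cantorset-existence} and the super-exponential smallness $|\e_j|_\Omega\leq C_0\bar\e^{p^j}$ provided by Theorem~\ref{thm:R-construction}.

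For $m=p^k$ one expands
\begin{equation*}
\log\left|\frac{\jac{\o{m}{F}}{z_0}}{\jac{\o{m}{F}}{z_1}}\right|=\sum_{i=0}^{p^k-1}\bigl(\log|\jac{F}{\o{i}{F}(z_0)}|-\log|\jac{F}{\o{i}{F}(z_1)}|\bigr).
\end{equation*}
The adding-machine action sends both orbits $\{\o{i}{F}(z_0)\}$ and $\{\o{i}{F}(z_1)\}$ into the same piece of depth $n$ at every step. Uniform Lipschitz control of $\log|\jac{F}{\cdot}|=\log|\del_y\e|$ (via Cauchy estimates on the holomorphic extension), together with the bounded-geometry estimates of Theorem~\ref{thm:real-ap-bounds} transferred to the H\'enon setting by the argument in Proposition~\ref{prop:henon-cantorset-existence}, bound each summand by $C\rho_0^n$ for some $\rho_0\in(0,1)$. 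This gives a raw bound of order $p^k\rho_0^n$.

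The raw bound is geometrically small in $n$ only when the product $p^k\rho_0^n$ is, which fails as $k\to n$ unless $p\rho_0<1$. To cover the full range I would use the scope-map semiconjugacy at level $k$: writing $\word{w}{}_k=w_0\cdots w_{k-1}$, the scope map $\oo{\word{w}{}_k}{\MT}$ conjugates $F_k$ on $B$ with $\o{p^k}{F}$ on the piece of depth $k$ containing $\oo{\word{w}{}}{B}$. Writing $\e_k=\bar\e_k\tilde\e_k$ with $\tilde\e_k$ converging to a limit by Theorem~\ref{thm:R-convergence}, the scalar factor $\bar\e_k$ cancels in any ratio of Jacobians of $F_k$, so $\log|\jac{F_k}{\cdot}|$ has uniformly bounded Lipschitz constant. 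Distortion of $F_k$ on a piece of depth $n-k$ for $F_k$ is therefore at most $C\rho_0^{n-k}$, and the scope-map cross-ratios of the form $\jac{\oo{\word{w}{}_k}{\MT}}{F_k\zeta_0}\jac{\oo{\word{w}{}_k}{\MT}}{\zeta_1}/\jac{\oo{\word{w}{}_k}{\MT}}{\zeta_0}\jac{\oo{\word{w}{}_k}{\MT}}{F_k\zeta_1}$ exhibit a second-order cancellation controllable by Proposition~\ref{prop:var-formula}, contributing $O(\rho_0^{n-k}+\bar\e^{p^k})$. Choosing $\rho\in(0,1)$ appropriately as a function of $\rho_0$ and $\bar\e$ yields $C\rho^n$ uniformly in $0\leq k\leq n$.

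\textbf{Main obstacle.} The crux is uniform control across $k$, and especially the endpoint $k=n$: the pulled-back points $\zeta_0,\zeta_1$ then fill essentially all of $B$, so the distortion of $F_n$ on its own domain is not automatically small in $n$. At that endpoint one must exploit the super-exponential decay $|\e_n|_\Omega\leq C_0\bar\e^{p^n}$ directly, matching it against the scope-map contributions so that the combined bound still decays geometrically in $n$; making this matching uniform, rather than degenerating as $k\to n$, is the delicate part of the argument.
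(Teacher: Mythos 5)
The paper does not provide its own proof of this lemma; it simply cites~\cite{dCML} and asserts the period-doubling argument carries over. So the comparison is against what a correct argument must contain.

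Your proposal has a concrete gap at exactly the place you flag. You write that, in the conjugated picture, "$\log|\jac{F_k}{\cdot}|$ has uniformly bounded Lipschitz constant" because $\e_k=\bar\e_k\tilde\e_k$ with "$\tilde\e_k$ converging to a limit by Theorem~\ref{thm:R-convergence}." Theorem~\ref{thm:R-convergence} is the hyperbolicity/stable-manifold theorem: it controls $|F_n-F_*|_\Omega$ (hence the \emph{size} of $\e_n$, which goes to zero super-exponentially) but says nothing about the normalized shape $\e_n/|\e_n|_\Omega$. That the normalized shape converges — equivalently, that $\del_y\e_n$ is comparable to its sup-norm uniformly in $n$, so that $\log|\jac{F_n}{\cdot}|$ has a Lipschitz constant independent of $n$ — is in substance the Universality Theorem~\ref{thm:2d-universality}, whose proof invokes this very Distortion Lemma through its Corollary. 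So the reduction is circular. Even granting the Lipschitz bound, the triangle-inequality decomposition through $F_k$ and the scope-map cross ratio yields a bound of the order $\sigma^{n-k}$, which is $O(1)$ rather than $O(\rho^n)$ for $k$ comparable to $n$; indeed, by universality, $\dis{F_n}{B}$ does \emph{not} tend to zero (it converges to the distortion of the universal function $a$), so the $O(\rho^n)$ smallness must come from a cancellation against the scope-map factors, which your decomposition does not capture.

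Separately, I believe you discard the direct chain-rule route too quickly. Bounding each of the $p^k$ summands by the \emph{maximal} piece diameter gives your raw $p^k\rho_0^n$, but the pieces $\oo{i+\word{w}{}}{B}$, $0\le i<p^k$, visited by the orbit are pairwise disjoint, and the bounded-geometry/Koebe-gap estimates coming from Theorem~\ref{thm:real-ap-bounds} (beau bounds) give a definite gap proportion $g>0$ at each refinement level, so that $\sum_{\word{v}{}\in W^j}\diam\bigl(\oo{\word{v}{}}{B}\bigr)\lesssim(1-g)^j$. Using this together with the relative depth $n-k$ of the visited pieces inside their level-$k$ ancestors gives
$\sum_{i=0}^{p^k-1}\diam\bigl(\oo{i+\word{w}{}}{B}\bigr)\lesssim \sigma_{\max}^{\,n-k}(1-g)^k\le\max(\sigma_{\max},1-g)^n$, uniformly in $k$. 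Combined with the Lipschitz bound on $\log|\jac F\cdot|$ (available for a fixed embedding $F$ since $\jac F\cdot=\del_y\e\ne0$ on $B$), this already produces $C\rho^n$ without the scope-map machinery. The key bookkeeping you are missing is summing the diameters of the visited pieces rather than multiplying their count by a worst-case diameter.
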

\begin{cor}
Let $\perm$ be a unimodal permutation of length $p>1$. Then there exists a universal constant $0<\rho<1$ such that the following holds:
given $0<\bar\e<\bar\e_0$, let $F\in\I_{\Omega,\perm}(\bar\e)$. Then for any integer $n\geq 0$, any $\word{w}{}\in W^n$, and any $z\in \oo{\word{w}{}}{B}$,
\begin{equation}
\jac{\o{p^n}{F}}{z}=b_F^{p^n}(1+\bigo(\rho^n)).
\end{equation}
\end{cor}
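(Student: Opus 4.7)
The plan is to combine the Distortion Lemma with the integral identity arising from $F$-invariance of the measure $\mu$ on $\Cantor$. First I would apply the chain rule
\[
\log|\jac{\o{p^n}{F}}{z}| = \sum_{k=0}^{p^n-1}\log|\jac{F}{\o{k}{F}(z)}|,
\]
and integrate against $\mu$. Using $F$-invariance this gives $\int_{\Cantor}\log|\jac{\o{p^n}{F}}{z}|\,d\mu(z) = p^n\log b_F$. Since the $F$-action on $\Cantor$ is conjugate to the adding machine on $\bar W$ with its Haar measure, every depth-$n$ cylinder $\Cantor^{\word{w}{}}$, $\word{w}{}\in W^n$, has mass $p^{-n}$.

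Next I would apply Lemma~\ref{lem:distortion} with $m = p^n$ on each depth-$n$ piece $\oo{\word{w}{}}{B}$ to conclude that $\log|\jac{\o{p^n}{F}}{z}|$ is constant on $\oo{\word{w}{}}{B}$ up to an error $\bigo(\rho^n)$; denote this approximate value by $c_{\word{w}{}}$. The integral identity then becomes $p^{-n}\sum_{\word{w}{}\in W^n} c_{\word{w}{}} = p^n\log b_F + \bigo(\rho^n)$, so the $c_{\word{w}{}}$ average to $p^n\log b_F$ to the required accuracy. To promote this average to each individual $c_{\word{w}{}}$, I would commute $F$ with $\o{p^n}{F}$: for any $z_{\word{w}{}}\in \oo{\word{w}{}}{B}$,
\[
c_{1+\word{w}{}} - c_{\word{w}{}} = \log|\jac{F}{\o{p^n}{F}(z_{\word{w}{}})}| - \log|\jac{F}{z_{\word{w}{}}}|.
\]
Because $\oo{\word{w}{}}{B}$ is $\o{p^n}{F}$-invariant, both arguments lie in the same depth-$n$ piece, so Lemma~\ref{lem:distortion} applied with $m=1$ bounds each consecutive difference by $\bigo(\rho^n)$.

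The hard part will be controlling the cumulative variation of $c_{\word{w}{}}$ around the full adding-machine cycle of length $p^n$: naive summation produces only $\bigo(p^n\rho^n)$, useful precisely when the universal $\rho$ of Lemma~\ref{lem:distortion} satisfies $p\rho<1$. I expect this to be resolved by enlarging $\rho$ once to a slightly larger universal constant (still less than $1$), using the bounded-geometry estimates of Theorem~\ref{thm:real-ap-bounds} so that the combined exponential decay absorbs the combinatorial factor $p^n$. With that in place, $c_{\word{w}{}} = p^n\log b_F + \bigo(\rho^n)$ uniformly in $\word{w}{}$; exponentiating, and using that $\jac{\o{p^n}{F}}{z}$ has constant sign on the connected piece $\oo{\word{w}{}}{B}$, yields $\jac{\o{p^n}{F}}{z} = b_F^{p^n}(1 + \bigo(\rho^n))$ as claimed.
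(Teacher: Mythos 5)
There is a genuine gap, and it lies exactly where you flagged it. Your telescoping argument accumulates an error of order $p^n\rho^n = (p\rho)^n$ around the adding-machine cycle, and this does \emph{not} decay unless $p\rho < 1$, which the Distortion Lemma does not guarantee. The proposed repair — enlarging $\rho$ to a larger constant still less than one — cannot work: enlarging $\rho$ only weakens the per-step bound, and no choice of $\rho' < 1$ dominates $(p\rho)^n$ when $p\rho \geq 1$. The bounded-geometry estimates of Theorem~\ref{thm:real-ap-bounds} do not remove this combinatorial factor either. So steps 5--7 of your plan cannot close.

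The correct move, which the paper makes and which you should compare against your step 4, is to never average over all pieces and never compare two different pieces. Instead, integrate $\log\left|\jac{\o{p^n}{F}}{z}\right|$ over a \emph{single} cylinder $\oo{\word{w}{}}{\Cantor}$. By the chain rule and the change of variables formula for the pushforward measure,
\begin{equation}
\int_{\oo{\word{w}{}}{\Cantor}}\log\left|\jac{\o{p^n}{F}}{z}\right|\,d\mu(z)
=\sum_{k=0}^{p^n-1}\int_{\o{k}{F}\left(\oo{\word{w}{}}{\Cantor}\right)}\log\left|\jac{F}{z}\right|\,d\mu(z),
\end{equation}
and since the $p^n$ sets $\o{k}{F}\left(\oo{\word{w}{}}{\Cantor}\right)$, $k=0,\ldots,p^n-1$, are precisely the $p^n$ disjoint depth-$n$ cylinders of $\Cantor$ (this is the adding-machine structure acting on one full cycle), the right-hand side collapses to $\int_\Cantor\log\left|\jac{F}{z}\right|\,d\mu(z)=\log b_F$, \emph{independently} of $\word{w}{}$. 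Dividing by $\mu\left(\oo{\word{w}{}}{\Cantor}\right)=p^{-n}$ produces, by the mean value theorem for integrals, a point $\xi\in\oo{\word{w}{}}{\Cantor}$ with $\log\left|\jac{\o{p^n}{F}}{\xi}\right|=p^n\log b_F$. A single application of the Distortion Lemma with $m=p^n$ on $\oo{\word{w}{}}{B}$ then gives $\left|\log\left|\jac{\o{p^n}{F}}{z}\right|-p^n\log b_F\right|\leq C\rho^n$ uniformly on $\oo{\word{w}{}}{B}$, and exponentiating yields the claim. No cycle-length factor $p^n$ ever appears, so the obstruction you correctly anticipated simply does not arise.
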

\begin{rmk}
The constant $\rho$ may be taken as the universal constant from Theorem~\ref{thm:R-convergence}.
\end{rmk}
\begin{proof}
Observe that, as $\mu$ has support on $\Cantor$,
\begin{align}
\int_{\oo{\word{w}{}}{B}}\log|\jac{\o{p^n}{F}}{z}|d\mu(z)
&=\int_{\oo{\word{w}{}}{\Cantor}}\log|\jac{\o{p^n}{F}}{z}|d\mu(z) \notag \\
&=\int_\Cantor\log|\jac{F}{z}|d\mu(z) \notag \\
&=\log b_F.
\end{align}
Therefore, there is a $\xi\in \oo{\word{w}{}}{B}$ such that
\begin{equation}
\log|\jac{\o{p^n}{F}}{\xi}|=\frac{\log b_F}{\mu(\oo{\word{w}{}}{B})}=p^n\log b_F
\end{equation}
so the result follows from the Lemma~\ref{lem:distortion}. 
\end{proof}
\begin{prop}[Monotonicity]\label{monotonicity}
Let $\perm$ be a unimodal permutation of length $p>1$. Let $F_t\in\I_{\Omega,\perm}(\bar\e_0)$ be a one parameter family of infinitely renormalisable H\'enon-like maps such that the average Jacobian $b_t=b(F_t)$ depends strictly
monotonically on $t$. Let $\tilde F_t=\RH F_t$ and let $\tilde b_t=b(\tilde F_t)$. Then $\tilde b_t$ is also strictly monotone in $t$.
\end{prop}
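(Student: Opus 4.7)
The proof hinges on establishing the closed-form relation $\tilde b_t = b_t^{\,p}$ between the two average Jacobians. Once this relation is in hand, strict monotonicity of $\tilde b_t$ in $t$ is automatic: each $b_t>0$ since it is the exponential of an integral, the function $x\mapsto x^p$ is strictly increasing on $(0,\infty)$, so the strict monotonicity of $b_t$ transfers directly to $\tilde b_t=b_t^{\,p}$.

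To derive the key identity, I will exploit the fact that the renormalisation is defined as a conjugacy: writing $\Psi_t=\III_t\circ H_t\colon \oo{0}{B}_t\to B$ for the chart appearing in Definition~\ref{def:henon-renormalisation}, one has $\tilde F_t=\Psi_t\circ \o{p}{F_t}\circ\Psi_t^{-1}$ on the image of $\oo{0}{B_t}$. By the construction of the renormalisation Cantor set in Proposition~\ref{prop:henon-cantorset-existence}, $\Psi_t^{-1}(\tilde\Cantor_t)=\Cantor_t^{0}$, and the unique invariant probability measure satisfies $\tilde\mu_t=p\cdot(\Psi_t)_\ast(\mu_t|_{\Cantor_t^{0}})$, the factor $p$ compensating for $\mu_t(\Cantor_t^{0})=1/p$.

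Taking logarithms of the Jacobian chain rule for the conjugacy gives
\[
\log|\jac{\tilde F_t}{w}|=\log|\jac{\o{p}{F_t}}{\Psi_t^{-1}w}|+\log|\jac{\Psi_t}{\tilde F_t w}|-\log|\jac{\Psi_t}{\Psi_t^{-1} w}|.
\]
Integrating against $\tilde\mu_t$, the two $\Psi_t$-terms cancel by $\tilde F_t$-invariance of $\tilde\mu_t$, leaving
\[
\log\tilde b_t = p\int_{\Cantor_t^{0}}\log|\jac{\o{p}{F_t}}{z}|\,d\mu_t(z).
\]
Expanding $\log|\jac{\o{p}{F_t}}{z}|=\sum_{j=0}^{p-1}\log|\jac{F_t}{\o{j}{F_t}(z)}|$ and using the adding-machine action $\o{j}{F_t}(\Cantor_t^{0})=\Cantor_t^{j}$ together with the $F_t$-invariance of $\mu_t$ (so $(\o{j}{F_t})_\ast\mu_t|_{\Cantor_t^{0}}=\mu_t|_{\Cantor_t^{j}}$), the sum of integrals over the $\Cantor_t^{j}$ reassembles into $\int_{\Cantor_t}\log|\jac{F_t}|\,d\mu_t=\log b_t$. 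Thus $\log\tilde b_t=p\log b_t$, as required.

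The only real difficulty is bookkeeping: one must track carefully that the chart $\Psi_t$ genuinely intertwines $\tilde F_t$ with $\o{p}{F_t}$ on a neighbourhood of $\Cantor_t^{0}$ (not merely on a measure-zero set), and that the adding-machine identification of depth-$n$ cylinder sets for $\tilde F_t$ with depth-$(n+1)$ cylinder sets starting with $0$ for $F_t$ matches the measure-theoretic pushforward claimed above. Both are immediate from Sections~\ref{sect:1d-construction-revised} and~\ref{sect:scope}, but they are where the rigour lives.
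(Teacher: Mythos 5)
Your proposal is correct and follows essentially the same route as the paper: establish $\log\tilde b_t=p\log b_t$ by conjugating through the scope map, expanding $\log|\jac{\o{p}{F_t}}{\cdot}|$ via the chain rule, cancelling the chart-Jacobian terms using invariance of the measure, and reassembling over the cylinder cycle. The paper phrases the cancellation in terms of $\o{p}{F_t}$-invariance of $\mu_t|_{\Cantor_t^0}$ rather than $\tilde F_t$-invariance of $\tilde\mu_t$, but these are the same fact in different coordinates.
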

\begin{proof}
Let $\tilde F_t=\RH F_t$, $\tilde\Cantor_t=\Cantor(\tilde F_t)$, and $\tilde\mu_t=\mu(\tilde F_t)$. Recall that, by definition,
\begin{equation}
\log b_t=\int_{\Cantor_t}\log|\jac{F_t}{z}|d\mu_t (z), \quad \log\tilde b_t=\int_{\tilde\Cantor_t}\log|\jac{\tilde F_t}{z}|d\tilde\mu_t(z).
\end{equation}
Then by construction $\tilde F_t=\MT_t^{-1}\o{p}{F_t}\MT_t$ and $\tilde\Cantor_t=\bar\MT_t(\Cantor_t^{0})$, where $\Cantor_t^0=\Cantor_t\cap B_t^0$. Since $\mu_t$ and $\tilde\mu_t$ are determined by the adding machine actions on $\Cantor_t$ and $\tilde\Cantor_t$ respectively we also have $\tilde\mu_t=p\mu_t\circ \MT_t$.
Therefore
\begin{align}
& \int_{\tilde\Cantor_t}\log|\jac{\tilde F_t}{z}|d\tilde{\mu}_t(z) \notag \\
&=p\int_{\bar\MT_t(\Cantor_t^0)}\log\left(\left|\jac{\o{p}{F_t}}{\MT_t(z)}\right|\left|\frac{\jac{\MT_t}{z}}{\jac{\MT_t}{\tilde F_t(z)}}\right|\right)d(\mu_t\circ\MT_t)(z)
\end{align}
hence\footnote{here we use the integral substitution fomula, namely if $(X,\mathcal B),(X',\mathcal B')$ are measurable spaces, $\mu$ is a measure on $X$, $T\colon X\to Y$ surjective then for all $\mu\circ T^{-1}$-measurable $\phi$ on $Y$,
\[\int_X\phi\circ Td\mu=\int_Y\phi d (\mu\circ T^{-1})\] }
\begin{align}
&\int_{\tilde\Cantor_t}\log|\jac{\tilde F_t}{z}|d\tilde{\mu}_t(z) \\
&=p\int_{\Cantor_t^0}\log\left(\left|\jac{\o{p}{F_t}}{z}\right|\left|\frac{\jac{\MT_t}{\bar\MT_t(z)}}{\jac{\MT_t}{\bar\MT_t\o{p}{F_t}(z)}}\right|\right)d\mu_t(z) \notag \\
&=p\int_{\Cantor_t^0}\sum_{i=0}^{p-1}\log|\jac{F_t}{\o{i}{F_t}(z)}|d\mu_t(z)+p\int_{\Cantor_t^0}\log\left(\frac{\jac{\MT_t}{\bar\MT_t(z)}}{\jac{\MT_t}{\bar\MT\o{p}{F_t}(z)}}\right)d\mu_t(z). \notag
\end{align}
Now observe, by definition of $\mu_t$,
\begin{equation}
\int_{\Cantor_t^0}\sum_{i=0}^{p-1}\log|\jac{F_t}{\o{i}{F_t}(z)}|d\mu_t(z)
=\int_{\Cantor_t}\log|\jac{F_t}{z}|d\mu_t(z)
\end{equation}
and
\begin{equation}
\int_{\Cantor_t^0}\log|\jac{\MT_t}{\bar\MT\o{p}{F_t}(z)}|d\mu_t(z)
=\int_{\Cantor_t^0}\log|\jac{\MT_t}{\bar\MT(z)}|d\mu_t(z).
\end{equation}
Together these imply
\begin{equation}
\log \tilde b_t=\int_{\tilde\Cantor_t}\log|\jac{\tilde F_t}{z}|d\tilde{\mu}_t(z) =p\int_{\Cantor_t}\log|\jac{F_t}{z}|d\mu_t =p\log b_t
\end{equation}
which depends monotonically on $t$ if $\log b_t$ depends monotonically. Since the logarithm function is monotone the proof is complete. 
\end{proof}

\section{Asymptotics of Scope Functions}\label{sect:asymptotics}
We study affine rescaling of scope functions and their compositions. We only consider the case when $w_i=0$ for all $i>0$ as this is the simplest to deal with and the most relevant in the next sections. However, we believe a large portion of the results below can be extended to the more general case. As before, unless otherwise stated, 
throughout this section $\perm$ will be a fixed unimodal permutation of length $p>1$ and $\bar\e_0>0$ will be a constant and $\Omega\subset \CC^2$ will be a complex polydisk containing the square $B$ in its interior such that $\I_{\Omega,\perm}(\bar\e)$ is invariant under renormalisation for all $0<\bar\e<\bar\e_0$.
 
\begin{prop}
Let $\perm$ be a unimodal permutation of length $p>1$. Then there exists a constant $\bar\e_0>0$ such that the following holds:
given $0\leq \bar\e<\bar\e_0$, for any $F\in\I_{\Omega,\perm}(\bar\e)$ let $\MT_n\colon B\to B$ denote its $n$-th scope map. Explicitly, for any $(x,y)\in B$, let
\begin{equation}
F(x,y)=(\phi_n(x,y),x); \quad \MT_n(x,y)=(\oo{1}{\mt}_n,\oo{0}{\mt}_n).
\end{equation}
Then there is a constant $C>0$, depending upon $F$ only, such that
\begin{equation}
|\del_{x^i}\oo{1}{\mt}_n(x,y)|<C, \quad |\del_{x^iy^j}\oo{1}{\mt}_n(x,y)|<C\bar\e^{p^n}
\end{equation}
for any $(x,y)\in B$ and any integers $i,j\geq 1$.
\end{prop}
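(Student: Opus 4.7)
The plan is to unpack the definition of the scope map, use the variational formula to localise the $y$-dependence of the horizontal diffeomorphism, and then transfer the resulting bounds to its inverse via implicit differentiation. By construction $\MT_n = \bar H_n \circ \bar\III_n$ where $H_n = (\oo{p-1}{\phi_n},\pi_y)$ and $\bar\III_n$ is affine of the form $(x,y)\mapsto(\sigma_n x + a_n,\pm\sigma_n y + b_n)$. Writing $\bar H_n(u,v) = (\oo{p-1}{\bar\phi_n}(u,v),v)$, where $\oo{p-1}{\bar\phi_n}$ is defined implicitly by $\oo{p-1}{\phi_n}(\oo{p-1}{\bar\phi_n}(u,v),v) = u$, one computes
\begin{equation}
\oo{1}{\mt}_n(x,y) = \oo{p-1}{\bar\phi_n}\bigl(\bar\III_n(x,y)\bigr), \qquad \oo{0}{\mt}_n(x,y) = \pm\sigma_n y + b_n.
\end{equation}
Since the scaling ratio $\sigma_n$ is uniformly bounded by the real a priori bounds (Theorem~\ref{thm:real-ap-bounds}), the chain rule reduces the claim to the corresponding estimates for $\oo{p-1}{\bar\phi_n}$.

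By Theorem~\ref{thm:R-construction} and the super-exponential contraction established in its proof, the thickening $\e_n$ of $F_n$ satisfies $|\e_n|_\Omega \leq C_0\bar\e^{p^n}$. Since $\e_n$ extends holomorphically to $\Omega$, Cauchy's estimates on a slightly smaller polydisk $\Omega'\subset\Omega$ containing $B$ give $|\del_{x^iy^j}\e_n|_{\Omega'} \leq C_{i,j}\bar\e^{p^n}$ for all $i,j\geq 0$. Applying the variational formula (Proposition~\ref{prop:var-formula}) then yields the decomposition
\begin{equation}
\oo{p-1}{\phi_n}(x,y) = \o{p-1}{f_n}(x) + R_n(x,y),
\end{equation}
where the $y$-dependence of $\oo{p-1}{\phi_n}$ enters exclusively through $\e_n$ and its higher-order analogues in the expansion, so $|\del_{x^iy^j}\oo{p-1}{\phi_n}(x,y)| \leq C_{i,j}\bar\e^{p^n}$ for every $i\geq 0$ and $j\geq 1$.

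The holomorphic implicit function theorem then produces $\oo{p-1}{\bar\phi_n}$ on a domain containing $\bar\III_n(B)$. Since $\o{p-1}{f_n}\to\o{p-1}{f_*}$ exponentially and $\o{p-1}{f_*}$ is a diffeomorphism on the maximal extension of $\oo{1}{J_*}$, the derivative $\del_x\oo{p-1}{\phi_n}(x,y) = (\o{p-1}{f_n})'(x) + O(\bar\e^{p^n})$ is bounded away from zero uniformly in $n$. Implicit differentiation of the defining identity gives
\begin{equation}
\del_v\oo{p-1}{\bar\phi_n}(u,v) = -\left.\frac{\del_y\oo{p-1}{\phi_n}}{\del_x\oo{p-1}{\phi_n}}\right|_{(x,y)=(\oo{p-1}{\bar\phi_n}(u,v),v)} = O(\bar\e^{p^n}),
\end{equation}
and iterating this identity shows, by induction on $i+j$, that each pure derivative $\del_{u^i}\oo{p-1}{\bar\phi_n}$ is uniformly bounded while each mixed derivative $\del_{u^iv^j}\oo{p-1}{\bar\phi_n}$ with $j\geq 1$ is $O(\bar\e^{p^n})$: every additional $v$-derivative brings down at least one factor of $\del_y\oo{p-1}{\phi_n} = O(\bar\e^{p^n})$, while the $u$-derivatives get absorbed into the bounded distortion of $(\o{p-1}{f_n})^{-1}$. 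Composing with $\bar\III_n$ via the chain rule introduces only the bounded factors $\sigma_n^{i+j}$, giving the claim.

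The principal technical obstacle is ensuring that the constants remain uniform in $n$ and in $F$ across all orders $(i,j)$. I would handle this by fixing once and for all a slightly smaller polydisk on which every horizontal diffeomorphism and every implicit inverse under consideration extends holomorphically --- an invariance of the H\'enon-like class under renormalisation that is built into Theorem~\ref{thm:R-construction} --- and then converting the sup-norm estimates of the previous paragraph into arbitrary-order derivative bounds via Cauchy's estimates on the interior polydisk.
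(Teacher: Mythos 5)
Your proposal is correct, but it takes a genuinely different route from the paper. The paper's proof is a two-line argument: pick the degenerate $\tilde F_n\in\H_{\Omega,\perm}(0)$ at distance $O(\bar\e^{p^n})$ from $F_n$, note that the scope map $\oo{1}{\tilde\mt}_n$ of a degenerate map has no $y$-dependence whatsoever (so its mixed partials vanish identically) and that $\tilde\MT_n\to\MT_*$ uniformly with $\MT_*$ analytic, and then transfer the $O(\bar\e^{p^n})$ sup-norm gap $|\MT_n-\tilde\MT_n|_\Omega$ to all derivative orders by Cauchy estimates on a slightly smaller polydisk. You instead compute $\oo{1}{\mt}_n$ explicitly as the implicit inverse $\oo{p-1}{\bar\phi}_n\circ\bar\III_n$, bound $|\del_{x^iy^j}\oo{p-1}{\phi}_n|=O(\bar\e^{p^n})$ for $j\geq 1$ via the variational formula plus Cauchy, and then propagate these bounds through the implicit function theorem by induction on $i+j$. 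Both approaches rest on the same two inputs --- the super-exponential decay $|\e_n|_\Omega\leq C\bar\e^{p^n}$ from Theorem~\ref{thm:R-construction} and Cauchy estimates on a shrunken domain --- but the paper's comparison-to-degenerate trick entirely sidesteps the recursive implicit differentiation. Your version is longer but has the pedagogical virtue of exposing \emph{why} the degenerate scope map has vanishing mixed partials; the paper's is cleaner because it treats the structure of $\MT_n$ as a black box once the Lipschitz dependence of the scope map on $F$ is established. One small bookkeeping point worth flagging in your induction step: when you claim that every term in $\del_{u^iv^j}\oo{p-1}{\bar\phi}_n$ with $j\geq 1$ carries at least one $y$-derivative of $\oo{p-1}{\phi}_n$, this is correct, but it relies on the $v$-derivatives of $\oo{p-1}{\bar\phi}_n$ appearing in the lower-order terms being $O(\bar\e^{p^n})$ as well --- so the induction must be on the total order $i+j$ with the uniform boundedness of the pure $u$-derivatives established simultaneously, as you indicate.
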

\begin{proof}
By Theorem~\ref{thm:R-convergence} we know there exists a constant $C_0>0$ and, for each integer $n>0$, a degenerate $\tilde F_n\in\H_{\Omega,\perm}(0)$ such that $|F_n-\tilde
F_n|_{\Omega}\leq C_0\bar\e^{p^n}$ and $F_n$
converges exponentially to $F_*$. Let $\tilde\MT_{n}$ denote the scope function for $F_n$. Then this implies there exists a constant $C_1>0$ such that $|\MT_{n}-\tilde
\MT_{n}|_{\Omega}\leq C_1\bar\e^{p^n}$ and $\tilde\MT_n$ converges exponentially to $\MT_*$. Since $\MT_*$ is analytic there exists a constant $C_2>0$ such that
$|\del_{x^i}\oo{1}{\mt}_*|<C_2$ and as $F_*$ is degenerate $\del_{x^iy^j}\oo{1}{\mt}_*=0$ for $j>0$. Hence the result follows.
\end{proof}
The next Lemma is a simple application of Taylor's Theorem.
\begin{lem}\label{lem:scope-decomposition}
For any $F\in \I_{\Omega,\perm}(\bar\e)$ let $\MT\colon B\to \oo{0}{B}$ denote its $n$-th scope map. Explicitly, for $(x,y)\in B$ let 
\begin{equation}
F(x,y)=(\phi(x,y),x); \quad \MT(x,y)=(\oo{1}{\mt}(x,y),\oo{0}{\mt}(x,y)).
\end{equation}
Then, for $z_0\in B$ and $z_1\in\RR$ satisfying $z_0+z_1\in B$, $\MT$ can be expressed as
\begin{equation}
\MT(z_0+z_1)=\MT(z_0)+\D{\MT}{z_0}\circ (\id+\Rem{\MT}{z_0})(z_1)
\end{equation}
where $\D{\MT}{z_0}$ denotes the derivative of $\MT$ at $z_0$ and $\Rem{\MT}{z_0}$ is a nonlinear remainder term. The maps $\D{\MT}{z_0}$ and $\Rem{\MT}{z_0}$ take the form
\begin{equation}
\D{\MT}{z_0}=\sigma\iibyii{s(z_0)}{t(z_0)}{0}{1}; \quad \Rem{\MT}{z_0}(z_1)=\ibyii{r(z_0)(z_1)}{0}
\end{equation}
for some functions $s(z)$ and $t(z)$. Here $\sigma$ denotes the scaling ratio of $\MT$.
\end{lem}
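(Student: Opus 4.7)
The plan is to chase through the explicit form of $\MT$ and then apply Taylor's theorem, using the crucial structural fact that the second coordinate of $\MT$ is affine.

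Recall from the construction that $\MT = \bar H \circ \bar\III$, where $\III\colon \oo{0}{B_\diag}\to B$ is the affine rescaling with linear part $\pm\sigma\id$ (so $\bar\III$ has the form $\bar\III(x,y)=(a_0\pm\sigma x,\, a_1\pm\sigma y)$), and where $\bar H$ is the inverse of the horizontal diffeomorphism $H=(\oo{p-1}{\phi},\pi_y)$. Since $H$ preserves horizontal lines, so does $\bar H$; hence $\bar H$ has the form $\bar H(u,v)=(\oo{p-1}{\bar\phi}(u,v),\, v)$ (see Proposition~\ref{prop:horizdiffeo}). Composing, one reads off
\begin{equation*}
\oo{1}{\mt}(x,y)=\oo{p-1}{\bar\phi}\bigl(a_0\pm\sigma x,\,a_1\pm\sigma y\bigr), \qquad \oo{0}{\mt}(x,y)=a_1\pm\sigma y .
\end{equation*}
The second coordinate $\oo{0}{\mt}$ is therefore \emph{affine} in $(x,y)$, depending only on $y$.

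From this form, the Jacobian of $\MT$ at any point $z_0=(x_0,y_0)\in B$ is
\begin{equation*}
\D{\MT}{z_0}=\iibyii{\pm\sigma\,\partial_x\oo{p-1}{\bar\phi}}{\pm\sigma\,\partial_y\oo{p-1}{\bar\phi}}{0}{\pm\sigma}
=\sigma\iibyii{s(z_0)}{t(z_0)}{0}{1},
\end{equation*}
after absorbing the signs appropriately into $s(z_0)$, $t(z_0)$, and the factor $\sigma$. This gives the stated form of the linear part. Note that $s(z_0)\ne 0$ throughout $B$, since $H$ is a diffeomorphism on the pre-renormalisation box (bounded away from the critical locus $\oo{p-1}{\Curve}$), and consequently $\D{\MT}{z_0}$ is invertible.

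For the nonlinear part, I would apply Taylor's theorem componentwise at $z_0$. Because $\oo{0}{\mt}$ is affine, its Taylor remainder is identically zero; all nonlinearity is concentrated in $\oo{1}{\mt}$. Thus
\begin{equation*}
\MT(z_0+z_1)-\MT(z_0)-\D{\MT}{z_0}(z_1)=\bigl(R(z_0,z_1),\,0\bigr)
\end{equation*}
for a smooth scalar remainder $R$ that is $O(|z_1|^2)$. To put this in the claimed factored form, define $r(z_0)(z_1)$ by $r(z_0)(z_1)=R(z_0,z_1)/(\sigma\,s(z_0))$; this is well defined by the preceding paragraph. A direct check using the explicit matrix form yields $\D{\MT}{z_0}\bigl((r(z_0)(z_1),0)\bigr)=(\sigma\,s(z_0)\,r(z_0)(z_1),\,0)=(R(z_0,z_1),0)$, so
\begin{equation*}
\MT(z_0+z_1)=\MT(z_0)+\D{\MT}{z_0}\circ\bigl(\id+\Rem{\MT}{z_0}\bigr)(z_1),
\end{equation*}
with $\Rem{\MT}{z_0}(z_1)=(r(z_0)(z_1),0)$, as required.

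There is no real obstacle here beyond correctly identifying the two structural ingredients: (i) the affine character of the second coordinate of $\MT$, which is forced by $H$ preserving horizontal lines and $\III$ being affine, and (ii) the non-vanishing of $s(z_0)$, which is exactly the condition that lets us factor the scalar remainder through the invertible linear map $\D{\MT}{z_0}$. Everything else is bookkeeping with Taylor's theorem.
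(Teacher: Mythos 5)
Your proof is correct and follows the same approach the paper takes — the paper dismisses this lemma as "a simple application of Taylor's Theorem" and gives no further detail, so your derivation simply spells out what is being asserted. You correctly identify the two structural facts that make Taylor's theorem yield the stated special form: the second coordinate of $\MT=\bar H\circ\bar\III$ is affine in $y$ (since $\bar H$ preserves horizontal lines and $\bar\III$ is affine), which forces the remainder to vanish in that coordinate, and $s(z_0)\neq 0$ (since $H$ is a diffeomorphism away from $\oo{p-1}{\Curve}$), which lets you factor the scalar remainder through the invertible $\D{\MT}{z_0}$.
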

\begin{rmk}\label{rmk:sigma}
There are two related quantities that will henceforth play an important role. The first is the scaling ratio of $F_*$, defined to be the unique eigenvalue, of multiplicity two,
of the affine factor of $\MT_*=\oo{0}{\MT}_*$. The second is the derivative of $\oo{0}{\MT}_*$ at the tip $\tau_*$ of $F_*$. By Lemma~\ref{lem:univ-linearisation} the
derivative of $\oo{1}{\mt}_*$ at its fixed point is also this scaling ratio (up to sign, which depends on the combinatorics), but the fixed point of $\oo{1}{\mt}$ is the
critical value, which is the projection onto the $x$ axis of $\tau_*$.
Hence these two quantities coincide and shall be denoted by $\sigma$.
\end{rmk}
\begin{defn}
The functions $s(z)$ and $t(z)$ given by the Lemma~\ref{lem:scope-decomposition} above are called the \emph{squeeze} and the \emph{tilt} of $\MT$ at $z$ respectively.
\end{defn}
\begin{prop}\label{prop:r-estimate}
Let $F\in\H_{\Omega,\perm}(\bar\e)$ and $\tilde F\in\H_{\Omega,\perm}(0)$ satisfy $|F-\tilde F|_{\Omega}<\bar\e$.
Let $\MT=(\oo{1}{\mt},\oo{0}{\mt})$ and $\tilde\MT_f=(\oo{1}{\tilde\mt},\oo{0}{\tilde\mt})$ denote their respective scope maps.
Assume there is a constant $C>1$ such that, for all $i>0$,
\begin{equation}
C^{-1}<\left|\del_x\oo{1}{\mt}\right|_{\Omega} \ \ \mbox{and} \ \ \left|\frac{\del_{x^i}\oo{1}{\mt}}{\del_x\oo{1}{\mt}}\right|_{\Omega}<C
\end{equation}
Then there is a constant $K>0$ such that, if $R(z_0)(z_1)=\Rem{\MT}{z_0}(z_1)$ is defined as above,
\begin{equation}
|\del_x r(z_0)(z_1)|, |\del_{xx} r(z_0)(z_1)|<K(1+|F-F_*|+\bar\e)
\end{equation}
and
\begin{equation}
|\del_y r(z_0)(z_1)|, |\del_{xy} r(z_0)(z_1)|, |\del_{yy} r(z_0)(z_1)|<K\bar\e.
\end{equation}
for any $z_0\in B$ and $z_1\in\RR^2$ satisfying $z_0+z_1\in B$.  
\end{prop}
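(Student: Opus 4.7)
The plan is to extract an explicit formula for $r(z_0)(z_1)$ from Lemma~\ref{lem:scope-decomposition} and then bound its partial derivatives directly. Because $\MT = \bar H\circ \bar\III$, where $\bar H$ preserves the $y$-coordinate and $\bar\III$ is affine with diagonal linear part $\sigma\cdot\id$, the component $\oo{0}{\mt}(x,y)$ is affine in $y$ alone, which is exactly why the second row of $\D{\MT}{z_0}$ is $(0,\sigma)$ and the remainder is confined to the first coordinate. Matching first coordinates in Lemma~\ref{lem:scope-decomposition} and using $\sigma s(z_0) = \del_x\oo{1}{\mt}(z_0)$ gives
\begin{equation*}
r(z_0)(z_1) = \frac{1}{\del_x\oo{1}{\mt}(z_0)}\Bigl[\oo{1}{\mt}(z_0 + z_1) - \oo{1}{\mt}(z_0) - \del_x\oo{1}{\mt}(z_0)\,x_1 - \del_y\oo{1}{\mt}(z_0)\,y_1\Bigr],
\end{equation*}
where $z_1 = (x_1, y_1)$. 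The prefactor is bounded by $C$ via the hypothesis $|\del_x\oo{1}{\mt}|_\Omega > C^{-1}$, so everything reduces to estimating the partials of the bracketed Taylor remainder in $z_1$.

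Differentiating with $z_0$ held fixed, the first partials of $r$ in $z_1$ are $(\del_x\oo{1}{\mt}(z_0))^{-1}[\del_x\oo{1}{\mt}(z_0+z_1) - \del_x\oo{1}{\mt}(z_0)]$ and the analogous expression with $\del_y$, while the second partials collapse to $\del_{x^i y^j}\oo{1}{\mt}(z_0+z_1)/\del_x\oo{1}{\mt}(z_0)$ for $i+j=2$. It therefore suffices to bound $|\del_{xx}\oo{1}{\mt}|$ by $K(1 + |F-F_*| + \bar\e)$ and each of $|\del_y\oo{1}{\mt}|$, $|\del_{xy}\oo{1}{\mt}|$, $|\del_{yy}\oo{1}{\mt}|$ by $K\bar\e$ on $B$; the estimate on $|\del_x r|$ itself will then follow from the one-variable integral identity $\del_x\oo{1}{\mt}(z_0+z_1) - \del_x\oo{1}{\mt}(z_0) = \int_0^1[\del_{xx}\oo{1}{\mt}\,x_1 + \del_{xy}\oo{1}{\mt}\,y_1](z_0 + tz_1)\,dt$ evaluated using these two bounds.

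For the pure $x$-derivative, I compare $\oo{1}{\mt}$ with the scope map $\oo{1}{\mt_*}$ of the fixed point $F_*$. Lipschitz continuity of scope maps in $F$ (Proposition~\ref{prop:henon-perturb-renormalisation}) combined with $|F - \tilde F|_\Omega < \bar\e$ gives $|\oo{1}{\mt} - \oo{1}{\mt_*}|_\Omega \leq C(|F-F_*| + \bar\e)$, and a Cauchy estimate on a subdomain of $\Omega$ compactly containing $B$ transfers this to second derivatives, producing $|\del_{xx}\oo{1}{\mt}| \leq |\del_{xx}\oo{1}{\mt_*}| + C'(|F-F_*|+\bar\e) \leq K(1 + |F-F_*| + \bar\e)$, as required. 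For the three $y$-derivative bounds I use the fact that $\tilde F$ is degenerate: its scope component $\oo{1}{\tilde\mt}$ depends on $x$ alone, so $\del_y\oo{1}{\tilde\mt} \equiv \del_{xy}\oo{1}{\tilde\mt} \equiv \del_{yy}\oo{1}{\tilde\mt} \equiv 0$, and the same continuity argument gives $|\oo{1}{\mt} - \oo{1}{\tilde\mt}|_\Omega \leq C\bar\e$. Applying Cauchy estimates in the $y$-variable on a subdomain containing $B$ then upgrades this to $|\del_y\oo{1}{\mt}|, |\del_{xy}\oo{1}{\mt}|, |\del_{yy}\oo{1}{\mt}| \leq C'\bar\e$. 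The only real subtlety is ensuring that the Lipschitz-in-$F$ constants for the scope map are uniform over the perturbative neighbourhood, which follows from the uniform estimate in Proposition~\ref{prop:henon-perturb-renormalisation} together with compactness of the relevant portion of $\U_{\Omegax,\perm}$.
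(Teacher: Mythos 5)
Your proof is correct, but it is a genuinely different argument from the paper's. The paper expands $\oo{1}{\mt}$ in a power series around $z_0$, writes
\begin{equation*}
r(z_0)(z_1)=\sum_{i+j\geq 2}\binom{i+j}{j}x_1^iy_1^j\,\frac{\del_{x^iy^j}\oo{1}{\mt}(z_0)}{\del_x\oo{1}{\mt}(z_0)},
\end{equation*}
splits this as $A_0(x_1)+y_1A_1(x_1)+y_1^2A_2(x_1,y_1)$, and then estimates each block coefficient-by-coefficient: $A_0$ via Lemma~\ref{lem:ratio-perturb} and the ratio hypothesis $|\del_{x^i}\oo{1}{\mt}/\del_x\oo{1}{\mt}|<C$, and $A_1,A_2$ via the Variational Formula of Proposition~\ref{prop:var-formula}, which makes the $y$-derivatives $\bigo(\bar\e)$. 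You instead invert the decomposition of Lemma~\ref{lem:scope-decomposition} to get the closed form $r(z_0)(z_1)=(\del_x\oo{1}{\mt}(z_0))^{-1}[\oo{1}{\mt}(z_0+z_1)-\oo{1}{\mt}(z_0)-D\oo{1}{\mt}(z_0)\cdot z_1]$, observe that the partial derivatives of $r$ in $z_1$ then reduce to partial derivatives of $\oo{1}{\mt}$ at nearby points divided by the bounded-below $\del_x\oo{1}{\mt}(z_0)$, and bound those by Cauchy estimates: against $\oo{1}{\mt_*}$ for the $x$-direction, and against the degenerate $\oo{1}{\tilde\mt}$ (whose $y$-dependence vanishes identically) for all $y$-derivatives. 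What your route buys is that it never invokes a term-by-term series bound, so there is no convergence-radius restriction on $|z_1|$; the paper's argument tacitly assumes $|z_1|\leq\gamma<1$ in its $\frac{|x_1|^2}{1-|x_1|}$ bounds, whereas your closed form and integral identity work for any $z_0,z_0+z_1\in B$. The one place where your argument leans on something not quite stated in the paper is the Lipschitz dependence $|\oo{1}{\mt}-\oo{1}{\tilde\mt}|_\Omega\leq C\bar\e$: Proposition~\ref{prop:henon-perturb-renormalisation} controls $\RH F$, not the scope map itself, so you are extrapolating from its proof (where $H$, $\oo{0}{B}_\diag$ and $\III$ are shown to vary continuously with $F$). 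That extrapolation is sound, but you could sidestep it entirely by using the Variational Formula as the paper does: $\oo{p-1}{\phi}$ depends on $y$ only through $\e$, so $\oo{p-1}{\bar\phi}$, and hence $\oo{1}{\mt}$, has $y$-derivatives of order $\bar\e$ on $\Omega$, and Cauchy estimates on a subdomain then give your three $K\bar\e$ bounds directly without appealing to perturbation of the scope map.
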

\begin{proof}
Let $z_i=(x_i,y_i)$ for $i=0,1$.
Expanding $\MT$ in power series around $z_0$ and equating it with the above representation gives
\begin{align}
r(z_0)(z_1)
&=\sum_{i,j\geq 0;i+j\geq 2} \binom{i+j}{j}x_1^iy_1^j\dfrac{\del_{x^iy^j}\oo{1}{\mt}(z_0)}{\del_{x}\oo{1}{\mt}(z_0)}.
\end{align}
and we get a similar expression for $\tilde r(z_0)(z_1)$ and $r_*(z_0)(z_1)$. We may write $r(z_0)(z_1)=A_0(x_1)+y_1A_1(x_1)+y_1^2A_2(x_1,y_1)$ where
\begin{align}
A_0(x_1)&=\sum_{i\geq 2;j=0}x_1^{i}\dfrac{\del_{x^i}\oo{1}{\mt}(z_0)}{\del_{x}\oo{1}{\mt}(z_0)} \\
A_1(x_1)&=\sum_{i\geq 1;j=1}\binom{i+1}{1}x_1^i\dfrac{\del_{x^iy}\oo{1}{\mt}(z_0)}{\del_{x}\oo{1}{\mt}(z_0)} \\
A_2(x_1,y_1)&=\sum_{i\geq 0;j\geq 2}\binom{i+j}{j}x_1^iy_1^j\dfrac{\del_{x^iy^j}\oo{1}{\mt}(z_0)}{\del_{x}\oo{1}{\mt}(z_0)}.
\end{align}
Define $\tilde A_0(x_1),\tilde A_1(x_1)$ and $\tilde A_2(x_1,y_1)$ for $\tilde r(z_0)(z_1)$ and $A_{*,0}(x_1), A_{*,1}(x_1)$ and $A_{*,2}(x_1,y_1)$ for $r_*(z_0)(z_1)$ similarly.
We claim there exists a constant $K_0>0$ such that
\begin{equation}
|A_0'(x_1)|, |A_0''(x_1)|\leq K_0\frac{|x_1|^2}{1-|x_1|}(1+|\tilde F-F_*|_{\Omega}+|\tilde F-F|_{\Omega}).
\end{equation}
First, as $F_*$ is fixed we may assume, without loss of generality, that the constant $C>0$ satisfies $\left|\frac{\del_{x^i}\oo{1}{\mt}_*}{\del_{x}\oo{1}{\mt}_*}\right|<C$. Also
observe that $C^{-1}< |\del_x\oo{1}{\mt}|$ implies $|\del_x\oo{1}{\tilde\mt}|^{-1}<C(1+\kappa\bar\e)$ for some $\kappa>0$.
Therefore, by Lemma~\ref{lem:ratio-perturb}, 
\begin{equation}
\left|\frac{\del_{x^i}\oo{1}{\tilde\mt}}{\del_{x}\oo{1}{\tilde\mt}}-\frac{\del_{x^i}\oo{1}{\mt_*}}{\del_{x}\oo{1}{\mt_*}}\right|_{\Omega}
\leq C(1+\kappa\bar\e)\max\left(1,C\right)\max_i\left(|\del_{x^i}\oo{1}{\tilde\mt}-\del_{x^i}\oo{1}{\mt}_*|_\Omega\right).
\end{equation}
The same argument, this time using the assumption $\left|\frac{\del_{x^i}\oo{1}{\mt}}{\del_{x}\oo{1}{\mt}}\right|<C$, also implies
\begin{equation}
\left|\frac{\del_{x^i}\oo{1}{\tilde\mt}}{\del_{x}\oo{1}{\tilde\mt}}-\frac{\del_{x^i}\oo{1}{\mt}}{\del_{x}\oo{1}{\mt}}\right|_{\Omega}
\leq C(1+\kappa\bar\e)\max\left(1,C\right)\max_i\left(|\del_{x^i}\oo{1}{\tilde\mt}-\del_{x^i}\oo{1}{\mt}|_\Omega\right),
\end{equation}
so analyticity of $F,\tilde F$ and $F_*$ implies there is a constant $C_0>0$ such that
\begin{equation}
\left|\frac{\del_{x^i}\oo{1}{\tilde\mt}}{\del_{x}\oo{1}{\tilde\mt}}-\frac{\del_{x^i}\oo{1}{\mt_*}}{\del_{x}\oo{1}{\mt_*}}\right|_{\Omega}
\leq C_0|\tilde F-F_*|_{\Omega}
\end{equation}
and
\begin{equation}
\left|\frac{\del_{x^i}\oo{1}{\tilde\mt}}{\del_{x}\oo{1}{\tilde\mt}}-\frac{\del_{x^i}\oo{1}{\mt}}{\del_{x}\oo{1}{\mt}}\right|_{\Omega}
\leq C_0|\tilde F-F|_{\Omega}.
\end{equation}
Hence, by the summation formula for a geometric progression,
\begin{align}
|\tilde A_0(x_1)-A_{*,0}(x_1)|
&\leq\sum_{i\geq 2}|x_1|^i\left|\frac{\del_{x^i}\oo{1}{\tilde\mt}(x_0)}{\del_{x}\oo{1}{\tilde\mt}(x_0)}-\frac{\del_{x^i}\oo{1}{\mt_*}(x_0)}{\del_{x}\oo{1}{\mt_*}(x_0)}\right| 
\leq \frac{C_0|x_1|^2}{1-|x_1|}|\tilde F-F_*|_{\Omega}
\end{align}
and similarly
\begin{align}
|\tilde A_0(x_1)-A_{0}(x_1)|
&\leq\sum_{i\geq 2}|x_1|^i\left|\frac{\del_{x^i}\oo{1}{\tilde\mt}(x_0)}{\del_{x}\oo{1}{\tilde\mt}(x_0)}-\frac{\del_{x^i}\oo{1}{\mt_*}(x_0)}{\del_{x}\oo{1}{\mt_*}(x_0)}\right|
\leq \frac{C_0|x_1|^2}{1-|x_1|}|\tilde F-F|_{\Omega}.
\end{align}
Secondly, observe that analyticity and degeneracy of $F_*$ implies  there exists a constant $C_1>0$ such that $|A_{*,0}(x_1)|<\frac{C_1|x_1|^2}{1-|x_1|}$. Therefore there exists
a $K_0>0$ such that
\begin{align}
|A_0(x_1)|
&\leq |A_{*,0}(x_1)|+|\tilde A_0(x_1)-A_{*,0}(x_1)|+|A_0(x_1)-\tilde A_0(x_1)| \\
&\leq \frac{K_0|x_1|^2}{1-|x_1|}\left(1+|\tilde F- F_*|_{\Omega}+|\tilde F-F|_{\Omega}\right) \notag
\end{align}
and, by analyticity of $A_0$, this implies the bound on its derivatives.
Next we claim there is are constants $C_2,C_3>0$ such that
\begin{equation}
 |A_1(z_1)|, |A_1'(z_1)|, |A_1''(z_1)| \leq C_2\bar\e|z_1|,
 \end{equation}
and
\begin{equation}
|\del_xA_2(z_1)|, |\del_yA_2(z_1)|, |\del_{xx}A_2(z_1)|, |\del_{xy}A_2(z_1)|, |\del_{yy}A_2(z_1)|\leq C_3\bar\e |z_1|.
\end{equation}
This can be seen by observing that all the coefficients of $A_1(z_1)$ and $A_2(z_1)$ are of the form $\del_{x^iy^j}\oo{p-1}{\mt}(z_0)/\del_x\oo{p-1}{\mt}(z_0)$, but from the Variational Formula there exists a constant $C_4>0$ such that
\begin{equation}
C_4^{-1}<|\del_{x}\oo{1}{\mt}|<C_4; \quad |\del_{x^iy^j}\oo{1}{\mt}|<C_4\bar\e,
\end{equation}
hence all coefficients are bounded by $C_4^2\bar\e$ in absolute value. Therefore, assuming $|z_1|\leq \gamma <1$, the above estimates must hold by setting $C_3=C_4^2/(1-\gamma)$.

Now differentiating $r(\MT;z_0)$ and applying the above estimates we find there exists a $C>0$ such that, for $|z_1|\leq \gamma<1$,
\begin{align}
|\del_x r(\MT;z_0)(z_1)|
&\leq |A_0'(x_1)|+|y_1||A_1'(x_1)|+|y_1|^2|\del_xA_2(x_1,y_1)| \notag \\
&\leq C(1+|f-f_*|+\bar\e) \\
|\del_y r(\MT;z_0)(z_1)|
&\leq |A_1(x_1)|+2|y_1||A_2(x_1,y_1)|+|y_1|^2|\del_y A_2(x_1,y_1)| \notag \\
&\leq C\bar\e  \\
|\del_{xx} r(\MT;z_0)(z_1)|
&\leq |A_0''(x_1)|+|y_1||A_1''(x_1)|+|y_1|^2|\del_{xx}A_2(x_1,y_1)| \notag \\
&\leq C(1+|f-f_*|+\bar\e) \\
|\del_{xy} r(\MT;z_0)(z_1)|
&\leq |A_1'(x_1)|+2|y_1||\del_xA_2(x_1,y_1)|+|y_1|^2|\del_{xy}A_2(x_1,y_1)| \notag \\
&\leq C\bar\e \\
|\del_{yy} r(\MT;z_0)(z_1)|
&\leq 2|A_2(x_1,y_2)|+4|y_1||\del_yA_2(x_1,y_1)|+|y_1|^2|\del_{yy}A_2(x_1,y_1)| \notag \\
&\leq C\bar\e
\end{align}
and hence the result is proved. 
\end{proof}

\section{Asymptotics around the Tip}\label{sect:tip}
As before, unless otherwise stated, 
throughout this section $\perm$ will be a fixed unimodal permutation of length $p>1$ and $\bar\e_0>0$ will be a constant and $\Omega\subset \CC^2$ will be a complex polydisk containing the square $B$ in its interior such that $\I_{\Omega,\perm}(\bar\e)$ is invariant under renormalisation for all $0<\bar\e<\bar\e_0$.

For a given $F\in \I_{\Omega,\perm}(\bar\e)$ we now wish to study the Cantor set $\Cantor$, and the behaviour of $F$ around it, in more detail. We will do this locally around a pre-assigned point. Let
\begin{equation}
\tau=\tau(F)=\bigcap_{n\geq 0} \oo{0^n}{B}.
\end{equation}
We call this point the \emph{tip}. The study of the orbit of this point is analogous to studying the critical orbit for a unimodal map. The remainder of our work can be viewed as the study of the behaviour of $F$ around $\tau$.

For $F\in\I_{\Omega,\perm}(\bar\e)$, as usual, let $F_n$ denote the $n$-th renormalisation and let $\MT_n\colon B\to \oo{0}{B_n}$ denote the scope map for $F_n$. 
Explicitly, $F_n(x,y)=(\phi_n(x,y),x)$ and $\MT_n(x,y)=(\oo{1}{\mt}_n(x,y),\oo{0}{\mt}_n(x,y))$.
Now let $\MT_{m,n}=\MT_m\circ\ldots\circ\MT_n$. Then $\MT_{m,n}(x,y)=(\oo{1}{\mt_{m,n}}(x,y),\oo{0}{\mt}_{m,n}(x,y))$ from height $n+1$ to height $m$. By this convention we let $\MT_{n,n}=\MT_n$. Observe that $\oo{0}{\mt}_{m,n}$ is affine and depends
upon $y$ only. 
Let us define points $\tau_n$ inductively by $\tau_0=\tau$ and $\tau_{n+1}=\MT_{n}^{-1}(\tau_n)$. We will call $\tau_n$ the \emph{tip at height $n$}. We wish to use the decompositions
\begin{align}
\MT_n(\tau_{n+1}+z)
&=\MT_{n}(\tau_{n+1})+\D{\MT_{m,n}}{\tau_{n+1}}\circ (\id+\Rem{\MT_{n}}{\tau_{n+1}})(z) \\
&=\tau_n+\D{\MT_{n}}{\tau_{n+1}}\circ (\id+\Rem{\MT_{n}}{\tau_{n+1}})(z) \notag
\end{align}
and
\begin{align}\label{eq:scope1}
\MT_{m,n}(\tau_{n+1}+z)
&=\MT_{m,n}(\tau_{n+1})+\D{\MT_{m,n}}{\tau_{n+1}}\circ (\id+\Rem{\MT_{m,n}}{\tau_{n+1}})(z_1) \\
&=\tau_m+\D{\MT_{m,n}}{\tau_{n+1}}\circ (\id+\Rem{\MT_{m,n}}{\tau_{n+1}})(z_1), \notag
\end{align}
whenever $\tau_{n+1}+z$ is in $\Dom(\MT_n)$ or $\Dom(\MT_{m,n})$ respectively. For notational simplicity let us denote the derivatives $\D{\MT_{n}}{\tau_{n+1}},
\D{\MT_{m,n}}{\tau_{n+1}}$ and remainder terms, $\Rem{\MT_n}{\tau_{n+1}}$ and $\Rem{\MT_{m,n}}{\tau_{n+1}}$, by $D_n, D_{m,n}, R_n$ and $R_{m,n}$ respectively. 

It will turn out to be fruitful to change to coordinates in which the tips are situated at the origin. Therefore let $\TT_n(z)=z-\tau_n$ and consider the maps
$\afa{\MT}_n=\TT_n\circ\MT_n\circ \TT_{n+1}^{-1}$ and their composites
\begin{equation}
\afa{\MT}_{m,n}=\afa{\MT}_m\circ\cdots\circ\afa{\MT}_n=\TT_m\MT_{m,n}\TT_{n+1}^{-1}.
\end{equation}
From Proposition~\ref{prop:remainder-1} we know, since $\TT_n$ is a translation, that $\Rem{\afa{\MT}_n}{z_0}=\Rem{\MT_n}{\TT_{n+1}^{-1}z_0}$.
Therefore using the same decomposition as above we find, 
\begin{align}
\afa{\MT}_n(z)&=\afa{\MT}_n(0+z) \\
&=\afa{\MT}_n(0)+\D{\afa{\MT}_n}{0}(\id+\Rem{\afa{\MT}_n}{0})(z) \notag \\
&=\D{\MT_n}{\tau_{n+1}}(\id+\Rem{\MT_n}{\tau_{n+1}})(z) \notag
\end{align}
and similarly
\begin{align}
\afa{\MT}_{m,n}(z)&=\afa{\MT}_{m,n}(0+z) \\
&=\afa{\MT}_{m,n}(0)+\D{\afa{\MT}_{m,n}}{0}(\id+\Rem{\afa{\MT}_{m,n}}{0})(z) \notag \\
&=\D{\MT_{m,n}}{\tau_{n+1}}(\id+\Rem{\MT_{m,n}}{\tau_{n+1}})(z) \notag
\end{align}
For notational simplicity let us denote the quantities $\D{\afa{\MT}_{n}}{0}$, $\D{\afa{\MT}_{m,n}}{0}$, $\Rem{\afa{\MT}_n}{0}$ and $\Rem{\afa{\MT}_{m,n}}{0}$, by $\afa{D}_n, \afa{D}_{m,n}, \afa{R}_n$ and $\afa{R}_{m,n}$ respectively. Observe that, because our coordinate changes were translations, these quantities are equal to $D_n, D_{m,n}, R_n$ and $R_{m,n}$ respectively.
The following follows directly from Lemma~\ref{lem:scope-decomposition}.
\begin{lem}\label{lem:D_n-decomposition}
For any $F\in\I_{\Omega,\perm}(\bar\e_0)$ let the linear map $D_n$ and the function $R_n(z)$ be as above. Then $D_n$ and $R_n(z)$ have the respective forms
\begin{equation}
D_n=\sigma_n\iibyii{s_n}{t_n}{0}{1}; \quad R_n(z)=\ibyii{r_n(z)}{0}.
\end{equation}
\end{lem}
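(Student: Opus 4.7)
The statement is really just Lemma~\ref{lem:scope-decomposition} applied to the particular scope map $\MT_n$ at the particular basepoint $\tau_{n+1}$, so the plan is to unpack the definition of $\MT_n$ and read off the block structure.

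First I would recall that, by construction, $\MT_n = \bar H_n \circ \bar\III_n$, where $\bar\III_n$ is the affine bijection $B \to \oo{0}{B_{\diag,n}}$ (mapping a square to a square, hence having diagonal linear part $\pm\sigma_n \cdot \mathrm{diag}(\pm 1,\pm 1)$ by the remark on the scaling ratio), and $\bar H_n = (\oo{p-1}{\bar\phi}_n,\pi_y)$ is the inverse of the horizontal diffeomorphism, so it preserves horizontal lines. Composing these shows that the second coordinate of $\MT_n$ is
\begin{equation}
\oo{0}{\mt}_n(x,y) = \pi_y \circ \bar\III_n(x,y) = \pm\sigma_n \, y + c_n,
\end{equation}
which depends only on $y$ and is affine. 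This is precisely the fact already invoked in the text, that $\oo{0}{\mt}_{m,n}$ is affine in $y$ only, restricted to $m=n$.

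Next I would compute $D_n = D_{\tau_{n+1}}\afa{\MT}_n$. Its bottom row is $(\partial_x \oo{0}{\mt}_n, \partial_y \oo{0}{\mt}_n) = (0,\pm\sigma_n)$ by the previous paragraph, so factoring $\sigma_n$ (with the appropriate sign absorbed) out of $D_n$ gives
\begin{equation}
D_n = \sigma_n \iibyii{s_n}{t_n}{0}{1}, \qquad s_n = \tfrac{1}{\sigma_n}\partial_x\oo{1}{\mt}_n(\tau_{n+1}), \quad t_n = \tfrac{1}{\sigma_n}\partial_y\oo{1}{\mt}_n(\tau_{n+1}),
\end{equation}
which is the first claim.

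For the remainder, I would expand the identity $\afa{\MT}_n(z) = D_n(\id + R_n)(z)$ coordinate-wise. In the second coordinate both sides must agree exactly, but the second coordinate of $\afa{\MT}_n$ is the affine map $z \mapsto \pm\sigma_n\,\pi_y(z)$, so the linear term $D_n z$ already accounts for it entirely; hence $(D_n R_n(z))_2 = 0$. Since $D_n$ acts on the second coordinate by multiplication by $\pm\sigma_n \neq 0$, this forces $\pi_y(R_n(z)) = 0$, so $R_n(z) = (r_n(z), 0)^{\mathsf T}$ for some scalar function $r_n$, giving the second claim. There is no real obstacle here: the whole content is that the second coordinate of the scope map is affine and depends only on $y$, a structural feature inherited directly from $\bar H_n$ preserving $\pi_y$ and $\bar\III_n$ being affine and diagonal.
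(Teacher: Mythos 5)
Your proof is correct and takes the same approach as the paper, whose entire proof is the single sentence preceding the lemma: ``The following follows directly from Lemma~\ref{lem:scope-decomposition}.'' You additionally spell out the structural reason that Lemma~\ref{lem:scope-decomposition} itself asserts without proof --- namely that $\MT_n=\bar H_n\circ\bar\III_n$ with $\bar H_n$ preserving $\pi_y$ and $\bar\III_n$ affine and diagonal, so the $y$-coordinate of the scope map is an affine function of $y$ alone, forcing the bottom row of $D_n$ to be $(0,\sigma_n)$ and the second component of $R_n$ to vanish.
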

\begin{defn}
The quantities $s_n$ and $t_n$ from the preceding Lemma will be called, respectively, the \emph{squeeze} and \emph{tilt} of $\MT_n$ at $\tau_{n+1}$.
\end{defn}
\begin{prop}\label{prop:cantor-convergence}
For $F\in\I_{\Omega,\perm}(\bar\e)$, let $\oo{\word{w}{}}{B}_n$ denote the box of height $n$ with word $\word{w}{}\in
W^*$. Then
\begin{enumerate}
\item for each $\word{w}{}\in W^*$, $\dist_{Haus}(\oo{\word{w}{}}{B}_n,\oo{\word{w}{}}{B}_*)\to 0$ exponentially;
\item for each $\word{w}{}\in \oline{W}$, $\dist_{Haus}(\oo{\word{w}{}}{\Cantor}_n,\oo{\word{w}{}}{\Cantor}_*)\to 0$ exponentially.
\end{enumerate} 

\end{prop}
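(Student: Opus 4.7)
The plan is to push the exponential convergence $F_n\to F_*$ guaranteed by Theorem~\ref{thm:R-convergence} through the scope maps, first to the boxes (for part (i)) and then to the Cantor points (for part (ii)). The single input I need at the level of one scope map is
\[
|\MT_n^w-\MT_*^w|_\Omega \leq C_1\rho^n, \qquad w\in W,
\]
for uniform constants $C_1>0$ and $0<\rho<1$. This follows from Proposition~\ref{prop:henon-perturb-renormalisation} applied to the sequence $F_n$, together with the continuous dependence on $F$ of the horizontal diffeomorphism $H$ and the rescaling $\III$ used to build $\MT$. In addition I need a uniform Lipschitz bound $L>0$ for all $\MT_n^w$ on $B$, independent of $n$; this is essentially the content of Lemma~\ref{lem:D_n-decomposition} combined with the beau bounds underlying Proposition~\ref{prop:henon-cantorset-stable}.

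For part (i), fix $\word{w}{}=w_0\ldots w_{m-1}\in W^m$ and telescope $\oo{\word{w}{}}{\MT_n}-\oo{\word{w}{}}{\MT_*}$ by replacing one factor at a time. The single-factor discrepancy at position $k$ is bounded by $C_1\rho^{n+k}$, and the surrounding compositions contribute a factor $L^{m-1}$, so summing the geometric series yields $|\oo{\word{w}{}}{\MT_n}-\oo{\word{w}{}}{\MT_*}|_\Omega \leq C(\word{w}{})\rho^n$. Taking images of $B$ transfers this directly to the Hausdorff bound $\dist_{Haus}(\oo{\word{w}{}}{B_n},\oo{\word{w}{}}{B_*})\leq C(\word{w}{})\rho^n$.

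For part (ii), with $\word{w}{}=w_0w_1\ldots\in\oline{W}$, I bootstrap from part (i). For each $m\geq 1$ both $\oo{\word{w}{}}{\Cantor_n}\in\oo{w_0\ldots w_{m-1}}{B_n}$ and $\oo{\word{w}{}}{\Cantor_*}\in\oo{w_0\ldots w_{m-1}}{B_*}$, and the uniform nesting constant $0<\delta<1$ from Proposition~\ref{prop:henon-cantorset-stable} gives $\diam(\oo{w_0\ldots w_{m-1}}{B_k})\leq \delta^m\diam(B)$, uniformly in $k$. Combining this with part (i) produces
\[
|\oo{\word{w}{}}{\Cantor_n}-\oo{\word{w}{}}{\Cantor_*}| \leq 2\delta^m\diam(B)+CL^{m-1}\rho^n.
\]
Choosing $m=\lfloor\alpha n\rfloor$ for $\alpha>0$ small enough that $L^\alpha\rho<1$ balances both terms into an exponential rate, recovering Corollary~\ref{cor:cantor-converge}.

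The only delicate point is securing the uniform Lipschitz constant $L$ for the family $\{\MT_n^w\}_n$ independently of $n$, since the telescoping in part (i) degrades with $m$ and the trade-off $m=\alpha n$ in part (ii) only works if $L$ does not itself grow with $n$. This uniformity is available from the beau a priori bounds of Theorem~\ref{thm:real-ap-bounds} transferred to the H\'enon setting through Theorem~\ref{thm:R-convergence}, so I do not anticipate a serious obstruction; the argument is essentially a two-step transport of the exponential rate $\rho$ from the hyperbolicity of $\RH$ at $F_*$.
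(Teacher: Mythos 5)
Your proof is correct, and in fact the paper supplies no explicit argument for this proposition (it is stated as a corollary of the surrounding machinery, mirroring the unproved Corollary~\ref{cor:cantor-converge}), so there is nothing to compare it against directly; your argument is the natural fill-in. The key input $|\MT_n^w - \MT_*^w|_\Omega \leq C_1\rho^n$ is indeed available (it is the two-dimensional analogue of Corollary~\ref{prop:1d-scope-conv2}, and is invoked implicitly in the proof of Proposition~\ref{prop:D_n-estimate}), and the telescoping plus uniform-nesting scheme is what one expects.

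One remark that simplifies the ``delicate point'' you flag: the scope maps are not merely uniformly Lipschitz, they are uniform contractions. By Lemma~\ref{lem:scope-decomposition} and Proposition~\ref{prop:D_n-estimate} (together with the beau bounds of Theorem~\ref{thm:real-ap-bounds} transported through Theorem~\ref{thm:R-convergence}), $\|D\MT_n^w\| \approx \sigma < 1$ uniformly in $n$ once $\bar\e$ is small and $n$ is large enough, with finitely many small $n$ handled by compactness. Thus the Lipschitz constant $L$ may be taken strictly below $1$, in which case the telescoping sum $\sum_{k=1}^m L^{k-1}C_1\rho^{n+k-1} = C_1\rho^n\sum_{k\geq 1}(L\rho)^{k-1}$ is bounded \emph{uniformly in $m$}. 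The bound in part (i) therefore does not degrade with $|\word{w}{}|$, and for part (ii) one can simply let $m\to\infty$ with $n$ fixed in the triangle inequality, obtaining $|\oo{\word{w}{}}{\Cantor_n}-\oo{\word{w}{}}{\Cantor_*}|\leq C_1\rho^n/(1-L\rho)$ directly — no $m=\lfloor\alpha n\rfloor$ trade-off is needed, and the exponential rate is the same $\rho$ as in Theorem~\ref{thm:R-convergence}. Your more cautious route also works, of course; it just imposes a constraint ($L^\alpha\rho<1$) that is automatically satisfied here.
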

\begin{prop}\label{prop:D_n-estimate}
There exist constants $C>1$, and $0<\rho<1$ such that the following holds:
given $0<\bar\e<\bar\e_0$ let $F\in\I_{\Omega,\perm}(\bar\e)$ and for each integer $n>0$ let $\sigma_n, s_n, t_n$ be the constants and $r_n(z)$ the function defined above. 
Then for any $z\in B$,
\begin{align}
&\sigma(1-C\rho^n)<|\sigma_n|<\sigma(1+C\rho^n) \\
&\sigma(1+C\rho^n)<|s_n|<\sigma(1+C\rho^n) \\
&C^{-1}\bar\e^{p^n}<|t_n|<C\bar\e^{p^n} \\
&|\del_xr_n(z)|<C|z|, |\del_yr_n(z)|<C\bar\e^{p^n}|z| \\
&|\del_{xx}r_n(z)|<C|z|, |\del_{xy}r_n(z)|<C\bar\e^{p^n}|z|, |\del_{yy}r_n(z)|<C\bar\e^{p^n}|z|
\end{align}
\end{prop}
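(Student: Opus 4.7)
The plan is to reduce each of the five estimates to two main inputs: the exponential convergence of renormalisation $|F_n - F_*|_{\Omega} \leq C\rho^n$ supplied by Theorem~\ref{thm:R-convergence}, and the thickening bound $|\del_{x^iy^j}\oo{1}{\mt}_n(x,y)| \leq C\bar\e^{p^n}$ for $j \geq 1$ supplied by the first proposition of Section~\ref{sect:asymptotics}. By Lemma~\ref{lem:D_n-decomposition}, together with the fact that $\oo{0}{\mt}_n$ is affine in $y$ with slope $\sigma_n$ (the scaling ratio of $\MT_n$), the quantities $\sigma_n, s_n, t_n$ and $r_n$ are entirely encoded in the Taylor expansion of $\oo{1}{\mt}_n$ around the tip $\tau_{n+1}$: explicitly,
\begin{equation*}
\sigma_n s_n = \del_x\oo{1}{\mt}_n(\tau_{n+1}), \qquad \sigma_n t_n = \del_y\oo{1}{\mt}_n(\tau_{n+1}),
\end{equation*}
and $\sigma_n s_n \cdot r_n(z)$ is the second-order Taylor remainder of $\oo{1}{\mt}_n$ at $\tau_{n+1}$.

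For the scaling ratio $\sigma_n$ I would use that it depends continuously on the scope map, which by Proposition~\ref{prop:1d-scope-conv} converges exponentially to $\MT_*$ along exponentially converging tips $\tau_n \to \tau_*$ (Corollary~\ref{cor:cantor-converge}); this yields $|\sigma_n - \sigma| \leq C\rho^n$. For the squeeze $s_n$, Remark~\ref{rmk:sigma} together with Lemma~\ref{lem:univ-linearisation} identifies $\del_x\oo{1}{\mt}_*(\tau_*)$ with $\pm\sigma$, whence $s_n = \del_x\oo{1}{\mt}_n(\tau_{n+1})/\sigma_n$ converges to $\pm 1$ exponentially fast, giving the stated two-sided bounds.

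For the tilt $t_n$, the upper bound $|t_n| \leq C\bar\e^{p^n}$ is immediate from the thickening bound on $|\del_y\oo{1}{\mt}_n|$ combined with the uniform bound on $|\sigma_n|^{-1}$ from the previous step. The lower bound $|t_n| \geq C^{-1}\bar\e^{p^n}$ is the genuinely delicate point: one must show that $\del_y\oo{1}{\mt}_n$ at the tip is not merely small but in fact comparable to $\bar\e^{p^n}$. The approach would be to express $\del_y\oo{1}{\mt}_n(\tau_{n+1})$ using the definition $\MT_n = \bar H_n \circ \bar\III_n$ with $H_n = (\oo{p-1}{\phi}_n, \pi_y)$, and relate the resulting quantity, up to factors bounded away from zero by the distortion control of $\oo{p-1}{\phi}_n$ along $\oo{1}{J}_n$, to the Jacobian $\jac{F_n}{\tau_n}$; by the corollary to Lemma~\ref{lem:distortion} the latter satisfies $|\jac{F_n}{\tau_n}| = b_F^{p^n}(1 + \bigo(\rho^n))$, which, once $b_F$ is known to be comparable to $\bar\e$ for genuinely non-degenerate $F$, supplies the matching lower bound.

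Finally, for the nonlinear remainder $r_n$, since $\afa{\MT}_n(0) = 0$ and $\D{\afa{\MT}_n}{0} = D_n$ by construction, one has $R_n(0) = 0$ and $\D{R_n}{0} = 0$; in particular $\del_x r_n$ and $\del_y r_n$ vanish at the origin. The mean value theorem then reduces the first-order bounds to bounds on second derivatives, which I would obtain by applying Proposition~\ref{prop:r-estimate} at $z_0 = \tau_{n+1}$; its hypotheses $C^{-1} < |\del_x\oo{1}{\mt}_n|$ and $|\del_{x^i}\oo{1}{\mt}_n|/|\del_x\oo{1}{\mt}_n| < C$ follow from the $\sigma_n, s_n$ bounds above together with the thickening estimate. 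This produces $|\del_{xx} r_n| = \bigo(1)$ and $|\del_{xy} r_n|, |\del_{yy} r_n| = \bigo(\bar\e^{p^n})$, from which $|\del_x r_n(z)| \leq C|z|$ and $|\del_y r_n(z)| \leq C\bar\e^{p^n}|z|$ follow by integrating along the segment from $0$ to $z$. The principal obstacle is the lower bound on $|t_n|$: the other four estimates flow directly from the two input bounds and elementary linear algebra, whereas the lower bound cannot be read off from the thickening size alone and demands an explicit link between the scope-map derivative and the H\'enon Jacobian, exploiting non-degeneracy through the average Jacobian machinery.
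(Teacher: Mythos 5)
Your plan follows the paper's proof for $\sigma_n$, $s_n$, the upper bound on $t_n$, and the remainder $r_n$: the paper controls $|\sigma_n-\sigma|$ via Hausdorff convergence of the central boxes (Proposition~\ref{prop:cantor-convergence}), controls $s_n$ by a triangle inequality through $\del_x\oo{1}{\mt}_*(\tau_*)$ using exponential convergence of scope maps and tips, bounds $|\del_y\oo{1}{\mt}_n|$ by comparing with a degenerate approximant of $F_n$, and then cites Proposition~\ref{prop:r-estimate} for the remainder. Your observation that the first-order bounds $|\del_xr_n(z)|<C|z|$, $|\del_yr_n(z)|<C\bar\e^{p^n}|z|$ need $R_n(0)=\D{R_n}{0}=0$ together with the second-derivative bounds and the mean value theorem actually fills in a step the paper leaves implicit: the stated conclusion of Proposition~\ref{prop:r-estimate} carries no $|z|$ factor, although its proof does. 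One wrinkle, though: following Lemma~\ref{lem:scope-decomposition} literally gives $\sigma_ns_n=\del_x\oo{1}{\mt}_n(\tau_{n+1})$, so your own formula yields $s_n\to\pm1$, not $|s_n|\to\sigma$ as the proposition asserts; you nonetheless declare this ``gives the stated two-sided bounds'', which it does not. (The paper's own proof sidesteps this by simply writing $s_n=\del_x\oo{1}{\mt}_n(\tau_{n+1})$, inconsistently with the lemma's normalisation, so this is a notational ambiguity in the source that your proposal inherits without resolving.)

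The more serious point is the lower bound $C^{-1}\bar\e^{p^n}<|t_n|$. You are right to flag it as the one estimate that does not reduce to the two inputs you identify; what you may not realise is that the paper's proof does not prove it at all -- it establishes $|\del_y\oo{1}{\mt}_n|<C_4\bar\e^{p^n}$ and declares the item done. Your proposed route through the Jacobian and the Distortion Lemma is the right idea in spirit (it is essentially how the sharp two-sided estimate $ab^{p^m}(1\pm C\rho^m)$ on the tilt is eventually obtained in Proposition~\ref{prop:t_mn-properties}, after Theorem~\ref{thm:2d-universality} is available), but your sketch stops at ``once $b_F$ is known to be comparable to $\bar\e$'', and that comparability is not free. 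The average Jacobian $b_F$ is a geometric mean of $|\jac{F}{\cdot}|$ over $\Cantor$ and can be far smaller than the sup-norm bound $\bar\e$ on the thickening; indeed for degenerate $F$ (with $\e\equiv 0$, which lies in $\I_{\Omega,\perm}(\bar\e)$ for every $\bar\e$) one has $t_n=0$, so the stated lower bound cannot hold uniformly over $\I_{\Omega,\perm}(\bar\e)$ with $\bar\e$ fixed. So the gap you flag is real, the paper also has it, and the route you sketch for closing it is incomplete without a nondegeneracy hypothesis making $b_F$ comparable to $\bar\e^{}$; fortunately only the upper bound on $|t_n|$ is invoked downstream (in Proposition~\ref{prop:D_mn-estimate}).
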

\begin{proof}
Observe that $\sigma_n$ is the eigenvalue of $D{\III_n^{-1}}{}$, the affine bijection between $\oo{0}{B}_{n,diag}$ and $B$. 
By Proposition~\ref{prop:cantor-convergence} there exists a constant $C_0>0$ such that $\dist_{Haus}(\oo{0}{B_n},\oo{0}{B_*})<C_0\rho^n$ we see that $|\sigma_n-\sigma_*|<C_0\rho^n$. Next observe that
$s_n=\del_x\oo{1}{\mt}_n(\tau_{n+1})$ and, by Lemma~\ref{lem:univ-linearisation}, $\sigma=\del_x\oo{1}{\mt_{f_*}}(\tau_*)$ which implies
\begin{align}
|s_n-\sigma|
&\leq |\del_x\oo{1}{\mt_n}(\tau_{n+1})-\del_x\oo{1}{\mt_*}(\tau_{n+1})|+|\del_x\oo{1}{\mt_*}(\tau_{n+1})-\del_x\oo{1}{\mt_*}(\tau_*)| \\
&\leq |\oo{1}{\mt_n}-\oo{1}{\mt_*}|_{\Omega}+|\del_{xx}\oo{1}{\mt_*}|_{\Omega}|\pi_x(\tau_{n+1})-\pi_x(\tau_*)|. \notag
\end{align}
Again by Proposition~\ref{prop:cantor-convergence} $|\tau_n-\tau_*|<C_0\rho^n$. Also, a consequence of Theorem~\ref{thm:R-convergence} is that there exists a constant $C_1>0$
such that
$|\oo{1}{\mt_n}-\oo{1}{\mt_*}|_{\Omega}<C_1\rho^n$. Since fixing the combinatorial type
fixes the map $\oo{1}{\mt_*}$, we may assume $|\del_{xx}\oo{1}{\mt_*}|_{\Omega}<C_2$ for some constant $C_2>0$. Therefore
\begin{equation}
|s_n-\sigma|\leq C_1\rho^n+C_0C_2\rho^n = (C_1+C_0C_2)\rho^n.
\end{equation}
Now for each $n>0$ choose a $\tilde F_n\in\H_{\Omega,\perm}(0)$ such that $|F_n-\tilde F_n|_{\Omega}<C_3\bar\e^{p^n}$, where $C_3>0$ is the constant from
Theorem~\ref{thm:R-construction}. A consequence of convergence of Renormalisation, Theorem~\ref{thm:R-convergence}, is that there exists a constant $C_4>0$ such that $|\del_y\oo{1}{\mt}_n|=|\del_y\oo{1}{\mt}_n-\del_y\oo{1}{\mt}_{f_n}|<C_4\bar\e^{p^n}$. This concludes the first item. For the next two items we apply Proposition~\ref{prop:r-estimate}.
\end{proof}

\begin{lem}\label{lem:D_mn-decomposition}
For any $F\in\I_{\Omega,\perm}(\bar\e_0)$ let the linear map $D_{m,n}$ and the function $R_{m,n}(z)$ be as above.
Then $D_{m,n}$ and the function $R_{m,n}(z)$ have the respective form
\begin{equation}\label{eq:scope3}
D_{m,n}=\sigma_{m,n}\iibyii{s_{m,n}}{t_{m,n}}{0}{1}; \quad R_{m,n}(z)=\ibyii{r_{m,n}(z)}{0},
\end{equation}
respectively, and so if $\tau_{m}=(\xi_m,\eta_n)$,
\begin{equation}\label{eq:scope2}
\MT_{m,n}(z)=\tau_m+\sigma_{m,n}\ibyii{s_{m,n}\left((x-\xi_m)+r_{m,n}(z-\tau_m)\right)+t_{m,n}(y-\eta_m)}{y-\eta_m}.
\end{equation}
Moreover,
\begin{equation}
\sigma_{m,n}=\prod_{i=m}^n\sigma_i; \quad s_{m,n}=\prod_{i=m}^n s_i; \quad t_{m,n}=\sum_{i=m}^n s_{m,i-1}t_i.
\end{equation}
\end{lem}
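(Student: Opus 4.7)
The lemma is a bookkeeping consequence of Lemma~\ref{lem:D_n-decomposition} built on three observations: $\afa{\MT}_i(0)=0$, the second coordinate of $\afa{\MT}_i(x,y)$ is the linear map $y\mapsto\sigma_i y$, and the chain rule at the origin.

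The first observation is immediate: $\afa{\MT}_i(0)=\TT_i(\MT_i(\tau_{i+1}))=\TT_i(\tau_i)=0$ by the inductive definition of $\tau_i$ and $\TT_i$. Since every intermediate image of $0$ under $\afa{\MT}_m\circ\cdots\circ\afa{\MT}_n$ is again $0$, the chain rule gives $D_{m,n}=D_m D_{m+1}\cdots D_n$. A direct computation using the explicit form of $D_i$ from Lemma~\ref{lem:D_n-decomposition} shows
\begin{equation*}
\sigma_i\iibyii{s_i}{t_i}{0}{1}\cdot\sigma_{i+1}\iibyii{s_{i+1}}{t_{i+1}}{0}{1}=\sigma_i\sigma_{i+1}\iibyii{s_is_{i+1}}{s_it_{i+1}+t_i}{0}{1},
\end{equation*}
so a short induction on the number of factors yields $\sigma_{m,n}=\prod_{i=m}^n\sigma_i$, $s_{m,n}=\prod_{i=m}^n s_i$, and the recurrence $t_{m,n}=t_{m,n-1}+s_{m,n-1}t_n$, whose closed form is the claimed sum $t_{m,n}=\sum_{i=m}^n s_{m,i-1}t_i$ (with the convention $s_{m,m-1}=1$).

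For the form of $R_{m,n}$, I would next exploit the structure $\MT_n=\bar H_n\circ\bar\III_n$. The affine map $\bar\III_n$ has diagonal linear part, as it is an axis-aligned affine bijection between squares, and $\bar H_n=(\oo{p-1}{\bar\phi}_n,\pi_y)$ preserves horizontal lines by construction. Hence the second coordinate of $\MT_n(x,y)$ takes the form $\sigma_n y + d_n$ for a constant $d_n$; the identity $\MT_n(\tau_{n+1})=\tau_n$ fixes $d_n$ so that in the translated coordinates the second coordinate of $\afa{\MT}_n(x,y)$ equals exactly $\sigma_n y$. By composition, the second coordinate of $\afa{\MT}_{m,n}(x,y)$ equals $\sigma_{m,n}y$. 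Comparing this with the defining identity $\afa{\MT}_{m,n}(z)=D_{m,n}(z+R_{m,n}(z))$, and using that $D_{m,n}$ has second row $(0,\sigma_{m,n})$, forces the second component of $R_{m,n}(z)$ to vanish identically. Thus $R_{m,n}(z)=\ibyii{r_{m,n}(z)}{0}$.

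Finally, translating back via $\MT_{m,n}(z)=\tau_m+\afa{\MT}_{m,n}(z-\tau_{n+1})=\tau_m+D_{m,n}\bigl((z-\tau_{n+1})+R_{m,n}(z-\tau_{n+1})\bigr)$ and substituting the derived forms of $D_{m,n}$ and $R_{m,n}$ yields the representation (\ref{eq:scope2}) for $\MT_{m,n}$. The only genuinely substantive input is the observation that the second coordinate of each $\MT_n$ depends affinely on $y$ alone; after that, every formula in the lemma reduces to matrix multiplication and a one-step induction, so there is no real obstacle beyond extracting the horizontal-line-preserving structure of $\bar H_n\circ\bar\III_n$ from the construction of the scope maps.
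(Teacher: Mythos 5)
Your argument is correct, and for the matrix part it is the same as the paper's: base case $m=n$ from Lemma~\ref{lem:D_n-decomposition}, then chain rule at the tips plus a one-step matrix multiplication and induction, yielding $\sigma_{m,n}=\prod\sigma_i$, $s_{m,n}=\prod s_i$, and the recurrence $t_{m,n}=t_{m,n-1}+s_{m,n-1}t_n$ (the paper's displayed recurrence reads $s_{n-1}t_n$, which is evidently a typo for $s_{m,n-1}t_n$; your version is the right one and matches the closed form). Where you genuinely deviate is in establishing that $R_{m,n}$ has vanishing second component: the paper leaves this implicit in the phrase ``follows by induction,'' and the intended argument is presumably to run Proposition~\ref{prop:remainder-1} inductively and observe that upper-triangular $D_n^{-1}$ preserves the subspace of vectors with zero second component. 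Your argument instead observes directly that each $\MT_n=\bar H_n\circ\bar\III_n$ sends horizontal lines to horizontal lines with second coordinate affine in $y$ alone, so after centring at the tips the second coordinate of $\afa{\MT}_{m,n}$ is exactly $\sigma_{m,n}y$, which forces $[R_{m,n}(z)]_2=0$ on comparison with $D_{m,n}(\id+R_{m,n})$. This is a cleaner, more structural route that bypasses the remainder-composition formula entirely, and it makes visible the geometric reason for the triangular form; the trade-off is that the paper's route is more mechanical and generalises verbatim to any composition of maps with upper-triangular linearisations, without invoking the special horizontal-line-preserving structure of the scope maps.
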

\begin{proof}
From Lemma~\ref{lem:D_n-decomposition} we know it holds for $m=n$. For $m<n$ the chain rule $D_{m,n}=D_{m,n-1}D_n$ implies $D_{m,n}$ is again upper triangular and
\begin{equation}
\sigma_{m,n}=\sigma_{m,n-1}\sigma_n, \quad s_{m,n}=s_{m,n-1}s_n, \quad t_{m,n}=s_{n-1}t_n+t_{m,n-1},
\end{equation}
from which the lemma immediately follows by induction.
\end{proof}
\begin{prop}\label{prop:D_mn-estimate}\label{scopeestimate}
There exist constants $C>0$, and $0<\rho<1$ such that the following holds:
for $F\in\I_{\Omega,\perm}(\bar\e)$, let $\sigma_{m,n}, s_{m,n}, t_{m,n}$ be the constants and $r_{m,n}(z)$ the function defined above. Then
\begin{align}
&\sigma^{n-m}(1-C\rho^m)<|\sigma_{m,n}|<\sigma^{n-m}(1+C\rho^m) \label{ineq:scope1} \\ 
&\sigma^{n-m}(1-C\rho^m)<|s_{m,n}|<\sigma^{n-m}(1+C\rho^m) \label{ineq:scope2} \\
&|t_{m,n}|<C\bar\e^{p^m} \\
&|\del_xr_{m,n}(z)|<C|z|, |\del_yr_{m,n}(z)|<C\bar\e^{p^{m-1}}|z| \\
&|\del_{xx}r_{m,n}(z)|<C|z|, |\del_{xy}r_{m,n}(z)|<C\sigma^{2(n-m)}\bar\e^{p^m}|z|, |\del_{yy}r_{m,n}(z)|<C\bar\e^{p^m}|z|.
\end{align}
\end{prop}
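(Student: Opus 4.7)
The proof splits into estimates on the affine data $(\sigma_{m,n}, s_{m,n}, t_{m,n})$ from Lemma~\ref{lem:D_mn-decomposition}, and estimates on the nonlinear remainder $r_{m,n}$ obtained via a telescoping decomposition; in both cases the height-$n$ bounds of Proposition~\ref{prop:D_n-estimate} are the input.

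First I would bound the products $\sigma_{m,n} = \prod_{i=m}^n \sigma_i$ and $s_{m,n} = \prod_{i=m}^n s_i$ by taking logarithms: using Proposition~\ref{prop:D_n-estimate}, $\log(\sigma_{m,n}) - (n-m+1)\log\sigma = \sum_{i=m}^n \log(\sigma_i/\sigma) = O(\sum_{i \geq m}\rho^i) = O(\rho^m)$, and exponentiating gives the stated two-sided bound; the argument for $s_{m,n}$ is identical. For the tilt $t_{m,n} = \sum_{i=m}^n s_{m,i-1}\, t_i$, the triangle inequality together with $|s_{m,i-1}| \leq C\sigma^{i-m}$ and $|t_i| \leq C\bar\e^{p^i}$ gives $|t_{m,n}| \leq C' \sum_{i=m}^n \sigma^{i-m}\bar\e^{p^i}$, which is dominated by the $i=m$ term $C''\bar\e^{p^m}$ since $\bar\e^{p^i}$ decays super-exponentially.

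The main work is the nonlinear piece $r_{m,n}$. I would write $\MT_{m,n} = \MT_m \circ \MT_{m+1,n}$, substitute the tip-centred decompositions of both factors, and compare with the decomposition of $\MT_{m,n}$ itself. A short computation yields the recursion
\begin{equation*}
R_{m,n}(z) = R_{m+1,n}(z) + D_{m+1,n}^{-1} R_m(\afa{\MT}_{m+1,n}(z)),
\end{equation*}
which iterates to the telescoping sum
\begin{equation*}
r_{m,n}(z) = \sum_{i=m}^{n} \frac{r_i(\afa{\MT}_{i+1,n}(z))}{\sigma_{i+1,n}\, s_{i+1,n}}
\end{equation*}
(with the conventions $\sigma_{n+1,n} = s_{n+1,n} = 1$ and $\afa{\MT}_{n+1,n} = \id$), on using that $R_i$ has vanishing second component and $D_{i+1,n}^{-1}$ is upper triangular. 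I would then differentiate this sum term-by-term via the chain rule and feed in the pointwise bounds $|\del_x r_i(w)| \leq C|w|$, $|\del_y r_i(w)| \leq C\bar\e^{p^i}|w|$ from Proposition~\ref{prop:D_n-estimate} together with $|\afa{\MT}_{i+1,n}(z)| \leq C\sigma^{n-i}|z|$; summing a geometric series in $i$ should yield each claimed derivative estimate.

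The main obstacle is verifying that the large prefactor $|\sigma_{i+1,n}\, s_{i+1,n}|^{-1} = O(\sigma^{-2(n-i)})$ is exactly absorbed by the compensating small factors: the order-two vanishing of $r_i$ at the origin, the scale $O(\sigma^{n-i}|z|)$ of $\afa{\MT}_{i+1,n}(z)$, and the extra $\sigma^{n-i}$ or $\bar\e^{p^{i+1}}$ factors produced whenever a derivative strikes $\afa{\MT}_{i+1,n}$. For the mixed and second-order derivatives $\del_{xx}, \del_{xy}, \del_{yy}$ the chain rule produces many terms, and in each one must identify the dominant combination: every extra $\del_x$ contributes a further $s_{i+1,n}$ that is cancelled by the prefactor, leaving an $O(|z|)$ bound, whereas every $\del_y$ brings out either $t_{i+1,n} = O(\bar\e^{p^{i+1}})$ or $\del_y r_{i+1,n} = O(\bar\e^{p^i}|z|)$, producing the additional $\bar\e^{p^m}$ and---in the case of $\del_{xy}$, where one $\del_x$ is spent cancelling the prefactor only once---$\sigma^{2(n-m)}$ factors claimed in the statement.
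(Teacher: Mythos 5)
Your proposal follows essentially the same route as the paper's proof, just with a slightly different presentation. For the affine data you bound $\prod(\sigma_i/\sigma)$ by passing to logarithms, whereas the paper invokes its lemma on products of the form $\prod(1+C\rho^i)$; these are the same bound. For the tilt, your direct domination of $\sum_{i\geq m}\sigma^{i-m}\bar\e^{p^i}$ by its first term is a mild simplification of the paper's manipulation of the ratios $t_i/t_m$, but yields the same estimate. For the nonlinear remainder, the paper proves exactly your recursion $R_{m,n} = R_{m+1,n} + D_{m+1,n}^{-1}R_m\circ\afa{\MT}_{m+1,n}$ and then runs a downward induction in $m$; your telescoping sum is precisely the unrolled form of that induction, and the derivative estimates then proceed by the same chain-rule bookkeeping. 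One small caution: your unrolled form is not fully decoupled, since $\afa{\MT}_{i+1,n}=D_{i+1,n}(\id+R_{i+1,n})$ still contains the multi-level $R_{i+1,n}$, so the term-by-term differentiation still relies on the inductively established bounds for $\del r_{i+1,n}$ (or, alternatively, on writing $D\afa{\MT}_{i+1,n}$ as a product of single-level derivatives), exactly as in the paper.
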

\begin{proof}
Throughout the proof $C_0>0$ will denote the constant from Proposition~\ref{prop:D_n-estimate}.
From  Lemma~\ref{lem:D_mn-decomposition}, Proposition~\ref{prop:D_n-estimate} and Proposition~\ref{prop:1+Crho} respectively, we find there exists a constant $C_1>0$ such that
\begin{equation}
|\sigma_{m,n}|=\prod_{i=m}^n|\sigma_i|\leq \sigma^{n-m}\prod_{i=m}^n(1+C_0\rho^i)\leq \sigma^{n-m}(1+C_1\rho^m)
\end{equation}
and similarly
\begin{equation}
|s_{m,n}|=\prod_{i=m}^n|s_i|\leq \sigma^{n-m}\prod_{i=m}^n(1+C_0\rho^i)\leq \sigma^{n-m}(1+C_1\rho^m).
\end{equation}
Again by Lemma~\ref{lem:D_mn-decomposition} and Proposition~\ref{prop:D_n-estimate} above we find, for $i>m$,
\begin{equation}
\left|\frac{t_i}{t_m}\right|=\left|\frac{\del_y\oo{p-1}{\phi}_i(\tau_{i+1})}{\del_y\oo{p-1}{\phi}_m(\tau_{m+1})}\right|\left|\frac{\del_x\oo{p-1}{\phi}_m(\tau_{m+1})}{\del_x\oo{p-1}{\phi}_i(\tau_{i+1})}\right|\leq C_0^4\bar\e^{p^{i+1}-p^m}.
\end{equation}
Therefore, by Lemma~\ref{lem:e-vs-rho} there exists a constant $C_2>0$ such that
\begin{align}
|t_{m,n}|
&\leq |t_m|\sum_{i=m}^n |s_{m,i-1}|\left|\frac{t_i}{t_m}\right| \\
&\leq C_0^2\bar\e^{p^m}\sum_{i=m}^n\sigma^{i-m-1}\bar\e^{p^{i+1}-p^m}(1+C_1\rho^i). \notag \\
&\leq C_2\bar\e^{p^m}. \notag 
\end{align}
This concludes the first item. For the second and third items we will proceed by induction.
The case when $m=n$ is shown in Proposition~\ref{prop:D_n-estimate} so, for $m+1\leq n$, assume the inequalities hold for $r_{m+1,n}$ and consider $r_{m,n}$. 
Choose $z=(x,y)\in\RR^2$ such that $\tau_{n+1}+z\in\Dom(\MT_{m,n})$. Then since $\MT_{m,n}=\MT_m\circ\MT_{m+1,n}$, decomposing the left hand side gives
\begin{equation}
\MT_{m,n}(\tau_{n+1}+z)=\tau_m+D_{m,n}(\id+R_{m,n})(z)
\end{equation}
and decomposing the right hand side and applying Proposition~\ref{prop:remainder-1} gives us
\begin{align}
&\MT_m(\MT_{m+1,n}(\tau_{n+1}+z)) \\
&=\MT_n(\tau_{m+1}+D_{m+1,n}(\id+R_{m+1,n})(z)) \notag \\
&=\tau_m+D_m(\id+R_m)\left(D_{m+1,n}(\id+R_{m+1,n})(z)\right) \notag \\
&=\tau_m+D_{m,n}(\id+R_{m+1,n})(z)+D_m\left(R_m(D_{m+1,n}(\id+R_{m+1,n})(z))\right). \notag
\end{align}
Equating these and making appropriate cancellations then gives
\begin{equation}
R_{m,n}(z)=R_{m+1,n}(z)+D_{m+1,n}^{-1}\left(R_m(D_{m+1,n}(\id+R_{m+1,n})(z))\right).
\end{equation}
By definition, $R_{m,n}(z)=(r_{m,n}(z),0)$, $R_{m+1,n}(z)=(r_{m+1,n}(z),0)$ and $R_m(z)=(r_m(z),0)$. Therefore setting $z'=(x',y')=D_{m+1,n}(\id+R_{m+1,n})(z)$, that is
\begin{equation}
(x',y')=(\sigma_{m+1,n}s_{m+1,n}(x+r_{m+1,n}(x,y))+\sigma_{m+1,n}t_{m+1,n}y,\sigma_{m+1,n}y),
\end{equation}
we find that
\begin{equation}
r_{m,n}(x,y)=r_{m+1,n}(x,y)+\sigma_{m+1,n}^{-1}s_{m+1,n}^{-1}r_m(x',y').
\end{equation}
Differentiating this with respect to $x$ and $y$ gives
\begin{align}
\del_x r_{m,n}(x,y)
&=\del_xr_{m+1,n}(x,y)+(1+\del_xr_m(x,y))\del_xr_m(x',y') \\
\del_y r_{m,n}(x,y)
&=\del_yr_{m+1,n}(x,y)+s_{m+1,n}^{-1}\left(t_{m+1,n}\del_xr_m(x',y')+\del_yr_m(x',y')\right).
\end{align}
Now let $C_4>1$ be the maximum of the constant from Proposition~\ref{prop:D_n-estimate} and the constant from the first item above which ensures
\begin{equation}
|s_{m+1,n}|>C_4^{-1}\sigma^{n-m-1},\quad |t_{m+1,n}|<C_4\bar\e^{p^{m+1}}, 
\end{equation}
and
\begin{equation}
|\del_xr_m(z)|<C_4|z|, \quad |\del_yr_m(z)|<C_4\bar\e^{p^m}|z|,\quad |\del_{xy}r_m(z)|<C_4\bar\e^{p^m}|z|.
\end{equation}
As a consequence of our induction hypothesis, there exists a constant $C_5>0$ such that $|z'|<C_5\sigma^{n-m-1}|z|$. Together these imply the existence of a constant $C_6>0$ such that
\begin{align}
|\del_x r_{m,n}(z)|
&\leq |\del_xr_{m+1,n}(z)|+|\del_xr_m(z')|(1+|\del_xr_m(z)|) \\
&\leq |\del_xr_{m+1,n}(z)|+C_4|z'|(1+C_4|z|) \notag \\
&\leq |\del_xr_{m+1,n}(z)|+C_4C_5\sigma^{n-m-1}|z|(1+C_4|z|) \notag \\
&\leq |\del_xr_{m+1,n}(z)|+C_6\sigma^{n-m-1}|z| \notag
\end{align}
and a constant $C_7>0$ such that
\begin{align}
|\del_y r_{m,n}(z)|
&\leq |\del_yr_{m+1,n}(z)|+|s_{m+1,n}|^{-1}\left(|\del_xr_m(z')||t_{m+1,n}|+|\del_yr_m(z')|\right). \\
&\leq |\del_yr_{m+1,n}(z)|+C_4\sigma^{-(n-m-1)}(C_4^2\bar\e^{p^{m+1}}|z'|+C_4\bar\e^{p^m}|z'|) \notag \\
&\leq |\del_yr_{m+1,n}(z)|+C_4^2C_5(C_4\bar\e^{p^{m+1}}+\bar\e^{p^m})|z| \notag \\
&\leq  |\del_yr_{m+1,n}(z)|+C_7\bar\e^{p^m}|z| \notag
\end{align}
Next we consider the second order derivatives. As all functions are analytic the estimates for $\del_{xx}r_{m,n}$ and $\del_{yy}r_{m,n}$ follow from those of $\del_xr_{m,n}$ and
$\del_yr_{m,n}$ respectively. Therefore we only need consider the mixed second order partial derivative. This is given by
\begin{equation}
\del_{xy}r_{m,n}(z)=\del_{xy}r_{m+1,n}(z)+\sigma_{m+1,n}\del_{xy}r_m(z)\left(\del_{xx}r_m(z')t_{m+1,n}+\del_{xy}r_m(z')\right)
\end{equation}
and hence, using the above estimates, there exists a constant $C_8>0$ such that
\begin{align}
&|\del_{xy}r_{m,n}(z)| \\
&\leq |\del_{xy}r_{m+1,n}(z)|+|\sigma_{m+1,n}||\del_{xy}r_m(z)|\left(|\del_{xx}r_m(z')||t_{m+1,n}|+|\del_{xy}r_m(z')|\right) \notag \\
&\leq |\del_{xy}r_{m+1,n}(z)|+C_4^2\sigma^{n-m-1}\bar\e^{p^m}|z|\left(C_4^2\bar\e^{p^{m+1}}|z'|+C_4\bar\e^{p^m}|z'|\right) \notag \\
&\leq |\del_{xy}r_{m+1,n}(z)|+C_8\sigma^{2(n-m)}\bar\e^{2p^m}|z|. \notag
\end{align}
Therefore invoking the induction hypothesis and setting $C=\max_i C_i$ we achieve the desired result.
\end{proof}

\begin{prop}\label{prop:r_mn-convergence-1}
There exists a constant $0<\rho<1$ such that that following holds:
for $F\in\I_{\Omega,\perm}(\bar\e)$, let $r_{m,n}(x,y)$ denote the functions constructed above for integers $0<m<n$. 
Then there exists a constant $C>0$ such that for any $(x,y)\in B$,
\begin{equation}
|[x+r_{m,n}(x,y)]-v_*(x)|<C(\bar\e^{p^m}y+\rho^{n-m})
\end{equation}
and
\begin{equation}
|[1+\del_xr_{m,n}(x,y)]-\del_x v_*(x)|<C\rho^{n-m}
\end{equation}
where $v_*(x)$ is the affine rescaling of the universal function $u_*$ so that its fixed point lies at the origin with multiplier $1$.
\end{prop}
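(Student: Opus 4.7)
The plan is to split off the $y$-dependence and then reduce the resulting one-dimensional problem along $\{y=0\}$ to the renormalisation fixed point, exploiting the functional recurrence used in the proof of Proposition~\ref{prop:D_mn-estimate}.

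First, I write
\[
[x + r_{m,n}(x,y)] - v_*(x) = \bigl([x + r_{m,n}(x,y)] - [x + r_{m,n}(x,0)]\bigr) + \bigl([x + r_{m,n}(x,0)] - v_*(x)\bigr).
\]
The first bracket is bounded by $|\del_y r_{m,n}|_\infty \cdot y \leq C\bar\e^{p^m} y$ by the $\del_y r_{m,n}$ estimate of Proposition~\ref{prop:D_mn-estimate} together with the mean value theorem. The second displayed inequality of the statement then follows from the first by a standard Cauchy estimate on the complex polydisk $\Omega$, since both $x + r_{m,n}(x,y)$ and $v_*(x)$ extend holomorphically to a fixed neighbourhood of $B$.

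For the second bracket, I would exploit the functional recurrence
\[
\MT^{rsc}_{m,n}(z) = \MT^{rsc}_{m+1,n}(z) + D_{m+1,n}^{-1} R_m\bigl(D_{m+1,n}\,\MT^{rsc}_{m+1,n}(z)\bigr),
\]
obtained from $\MT_{m,n} = \MT_m \circ \MT_{m+1,n}$ after setting $\MT^{rsc}_{m,n}(z) := (\id + R_{m,n})(z)$. The crucial observation is $R_m(0) = 0$ and $DR_m(0) = 0$, both forced by the normalisations $\MT_m(\tau_{m+1}) = \tau_m$ and $D\MT_m(\tau_{m+1}) = D_m$; consequently $|R_m(w)| \leq C|w|^2$ near the origin. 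Specialising to $F = F_*$ gives a recurrence depending only on $k = n-m+1$, whose iterates $\Psi_k$ converge exponentially to a universal limit whose $x$-component along $y = 0$ is, by Lemma~\ref{lem:univ-linearisation} applied to $\oo{1}{\mt_*}$ at its attracting fixed point, precisely $v_*$.

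A termwise comparison of the recurrences for $\MT^{rsc}_{m,n}$ and $\Psi_{n-m+1}$ then yields the required bound. Using the super-exponential estimate $|R_m - R_*| \leq C\rho^m |w|^2$ (coming from Theorem~\ref{thm:R-convergence} together with the vanishing of the first two Taylor coefficients of both remainders), the rescaling estimate $|D_{m+1,n} - D_{m+1,n}^*| \leq C\sigma^{n-m}\rho^m$ of Proposition~\ref{prop:D_mn-estimate}, and the quadratic vanishing of $R_*$ at the origin, one obtains the Gronwall-type inequality
\[
\Delta_m \leq \Delta_{m+1}(1 + C\sigma^{n-m}) + C\sigma^{n-m}\rho^m, \qquad \Delta_n = |R_n - R_*|_\infty \leq C\rho^n,
\]
where $\Delta_i$ is the sup-norm difference between the two rescaled compositions at height $i$. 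Since $\prod_{k\geq 1}(1 + C\sigma^k) < \infty$, iterating from $n$ down to $m$ gives $\Delta_m \leq C\rho^{n-m}$ for some universal $\rho \in (0,1)$; combining with the fixed-point convergence $|\Psi_{n-m+1} - v_*| \leq C\rho^{n-m}$ completes the argument. The main obstacle is precisely this Gronwall step: the amplification $|D_{m+1,n}^{-1}| \sim \sigma^{-(n-m)}$ would inflate the errors unacceptably were it not for the quadratic vanishing of $R_m$ at the tip, and ensuring this compensation produces a clean $\rho^{n-m}$ rate uniform in $m$, $n$ and $F \in \I_{\Omega,\perm}(\bar\e)$ is the technical heart of the proof.
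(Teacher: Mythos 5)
The splitting off of the $y$-dependence and the reduction to the one-dimensional problem along $\{y=0\}$ matches the paper; and your idea of handling the $y=0$ piece by a self-contained Gronwall iteration of the recurrence for $R_{m,n}$ is a genuine alternative to what the paper does (the paper simply invokes Lemma~7.3 of~\cite{dCML} to obtain $|\mt_{m,n}-\mt_{*,m,n}|_{C^1(J)}\lesssim\rho^{n-m}$ and then identifies the normalised limit with $v_*$). Your sketch of the Gronwall step, exploiting the quadratic vanishing of $R_m$ at the translated origin to cancel the $\sigma^{-(n-m)}$ amplification from $D_{m+1,n}^{-1}$, is plausible in outline, but note that the amplification in the $x$-component of $D_{m+1,n}^{-1}$ actually scales like $\sigma^{-2(n-m)}$ (both $\sigma_{m+1,n}$ and $s_{m+1,n}$ contribute), so the compensation is precisely marginal and the bookkeeping would need to be carried out carefully to confirm a geometric rate uniform in $F$.

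However, your derivation of the second displayed inequality does not work as stated. You claim it follows from the first by a Cauchy estimate on the polydisk. A Cauchy estimate applied to $g(x,y)=[x+r_{m,n}(x,y)]-v_*(x)$ with the bound $|g|\lesssim\bar\e^{p^m}|y|+\rho^{n-m}$ yields
\begin{equation}
\left|\del_x g(x_0,y_0)\right|\leq \tfrac{1}{\delta}\sup_{|x-x_0|\leq\delta}|g(x,y_0)|\lesssim \bar\e^{p^m}|y_0|+\rho^{n-m},
\end{equation}
and the first term does not decay as $\rho^{n-m}$: for $m$ fixed, $y_0$ of order one, and $n\to\infty$, this bound tends to $\bar\e^{p^m}>0$, whereas the claim requires a bound tending to zero. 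The point is that Proposition~\ref{prop:D_mn-estimate} gives the \emph{stronger} estimate $|\del_{xy}r_{m,n}(z)|<C\sigma^{2(n-m)}\bar\e^{p^m}|z|$, which carries an additional factor $\sigma^{2(n-m)}$ not present in the $\del_y r_{m,n}$ bound; this improvement comes from direct bookkeeping in the recurrence and cannot be recovered by Cauchy-estimating the first inequality. The paper therefore proves the derivative bound directly, writing
\begin{equation}
\left|[1+\del_xr_{m,n}(x,y)]-\del_x v_*(x)\right|\leq |\del_{xy}r_{m,n}||y|+\bigl|\mu_{m,n}^{-1}[\afa{\mt}_{m,n}]-\mu_*^{-1}\afa{u}_*\bigr|_{C^1(J)}
\end{equation}
and the extra factor of $\sigma^{2(n-m)}$ (or even $\sigma^{n-m}$) on the first term is what allows the $\bar\e^{p^m}|y|$ contribution to be absorbed into $\rho^{n-m}$. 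Without that factor, the argument collapses. To repair your proof, invoke the $\del_{xy}r_{m,n}$ estimate from Proposition~\ref{prop:D_mn-estimate} for the mixed-derivative contribution and use your Gronwall argument (or the citation to~\cite{dCML}) in the $C^1$-norm along $y=0$ for the remaining term.
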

\begin{proof}
Given $F\in\I_{\Omega,\perm}(\bar\e)$ let $F_n\colon B\to B$ denote the $n$-th renormalisation and let $\MT_n\colon B\to B$ denote the $n$-th scope function. Let
$\afa{F}_n\colon \afa{B}_{n+1}\to\afa{B}_n$ and $\afa{\MT}_n\colon \afa{B}_{n+1}\to\afa{B}_n$ denote these maps under the translational change of coordinates described above.

First, let us consider the functions $\afa{\MT}_m\colon\hat{B}_{m+1}\to\hat{B}_{m}$. By construction these preserve the $x$-axis, since they preserve the family of horizontal
lines and the origin is a fixed point for each of them. 
This implies there exists a functions $\afa{\mt}_m\colon \afa{J}_{m+1}\to\afa{J}_{m}$ such that $\afa{\MT}_{m}(x,0)=(\afa{\mt}_{m}(x),0)$.
Next observe there is a constant $C_0>0$ such that for each $n\geq 0$ there exists $f_n\U_{\Omegax,\perm}$ satisfying $|F_n-(f_n\circ\pi_x,\pi_x)|_{\Omega}<C_0\bar\e^{p^n}$. 
Let $\afa{f}_n\colon \afa{J}_n\to \afa{J}_n$ denote $f_n$ under the translational change of coordinates and let $\oo{1}{\afa{\mt}}_n\colon\afa{J}_n\to\oo{1}{\afa{J}}_n$ be the
branch of its presentation function corresponding to the interval $\oo{1}{\afa{J}}_n$. Proposition~\ref{prop:1d-scope-conv} implies there is a constant $C_1>0$ such that
$|\oo{1}{\afa{\mt}}_n-\afa{\mt}_n|_{\C{2}{J}}<C_1\bar\e^{p^n}$ and Proposition~\ref{prop:1d-scope-conv} and Theorem~\ref{thm:R-convergence} implies there is a constant
$C_2>0$ such that $|\oo{1}{\afa{\mt}}_n-\oo{1}{\afa{\mt}}_{*}|_{\C{2}{J}}<C_2\rho^n$. Combining these we find there is a constant $C_3>0$ such that
\begin{equation}
|\afa{\mt}_n-\oo{1}{\afa{\mt}}_{*}|_{\C{2}{J}}<C_3\rho^n.
\end{equation}
Now observe there exist functions $\afa{\mt}_{m,n}\colon
\afa{J}_{n+1}\to \oo{0}{\afa{J}}_{m,n}\subset\afa{J}_{m}$, where $\oo{0}{\afa{J}}_{m,n}=\afa{\mt}_{m,n}(\afa{J}_{n+1})$, such that
$\afa{\MT}_{m,n}(x,0)=(\afa{\mt}_{m,n}(x),0)$.
Moreover, since $\afa{\MT}_{m,n}=\afa{\MT}_m\circ\cdots\circ\afa{\MT}_n$ we must have $\afa{\mt}_{m,n}=\afa{\mt}_m\circ\cdots\circ\afa{\mt}_n$.
Also observe that, since $\afa{\MT}_{m,n}=\TT_m\circ\MT_{m,n}\circ\TT_{n+1}^{-1}$, there are translations $\tt_m$ such that $\afa{\mt}_{m,n}=\tt_m\circ\mt_{m,n}\circ\tt_{n+1}^{-1}$.

Now let $[\mt_{m,n}]$ and $[\mt_{*,m,n}]$ denote, respectively, the orientation preserving affine rescalings of the maps $\afa{\mt}_m\circ\cdots\circ\afa{\mt}_n$ and
$\afa{\mt}_*\circ\cdots\circ\afa{\mt}_*$ to the interval $J$. Here the composition of $\afa{\mt}_*$ with itself is taken $n-m$ times.
Then Lemma 7.3 in~\cite{dCML} implies there exists a constant $C_4>0$ such that $|\mt_{m,n}-\mt_{*,m,n}|_{\C{1}{J}}<C_4\rho^{n-m}$.
This then implies, together with the second part of Lemma~\ref{lem:univ-linearisation}, that there is a constant $C_5>0$ such that
\begin{align}
|[\mt_{m,n}]-u_*|_{\C{1}{J}}
&\leq |[\mt_{m,n}]-[\mt_{*,m,n}]|_{\C{1}{J}}+|[\mt_{*,m,n}]-u_*|_{\C{1}{J}} \\
&\leq C_5\rho^{n-m}. \notag
\end{align}
where $u_*$ is the universal function from that Lemma. Next we perform an translational change of coordinates on $[\mt_{m,n}]$ and $u_*$ so that the fixed point lies at the
origin. Observe that these coordinate changes also converge exponentially. 
 Therefore, if $[\afa{\mt}_{m,n}]$, and $\afa{u}_*$
denote these functions in the new coordinates, there exists a constant $C_6>0$ such that
\begin{equation}
|[\afa{\mt}_{m,n}]-\afa{u}_*|_{\C{1}{J}} <C_6\rho^{n-m}.
\end{equation}
Now observe that,  
since multipliers of fixed points have uniform Lipschitz-type dependence (by the Cauchy estimates), this implies the difference between the multiplier $\mu_{m,n}$ of the fixed point $0$ for $[\afa{\mt}_{m,n}]$ and the
multiplier $\mu_*$ of the
fixed point $0$ for $\afa{u}_*$ decreases exponentially in $n-m$ at the same rate. This implies there exists a constant $C_7>0$ such that
\begin{equation}
|\mu_{m,n}^{-1}[\afa{\mt}_{m,n}]-\mu_*^{-1}\afa{u}_*|_{\C{1}{J}} <C_7\rho^{n-m}.
\end{equation}
Now we claim that $\mu_{m,n}^{-1}[\afa{\mt}_{m,n}]=x+r_{m,n}(x,0)$. Both come from affinely rescaling $\MT_{m,n}$ so that the origin is fixed, the horizontal line
$\{y=0\}$ is fixed and their derivatives in the $x$-direction are 1. Hence they are equal. Also, by definition, $\mu_*^{-1}\afa{u}_*=v_*$.
This then implies, by the above and Proposition~\ref{prop:D_mn-estimate}, that there is a constant $C>0$ such that
\begin{align}
&|[x+r_{m,n}(x,y)]-v_*(x)| \\
&\leq |[x+r_{m,n}(x,y)]-[x+r_{m,n}(x,0)]|+|[x+r_{m,n}(x,0)]-v_*(x)| \notag \\
&\leq |\del_yr_{m,n}||y|+|\mu_{m,n}^{-1}[\afa{\mt}_{m,n}]-\mu_*^{-1}\afa{u}_*|_{\C{0}{J}} \notag \\
&\leq C(\bar\e^{p^{m-1}}|y|+\rho^{n-m}) \notag
\end{align}
which gives the first bound while 
\begin{align}
&|[1+\del_xr_{m,n}(x,y)]-\del_xv_*(x)| \\
&\leq |[1+\del_xr_{m,n}(x,y)]-[1+\del_xr_{m,n}(x,0)]|+|[1+\del_xr_{m,n}(x,0)]-\del_xv_*(x)| \notag \\
&\leq |\del_{xy}r_{m,n}||y|+|\mu_{m,n}^{-1}[\afa{\mt}_{m,n}]-\mu_*^{-1}\afa{u}_*|_{\C{1}{J}} \notag \\
&\leq C(\sigma^{n-m}\bar\e^{p^m}|y|+\rho^{n-m}) \notag
\end{align}
which, since $z$ lies in a bounded domain and $\bar\e^{p^m}$ is bounded from above, gives us the bound for the derivate.
\end{proof}

\begin{prop}\label{prop:r_mn-convergence-2}
There exist constants $C>0, 0<\rho<1$ such that the following holds:
given $F\in\I_{\Omega,\perm}(\bar\e)$, for each integer $m>0$ there exists a constant $\kappa_{(m)}=\kappa_{(m)}(F)\in\RR$, satisfying $|\kappa_{(m)}|<C\bar\e^{p^m}$, such that
\begin{equation}\label{ineq:scope4}
|[x+r_{m,n}(x,y)]-[v_*(x)+\kappa_{(m)}y^2]|<C\rho^{n-m}
\end{equation}
\end{prop}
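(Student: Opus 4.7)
The plan is to Taylor-expand $r_{m,n}(x,y)$ in $y$ about $y=0$,
\begin{equation*}
r_{m,n}(x,y)=r_{m,n}(x,0)+y\,\del_y r_{m,n}(x,0)+\tfrac{y^2}{2}\del_{yy}r_{m,n}(x,0)+\sum_{j\geq 3}\tfrac{y^j}{j!}\del_{y^j}r_{m,n}(x,0),
\end{equation*}
and to show that, after the affine rescaling $[\cdot]$, each term either matches the corresponding piece of $v_*(x)+\kappa_{(m)}y^2$ up to $\bigo(\rho^{n-m})$, or is itself $\bigo(\rho^{n-m})$. The $y=0$ slice is handled directly by Proposition~\ref{prop:r_mn-convergence-1}.

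For the linear-in-$y$ term, the key observation is that $R_{m,n}$ is a pure nonlinear remainder, so $\del_y r_{m,n}(0,0)=0$. Combined with the bound $|\del_{xy}r_{m,n}(z)|\leq C\sigma^{2(n-m)}\bar\e^{p^m}|z|$ from Proposition~\ref{prop:D_mn-estimate}, integration along $[0,x]\times\{0\}$ gives $|\del_y r_{m,n}(x,0)|\leq C'\sigma^{2(n-m)}\bar\e^{p^m}$. After rescaling, $y[\del_y r_{m,n}(x,0)]=\bigo(\rho^{n-m})$ uniformly on $B$ for any $\rho\in(\sigma^2,1)$.

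The quadratic and higher-order coefficients are analysed by iterating the recursion
\begin{equation*}
r_{m,n}(x,y)=r_{m+1,n}(x,y)+\sigma_{m+1,n}^{-1}s_{m+1,n}^{-1}\,r_m\bigl(D_{m+1,n}(\id+R_{m+1,n})(x,y)\bigr)
\end{equation*}
from the proof of Proposition~\ref{prop:D_mn-estimate} and Taylor-expanding each $r_k$ in powers of $y'=\sigma_{k+1,n}y$ about $y'=0$. The $y^j$ contribution of the $k$-th recursion term acquires an overall factor $\sigma_{k+1,n}^{j-1}s_{k+1,n}^{-1}\sim\sigma^{(j-2)(n-k)}$. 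For $j=2$ this factor is $\bigo(1)$ while $|\del_{yy}r_k|\leq C\bar\e^{p^k}$ is super-exponentially small, so the resulting series converges absolutely as $n\to\infty$; defining $\kappa_{(m)}$ to be (the rescaled value of) this limit gives $|\kappa_{(m)}|\leq C\bar\e^{p^m}$ from the dominant $k=m$ term. For $j\geq 3$ the same factor is already $\bigo(\sigma^{n-m})$ at its dominant $k=m$ term, so the partial sums yield $y^{\geq 3}$-coefficients of total size $\bigo(\sigma^{n-m}\bar\e^{p^m})=\bigo(\rho^{n-m})$. Uniformity in $x$ of the quadratic approximation $\tfrac12\del_{yy}r_{m,n}(x,0)\approx\kappa_{(m)}$ is obtained by establishing, along the inductive lines of Proposition~\ref{prop:D_mn-estimate}, the additional cross-derivative bound $|\del_{xyy}r_{m,n}(z)|\leq C\sigma^{2(n-m)}\bar\e^{p^m}|z|$.

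The hardest step will be the recursive book-keeping needed to show that every $y^{\geq 3}$ contribution picks up at least one extra factor of $\sigma^{n-m}$: the chain-rule cross-terms arising from $(x',y')=D_{m+1,n}(\id+R_{m+1,n})(x,y)$, where $x'$ itself depends on $y$ through $r_{m+1,n}$, must not be allowed to spoil the $\sigma^{(j-2)(n-k)}$ factor controlling each $y^j$ coefficient. Once this is verified, the constant, linear, quadratic and higher-order $y$-pieces assemble to give the claimed estimate $|[x+r_{m,n}(x,y)]-[v_*(x)+\kappa_{(m)}y^2]|<C\rho^{n-m}$.
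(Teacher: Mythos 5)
Your plan is sound in outline but takes a genuinely different (and harder) route than the paper, and the part you flag as ``the hardest step'' is in fact where the heart of the proof lies, so as written there is a real gap.

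The paper's proof does not Taylor-expand in $y$. Instead it first reduces to the $x=0$ slice: since $v_*(0)=r_{m,n}(0,0)=0$, the Mean Value Theorem in $x$ together with the $\C{1}$-bound $|[1+\del_x r_{m,n}]-v_*'|<C\rho^{n-m}$ from Proposition~\ref{prop:r_mn-convergence-1} gives
\[
|[x+r_{m,n}(x,y)]-[v_*(x)+r_{m,n}(0,y)]|\le C\rho^{n-m}|x|,
\]
which swallows the constant, linear and cross terms in one stroke, with no need for a $\del_{xyy}$ estimate. It then analyses $r_{m,n}(0,y)$ (which automatically has no linear $y$-term, as you observe) via the recursion obtained by peeling off the \emph{last} composant, $\MT_{m,n}=\MT_{m,n-1}\circ\MT_n$: this gives
$r_{m,n}(0,y)=r_n(0,y)+\sigma_n^{-1}s_n^{-1}r_{m,n-1}(\omega_n(y),\sigma_ny)$ with $\omega_n(y)$ built only from $r_n$, so the recursion has no feedback loop. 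Writing $r_{m,n}(0,y)=\kappa_{m,n}y^2+K_{m,n}(y)$ and using the bounds from Proposition~\ref{prop:D_n-estimate}, the induction is clean: $\kappa_{m,n}$ is Cauchy and $K_{m,n}$ contracts by roughly $\sigma$ per step.

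Your proposal instead peels off the \emph{first} composant via $r_{m,n}=r_{m+1,n}+\sigma_{m+1,n}^{-1}s_{m+1,n}^{-1}r_m\bigl(D_{m+1,n}(\id+R_{m+1,n})(\cdot)\bigr)$. This is where the trouble is concentrated: the inner $x$-argument $x'$ depends on $r_{m+1,n}(0,y)$, which is the very quantity being recursed on, so the ``chain-rule cross-terms'' you defer are not a peripheral check but the substance of the argument. You also need the new bound $|\del_{xyy}r_{m,n}|\le C\sigma^{2(n-m)}\bar\e^{p^m}|z|$, which is not established in the paper and would require re-running the induction of Proposition~\ref{prop:D_mn-estimate} one derivative higher. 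Both of these are left as things ``to be verified'', which leaves the proof incomplete. The local steps you do carry out (the $y=0$ slice, and the linear-in-$y$ term via $\del_y r_{m,n}(0,0)=0$ plus the $\del_{xy}$ bound) are correct, but they are precisely the steps the paper avoids having to do separately; the $x\mapsto 0$ reduction via the Mean Value Theorem and the no-feedback recursion on $n$ are the decisive simplifications you are missing.
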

\begin{proof}
Observe that, since $v_*(0)=0$, Proposition~\ref{prop:r_mn-convergence-1} tells us there exists a constant $C_0>0$ and a point $\xi_{0,x}\in[0,x]$ such that
\begin{align}
&|[x+r_{m,n}(x,y)]-[v_*(x)+r_{m,n}(0,y)]| \\
&=|[x+r_{m,n}(x,y)-v_*(x)]-[0+r_{m,n}(0,y)-v_*(0)]| \notag \\
&\leq |1+\del_xr_{m,n}(\xi_{0,x},y)-\del_xv_*(\xi_{0,x})||x| \notag \\
&\leq C_0\rho^{n-m}|x| \notag
\end{align}
We now claim there exists a constant $\kappa_{(m)}$ such that $|\kappa_{(m)}|<C\bar\e^{p^m}$ and
\begin{equation}
|r_{m,n}(0,y)-\kappa_{(m)}y^2|<C_1\rho^n.
\end{equation}
To show this we use induction. Recall that $\MT_{m,n}(z)=\MT_{m,n-1}\circ\MT_{n}(z)$ for $z\in B$. This implies
\begin{equation}
R_{m,n}(z)=R_n(z)+D_n^{-1}(R_{m,n-1}(D_n(\id+R_n(z)))).
\end{equation}
Since $R_{m,n}, R_n$ and $D_n$ have the forms given by Lemmas~\ref{lem:D_n-decomposition} and~\ref{lem:D_mn-decomposition}, we find that, setting $z'=\MT_n(z)$,
\begin{equation}
(x',y')=(\sigma_ns_n(x+r_n(x,y))+\sigma_nt_ny,\sigma_ny),
\end{equation}
where we write $(x',y')$ for $z'$.
This then gives us
\begin{equation}
r_{m,n}(x,y)=r_n(x,y)+\sigma_n^{-1}s_n^{-1}r_{m,n-1}(x',y').
\end{equation}
Let $\omega_n(y)=\sigma_n(s_nr_n(0,y)+t_ny)$. Then in particular, this together with the Mean Value Theorem implies there exists a $\xi\in [0,\omega_n(y)]$ such that
\begin{align}\label{eqn:r_mn-inductive}
r_{m,n}(0,y)
&=r_n(0,y)+\sigma_{n}^{-1}s_{n}^{-1}r_{m,n-1}(\omega_n(y),\sigma_ny) \\
&=r_n(0,y)+\sigma_{n}^{-1}s_{n}^{-1}\left(r_{m,n-1}(0,\sigma_n y)+\del_xr_{m,n-1}(\xi,\sigma_ny)\omega_n(y)\right). \notag
\end{align}
Next observe that, by construction, $r_n(x,y)$ consists of degree two terms or higher. Therefore, by the above equation, so too must $r_{m,n}(x,y)$. Thus, we may write $r_n(0,y)$ and
$r_{m,n}(0,y)$ in the forms
\begin{equation}
r_n(0,y)=\kappa_ny^2+K_n(y); \quad r_{m,n}(0,y)=\kappa_{m,n}y^2+K_{m,n}(y),
\end{equation}
where $\kappa_n,\kappa_{m,n}$ are real constants and $K_n(y), K_{m,n}(y)$ are functions of the third order in $y$. This implies together with equation~\eqref{eqn:r_mn-inductive}, that
\begin{align}
\kappa_{m,n}y^2+K_{m,n}(y)
&=\kappa_ny^2+K_n(y)  \\ 
&+\sigma_{n}^{-1}s_{n}^{-1}\left(\kappa_{m,n-1}y^2+K_{m,n-1}(y)+\del_xr_{m,n-1}(\xi,\sigma_ny)\omega_n(y)\right) \notag
\end{align}
By Proposition~\ref{prop:D_n-estimate} there exists a constant $C_1>0$ such that
$|\del_yr_n(z)|<C_1\bar\e^{p^n}|z|$ for all suitable $z$. Therefore $\kappa_n$ is satisfies $|\kappa_n|<C_1\bar\e^{p^n}$ and $K_n$ satisfies $|K_n(y)|<C_1\bar\e^{p^n}|y|^3$.
Proposition~\ref{prop:D_n-estimate} also implies there exists a constant $C_2>0$ such that $|\omega(y)|<C_2\bar\e^{p^n}|y|$. Proposition~\ref{prop:D_mn-estimate} implies there
exists a constant $C_3>0$ such that $|\del_xr_{m,n-1}(x,y)|<C_3$. These imply, there is a constant $C_4>0$ such that
\begin{align}
|\kappa_{m,n}|
&\leq|\kappa_n|+|\sigma_n s_n^{-1}||\kappa_{m,n-1}|+C_4\bar\e^{p^n} \\
&\leq 2C_4\bar\e^{p^n}+(1+C_4\rho^n)|\kappa_{m,n-1}| \notag \\
|K_{m,n}(y)|
&\leq |K_n(y)|+|\sigma_n^2 s_n^{-1}||K_{m,n-1}(y)|+C_4\bar\e^{p^n} \\
&\leq \sigma(1+C_4\rho^n)|K_{m,n-1}(y)|+2C_4\bar\e^{p^n} \notag
\end{align}
which implies $\kappa_{m,n}$ converges as $n$ tends to infinity and $K_{m,n}(y)$ decreases exponentially if $n$ is sufficiently large.  Moreover, by
Proposition~\ref{lem:e-vs-rho},
$\kappa_{(m)}=\lim_{n\to\infty}\kappa_{m,n}$ satisfies $|\kappa_{(m)}|\leq C_5\bar\e^{p^n}$ for some constant $C_5>0$. Hence the Proposition is shown.
\end{proof}

\section{Three Applications}\label{sect:applications}
We extend three the results in~\cite{dCML} to the case of arbitrary combinatorics using the results of the previous section. First we will show that universality holds at the tip. By this we mean the rate of convergence to the renormalisation fixed point is  controlled by a universal quantity. In the unimodal case this is a positive real number, but here the quantity is a real-valued real analytic function. This universality is then used to show our two other results, namely the non-existence of continuous invariant linefields on the renormalisation Cantor set and the non-rigidity of these Cantor sets.
\subsection{Universality at the Tip}
\begin{thm}\label{thm:2d-universality}
There exists a constant $\bar\e_0>0$, a universal constant $0<\rho<1$ and a universal function $a\in C^\omega(J,\RR)$ such that the following holds:
Let $F\in\I_{\Omega,\perm}(\bar\e_0)$ and let the sequence of renormalisations be denoted by $F_n$. Then
\begin{equation}
F_n(x,y)=\left(f_n(x)+b^{p^n}a(x)y\left(1+\bigo\left(\rho^n\right)\right),y\right)
\end{equation}
where $b=b(F)$ denotes the average Jacobian of $F$ and $f_n$ are unimodal maps converging exponentially to $f_*$.
\end{thm}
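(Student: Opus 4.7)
The plan is to write $F_n(x,y) = (f_n(x) \pm \e_n(x,y), x)$, with $\e_n(x,0) = 0$ by construction, and to show that $\del_y \e_n(x,y) = b^{p^n} a(x)(1 + \bigo(\rho^n))$ uniformly in $(x,y) \in B$; integrating in $y$ from $0$ then yields the claimed expansion of $\e_n$. Exponential convergence $f_n \to f_*$ is already given by Theorems~\ref{thm:R-construction} and~\ref{thm:1d-exp-convergence}, so the entire content of the theorem lies in controlling $\del_y \e_n$.

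The key identity is the dynamical conjugacy $F_n = \hat\MT_n^{-1} \circ \o{p^n}{F} \circ \hat\MT_n$ on $B$, where $\hat\MT_n = \oo{0^n}{\MT} \colon B \to \oo{0^n}{B_0}$ is the central depth-$n$ scope map. Taking Jacobian determinants and applying the chain rule gives
\begin{equation}
\jac{F_n}{z} = \jac{\o{p^n}{F}}{\hat\MT_n(z)}\cdot \frac{\jac{\hat\MT_n}{z}}{\jac{\hat\MT_n}{F_n(z)}}.
\end{equation}
Since $\hat\MT_n(z) \in \oo{0^n}{B_0}$ for every $z \in B$, the corollary following the Distortion Lemma~\ref{lem:distortion} gives $\jac{\o{p^n}{F}}{\hat\MT_n(z)} = b^{p^n}(1 + \bigo(\rho^n))$ uniformly in $z$. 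The task therefore reduces to showing that the geometric ratio on the right converges exponentially, uniformly in $z$, to a universal function of $x$ alone.

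Here I would appeal to the asymptotics of Section~\ref{sect:tip}. Writing $\hat\MT_n$ in the translated form $\tau_0 + D_{0,n-1}(\id + R_{0,n-1})(\cdot - \tau_n)$ of Lemma~\ref{lem:D_mn-decomposition}, and using that $D_{0,n-1}$ is upper triangular with constant entries while $R_{0,n-1} = (r_{0,n-1},0)$, a direct calculation gives $\jac{\hat\MT_n}{w} = \sigma_{0,n-1}^2\, s_{0,n-1}\,(1 + \del_x r_{0,n-1}(w - \tau_n))$, so that the prefactor cancels in the ratio, leaving
\begin{equation}
\frac{\jac{\hat\MT_n}{z}}{\jac{\hat\MT_n}{F_n(z)}} = \frac{1 + \del_x r_{0,n-1}(z - \tau_n)}{1 + \del_x r_{0,n-1}(F_n(z) - \tau_n)}.
\end{equation}
The second half of Proposition~\ref{prop:r_mn-convergence-1} gives $|(1 + \del_x r_{0,n-1}(x,y)) - v_*'(x)| < C\rho^n$ uniformly in $(x,y) \in B$; Proposition~\ref{prop:cantor-convergence} gives $|\tau_n - \tau_*| < C\rho^n$; and exponential convergence $F_n \to F_*$, together with the fact that $F_*(x,y) = (f_*(x), x)$ has $y$-independent first coordinate, lets me replace $\pi_x(F_n(z))$ by $f_*(x)$ up to $\bigo(\rho^n)$. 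Setting $\xi_* = \pi_x(\tau_*)$ and
\begin{equation}
a(x) = \frac{v_*'(x - \xi_*)}{v_*'(f_*(x) - \xi_*)},
\end{equation}
which depends only on $\perm$ and is real-analytic wherever the denominator is nonzero, the geometric ratio equals $a(x)(1 + \bigo(\rho^n))$.

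Combining the two factors gives $\jac{F_n}{z} = b^{p^n} a(x)(1 + \bigo(\rho^n))$, and integrating $\del_y \e_n = \pm \jac{F_n}{\cdot}$ in $y$ from $\e_n(x,0) = 0$ yields $\e_n(x,y) = b^{p^n} a(x) y(1 + \bigo(\rho^n))$, as required. The main obstacle is the uniform control of the $y$-dependence in the geometric ratio: at depth $m = 0$ the pointwise bound $|\del_y r_{0,n-1}| \leq C\bar\e$ from Proposition~\ref{prop:D_mn-estimate} is nowhere near small enough to give a $\rho^n$-error in the ratio. The rescue is precisely the strengthening in Proposition~\ref{prop:r_mn-convergence-1} that the $x$-derivative of $x + r_{0,n-1}$, rather than the function itself, converges to $v_*'(x)$ uniformly in $y$ at rate $\rho^n$ --- a phenomenon that is a consequence of the fact (Proposition~\ref{prop:r_mn-convergence-2}) that the leading $y$-correction to $x + r_{0,n-1}$ is an $x$-independent quadratic $\kappa_{(0)}y^2$, which is annihilated by $\del_x$.
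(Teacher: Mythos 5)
Your argument is correct and follows the same strategy as the paper: conjugate $F_n$ to $\o{p^n}{F}$ via the depth-$n$ central scope map, take Jacobians, control the $\o{p^n}{F}$-factor with the corollary to the Distortion Lemma, and control the geometric ratio via the convergence of $1+\del_x r_{0,n-1}$ to $v_*'$ from Proposition~\ref{prop:r_mn-convergence-1}. The only presentational differences are minor. You compute $\jac{\hat\MT_n}{w}=\sigma_{0,n-1}^2\,s_{0,n-1}\,(1+\del_xr_{0,n-1}(w-\tau_n))$ directly and observe the prefactor cancels, whereas the paper writes the same cancellation more abstractly as $\jac{\MT_{0,n-1}}{z}=\jac{\MT_{0,n-1}}{\tau_n}\,\jac{(\id+R_{0,n-1})}{z-\tau_n}$ and then applies Lemma~\ref{lem:ratio-perturb} to the quotient of the $(\id+R_{0,n-1})$ Jacobians; also, where you informally replace $\pi_x(F_n(z))$ by $f_*(x)$ up to $\bigo(\rho^n)$, the paper introduces the intermediate point $\varsigma_*=\tau_*+(\varsigma-\tau_n)$ and the vectors $\delta_*^0,\delta_*^1$ to make the comparison explicit. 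Your closing remark correctly diagnoses why the naive bound $|\del_y r_{0,n-1}|\leq C\bar\e$ is useless and why the mixed-partial decay from Proposition~\ref{scopeestimate} (reflected in the $\kappa_{(m)}y^2$ structure of Proposition~\ref{prop:r_mn-convergence-2}) is what saves the argument; the paper exploits precisely this in the proof of Proposition~\ref{prop:r_mn-convergence-1}. One small point: you invoke the canonical parametrisation $\e_n(x,0)=0$ from the outset, which cleanly identifies the $y$-independent integration constant with $f_n$; the paper instead integrates to some $g_n$ and then argues separately that $g_n$ can be taken as the unimodal part, which amounts to the same thing but is a bit more laboured. Your version is therefore slightly tighter at the final step.
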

\begin{proof}
Let $F_n=(\phi_n,\pi_x)$ denote the $n$-th renormalisation of $F$. Let $\tau_n$ denote the tip of height $n$ and let $\varsigma\in\Dom(F_n)$ be any other point. 
Applying the chain rule to $F_n=\MT_{0,n-1}^{-1}\circ \o{p^n}{F}\circ\MT_{0,n-1}$ at the point $\varsigma$ gives
\begin{equation}\label{eqn:2d-universality:chainrule}
\del_y\phi_n(\varsigma)=\jac{F_n}{\varsigma}=\jac{\o{p^n}{F}}{\MT_{0,n-1}(\varsigma)}\frac{\jac{\MT_{0,n-1}}{\varsigma}}{\jac{\MT_{0,n-1}}{F_n(\varsigma)}}.
\end{equation}
By the Distortion Lemma~\ref{lem:distortion}, since $\MT_{0,n-1}(\varsigma)\in \oo{0^n}{B}$, there exists a constant $C_0>0$ such that
\begin{equation}\label{eqn:2d-universality-distortion}
\left|\jac{\o{p^n}{F}}{\MT_{0,n-1}(\varsigma)}\right|\leq b^{p^n}\left(1+C_0\rho^n\right).
\end{equation}
It is clear from the decomposition in Lemma~\ref{lem:D_mn-decomposition} that 
\begin{equation}
\jac{\MT_{0,n-1}}{\varsigma}
=\jac{\MT_{0,n-1}}{\tau_n}\jac{\left(\id+R_{0,n-1}\right)}{\varsigma-\tau_n}
\end{equation}
and
\begin{equation}
\jac{\MT_{0,n-1}}{F_n(\varsigma)}
=\jac{\MT_{0,n-1}}{\tau_n}\jac{\left(\id+R_{0,n-1}\right)}{F_n(\varsigma)-\tau_n}.
\end{equation}
Let $\delta_n^0=\varsigma-\tau_n$ and $\delta_n^1=F_n(\varsigma)-\tau_n$. Observe that, by Theorem~\ref{thm:R-convergence} and Corollary~\ref{cor:cantor-converge}, there exists a constant $C_1>0$ such that $|\tau_n-\tau_*|, |F_n-F_*|_\Omega<C_1\rho^n$. 
Therefore there exists a constant $C_2>0$ such that, if $\varsigma_*=\tau_*+(\varsigma-\tau_n)$, $\delta_*^0=\varsigma_*-\tau_*$ and $\delta_*^1=F_*(\varsigma_*)-\tau_*$,
\begin{equation}
\left|\delta_n^0-\delta_*^0\right|=\left|\left[\varsigma-\tau_n\right]-\left[\varsigma_*-\tau_*\right]\right|=0
\end{equation}
and
\begin{equation}
\left|\delta_n^1-\delta_*^1\right|=\left|\left[F_n(\varsigma)-\tau_n\right]-\left[F_*(\varsigma_*)-\tau_*\right]\right|<C_2\rho^n.
\end{equation}
By Proposition~\ref{prop:r_mn-convergence-1} there is a constant $C_3>0$ such that
\begin{equation}
\left|1+\del_xr_{0,n-1}-v_*'\right|_{\C{0}{J}}<C_3\rho^n.
\end{equation}
Combining these and observing that $v_*$ has bounded derivatives and $\delta_n^0$ and $\delta_n^1$ both lie in a bounded domain gives us a constant $C_4>0$ satisfying
\begin{align}
&\left|\jac{\left(\id+R_{0,n-1}\right)}{\delta_n^0}-v_*'\left(\pi_x\left(\delta_*^0\right)\right)\right| \\
&\leq \left|\jac{\left(\id+R_{0,n-1}\right)}{\delta_n^0}-v_*'\left(\pi_x\left(\delta_n^0\right)\right)\right|+\left|v_*'\left(\delta_n^0\right)-v_*'\left(\delta_*^0\right)\right| \notag \\
&\leq
\left|1+\del_xr_{0,n-1}\left(\delta_n^0\right)-v_*\left(\pi_x\left(\delta_n^0\right)\right)\right|\left|\tau_n-\tau_*\right|+\left|v_*''\right|_{\C{0}{J}}\left|\tau_n-\tau_*\right| \notag \\
&\leq C_2C_3\rho^{2n}+C_2\left|v_*\right|_{\C{2}{J}}\rho^n \notag \\
&\leq C_4\rho^n \notag
\end{align}
and
\begin{align}
&\left|\jac{\left(\id+R_{0,n-1}\right)}{\delta_n^1}-v_*'\left(\pi_x\left(\delta_*^1\right)\right)\right| \\
&\leq \left|\jac{\left(\id+R_{0,n-1}\right)}{\delta_n^1}-v_*'\left(\pi_x\left(\delta_n^1\right)\right)\right|
+\left|v_*'\left(\pi_x\left(\delta_n^1\right)\right)-v_*'\left(\pi_x\left(\delta_*^1\right)\right)\right| \notag \\
&\leq
\left|1+\del_xr_{0,n-1}\left(\delta_n^1\right)-v_*'\left(\pi_x\left(\delta_n^1\right)\right)\right|\left|\delta_n^1\right|
+\left|v_*''\right|_{\C{0}{J}}\left|\delta_n^1-\delta_*^1\right| \notag \\
&\leq C_4\rho^n \notag.
\end{align}
Observe that there exists a constant $C_5>0$ such that $\left|v_*'(x)\right|\geq C_5>0$, as $v_*$ is a rescaling of a diffeomorphism onto its image. Observe also that there
exists an $N>0$ such that $\left|1+\del_xr_{0,n}\right|_{\C{0}{J}}\geq
\half \inf \left|v_*'(x)\right|\geq C_5$ for all $n>N$. Therefore there exists a constant $C_6>1$ such that for all $n>N$,
\begin{equation}
\max\left(1,\left|\frac{v_*'\left(\pi_x\left(\delta_*^0\right)\right)}{v_*'\left(\pi_x\left(\delta_*^1\right)\right)}\right|\right)<C_6; \qquad
C_6^{-1}<\left|\jac{\MT_{0,n}}{\delta_n^1}\right|.
\end{equation}
Therefore, applying Lemma~\ref{lem:ratio-perturb} we find
\begin{align}
\left|\frac{\jac{\MT_{0,n-1}}{\delta_n^0}}{\jac{\MT_{0,n-1}}{\delta_n^1}}-\frac{v_*'\left(\pi_x\left(\delta_*^0\right)\right)}{v_*'\left(\pi_x\left(\delta_*^1\right)\right)}\right|
&=\left|\frac{\jac{\left(\id+R_{0,n-1}\right)}{\delta_n^0}}{\jac{\left(\id+R_{0,n-1}\right)}{\delta_n^1}}-\frac{v_*'\left(\pi_x\left(\delta_*^0\right)\right)}{v_*'\left(\pi_x\left(\delta_*^1\right)\right)}\right| \\
&\leq C_6^2\max_{i=0,1}\left(\left|1+\del_xr_{0,n-1}\left(\delta_n^i\right)-v_*'\left(\pi_x\left(\delta_*^i\right)\right)\right|\right) \notag \\
&\leq C_4C_6^2\rho^n. \notag
\end{align}
Together with equation~\ref{eqn:2d-universality:chainrule} and~\ref{eqn:2d-universality-distortion} this implies,
\begin{equation}
\del_y\phi_n\left(\varsigma\right)=b^{p^n}a(\xi)\left(1+\bigo\left(\rho^n\right)\right)
\end{equation}
where $\varsigma=\left(\xi,\eta\right)$ and
\begin{equation}
a(\xi)=\frac{v_*'\left(\xi-\pi_x\left(\tau_*\right)\right)}{v_*'\left(f_*\left(\xi\right)-\pi_x\left(\tau_*\right)\right)}.
\end{equation}
This implies that, if $z=(x,y)\in B$, upon integrating with respect to the $y$-variable  we find
\begin{equation}
\phi_n(x,y)=g_n(x)+yb^{p^n}a(x)\left(1+\bigo\left(\rho^n\right)\right),
\end{equation}
for some function $g_n$ independent of $y$. But now let $(f_n,\e_n)$ be any parametrisation of $F_n$ such that $|\e_n|\leq C_7\bar\e^{p^n}$ and $|f_n-f_*|<C_8\rho^n$. Here
$C_7>0$ is the constant from Theorem~\ref{thm:R-construction} and $C_8>0$ is the constant from Theorem~\ref{thm:R-convergence}. Then there is a constant $C_9>0$ such that
$|g_n-f_n|=|\e_n-b^{p^n}\pi_y\circ a|\leq C_9\rho^n$. Therefore, for $n>0$ sufficiently large $g_n$ will also be unimodal and $|g_n-f_*|\leq |g_n-f_n|+|f_n-f_*|\leq (C_9+C_8)\rho^n$.
Hence we may absorb their difference into into the $\bigo(\rho^n)$ term.
\end{proof}

The following is an immediate consequence of the proof of above Theorem.
\begin{prop}\label{prop:t_mn-properties}
Let $F\in\I_{\Omega,\perm}(\bar\e)$, let $\MT_{m,n}$ denote the scope function from height $n+1$ to height $m$. Let $t_{m,n}$ denote the tilt of $\MT_{m,n}$ and let $\tau_{m+1}$ denote the tip at height $m+1$. Let $a=a(\tau_*)$ where $a(x)$ is the universal function from Theorem~\ref{thm:2d-universality} above. Then exists constants $C>0$ and $0<\rho<1$ such that for all $0<m<n$ sufficiently large,
\begin{align}\label{ineq:scope3}
ab^{p^m}(1-C\rho^m)<  \left|t_{m}(\tau_{m+1}) \ \right|<ab^{p^m}(1+C\rho^m) \\
ab^{p^m}(1-C\rho^m)<\left|t_{m,n}(\tau_{n+1})\right|<ab^{p^m}(1+C\rho^m).
\end{align}
Moreover $t_{m,*}=\lim_{n\to\infty}t_{m,n}(\tau_{n+1})$ exists and the convergence is exponential.
\end{prop}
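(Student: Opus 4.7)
The strategy is to reduce both inequalities to Theorem~\ref{thm:2d-universality} by first identifying $|t_m(\tau_{m+1})|$ with a ratio of partial derivatives of $\oo{p-1}{\phi_m}$. Since $\MT_m=\bar H_m\circ\bar{\III}_m$, with $\bar{\III}_m$ an affine bijection of squares (so its linear part is a diagonal scaling by $\pm\sigma_m$) and $\bar H_m(u,v)=(\oo{-p+1}{\phi_m}(u,v),v)$, differentiating at $\tau_{m+1}$ gives $\sigma_m t_m = \pm\sigma_m\del_v\oo{-p+1}{\phi_m}(\bar{\III}_m(\tau_{m+1}))$. The implicit function theorem applied to $\oo{p-1}{\phi_m}(\oo{-p+1}{\phi_m}(u,v),v)=u$, combined with the identity $\bar H_m\bar{\III}_m(\tau_{m+1})=\MT_m(\tau_{m+1})=\tau_m$, then produces
\begin{equation}
|t_m(\tau_{m+1})|=\left|\frac{\del_y\oo{p-1}{\phi_m}(\tau_m)}{\del_x\oo{p-1}{\phi_m}(\tau_m)}\right|.
\end{equation}

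Next I would apply Proposition~\ref{prop:var-formula} to both partial derivatives. Writing $F_m=(f_m+\e_m,\pi_x)$, this gives $\del_y\oo{p-1}{\phi_m}(z)=\del_y\e_m(z)\,(\o{p-1}{f_m})'(\pi_x z)+\bigo(\bar\e^{2p^m})$ and $\del_x\oo{p-1}{\phi_m}(z)=(\o{p-1}{f_m})'(\pi_x z)(1+\bigo(\bar\e^{p^m}))$. The real bounds of Theorem~\ref{thm:real-ap-bounds} keep $(\o{p-1}{f_m})'$ bounded away from zero on the maximal extension of the relevant interval, so the ratio collapses to $\del_y\e_m(\tau_m)(1+\bigo(\bar\e^{p^m}))=\del_y\phi_m(\tau_m)(1+\bigo(\rho^m))$. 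Theorem~\ref{thm:2d-universality} now supplies $\del_y\phi_m(\tau_m)=b^{p^m}a(\pi_x\tau_m)(1+\bigo(\rho^m))$, and analyticity of $a$ together with the exponential convergence $\tau_m\to\tau_*$ from Proposition~\ref{prop:cantor-convergence} upgrades this to $|t_m(\tau_{m+1})|=ab^{p^m}(1+\bigo(\rho^m))$.

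For the composite tilt, the decomposition $t_{m,n}=\sum_{i=m}^n s_{m,i-1}t_i$ from Lemma~\ref{lem:D_mn-decomposition} reduces the problem to a leading-term estimate plus a tail bound. The $i=m$ term equals $t_m$ and is $\sim ab^{p^m}$ by the preceding paragraph, while Propositions~\ref{prop:D_n-estimate} and~\ref{prop:D_mn-estimate} give $|s_{m,i-1}t_i|\lesssim\sigma^{i-m}\bar\e^{p^i}$ for $i>m$; the sum of these is super-exponentially smaller than $b^{p^m}$, which yields the second pair of bounds. The same tail estimate shows that $\sum_{i=m}^\infty s_{m,i-1}t_i$ converges absolutely, defining $t_{m,*}$, with $|t_{m,n}(\tau_{n+1})-t_{m,*}|$ decaying super-exponentially in $n$. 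The main obstacle lies in the bookkeeping of the first paragraph: keeping track of the orientation of $\bar{\III}_m$ and verifying that the scaling factor $\sigma_m$ cancels cleanly so that $|t_m|$ appears exactly as the displayed quotient; once this is in place, the remainder is a direct assembly of tools already developed.
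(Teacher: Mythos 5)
Your proposal follows essentially the same route as the paper's proof: identify $|t_m(\tau_{m+1})|$ with the ratio $\bigl|\del_y\oo{p-1}{\phi}_m(\tau_m)/\del_x\oo{p-1}{\phi}_m(\tau_m)\bigr|$, feed both partials through the Variational Formula (Proposition~\ref{prop:var-formula}) to collapse the ratio to $\del_y\e_m(\tau_m)$ up to $\bigo(\rho^m)$ error, apply Theorem~\ref{thm:2d-universality}, and then use exponential convergence of $\tau_m\to\tau_*$ and analyticity of $a$. For the composite tilt you invoke the same decomposition $t_{m,n}=\sum_{i\geq m}s_{m,i-1}t_i$ and a tail bound from Propositions~\ref{prop:D_n-estimate} and~\ref{prop:D_mn-estimate}; the paper packages the tail slightly differently by factoring out $t_m$ and writing $t_{m,n-1}(\tau_n)=t_m(\tau_{m+1})(1+K_{m,n-1})$ with $|K_{m,n-1}|$ super-exponentially small, but the bookkeeping and the content are the same.
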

\begin{proof}
Let $\tau_m=(\xi_m,\eta_m)$. Recall that
\begin{equation}
t_m=\pm\frac{\del_y\oo{p-1}{\phi}_m(\tau_{m})}{\del_x\oo{p-1}{\phi}_m(\tau_{m})},
\end{equation}
but by the Variational Formula~\ref{prop:var-formula} we know
\begin{equation}
\oo{p-1}{\phi}_m\left(\xi_m,\eta_m\right)=\o{p-1}{f}_m\left(\xi_m\right)+\oo{p-1}{L}_m\left(\xi_m\right)+\e_m\left(\xi_m,\eta_m\right)\left(\o{p-1}{f}_m\right)'\left(\xi_m\right)+\bigo(\bar\e^{2p^m})
\end{equation}
which implies
\begin{align}
\del_x\oo{p-1}{\phi}_m\left(\xi_m,\eta_m\right)
&=\left(\o{p-1}{f}_m\right)'\left(\xi_m\right)+\left(\oo{p-1}{L}_m\right)'\left(\xi_m\right)+\del_x\e_m\left(\xi_m,\eta_m\right)\left(\o{p-1}{f}_m\right)'\left(\xi_m\right)  \notag \\
&+\e_m\left(\xi_m,\eta_m\right)\left(\o{p-1}{f}_m\right)''\left(\xi_m\right)+\bigo(\bar\e^{2p^m}) \\
\del_y\oo{p-1}{\phi}_m(\xi_m,\eta_m)
&=\del_y\e_m(\xi_m,\eta_m)(\o{p-1}{f}_m)'(\xi_m,\eta_m)+\bigo(\bar\e^{2p^m})
\end{align}
Therefore, by the fact that $(\o{p-1}{f}_m)'(\xi_m)$ is uniformly bounded from zero if $n$ is sufficiently large,
\begin{align}
\frac{\del_y\oo{p-1}{\phi}_m\left(\xi_m,\eta_m\right)}{\del_x\oo{p-1}{\phi}_m\left(\xi_m,\eta_m\right)}
&=\frac{\del_y\e_m\left(\xi_m,\eta_m\right)\left(\o{p-1}{f}_m\right)'\left(\xi_m,\eta_m\right)+\bigo(\bar\e^{2p^m})}{\left(\o{p-1}{f}_m\right)'\left(\xi_m\right)+\bigo(\bar\e^{p^n})} \\
&=\left(\del_y\e_m\left(\xi_m,\eta_m\right)+\bigo(\bar\e^{2p^m})\right)\left(1+\bigo(\bar\e^{p^m})\right) \notag \\
&=\del_y\e_m\left(\xi_m,\eta_m\right)+\bigo(\bar\e^{2p^m}). \notag 
\end{align}
Theorem~\ref{thm:2d-universality} above and observing that the $\bigo(\bar\e^{2p^m})$ term can be absorbed into the $\bigo(\rho^m)$ then tells us
\begin{equation}
\left|t_m(\tau_{m+1})\right|=a\left(\xi_m\right)b^{p^m}\left(1+\bigo(\rho^m)\right),
\end{equation}
but by Proposition~\ref{prop:cantor-convergence} we know that $\xi_m$ converges to $\xi_*$ exponentially and so analyticity of $a$ implies $a(\xi_m)=a(\xi_*)(1+\bigo(\rho^m))$. Hence we get the first claim.
Secondly, observe by Lemma~\ref{lem:D_mn-decomposition},
\begin{align}
t_{m,n-1}(\tau_n)
&=\sum_{i=m}^{n-1}s_{m,i-1}(\tau_i)t_i(\tau_{i+1}) \\
&=t_m(\tau_{m+1})\sum_{i=m}^{n-1}s_{m,i-1}(\tau_i)
\left(\frac{\del_x\oo{p-1}{\phi}_m(\tau_m)}{\del_x\oo{p-1}{\phi}_i(\tau_i)}\right)
\left(\frac{\del_y\oo{p-1}{\phi}_i(\tau_i)}{\del_y\oo{p-1}{\phi}_m(\tau_m)}\right) \notag \\
&=t_m(\tau_{m+1})\sum_{i=m}^{n-1}s_{m+1,i}(\tau_{i+1})
\left(\frac{\del_y\oo{p-1}{\phi}_i(\tau_i)}{\del_y\oo{p-1}{\phi}_m(\tau_m)}\right) \notag
\end{align}
Therefore we can write $t_{m,n-1}(\tau_n)=t_m(\tau_{m+1})(1+K_{m,n-1}(\tau_{n+1}))$ where
\begin{equation}
K_{m,n-1}\sum_{i=m+1}^{n-1}s_{m+1,i}(\tau_{i+1})
\left(\frac{\del_y\oo{p-1}{\phi}_i(\tau_i)}{\del_y\oo{p-1}{\phi}_m(\tau_m)}\right).
\end{equation}
By Proposition~\ref{prop:D_mn-estimate} and the Variational Formula~\ref{prop:var-formula}, there exists a constant $C_7>0$ such that $|K_{m,n-1}(\tau_{n+1})|\geq C\bar\e^{p^{m+1}-p^m}$. Absorbing this error into the $\bigo(\rho^m)$ term gives us the second claim. The third claim follows as the terms in $K_{m,n}$ decrease super-exponentially as $n$ tends to infinity, but $\tau_m$ only converges exponentially to $\tau_*$.
\end{proof}

\begin{prop}\label{prop:D_mn-perturb}
Let $F\in\I_{\Omega,\perm}(\bar\e)$ be as above, let $\tau_n$ denote the tip of $F_n$ and let $\varsigma_n=\o{p}{F_n}(\tau_n)$. Then there exists a constant $C>1$ for all $0<m<n$ 
\begin{align}
&C^{-1}|s_{m,n-1}(\tau_n)|\leq |s_{m,n-1}(\varsigma_n)|\leq C|s_{m,n-1}(\tau_n)| \\
&C^{-1}|t_{m,n-1}(\tau_n)|\leq |t_{m,n-1}(\varsigma_n)|\leq C|t_{m,n-1}(\tau_n)| \\
&|s_{m,n-1}(\varsigma_n)-s_{m,n-1}(\tau_n)|>C^{-1}|\varsigma_n-\tau_n| \\
&|t_{m,n-1}(\varsigma_n)-t_{m,n-1}(\tau_n)|>C^{-1}|\varsigma_n-\tau_n|
\end{align} 
\end{prop}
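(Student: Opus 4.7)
The plan is to regard $s_{m,n-1}$ and $t_{m,n-1}$ as real-analytic functions of the base point $z\in B$, namely the normalized $(1,1)$ and $(1,2)$ entries of $\sigma_{m,n-1}^{-1}\D{\MT_{m,n-1}}{z}$, and to derive by iterating Lemma~\ref{lem:scope-decomposition} the identities
\begin{align}
s_{m,n-1}(z) &= \prod_{i=m}^{n-1}s_i(\MT_{i+1,n-1}(z)), \notag \\
t_{m,n-1}(z) &= \sum_{i=m}^{n-1}\left(\prod_{j=m}^{i-1}s_j(\MT_{j+1,n-1}(z))\right) t_i(\MT_{i+1,n-1}(z)), \notag
\end{align}
with the convention that empty products equal $1$. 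The two evaluation points have explicit dynamical descriptions: one has $\MT_{i+1,n-1}(\tau_n)=\tau_{i+1}$ by the defining identity $\tau_j=\MT_j(\tau_{j+1})$ of the tip tower, while iterating the semiconjugacy $\o{p}{F_j}\circ\MT_j=\MT_j\circ F_{j+1}$ yields $\MT_{i+1,n-1}(\varsigma_n)=\o{p^{n-i}}{F_{i+1}}(\tau_{i+1})$, the point of $\Cantor_{i+1}$ with adding-machine address $0^{n-i-1}1\,0^\infty$.

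For the first two (comparability) inequalities the plan is a factorwise log-ratio estimate. Since Proposition~\ref{prop:D_n-estimate} gives $|s_i|\asymp\sigma$ together with a uniform bound on $|s_i'|$, and $|\MT_{i+1,n-1}(\varsigma_n)-\MT_{i+1,n-1}(\tau_n)|=O(\sigma^{n-i-1})$ by Proposition~\ref{prop:D_mn-estimate}, each factor satisfies $|\log(s_i(\MT_{i+1,n-1}(\varsigma_n))/s_i(\MT_{i+1,n-1}(\tau_n)))|=O(\sigma^{n-i-2})$; summing the geometric series in $i$ bounds the total log-ratio by a uniform constant and hence yields the $s$-comparability. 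For the tilt, the super-exponential decay $|t_i|\asymp b^{p^i}$ supplied by Proposition~\ref{prop:t_mn-properties} makes the $i=m$ summand dominate, so $t_{m,n-1}(z)=t_m(\MT_{m+1,n-1}(z))(1+o(1))$, and comparability at $\tau_n$ and $\varsigma_n$ reduces to analytic comparability of $t_m$ at two points of $B$ where $|t_m|\asymp b^{p^m}$.

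For the lower-bound (non-degeneracy) inequalities the plan is to pass to the universal limit and then transfer. By Theorem~\ref{thm:R-convergence}, Corollary~\ref{cor:cantor-converge} and Proposition~\ref{prop:r_mn-convergence-2}, the pair $(\tau_n,\varsigma_n)$ converges exponentially to $(\tau_*,\varsigma_*)$ with $\varsigma_*-\tau_*=(0,\o{p-1}{f_*}(\alpha_x)-\alpha_y)$ a definite nonzero vertical displacement (the horizontal component vanishes because $\alpha_x$ is $p$-periodic under $f_*$), while $s_{m,n-1}$ and $t_{m,n-1}$ converge to universal real-analytic limits $s_m^*,t_m^*$ expressible through the presentation function $\pf_*$ and the universal function $a$ of Theorem~\ref{thm:2d-universality}. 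The quadratic correction $\kappa_{(m)}y^2$ isolated in Proposition~\ref{prop:r_mn-convergence-2}, combined with the nonvanishing of $a$ at the tip, will give nonvanishing $y$-partial derivatives for $s_m^*$ and $t_m^*$ along the vertical segment $[\tau_*,\varsigma_*]$. A mean-value estimate then produces uniform lower bounds $|s_m^*(\varsigma_*)-s_m^*(\tau_*)|,\,|t_m^*(\varsigma_*)-t_m^*(\tau_*)|\geq c_0|\varsigma_*-\tau_*|$ with $c_0>0$ independent of $m$, and the exponential transfer promotes these to the claimed inequalities with a uniform $C>1$.

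The hard part will be this non-degeneracy step. The comparability half is essentially routine bookkeeping once the product/sum decompositions and the earlier estimates are in place, but proving that the universal limits $s_m^*,t_m^*$ are genuinely non-constant in the $y$-direction between $\tau_*$ and $\varsigma_*$, and that the slope is uniform in the lower height index $m$, will require a careful combination of the presentation-function theory of Section~\ref{sect:unimodal-scopemaps} with the tip asymptotics of Section~\ref{sect:tip} more delicate than the purely universal estimates established so far.
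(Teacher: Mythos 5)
The third and fourth inequalities as printed are almost certainly sign errors, and your plan spends its entire difficulty trying to prove a lower bound that the paper neither proves nor needs. The paper's own proof is a one-liner: the estimates on $r_{m,n}$ in Proposition~\ref{prop:D_mn-estimate} give uniform upper bounds on the derivatives of $s_{m,n-1}$ and $t_{m,n-1}$ on the rectangle spanned by $\tau_n$ and $\varsigma_n$ for $n$ large, which yields the \emph{upper} Lipschitz bounds $|s_{m,n-1}(\varsigma_n)-s_{m,n-1}(\tau_n)|\leq C|\varsigma_n-\tau_n|$ and likewise for $t$. That this is what is intended, rather than the printed lower bounds with $>C^{-1}$, is confirmed by the only place the Proposition is invoked: the second paragraph of the proof of Theorem~\ref{thm:no-continuous-linefields} quotes the same four inequalities, but later in that same proof they are applied in the forms $|s_{0,m-1}(\varsigma_m)-s_{0,m-1}(\tau_m)|\leq C_4|\varsigma_m-\tau_m|$ and $|t_{0,m-1}(\varsigma_m)-t_{0,m-1}(\tau_m)|\leq C_4|\varsigma_m-\tau_m|$.

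Your factorwise log-ratio argument for the first two inequalities is a workable, if somewhat more elaborate, alternative to the paper's direct citation of Proposition~\ref{prop:D_mn-estimate}; both hinge on the same derivative control together with the lower bounds $|s_{m,n-1}|\gtrsim\sigma^{n-m}$ and $|t_{m,n-1}|\gtrsim ab^{p^m}$ at the tip. The problem is your third paragraph. Even read on its own terms, the non-degeneracy step rests on two claims that are false. First, $\varsigma_*-\tau_*$ is not purely vertical: by Remark~\ref{rmk:sigma}, $\pi_x(\tau_*)$ is the critical \emph{value} $c_1=f_*(c_0)$ of $f_*$, and $\pi_x(\varsigma_*)=f_*^{p}(c_1)$; these are distinct because $f_*$ is not postcritically finite, and indeed the proof of Theorem~\ref{thm:nonrigid} explicitly records that both $|\pi_x(\varsigma_*)-\pi_x(\tau_*)|$ and $|\pi_y(\varsigma_*)-\pi_y(\tau_*)|$ are bounded away from zero. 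Second, Proposition~\ref{prop:r_mn-convergence-2} gives only the \emph{upper} bound $|\kappa_{(m)}|<C\bar\e^{p^m}$, a quantity decaying super-exponentially in $m$; you would need a \emph{lower} bound on $|\kappa_{(m)}|$ uniform over $m$ to extract the uniform non-degeneracy you assert, and no such bound exists. Once you correct the last two inequalities to upper Lipschitz bounds, the hard part of your plan simply disappears and the proof reduces to the paper's observation.
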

\begin{proof}
These follow from the estimates on the second order terms (i.e. the functions $r_{m,n}$) given by Proposition~\ref{prop:D_mn-estimate} and the observation that $\tau_n,\varsigma_n\in\oo{0}{B_n}$ implies, for $n$ sufficiently large, that the derivatives of $s_{m,n-1}, t_{m,n-1}$ in the rectangle spanned by $\tau_n,\varsigma_n$ will be uniformly bounded.
\end{proof}

\subsection{Invariant Line Fields}\label{sect:linefields}
Let $F\in\I_{\Omega,\perm}(\bar\e)$ and let $\Cantor$ denote its renormalisation Cantor set. We will now consider the space of $F$-invariant line fields on $\Cantor$. 
As we are considering line fields, let us projectivise all the transformations under consideration. Let us take the projection onto the line $\{y=1\}$, and let us denote the projected coordinate by $X$. Then the maps $D(\MT_{m,n};z)$ and $D(\o{p}{F_n};z)$
induce the transformations
\begin{align}
\projD{\MT_{m,n}}{z}(X)
&= s_{m,n}(z)X+t_{m,n}(z) \\ 
\projD{\o{p}{F_n}}{z}(X)
&=\zeta_n(z)\frac{X+\eta_n(z)}{X+\theta_n(z)}.
\end{align}
where $s_{m,n}(z), t_{m,n}(z)$ are as in Section~\ref{sect:scope} and $\zeta_n(z),\eta_n(z),\theta_n(z)$ are given by
\begin{equation}
\zeta_n(z)=\frac{\del_x\oo{p}{\phi_n}(z)}{\del_x\oo{p-1}{\phi_n}(z)},\quad 
\eta_n(z)=\frac{\del_y\oo{p}{\phi_n}(z)}{\del_x\oo{p}{\phi_n}(z)}, \quad 
\theta_n(z)=\frac{\del_y\oo{p-1}{\phi_n}(z)}{\del_x\oo{p-1}{\phi_n}(z)}.
\end{equation}
\begin{prop}\label{prop:etas-estimate}
Let $F\in\I_{\Omega,\perm}(\bar\e)$ be as above. Then there exists a constant $C>1$ such that for all $n>0$
\begin{align}
&C^{-1}<|\zeta_n(\tau_n)|<C \\
&|\eta_n(\tau_n)|<C\bar\e^{p^{n+1}} \\
&|\theta_n(\tau_n)|<C\bar\e^{p^n}
\end{align}
\end{prop}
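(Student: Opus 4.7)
The plan is to apply the Variational Formula of Proposition~\ref{prop:var-formula} to $F_n=(\phi_n,\pi_x)$, which by Theorem~\ref{thm:R-construction} is a thickening of the degenerate map by $\e_n$ with $|\e_n|_{\Omega}\leq C\bar\e^{p^n}$, and by the Cauchy estimate $|\del_y\e_n|_{\Omega'}\leq C'\bar\e^{p^n}$ on a slightly shrunken polydisk. The variational expansion then yields, for each $w\geq 1$,
\begin{align*}
\del_x\oo{w}{\phi_n}(x,y)&=(\o{w}{f_n})'(x)\bigl(1+\bigo(\bar\e^{p^n})\bigr),\\
\del_y\oo{w}{\phi_n}(x,y)&=\del_y\e_n(x,y)\,(\o{w}{f_n})'(x)+\bigo(\bar\e^{2p^n}).
\end{align*}
Evaluating at $\tau_n$ and writing $\xi_n=\pi_x(\tau_n)$, the ratios defining $\zeta_n,\theta_n,\eta_n$ become explicit up to a multiplicative $(1+\bigo(\bar\e^{p^n}))$ factor.

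For $\zeta_n(\tau_n)$ one has
\[
\zeta_n(\tau_n)=\frac{(\o{p}{f_n})'(\xi_n)}{(\o{p-1}{f_n})'(\xi_n)}(1+\bigo(\bar\e^{p^n}))=f_n'\bigl(\o{p-1}{f_n}(\xi_n)\bigr)\bigl(1+\bigo(\bar\e^{p^n})\bigr).
\]
Theorem~\ref{thm:R-convergence} and Corollary~\ref{cor:cantor-converge} give $F_n\to F_*$ and $\tau_n\to\tau_*$ exponentially, so it suffices to check that $f_*'(\o{p-1}{f_*}(\xi_*))$ is bounded from $0$ and $\infty$. By Remark~\ref{rmk:sigma} the point $\xi_*=\pi_x(\tau_*)$ is the fixed point of $\oo{1}{\mt_*}$ and so lies in $\oo{1}{J_*}$; the combinatorics then force $\o{p-1}{f_*}(\xi_*)\in\oo{0}{J_*}$ while the orbit $\xi_*,f_*(\xi_*),\ldots,\o{p-1}{f_*}(\xi_*)$ visits the cycle $\oo{1}{J_*},\ldots,\oo{p-1}{J_*},\oo{0}{J_*}$ once, in particular missing the critical point $c_0(f_*)$. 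Hence $f_*'(\o{p-1}{f_*}(\xi_*))\neq 0$, giving the two-sided bound on $|\zeta_n(\tau_n)|$ for all sufficiently large $n$. For $\theta_n(\tau_n)$ the numerator cancels out to
\[
\theta_n(\tau_n)=\del_y\e_n(\tau_n)\bigl(1+\bigo(\bar\e^{p^n})\bigr),
\]
and the Cauchy bound on $\del_y\e_n$ immediately yields $|\theta_n(\tau_n)|<C\bar\e^{p^n}$.

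For $\eta_n(\tau_n)$ the variational formula alone only gives the same $\bigo(\bar\e^{p^n})$ estimate as for $\theta_n$, so we must squeeze out an extra factor of $p$ in the exponent. The key identity is the expansion of the Jacobian determinant
\[
\jac{\o{p}{F_n}}{\tau_n}=\del_x\oo{p}{\phi_n}(\tau_n)\,\del_y\oo{p-1}{\phi_n}(\tau_n)-\del_y\oo{p}{\phi_n}(\tau_n)\,\del_x\oo{p-1}{\phi_n}(\tau_n),
\]
which, when divided by $\del_x\oo{p}{\phi_n}(\tau_n)\,\del_x\oo{p-1}{\phi_n}(\tau_n)$, rearranges to
\[
\eta_n(\tau_n)=\theta_n(\tau_n)-\frac{\jac{\o{p}{F_n}}{\tau_n}}{\del_x\oo{p}{\phi_n}(\tau_n)\,\del_x\oo{p-1}{\phi_n}(\tau_n)}.
\]
The denominator is bounded away from zero by the first step, and the Corollary of the Distortion Lemma~\ref{lem:distortion} applied to $F_n$ (one period, one renormalisation level) gives $|\jac{\o{p}{F_n}}{\tau_n}|=b_{F_n}^p(1+\bigo(\rho^n))=b^{p^{n+1}}(1+\bigo(\rho^n))\leq C\bar\e^{p^{n+1}}$. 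The hard step, and where I expect the main obstacle to lie, is identifying the cancellation between the $\theta_n(\tau_n)$ term and a corresponding contribution in the Jacobian quotient: one must plug in the precise universality expansion from Theorem~\ref{thm:2d-universality}, namely $\e_n(x,y)=b^{p^n}a(x)y(1+\bigo(\rho^n))$, and use it both to evaluate $\theta_n(\tau_n)$ to leading order as $b^{p^n}a(\xi_*)(1+\bigo(\rho^n))$ and to track the same leading order in the numerator $\jac{\o{p}{F_n}}{\tau_n}$, so that the two leading contributions cancel and only the $\bigo(\bar\e^{p^{n+1}})$ remainder survives. Once that cancellation is verified, absorbing the distortion and convergence errors into the $\rho^n$ term finishes the estimate.
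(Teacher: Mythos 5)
Your argument for the first and third bounds tracks the paper's closely: the paper likewise combines the variational formula with convergence of renormalisation and the non-vanishing of a derivative of $f_*$ at a postcritical point for $\zeta_n$, and reads off the $\theta_n$ bound from the already-established estimate on the tilt $t_n$ in Proposition~\ref{prop:D_n-estimate}. (One small point: you deduce $f_*'(\o{p-1}{f_*}(\xi_*))\neq 0$ from the orbit ``missing the critical point,'' but $\o{p-1}{f_*}(\xi_*)\in\oo{0}{J_*}$, which \emph{does} contain $c_0(f_*)$; what actually rules out coincidence is that $\o{p-1}{f_*}(\xi_*)=\o{p}{f_*}(c_0)$ and infinitely renormalisable maps are never postcritically finite, which is exactly the justification the paper gives.)

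The genuine gap is in the $\eta_n$ estimate, and you are right to flag it as the ``hard step'' --- but the cancellation you hope for is impossible, and you should have pushed this further. Your Jacobian identity is correct and is also what the paper invokes: it gives
\[
\theta_n(\tau_n)-\eta_n(\tau_n)=\frac{\jac{\o{p}{F_n}}{\tau_n}}{\del_x\oo{p}{\phi_n}(\tau_n)\,\del_x\oo{p-1}{\phi_n}(\tau_n)}=\bigo(\bar\e^{p^{n+1}}),
\]
since the Jacobian of $\o{p}{F_n}$ is a product of $p$ factors each $\bigo(\bar\e^{p^n})$. But this only controls the \emph{difference} $\eta_n-\theta_n$, and you cannot parlay it into a $\bar\e^{p^{n+1}}$ bound on $\eta_n$ alone: by Proposition~\ref{prop:D_n-estimate} one has the two-sided bound $C^{-1}\bar\e^{p^n}<|t_n|<C\bar\e^{p^n}$, and $t_n=-\theta_n(\tau_n)$ by the computation of the scope-map derivative (differentiating $\oo{p-1}{\phi_n}(\oo{p-1}{\bar\phi_n}(u,v),v)=u$), so $|\theta_n(\tau_n)|$ is genuinely of order $\bar\e^{p^n}$, while the Jacobian quotient is of order $\bar\e^{p^{n+1}}$. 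Two quantities of different orders cannot cancel, so $\eta_n(\tau_n)=\theta_n(\tau_n)+\bigo(\bar\e^{p^{n+1}})$ is itself of order $\bar\e^{p^n}$. In particular the universality expansion $\e_n(x,y)=b^{p^n}a(x)y(1+\bigo(\rho^n))$, which you invoke, confirms this: $\del_y\e_n(\tau_n)\sim b^{p^n}a(\xi_*)\neq 0$, and the recursion $\del_y x_{i+1}=\del_1\phi_n\,\del_y x_i+\del_2\phi_n\,\del_y x_{i-1}$ gives $\eta_n(\tau_n)\approx\theta_n(\tau_n)\approx\del_y\e_n(\tau_n)/f_n'(\xi_n)$ to leading order. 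So the plan of ``matching leading orders and letting them cancel'' is structurally wrong: the leading orders already agree, and the Jacobian quotient is a subleading correction. What the Jacobian argument actually establishes --- and what the paper's one-line proof can reasonably be read as establishing --- is $|\eta_n(\tau_n)-\theta_n(\tau_n)|<C\bar\e^{p^{n+1}}$, not $|\eta_n(\tau_n)|<C\bar\e^{p^{n+1}}$. You should state that explicitly as the honest output of your argument rather than speculating that a further cancellation will rescue the stronger claim.
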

\begin{proof}
Let $(f_n,\e_n)$ be a parametrisation for $F_n$. Let $v_n$ denote the critical value of $f_n$. Observe, by convergence of renormalisation~\ref{thm:R-convergence}, that $v_n$ and $\pi_x\tau_n$ are exponentially close and so there is a constant $C_0>0$ such that 
\begin{equation}
\left|\left(\o{p-1}{f_n}\right)'\left(v_n\right)\right|, \left|\left(\o{p-1}{f_n}\right)'\left(\pi_x\tau_n\right)\right|>C_0,
\end{equation}
if $n>0$ is sufficiently large. Therefore by the variational formula, there is a constant $C_1>0$ such that
\begin{align}
\left|\frac{\del_x\oo{p}{\phi_n}\left(\tau_n\right)}{\del_x\oo{p-1}{\phi_n}\left(\tau_n\right)}-\frac{(\o{p}{f_n})'\left(v_n\right)}{(\o{p-1}{f_n})'\left(v_n\right)}\right|
&\leq C_1\bar\e^{p^n}.
\end{align}
Now observe, by Theorem~\ref{thm:R-convergence}, that there is a $C_2>0$ such that
\begin{align}
\left|\frac{(\o{p}{f_n})'\left(v_n\right)}{(\o{p-1}{f_n})'\left(v_n\right)}-\frac{(\o{p}{f_*})'\left(v_*\right)}{(\o{p-1}{f_*})'\left(v_*\right)}\right|<C_2\rho^n.
\end{align}
Therefore there exists a $C_3>0$ such that
\begin{align}
&\left|\zeta_n\left(\tau_n\right)-f_*'\left(\o{p}{f_*}\left(v_*\right)\right)\right| \\
&\leq
\left|\frac{\del_x\oo{p}{\phi_n}(z)}{\del_x\oo{p-1}{\phi_n}(z)}
-\frac{(\o{p}{f_n})'\left(v_n\right)}{(\o{p-1}{f_n})'\left(v_n\right)}\right|+\left|\frac{(\o{p}{f_n})'\left(v_n\right)}{(\o{p-1}{f_n})'\left(v_n\right)}-\frac{(\o{p}{f_*})'\left(v_*\right)}{(\o{p-1}{f_*})'\left(v_*\right)}\right| \notag \\
&\leq C_3\rho^n. \notag
\end{align}
Since $f_*'\left(\o{p}{f_*}\left(v_*\right)\right)\neq 0$ (infinitely renormalisable maps are never postcritically finite), this implies for $n>0$ sufficiently large the first item is true.

For the second item, taking the Jacobian of $\o{p}{F_n}$ at $\tau_n$, applying Proposition~\ref{prop:D_n-estimate} and making the same observation regarding
$f_*'\left(\o{p}{f_*}\left(v_*\right)\right)\neq 0$ as above, gives us the result.

The third item follows directly from Proposition~\ref{prop:D_n-estimate}.
\end{proof}

\begin{thm}\label{thm:no-continuous-linefields}
Let $F\in\I_{\Omega,\perm}(\bar\e)$ and let $\Cantor$ denote its renormalisation Cantor set. Then there do not exist any continuous invariant line fields on $\Cantor$. More precisely, if $X$ is an invariant line field then it must be discontinuous at the tip, $\tau$, of $F$.
\end{thm}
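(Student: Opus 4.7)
The plan is to suppose for contradiction that $X$ is continuous at $\tau$, pull it back through the scope maps $\MT_{0,n-1}$ to $F_n$-invariant line fields $X_n$ on $\Cantor_n$, and derive a contradiction by comparing $X$ at $\tau$ with $X$ at the nearby iterate $\o{p^{n+1}}{F}(\tau)$. In the projective chart from Section~\ref{sect:linefields}, the matrix form of $\D{\MT_{0,n-1}}{\cdot}$ in Lemma~\ref{lem:D_mn-decomposition} projectivises to the pullback identity
\begin{equation}
X(\MT_{0,n-1}(z))=s_{0,n-1}(z)X_n(z)+t_{0,n-1}(z).
\end{equation}
The semi-conjugacy $\MT_{0,n-1}\circ F_n=\o{p^n}{F}\circ\MT_{0,n-1}$ sends $\tau_n$ and $\varsigma_n=\o{p}{F_n}(\tau_n)$ to $\tau$ and $\o{p^{n+1}}{F}(\tau)\in\oo{0^{n+1}}{B}$ respectively, and the latter converges to $\tau$ exponentially. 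Subtracting the two resulting identities and using continuity of $X$ at $\tau$ yields
\begin{equation}\label{pl:diff}
s_{0,n-1}(\tau_n)X_n(\tau_n)-s_{0,n-1}(\varsigma_n)X_n(\varsigma_n) = \bigl(t_{0,n-1}(\varsigma_n)-t_{0,n-1}(\tau_n)\bigr)+o(1).
\end{equation}

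The crux combines three estimates on (\ref{pl:diff}). By Proposition~\ref{prop:D_mn-perturb}, the tilt gap on the right is bounded below by $C^{-1}|\varsigma_n-\tau_n|$; the separation $|\varsigma_n-\tau_n|$ is uniformly bounded below because $\tau_n\to\tau_*$ and $\varsigma_n\to\o{p}{F_*}(\tau_*)$ have distinct limits (the tip is non-periodic); and by Proposition~\ref{prop:t_mn-properties} (together with its natural analogue at $\varsigma_n$) the two tilts have distinct limits $t^\tau_*\ne t^\varsigma_*$, so the right-hand side of (\ref{pl:diff}) has positive absolute limit. Meanwhile the squeezes on the left are $\bigo(\sigma^n)$ by Proposition~\ref{prop:D_mn-estimate}, forcing at least one of $X_n(\tau_n), X_n(\varsigma_n)$ to diverge at rate $\sigma^{-n}$. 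Combining this with the $F_n$-invariance $X_n(\varsigma_n)=\projD{\o{p}{F_n}}{\tau_n}(X_n(\tau_n))$ and the Möbius form of each chain-rule factor given by Proposition~\ref{prop:etas-estimate} (each $X\mapsto\zeta_n(X+\eta_n)/(X+\theta_n)$ with bounded $\zeta_n$ and tiny $\eta_n,\theta_n$, which outputs $\zeta_n+\bigo(X^{-1})$ on large inputs), only one side of the pair $\{X_n(\tau_n),X_n(\varsigma_n)\}$ can blow up while the other stays bounded. Substituting back into (\ref{pl:diff}) pins $X(\tau)$ to the corresponding tilt limit $t^\tau_*$ or $t^\varsigma_*$.

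The same argument then applies at each renormalisation level $k$, pinning $X_k(\tau_k)$ to one of two specific values of order $b^{p^k}$ (via Proposition~\ref{prop:t_mn-properties} at level $k$ and the monotonicity $b(F_k)=b^{p^k}$ of Proposition~\ref{monotonicity}). The scope identity $X(\tau)=s_{0,k-1}(\tau_k)X_k(\tau_k)+t_{0,k-1}(\tau_k)$, combined with $s_{0,k-1}(\tau_k)X_k(\tau_k)=\bigo(\sigma^k b^{p^k})\to 0$ and $t_{0,k-1}(\tau_k)\to t^\tau_*$, forces $X(\tau)=t^\tau_*$, ruling out the level-0 alternative $X(\tau)=t^\varsigma_*$. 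For the remaining case $X(\tau)=t^\tau_*$, I would carry out a parallel comparison between $\tau$ and the other depth-one returns $\o{jp^n}{F}(\tau)$ for $2\leq j\leq p-1$, each converging to $\tau$ along a different sequence within $\oo{0^n}{B}$; applying the same argument to these alternative pairs produces further tilt limits to which $X(\tau)$ must be equal, incompatible with $t^\tau_*$.

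The hard part will be the Möbius case analysis in the second paragraph: excluding the scenario in which $X_n(\tau_n)$ sits near a pole of $\projD{\o{p}{F_n}}{\tau_n}$, which would allow $X_n(\varsigma_n)$ to blow up along with it. Proposition~\ref{prop:etas-estimate} bounds the poles of each Möbius factor by $|\theta_n|=\bigo(b^{p^n})$ and $|\eta_n|=\bigo(b^{p^{n+1}})$, both much smaller than $\sigma^{-n}$, so a chain-rule estimate places the composed pole at comparably small distance from $0$ in the projective chart; this excludes the degenerate configuration uniformly in $n$.
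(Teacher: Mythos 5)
Your plan hinges on the claim that the right-hand side of your identity (\ref{pl:diff}),
\begin{equation}
s_{0,n-1}(\tau_n)X_n(\tau_n)-s_{0,n-1}(\varsigma_n)X_n(\varsigma_n)
= t_{0,n-1}(\varsigma_n)-t_{0,n-1}(\tau_n)+o(1),
\end{equation}
has a positive absolute limit. That claim is false, and the whole forcing argument collapses with it. The function $t_{0,n-1}(z)$ is built by the chain rule from the one-step tilts $t_i$ evaluated at the intermediate images $\MT_{i+1,n-1}(z)$; since $\MT_{i+1,n-1}(B)=\oo{0^{n-i-1}}{B}_{i+1}$ shrinks to $\{\tau_{i+1}\}$ as $n\to\infty$, these evaluation points converge to $\tau_{i+1}$ uniformly in $z\in B$. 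Hence $t_{0,n-1}(z)\to t_{0,*}$ for \emph{every} $z$, and in particular $t_{0,n-1}(\varsigma_n)$ and $t_{0,n-1}(\tau_n)$ converge to the \emph{same} limit; there is no dichotomy $t^\tau_*\neq t^\varsigma_*$. The tilt gap therefore tends to zero (at a rate comparable to $\sigma^n$, the same order as the squeezes on the left), so (\ref{pl:diff}) reduces to a vacuous $o(1)=o(1)$ and does not force any of $X_n(\tau_n),X_n(\varsigma_n)$ to diverge. Related to this: the lower bounds in Proposition~\ref{prop:D_mn-perturb} as printed, $|t_{m,n-1}(\varsigma_n)-t_{m,n-1}(\tau_n)|>C^{-1}|\varsigma_n-\tau_n|$, cannot be taken at face value --- the tilts are uniformly of order $\bar\e^{p^m}$ while $|\varsigma_n-\tau_n|$ stays bounded below, so the two sides have incompatible orders of magnitude. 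In the body of the theorem's proof the paper actually uses the \emph{upper}-Lipschitz version $|t_{0,m-1}(\varsigma_m)-t_{0,m-1}(\tau_m)|\leq C_4|\varsigma_m-\tau_m|$; the displayed lower bounds appear to be misprints, and your argument is the one piece of the paper that would actually rely on them.

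A second, independent gap: the case $X(\tau)=t^\tau_*$ is not proved, only gestured at (``I would carry out a parallel comparison with the other depth-one returns''). This is precisely the hard case of the paper's argument, and the additional returns $\o{jp^n}{F}(\tau)$ again converge to $\tau$, so the same difficulty --- vanishing tilt gap --- recurs and does not obviously produce further incompatible constraints on $X(\tau)$. The mechanism the paper uses here is of a different nature: one observes that $X_m(\tau_m)=t_{m,*}$ sits within $O(\bar\e^{p^{m+1}})$ of the pole $-\theta_m(\tau_m)$ of the M\"obius map $\projD{\o{p}{F_m}}{\tau_m}$ (by Proposition~\ref{prop:t_mn-properties}), so $|X_m(\varsigma_m)|\gtrsim\bar\e^{-p^{m+1}}$ is super-exponentially large, and pushing this forward by $\MT_{0,m-1}$ makes $|X(\varsigma)-X(\tau)|\gtrsim\sigma^m\bar\e^{-p^{m+1}}\to\infty$ while $|\varsigma-\tau|\to 0$. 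Your sketch contains no analogue of this near-pole estimate, and without it the case is not closed. Similarly the paper's other case ($X_m(\tau_m)\to\infty$) is handled by fixing a level $m$ where $|X_m(\tau_m)|$ is large and then contradicting uniform continuity of $X_m$ on $\Cantor_m$; your level-$0$ comparison never isolates such a level, which is what gives the paper a nonvanishing quantity to contradict.

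In short: the approach is genuinely different from the paper's (direct level-$0$ comparison versus the paper's two-case argument at intermediate levels), but the central estimate you lean on does not hold and the hard case is missing, so the proposal does not give a proof.
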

\begin{proof}
Let $X$ be a continuous invariant line field on $\Cantor$. 
Let $\tau_n$ denote the tip of $F_n$ and let $\varsigma_n=\o{p}{F}_n(\tau_n)$ denote its first return, under $F_n$, to $\oo{0}{B}_n$.

Before we begin let us define some constants that shall help our exposition. Let $C_0>0$ satisfy $\left|\theta_n\left(\tau_n\right)\right|<C_0\bar\e^{p^n}$ and
$\left|\eta_n\left(\tau_n\right)\right|<C_0\bar\e^{p^{n+1}}$ for all $n>0$. Such a constant exists by Proposition~\ref{prop:etas-estimate}. Let $C_1>1$ satisfy
$C_1^{-1}<\left|\zeta_n\left(\tau_n\right)\right|<C_1$ for all $n>0$. Such a constant exists by Proposition~\ref{prop:etas-estimate}. Let $C_2>0$ satisfy
$\left|t_m\left(\tau_{m+1}\right)\right|, \left|t_{m,n-1}\left(\tau_n\right)\right|<C_2\bar\e^{p^m}$  for all $0<m<n$. Such a constant exists by
Propositions~\ref{prop:D_n-estimate} and~\ref{prop:D_mn-estimate}. Let $C_3>0$ satisfy $\left|s_{m,n-1}\left(\tau_n\right)\right|>C_3\sigma^{n-m-1}$ for all $0<m<n$. Finally let $C_4>1$ satisfy 
\begin{align}
&C_4^{-1}\left|s_{m,n-1}\left(\tau_n\right)\right|\leq \left|s_{m,n-1}\left(\varsigma_n\right)\right|\leq C_4\left|s_{m,n-1}\left(\tau_n\right)\right| \\
&C_4^{-1}\left|t_{m,n-1}\left(\tau_n\right)\right|\leq \left|t_{m,n-1}\left(\varsigma_n\right)\right|\leq C_4\left|t_{m,n-1}\left(\tau_n\right)\right| \\
&\left|s_{m,n-1}\left(\varsigma_m\right)-s_{m,n-1}\left(\tau_m\right)\right|>C_4^{-1}\left|\varsigma_m-\tau_m\right| \\
&\left|t_{m,n-1}\left(\varsigma_m\right)-t_{m,n-1}\left(\tau_m\right)\right|>C_4^{-1}\left|\varsigma_m-\tau_m\right|
\end{align}
for all $0<m<n$. Such a constant exists by Proposition~\ref{prop:D_mn-perturb} above.

Observe that $X$ induces continuous invariant line fields $X_n$ for $F_n$ on $\Cantor_n$, the induced Cantor sets.
Thus
\begin{equation}
X_m(\tau_m)=\projD{\MT_{0,m}^{-1}}{\tau}X(\tau)=\left(X(\tau)-t_{0,m}(\tau_m)\right)/s_{0,m}(\tau_m).
\end{equation}
There are two possibilities: either $X(\tau)=t_{0,*}(\tau_*)=\lim t_{0,m-1}(\tau_m)$, and so $X_m(\tau_m)$ converges to zero (since $t_{0,m}$ converges super-exponentially to
$t_{0,*}$ but $s_{0,m}$ converges only exponentially to $0$), or $X(\tau)\neq t_{0,*}(\tau_*)$, and so $X_m(\tau_m)$ tends to infinity. 

First, let us show the second case cannot occur. 
Let $K,\kappa>0$ be constants. Choose $M>0$ such that $|X_m(\tau_m)|>K$ for all $m>M$. Fix such an $m>M$.
By continuity of $X_m$ there exists a $\delta>0$ such that $|x-y|<\delta$ implies $|X_m(x)-X_m(y)|<\kappa$ for any $x,y\in\Cantor_m$.
Choose $N>m$ such that, for all $n>N$, $|\MT_{m,n-1}(\tau_{n})-\MT_{m,n-1}(\varsigma_{n})|<\delta$. This then implies $|X_m(\MT_{m,n-1}(\tau_{n}))-X_m(\MT_{m,n-1}(\varsigma_{n}))|<\kappa$.

By invariance of the $X_n$, 
\begin{align}\label{eqn:linefield-1}
\left|X_n\left(\varsigma_n\right)\right|=
\left|\projD{\o{p}{F_n}}{\tau_n}\left(X_n\left(\tau_n\right)\right)\right|=\left|\zeta_n\left(\tau_n\right)\right|\left|\frac{X_n\left(\tau_n\right)+\eta_n\left(\tau_n\right)}{X_n\left(\tau_n\right)+\theta_n\left(\tau_n\right)}\right|.
\end{align}
By our above hypotheses we know $|\theta_n(\tau_n)|, |\eta_n(\tau_n)|<C_0\bar\e^{p^n}$. Since $n>m$, we also know $|X_{n}(\tau_{n})|>K$. 
Therefore
\begin{align}
\left|\frac{X_n\left(\tau_n\right)+\eta_n\left(\tau_n\right)}{X_n\left(\tau_n\right)+\theta_n\left(\tau_n\right)}\right|
&\leq \frac{1+|\eta_n\left(\tau_n\right)/X_n\left(\tau_n\right)|}{1-|\theta_n\left(\tau_n\right)/X_n\left(\tau_n\right)|} \\
&\leq \frac{1+C_0\bar\e^{p^n}/K}{1-C_0\bar\e^{p^n}/K} \notag
\end{align}
Therefore, combining this with the above equation~\ref{eqn:linefield-1} and the hypotheses of the second paragraph we find
\begin{align}
\left|X_n\left(\varsigma_n\right)\right|
&\leq C_1 \left(\frac{1+C_0\bar\e^{p^n}/K}{1-C_0\bar\e^{p^n}/K}\right)
\end{align}
Now we apply $\projD{\MT_{m,n-1}}{\varsigma_{n}}$. Then by the definition of the constant $C_4>0$ in the second paragraph and Proposition~\ref{prop:D_mn-estimate}
\begin{align}
\left|X_m\left(\MT_{m,n-1}\left(\varsigma_{n}\right)\right)\right|
&= \left|s_{m,n-1}\left(\varsigma_n\right)X_n\left(\varsigma_n\right)+t_{m,n-1}\left(\varsigma_n\right)\right| \\
&\leq \left|s_{m,n-1}\left(\varsigma_n\right)\right|\left|X_n\left(\varsigma_n\right)\right|+\left|t_{m,n-1}\left(\varsigma_n\right)\right| \notag \\
&\leq C_4\left(\left|s_{m,n-1}\left(\tau_n\right)\right|\left|X_n\left(\varsigma_n\right)\right|+\left|t_{m,n-1}\left(\tau_n\right)\right|\right) \notag \\
&\leq C_4\sigma^{n-m-1}(1+\left|X_n\left(\varsigma_n\right)\right|) \notag
\end{align}
and hence
\begin{align}
&\left|X_m\left(\MT_{m,n-1}\left(\tau_n\right)\right)-X_m\left(\MT_{m,n-1}\left(\varsigma_n\right)\right)\right| \\
&\geq \bigl|\left|X_m\left(\MT_{m,n-1}\left(\tau_n\right)\right)\right|-\left|X_m\left(\MT_{m,n-1}\left(\varsigma_n\right)\right)\right|\bigr| \notag \\
&\geq K- C_4\sigma^{n-m-1}\left[1+C_1 \left(\frac{1+C_0\bar\e^{p^n}/K}{1-C_0\bar\e^{p^n}/K}\right)\right]. \notag
\end{align}
But, by our continuity assumption, this must be less than $\kappa$. For $K>0$ sufficiently large this cannot happen.

So now let us assume $X(\tau)=t_{0,*}$. Then the induced line fields must satisfy $X_m(\tau_m)=t_{m,*}$, for all $m>0$. The idea is, as before, to look at the first returns under $F_m$ of $\oo{0}{B_m}$. We will apply $\projD{\o{p}{F_m}}{\tau_m}$ to the line $X_m(\tau_m)=t_{m,n}$ and take the limit as $n$ tends to infinity.

Proposition~\ref{prop:t_mn-properties} implies, as $t_m(\tau_{m+1})=\pm\del_y\oo{p-1}{\phi}_m(\tau_m)/\del_x\oo{p-1}{\phi}_m(\tau_m)=\pm\eta_m$, that there exists a constant $C_5>0$ for which
\begin{align}
\left|t_{m,n-1}\left(\tau_n\right)+\theta_m\left(\tau_m\right)\right|\leq \left|t_m\left(\tau_{m+1}\right)\right|\left|K_{m,n-1}\left(\tau_{n+1}\right)\right|\leq C_5\bar\e^{p^{m+1}}.
\end{align}
On the other hand, we know $|\eta_m(\tau_m)|<C_0\bar\e^{p^{m+1}}$ and $|t_{m,n-1}(\tau_n)|<C_2\bar\e^{p^m}$ and hence
\begin{align}
\left|t_{m,n-1}\left(\tau_n\right)+\eta_m\left(\tau_m\right)\right|\geq \bigl|\left|t_{m,n-1}(\tau_n)\right|-\left|\eta_m(\tau_m)\right|\bigr|\geq C_2\bar\e^{p^m}-C_0\bar\e^{p^{m+1}}
\end{align}
We also know $|\zeta_m(\tau_m)|>C_1^{-1}$. Therefore there exists a constant $C_6>0$ such that
\begin{align}
\left|\projD{\o{p}{F_m}}{\tau_m}\left(t_{m,n-1}(\tau_n)\right)\right|
&=\left|\zeta_m(\tau_m)\right|\left|\frac{t_{m,n-1}(\tau_n)+\eta_m(\tau_m)}{t_{m,n-1}(\tau_n)+\theta_m(\tau_m)}\right| \\
&\geq C_1^{-1}C_5^{-1}\bar\e^{-p^{m+1}}(C_2\bar\e^{p^m}-C_0\bar\e^{p^{m+1}}) \notag \\
&\geq C_6\bar\e^{-p^{m+1}}. \notag
\end{align}
Now recall $|t_{m,n-1}(\tau_n)|<C_2\bar\e^{p^m}$. Also observe that both of these estimates are independent of $n$. Therefore they still hold when passing to the limit, as $n$ tends to infinity, giving
\begin{equation}
\left|X_m(\varsigma_m)\right|>C_6\bar\e^{-p^{m+1}}, \quad |X_m(\tau_m)|<C_2\bar\e^{p^m}.
\end{equation}
Finally, applying $\MT_{0,m-1}$ and setting $\varsigma=\MT_{0,m-1}(\varsigma_m)$ we find that
\begin{align}
&\left|X(\varsigma)-X(\tau)\right| \\
&=\bigl|\left[s_{0,m-1}(\varsigma_m)X_m(\varsigma_m)+t_{0,m-1}(\varsigma_m)\right]-\left[s_{0,m-1}(\tau_m)X_m(\tau_m)+t_{0,m-1}(\tau_m)\right]\bigr| \notag \\
&\geq\bigl| \left|s_{0,m-1}(\varsigma_m)X_m(\varsigma_m)-s_{0,m-1}(\tau_m)X_m(\tau_m)\right|-\left|t_{0,m-1}(\varsigma_m)-t_{0,m-1}(\tau_m)\right|\bigr| \notag
\end{align}
but by our assumptions in the second paragraph
\begin{align}
&\left|s_{0,m-1}(\varsigma_m)X_m(\varsigma_m)-s_{0,m-1}(\tau_m)X_m(\tau_m)\right| \\
&\geq \bigl|\left|s_{0,m-1}(\varsigma_m)\right|\left|X_m(\varsigma_m)-X_m(\tau_m)\right|-\left|s_{0,m-1}(\varsigma_m)-s_{0,m-1}(\tau_m)\right|\left|X_m(\tau_m)\right|\bigr| \notag \\
&\geq C_4^{-1}\left|s_{0,m-1}(\tau_m)\right|\left|X_m(\varsigma_m)-X_m(\tau_m)\right|-C_4\left|\varsigma_m-\tau_m\right|\left|X_m(\tau_m)\right| \notag
\end{align}
and
\begin{equation}
\left|t_{0,m-1}(\varsigma_m)-t_{0,m-1}(\tau_m)|\leq C_4|\varsigma_m-\tau_m\right|.
\end{equation}
Therefore again by our assumptions in the second paragraph, $|s_{0,m-1}(\tau_m)|>C_3\sigma^m$. Hence, by our bounds on $|X_m(\varsigma_m)|$ and $|X_m(\tau_m)|$ and the above we find
\begin{align}
&\left|X(\varsigma)-X(\tau)\right| \\
&\geq C_4^{-1}C_3\sigma^m\left|X_m(\varsigma_m)-X_m(\tau_m)\right|-C_4\left|\varsigma_m-\tau_m\right|\left|X_m(\tau_m)\right|-C_4\left|\varsigma_m-\tau_m\right| \notag \\
&\geq C_4^{-1}C_3\sigma^m\left(C_2\bar\e^{-p^{m+1}}-C_6\bar\e^{p^m}\right)-C_4\left|\varsigma_m-\tau_m\right|\left(1+C_6\bar\e^{p^m}\right) \notag
\end{align}
However, since $\left|\varsigma_m-\tau_m\right|$ is bounded from above there is a constant $C_7>0$ such that
\begin{equation}
\left|X(\varsigma)-X(\tau)\right|\geq C_7\sigma^m\bar\e^{-p^{m+1}}.
\end{equation}
Therefore, as we increase $m>0$ the points $\tau$ and $\varsigma$ get exponentially closer but the distance between $X(\tau)$ and $X(\varsigma)$ diverges superexponentially. In particular $X$ cannot be continuous at $\tau$ as required.
\end{proof}
We now need to define the following type of convergence, which is stronger than Hausdorff convergence.
\begin{defn}
Let $\Cantor_*\subset M$ be a Cantor set, embedded in the metric space $M$, with presentation $\uline{B}_*=\{\oo{\word{w}{}}{B}_*\}_{\word{w}{}\in W^*}$. Let $\oo{\word{w}{}}{\Cantor_*}$ denote the cylinder set for $\Cantor_*$ associated to the word $\word{w}{}\in\oline{W}$. Let $\Cantor_n\subset M$ denote a sequence of Cantor sets, also embedded in $M$, with presentations $\uline{B}_n=\{\oo{\word{w}{}}{B}_n\}_{\word{w}{}\in W^*}$ combinatorially equivalent to $\uline{B}_*$. Then we say $\Cantor_n$ \emph{strongly converges} to $\Cantor_*$ if, for each $\word{w}{}\in\oline{W}$, $\oo{\word{w}{}}{\Cantor}_n\to \oo{\word{w}{}}{\Cantor}_*$.
\end{defn}
\begin{defn}
Let $X_n$ be a line field on $\Cantor_n$. Then we say $X_n$ \emph{strongly converges} to a line field $X_*$ on $\Cantor_*$ if, for each $\word{w}{}\in\oline{W}$,  $X_n(\oo{\word{w}{}}{\Cantor}_n)$ converges to $X_*(\oo{\word{w}{}}{\Cantor}_*)$ in the projected coordinates.
\end{defn}
\begin{prop} 
Let $F\in\I_{\Omega,\perm}(\bar\e)$ and let $\Cantor$ denote its renormalisation Cantor set. Given any invariant line field $X$ on $\Cantor$ the induced line fields $X_n$ on $\Cantor_n$ do not strongly converge to the tangent line field $X_*$ on $\Cantor_*$.
\end{prop}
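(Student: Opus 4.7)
The plan is to push the dichotomy from the proof of Theorem~\ref{thm:no-continuous-linefields} one step further. First I identify the tangent line field $X_*$ explicitly. Since $F_*=(f_*\circ\pi_x,\pi_x)$ is degenerate, $\Cantor_*$ lies on the smooth curve $\{x=f_*(y)\}$, whose tangent direction at $(f_*(y),y)$ is $(f_*'(y),1)$; in the projected coordinate on $\{y=1\}$ this reads $X_*(x,y)=f_*'(y)$. Because $\MT_*$ preserves horizontal lines and $\RU f_*=f_*$ forces the branch $\oo{0}{\mt_*}$ to fix the critical point $c_0$ of $f_*$, we have $\pi_y(\tau_*)=c_0$ and hence
\begin{equation*}
X_*(\tau_*)=f_*'(c_0)=0.
\end{equation*}
Setting $\varsigma_*=\o{p}{F_*}(\tau_*)$, one finds $\pi_y(\varsigma_*)=\o{p}{f_*}(c_0)\neq c_0$ (else $f_*$ would be post-critically finite and not infinitely renormalisable), so $X_*(\varsigma_*)=f_*'(\o{p}{f_*}(c_0))$ is a finite non-zero real number.

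Now suppose for contradiction that $X_n\to X_*$ strongly. Since $\tau_m$ and $\tau_*$ are the cylinder points of $\Cantor_m$ and $\Cantor_*$ with address $0^\infty$, this forces $X_m(\tau_m)\to 0$. The identity
\begin{equation*}
X_m(\tau_m)=\projD{\MT_{0,m-1}^{-1}}{\tau}(X(\tau))=\frac{X(\tau)-t_{0,m-1}(\tau_m)}{s_{0,m-1}(\tau_m)}
\end{equation*}
rules out $X(\tau)\neq t_{0,*}:=\lim_n t_{0,n-1}(\tau_n)$: the numerator then stays bounded away from zero while $|s_{0,m-1}(\tau_m)|$ decays exponentially in $m$ by Proposition~\ref{prop:D_mn-estimate}, forcing $|X_m(\tau_m)|\to\infty$. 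Hence $X(\tau)=t_{0,*}$. Composing the projected affine parts of $\MT_{0,m-1}$ and $\MT_{m,n-1}$ at the tips yields the cocycle $t_{0,n-1}(\tau_n)=s_{0,m-1}(\tau_m)\,t_{m,n-1}(\tau_n)+t_{0,m-1}(\tau_m)$, which passes to the limit as $t_{0,*}=s_{0,m-1}(\tau_m)\,t_{m,*}+t_{0,m-1}(\tau_m)$, giving $X_m(\tau_m)=t_{m,*}$ exactly.

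Applying invariance of $X_m$ at $\tau_m$ one now obtains
\begin{equation*}
X_m(\varsigma_m)=\projD{\o{p}{F_m}}{\tau_m}(t_{m,*})=\zeta_m(\tau_m)\,\frac{t_{m,*}+\eta_m(\tau_m)}{t_{m,*}+\theta_m(\tau_m)}.
\end{equation*}
Exactly as in the proof of Theorem~\ref{thm:no-continuous-linefields}, Propositions~\ref{prop:t_mn-properties} and~\ref{prop:etas-estimate} yield $|t_{m,*}+\theta_m(\tau_m)|=\bigo(\bar\e^{p^{m+1}})$ whereas $|t_{m,*}+\eta_m(\tau_m)|$ is of order $\bar\e^{p^m}$, so $|X_m(\varsigma_m)|\to\infty$. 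By Corollary~\ref{cor:cantor-converge} the points $\varsigma_m$ and $\varsigma_*$ are the cylinder points of $\Cantor_m$ and $\Cantor_*$ of address $010^\infty$ (the image of $0^\infty$ under $p$ steps of the adding machine), and $\varsigma_m\to\varsigma_*$, so strong convergence would require $X_m(\varsigma_m)\to X_*(\varsigma_*)\in\RR\setminus\{0\}$, contradicting the divergence just obtained. The only conceptual step beyond the preceding theorem is the identification $X_*(\tau_*)=0$, which rests on the coincidence that $c_0$ is simultaneously the critical point of $f_*$ and the attracting fixed point of $\oo{0}{\mt_*}$; granting this, the contradiction is a direct reapplication of the estimates already in hand.
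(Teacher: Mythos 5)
Your proof is correct, and it is a genuinely different route from the paper's. The paper does not introduce $X_*(\tau_*)=0$ at all; instead it couples $n$ to $m$ via $\sigma^{n-m+1}\leq b^{p^m}\leq\sigma^{n-m}$, transports $X_n(\varsigma_n)$ through $F_m\MT_{m,n}$ to a point $\varsigma'$ near $\varsigma_m$, and shows that
\begin{equation}
X_m(\varsigma')=\del_x\phi_m(\varsigma)+\frac{\del_y\phi_m(\varsigma)}{s_{m,n}X_n(\varsigma_n)+t_{m,n}}
\end{equation}
differs from $\del_x\phi_*(\tau_*)=X_*(\varsigma_*)$ by a term that is uniformly bounded away from zero because the choice of $n$ makes numerator and denominator both of size $b^{p^m}$. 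You instead first pin down $X_*(\tau_*)=0$ via $\pi_y(\tau_*)=c_0$, use the dichotomy already established in Theorem~\ref{thm:no-continuous-linefields} together with the cocycle $t_{0,n-1}=s_{0,m-1}t_{m,n-1}+t_{0,m-1}$ to force $X(\tau)=t_{0,*}$ and hence $X_m(\tau_m)=t_{m,*}$, and then show $|X_m(\varsigma_m)|$ blows up using the $\theta_m/\eta_m$ cancellation. Your version is more conceptual and leans harder on the machinery already built (especially Propositions~\ref{prop:t_mn-properties} and~\ref{prop:etas-estimate} and the structure of Theorem~\ref{thm:no-continuous-linefields}), with the payoff that no coupling between $m$ and $n$ is needed and the contradiction is located at the single explicit address $010^\infty$; the paper's version avoids the dichotomy entirely and is self-contained but harder to read, at the cost of a quantitative balancing act. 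Two small remarks: the fact that $\oo{0}{\mt}_*$ fixes $c_0$ is not special to the fixed point of renormalisation — it follows for any infinitely renormalisable $f$ from $c_0\in\oo{0^n}{J}$ for all $n$ — so the phrase ``$\RU f_*=f_*$ forces'' is slightly misleading even though the conclusion is correct; and you should note that $t_m(\tau_{m+1})=\pm\theta_m(\tau_m)$ (not $\pm\eta_m$) to justify the $\bigo(\bar\e^{p^{m+1}})$ estimate for $|t_{m,*}+\theta_m(\tau_m)|$, a point on which the paper's own proof of Theorem~\ref{thm:no-continuous-linefields} contains a typo.
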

\begin{proof}
Let us denote the correspondence between elements of $\Cantor_n$ and $\Cantor_*$ by $\pi_n$. Then a sequence of line fields $X_n$ strongly converges to $X_*$ if $X_n\circ\pi_n$ converges to $X_*$, where we have identified the line fields
with their projectivised coordinates.



Assume convergence holds and let $\epsilon>0$ and choose $N>0$ such that $|X_n\circ\pi_n-X_*|_{\Cantor_*}<\epsilon$ for all $n>N$.
Take any $m>N$ and let $n>m$ be chosen so that $\sigma^{n-m+1}\leq b^{p^{m}}\leq \sigma^{n-m}$.
Then
\begin{equation}
\left|X_m(\tau_m)-X_*(\tau_*)\right|, \ |X_m(F_m(\tau_m))-X_*(F_*(\tau_*))|<\epsilon,
\end{equation}
and the same holds if we replace $m$ by $n$. Let us denote the points $F_i(\tau_i)$ by $\varsigma_i$.

Observe that $X_*(\varsigma_*)=\del_x\phi_*(\tau_*)$. Therefore, as convergence of renormalisation implies $|\del_x\phi_m(\tau_m)-\del_x\phi_*(\tau_*)|<C\rho^m$, this tells us
\begin{equation}
\left|X_m(\varsigma_m)-\del_x\phi_m(\tau_m)\right|<\epsilon+C\rho^m.
\end{equation}
We will now show they must differ by a definite constant and achieve the required contradiction. We will show this by evaluating $X_m$ at a point near to $\varsigma_m$.
Consider the points $\varsigma=\MT_{m,n}(\varsigma_{n})$ and $\varsigma'=F_m\MT_{m,n}(\varsigma_{n})$. 
First let us evaluate $X_m$ at $\varsigma'$. By invariance this must be 
\begin{align}
\projD{F_m\MT_{m,n}}{\varsigma_{n}}\left(X_{n}(\varsigma_{n})\right)
&=\del_x\phi_m(\varsigma)+\frac{\del_y\phi_m(\varsigma)}{s_{m,n}(\varsigma_{n})+t_{m,n}(\varsigma_{n})X_{n}(\varsigma_{n})}.
\end{align}
The second term must be bounded away from zero as $X_{n}(\varsigma_{n})$ is bounded from above if $n$ is sufficiently large and the hypothesis on $m,n$ tells us $s_{m,n}$ and
$t_{m,n}$ are both comparable to $b^{p^m}$, as is the numerator $\del_y\phi_m(\varsigma)$. It is clear this bound can be made uniform in $m$.

Second, observe that $|\varsigma'-\varsigma_m|$ can be made arbitrarily small by choosing $m$ and $n-m$ sufficiently large, by the assumption that $\Cantor_n$ converges strongly to $\Cantor_*$.
Combining these gives us the required contradiction, as our hypothesis implies increasing $m$ leads to an exponential increase in $n$.
\end{proof}

\subsection{Failure of Rigidity at the Tip}
Using the same method as for the period doubling case we show that given two Cantor attractors $\Cantor$ and $\tilde\Cantor$ for some $F, \tilde
F\in \I_{\Omega,\perm}(\bar\e_0)$ with average Jacobian $b,\tilde b$ respectively, there is a bound on the Holder exponent of any conjugacy that preserves `tips'.
\begin{thm}\label{thm:nonrigid}
Let $F, \tilde F\in\I_{\Omega,\upsilon}(\bar\e)$ be two infinitely renormalisable H\'enon-like maps with respective renormalisation Cantor sets $\Cantor$ and $\tilde\Cantor$, and tips
$\tau$ and $\tilde\tau$. If there is a conjugacy $\pi\colon\tilde\Cantor\to\Cantor$ mapping $\tilde\tau$ to $\tau$ then the H\"older exponent $\alpha$ of $\pi$ satisfies
\begin{equation}
\alpha\leq \frac{1}{2}\left(1+\frac{\log \tilde b}{\log b}\right)
\end{equation}
\end{thm}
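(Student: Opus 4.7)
I plan to mirror the proof of the non-rigidity theorem in~\cite{dCML}, now in the setting of arbitrary stationary combinatorics, using the scope-map calculus of Section~\ref{sect:tip}. The strategy is to construct, for each $m\geq 0$, a distinguished pair $(p_m,q_m)\in\Cantor\times\Cantor$ together with its combinatorial counterpart $(\tilde p_m,\tilde q_m)\in\tilde\Cantor\times\tilde\Cantor$, compute the leading-order asymptotics of the two distances as $m\to\infty$, and read off the bound on $\alpha$ from the H\"older inequality $|p_m-q_m|\leq C|\tilde p_m-\tilde q_m|^{\alpha}$.

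The choice of points exploits the hypothesis that $\pi$ maps tip to tip and conjugates the dynamics, which implies that $\pi$ intertwines the adding-machine actions and hence respects the symbolic coding of both Cantor sets by $\bar W$. In particular, a pair of points in $\tilde\Cantor$ with prescribed infinite codings is sent by $\pi$ to the pair of points in $\Cantor$ with the same codings. The codings will be chosen so that the associated points lie near the tip and are images under the composed scope map $\MT_{0,m-1}$ of two preselected points of $\Cantor_m$.

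Using the decomposition $\MT_{0,m-1}(\tau_m+z)=\tau+D_{0,m-1}(z+R_{0,m-1}(z))$ of Lemma~\ref{lem:D_mn-decomposition}, the estimates on $\sigma_{0,m-1},s_{0,m-1},t_{0,m-1}$ of Proposition~\ref{prop:D_mn-estimate}, the universality of the tilt (Proposition~\ref{prop:t_mn-properties}), and the quadratic refinement $x+r_{0,m-1}(x,y)\approx v_*(x)+\kappa_{(0)}y^2$ of Proposition~\ref{prop:r_mn-convergence-2}, I will compute the asymptotics of $|p_m-q_m|$ and $|\tilde p_m-\tilde q_m|$ in terms of $b$, $\tilde b$ and $\sigma^{m}$. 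The key to obtaining the sharp prefactor $\tfrac12$, rather than the weaker ratio $\log b/\log\tilde b$ produced by a naive first-order comparison, is to arrange the pair so that the first-order tilt contribution cancels, leaving the quadratic remainder (controlled by $\kappa_{(0)}$) as the leading-order displacement; this cancellation encodes a square-root relation between the horizontal scale $\sigma^{m}$ and the vertical scale $b^{p^{m}}$, from which the factor of $\tfrac12$ emerges. Substituting these asymptotics into the H\"older inequality, taking logarithms, dividing by $p^{m}$ and letting $m\to\infty$ then yields the claimed bound $\alpha\leq\tfrac12\bigl(1+\log\tilde b/\log b\bigr)$.

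The main technical obstacle is selecting the specific Cantor-set points that realise this cancellation of the first-order tilt: one cannot simply take preassigned pairs of points in $B$ and push them into $\Cantor$ and $\tilde\Cantor$ via the scope maps, because the linear part $D_{0,m-1}$ would then always dominate and give only the non-sharp estimate. Instead, the positions of $p_m$ and $q_m$ have to be engineered using the universal function $v_*$ and the quadratic coefficients $\kappa_{(m)}$ so that the tilt contribution vanishes to leading order and the $\kappa_{(m)}$-term governs the displacement. In the period-doubling case of~\cite{dCML}, the cancellation was implicit in a ``flipping'' phenomenon of the combinatorics; in the general stationary case, no such automatic mechanism is available, so the cancellation must be produced algebraically from the scope-map asymptotics of Section~\ref{sect:tip}. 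A secondary concern is to verify that the constructed points actually lie in $\Cantor,\tilde\Cantor$ rather than merely in the ambient square $B$, which is handled by expressing them as nested images of tips of deeper subsystems together with the strong-convergence estimate of Corollary~\ref{cor:cantor-converge}.
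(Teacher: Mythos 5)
Your overall framework is right: take a distinguished pair of points, push them through scope maps, and feed the resulting distances into the H\"older inequality. But the mechanism you describe for obtaining the factor $\tfrac12$ is not the one the paper uses, and I believe your version would not go through as stated.

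You propose to \emph{cancel} the first-order tilt contribution and let the quadratic remainder $\kappa_{(m)}y^2$ of Proposition~\ref{prop:r_mn-convergence-2} govern the displacement. The paper's proof does not do this: the quadratic coefficient $\kappa_{(m)}$ plays no role in the non-rigidity argument, and the tilt is never cancelled. Instead the sharp factor comes from a \emph{scale-coupling} choice: one picks two depths $m<n$ tied together by
\begin{equation}
\sigma^{n-m+1}\leq b^{p^m}<\sigma^{n-m},
\end{equation}
so that the horizontal contraction across heights $m$ to $n$ is commensurate with the vertical thickness $b^{p^m}$. The pair of points is simply the tip $\tau$ of $F_{n+1}$ and its first return $\varsigma=F_{n+1}(\tau)$ (horizontally separated by a definite constant), pushed down to height $0$ by $\MT_{0,m-1}\circ F_m\circ\MT_{m,n}$. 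For $F$, the tilt contribution $t_{m,n}\sim b^{p^m}$ is \emph{comparable to}, not cancelled against, the diagonal contraction $\sigma^{n-m}$; the sum is still $O(\sigma^{n-m})$, and tracking through the composition one gets $\mathrm{dist}(\dddot\tau,\dddot\varsigma)\lesssim\sigma^{2n-m}=\sigma^m(\sigma^{n-m})^2\approx\sigma^m(b^{p^m})^2$. For $\tilde F$ the hypothesis $\tilde b^{p^m}\gg b^{p^m}\approx\sigma^{n-m}$ makes the tilt term \emph{dominate}, giving $\mathrm{dist}(\dddot{\tilde\tau},\dddot{\tilde\varsigma})\gtrsim\sigma^n\tilde b^{p^m}\approx\sigma^m b^{p^m}\tilde b^{p^m}$. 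Plugging into the H\"older inequality gives $\sigma^m b^{p^m}\tilde b^{p^m}\lesssim(\sigma^m(b^{p^m})^2)^\alpha$, and the $\tfrac12$ comes from the square of $b^{p^m}$ produced by $(\sigma^{n-m})^2$ after the tuning. So the square-root relation you intuit is correct, but it enters through the choice of $n$ relative to $m$, not through a cancellation engineered via $v_*$ and $\kappa_{(m)}$; pursuing the latter route would require proving estimates that the paper has not established, and it is not clear they are sharp enough.

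A secondary point: the ``flipping'' phenomenon you cite from~\cite{dCML} is, according to this paper's own introduction, the combinatorial mechanism behind the \emph{non-existence of continuous invariant line fields} (Theorem~\ref{thm:no-continuous-linefields}), not the non-rigidity theorem. The non-rigidity argument in both papers is a direct scaling estimate of the type above; no orientation-flipping is invoked, and importing that idea here points you toward the wrong technical tools.
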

\begin{proof}
We will denote all objects associated with $F$ without tilde's and all objects associated with $\tilde F$ with them. For example $\MT$ and $\tilde \MT$ will denote the scope
function for $F$ and $\tilde F$ respectively.
 
Let $K>0$ be a positive constant which we will think of as being large. Let us choose an integer $m>0$ which ensures that $\tilde{b}^{p^m}>Kb^{p^m}$ and take an integer $n>m$
which satisfies $\sigma^{n-m+1}\leq b^{p^m}<\sigma^{n-m}$. This will be the depth of the Cantor sets $\Cantor$ and $\tilde\Cantor$ that we will consider. So let us consider $F$ and $\Cantor$. 
Let us denote the tip of $F_{n+1}$ by $\tau$ and let $\varsigma$ be its image under $F_{n+1}$.
Let $\dot\tau$ and $\dot\varsigma$ be the respective images of these points under $\MT_{m,n}$.
Let $\ddot\tau$ and $\ddot\varsigma$ be the respective images of $\dot\tau$ and $\dot\varsigma$ under $F_m$.
Let $\dddot\tau$ and $\dddot\varsigma$ be the respective images of $\ddot\tau,\ddot\varsigma$ under $\MT_{0,m-1}$.
The equivalent points for $\tilde F$ will be denoted by with tilde's.
Finally, $\tau_*$ denotes the tip of $F_*$ and $\varsigma_*$ denotes its image under $F_*$.


Observe that $\tau_*$ and $\varsigma_*$ will not lie on the same vertical or horizontal line. Therefore we know that the following constant
\begin{equation}
C_0=\half\min\left( \left|\pi_x(\varsigma_*)-\pi_x(\tau_*)\right|, \left|\pi_y(\varsigma_*)-\pi_y(\tau_*)\right|\right)
\end{equation}
is positive. By Theorem~\ref{thm:R-convergence} there exists an integer $N>0$ such that 
\begin{equation}
\left|\pi_x(\varsigma)-\pi_x(\tau)\right|, \left|\pi_y(\varsigma)-\pi_y(\tau)\right|,\left|\pi_x(\tilde\varsigma)-\pi_x(\tilde\tau)\right|,
\left|\pi_y(\tilde\varsigma)-\pi_y(\tilde\tau)\right|>C_0>0,
\end{equation}
for all integers $m>N$. Let $\delta=\left(\delta_x,\delta_y\right)=\varsigma-\tau$ and $\tilde\delta=\left(\tilde\delta_x,\tilde\delta_y\right)=\tilde\varsigma-\tilde\tau$. Clearly we also have an
upper bound for each of these quantities, namely $C_1=\diam(B)$. 

First we will derive an upper bound for the distance between $\dddot\varsigma$ and $\dddot\tau$, then we will derive a lower bound for the distance between $\dddot{\tilde\varsigma}$ and $\dddot{\tilde\tau}$.

Applying $\MT_{m,n}$ to $\varsigma$ and $\tau$ gives $\dot\varsigma-\dot\tau=D_{m,n}\left(\id+R_{m,n}\right)(\varsigma-\tau)$. 
Let $\dot\delta=\left(\dot\delta_x,\dot\delta_y\right)=\dot\varsigma-\dot\tau$. Hence by Proposition~\ref{prop:D_mn-estimate} and the above paragraph there exists a constant $C_2>0$ such that,
\begin{align}
\left|\dot\delta_x\right|
&=\left|\sigma_{m,n}s_{m,n}\left[\delta_x+r_{m,n}\left(\delta_x,\delta_y\right)\right]+\sigma_{m,n}t_{m,n}\delta_y\right| \\
&\leq C_2\sigma^{n-m}\left(\sigma^{n-m}+b^{p^m}\right) \notag \\
\left|\dot\delta_y\right|
&=\left|\sigma_{m,n}\delta_y\right| \\
&\leq C_2\sigma^{n-m} \notag
\end{align}

Next we apply $F_m=\left(\phi_m,\pi_x\right)$ which gives $\ddot\varsigma-\ddot\tau=F_m\left(\dot\varsigma\right)-F_m\left(\dot\tau\right)$. Let
$\ddot\delta=\left(\ddot\delta_x,\ddot\delta_y\right)=\ddot\varsigma-\ddot\tau$. First observe that by convergence of renormalisation, i.e. Theorem~\ref{thm:R-convergence},
there is a constant $C_2>0$ such that $\left|\del_x\phi_m\right|<C_2$. Second observe, by Theorem~\ref{thm:R-construction} there exists a constant $C_3>0$ such that
$\left|\del_y\phi_m\right|<C_3b^{p^m}$. Then by the Mean Value Theorem, if $\xi=\left(\pi_x\left(\dot\tau\right),\pi_y\left(\dot\varsigma\right)\right)$, there exist points
$\xi_y\in\left[\dot\varsigma,\xi\right], \xi_x\in\left[\xi,\dot\tau\right]$ such that
\begin{align}
\left|\ddot\delta_x\right|
&=\left|\del_x\phi_m\left(\xi_y\right)\dot\delta_x+\del_y\phi_m\left(\xi_x\right)\dot\delta_y\right| \\
&\leq C_3\left|\dot\delta_x\right|+C_4b^{p^m}\left|\dot\delta_y\right| \notag \\
&\leq C_2\sigma^{n-m}\left(C_2\left(\sigma^{n-m}+b^{p^m}\right)+C_3b^{p^m}\right) \notag \\
&\leq C_5\sigma^{n-m}\left(\sigma^{n-m}+b^{p^m}\right) \notag \\
\left|\ddot\delta_y\right|
&=\left|\dot\delta_x\right| \\
&\leq  C_2\sigma^{n-m}\left(\sigma^{n-m}+b^{p^m}\right) \notag
\end{align}

Now we apply $\MT_{0,m}$ which gives $\dddot\varsigma-\dddot\tau=D_{0,m}\left(\id+R_{0,m}\right)\left(\ddot\varsigma-\ddot\tau\right)$.
Let $\dddot\delta=\left(\dddot\delta_x,\dddot\delta_y\right)=\dddot\varsigma-\dddot\tau$. Hence, by Proposition~\ref{prop:D_mn-estimate} and the above paragraph, there is a constant $C_6>0$ such that
\begin{align}
\left|\dddot\delta_x\right|
&=
\left|\sigma_{0,m}s_{0,m}\left[\ddot\delta_x+r_{0,m}\left(\ddot\delta_x,\ddot\delta_y\right)\right]+\sigma_{0,m}t_{0,m}\ddot\delta_y\right| \\
&\leq
C_2\sigma^m\left|\sigma^{m}\left[\left|\ddot\delta_x\right|+\left|\del_xr_{0,m}\right|\left|\ddot\delta\right|\right]+b^{p^m}\left|\ddot\delta_y\right|\right| \notag \\
&\leq 
C_6\sigma^{2m}\sigma^{n-m}\left(\sigma^{n-m}+b^{p^m}\right)+C_2^2\sigma^nb^{p^m}\left(\sigma^{n-m}+b^{p^m}\right) \notag \\
&\leq 
\left(C_6\sigma^{n+m}+C_2^2\sigma^nb^{p^m}\right)\left(\sigma^{n-m}+b^{p^m}\right) \notag \\
\left|\dddot\delta_y\right|
&=\left|\sigma_{0,m}\ddot\delta_y\right| \\
&\leq C_2^2\sigma^{n}\left(\sigma^{n-m}+b^{p^m}\right) \notag
\end{align}
From the second inequality we find there exists a constant $C_7>0$ such that $\dist(\dddot\varsigma,\dddot\tau)\leq C_7\sigma^{2n-m}$.

Now we wish to a find a lower bound for $\dist(\dddot{\tilde\varsigma},\dddot{\tilde\tau})$. 
Applying $\tilde{\MT}_{m,n}$ to these points gives $\dot{\tilde{\varsigma}}-\dot{\tilde\tau}=\tilde{D}_{m,n}(\id+\tilde{R}_{m,n})(\tilde\varsigma-\tilde\tau)$. 
Let $\dot{\tilde\delta}=(\dot{\tilde\delta}_x,\dot{\tilde\delta}_y)=\dot{\tilde\varsigma}-\dot{\tilde\tau}$. 
Hence, as before, by Proposition~\ref{prop:D_mn-estimate} and the second paragraph there exists a constant $C_2>0$ such that,
$|\dot{\tilde{\delta}}_y|=|\tilde{\sigma}_{m,n}\tilde{\delta}_y|\leq C_2\sigma^{n-m}$. Let $C_8>1$ be constants satisfying 
\begin{equation}
\left|\tilde{\sigma}_{m,n}\right|>C_8^{-1}\sigma^{n-m},\quad \left|\tilde{t}_{m,n}\right|>C_8^{-1}\tilde{b}^{p^m},\quad \left|s_{m,n}\right|<C_8\sigma^{n-m},\quad
\left|\tilde{r}_{m,n}\right|<C_8.
\end{equation}
But, since $\tilde{b}^{p^m}>K\sigma^{n-m+1}$, Proposition~\ref{prop:D_mn-estimate} tells us
\begin{align}
\left|\dot{\tilde{\delta}}_x\right|
&=\left|\tilde{\sigma}_{m,n}\tilde{s}_{m,n}\left[\tilde\delta_x+\tilde{r}_{m,n}\left(\tilde\delta_x,\tilde\delta_y\right)\right]+\tilde{\sigma}_{m,n}\tilde{t}_{m,n}\tilde\delta_y\right| \\
&\geq \left|\tilde{\sigma}_{m,n}\right|
\left|
\left|\tilde{s}_{m,n}\right|\left|\tilde\delta_x+\tilde{r}_{m,n}\left(\tilde\delta_x,\tilde\delta_y\right)\right|
-\left|\tilde{t}_{m,n}\tilde\delta_y\right|
\right| \notag \\
&\geq C_8^{-1}\sigma^{n-m}\left(C_8^{-1}C_0b^{p^m}-C_8\left(C_0+C_8\right)\sigma^{n-m}\right) \notag \\
&\geq C_8^{-1}\sigma^{n-m}b^{p^m}\left(C_8^{-1}C_0-K^{-1}\sigma^{-1}C_8\left(C_0+C_8\right)\right). \notag
\end{align}
Since $K>0$ was assumed to be large (and the constants $C_8$ had no dependence upon $m$ and $n$) we find there exists a constant $C_9>0$ such that
$\left|\dot{\tilde{\delta}}_x\right|>C_9 b^{p^m}\sigma^{n-m}$.

Applying $\tilde{F}_m$ to $\dot{\tilde{\varsigma}}$ and $\dot{\tilde{\tau}}$ gives
$\ddot{\tilde\varsigma}-\ddot{\tilde\tau}=F_m\left(\dot{\tilde{\varsigma}}\right)-F_m\left(\dot{\tilde{\tau}}\right)$. Let
$\ddot{\tilde\delta}=\left(\ddot{\tilde\delta}_x,\ddot{\tilde\delta}_y\right)=\ddot{\tilde\varsigma}-\ddot{\tilde{\tau}}$. 
Then, ignoring the difference in the $x$-direction, we find $\left|\ddot{\tilde\delta}_y\right|=\left|\dot{\tilde\delta}_x\right|\geq C_{11}b^{p^m}\sigma^{n-m}$.

Now we apply $\tilde{\MT}_{0,m}$ which gives
$\dddot{\tilde\varsigma}-\dddot{\tilde\tau}=\tilde{D}_{0,m}\left(\id+\tilde{R}_{0,m}\right)\left(\ddot{\tilde\varsigma}-\ddot{\tilde\tau}\right)$. 
Let $\dddot{\tilde\delta}=\left(\dddot{\tilde\delta}_x,\dddot{\tilde\delta}_y\right)=\dddot{\tilde{\varsigma}}-\dddot{\tilde{\tau}}$. Then from Lemma~\ref{lem:D_mn-decomposition} we find 
$\left|\dddot{\tilde\delta}_y\right|=\left|\tilde{\sigma}_{0,m}\ddot{\tilde\delta}_y\right|$. But Proposition~\ref{prop:D_mn-estimate} implies there exists a constant $C_{10}>0$ such that
$\left|\tilde{\sigma}_{0,m}\right|\geq C_{10}\sigma^{m}$, so combining this with the estime from preceding paragraph gives $\left|\dddot{\tilde\delta}_y\right|\geq C_{9}C_{10}\sigma^{n}b^{p^m}$.

Now let us combine these upper and lower bounds. Let $C_{11}, C_{12}>0$ be constants satisfying $\dist\left(\dddot{\tilde\varsigma},\dddot{\tilde\tau}\right)>C_11\sigma^{n}b^{p^m}$ and
$\dist\left(\dddot\varsigma,\dddot\tau\right)<C_{12}\sigma^{2n-m}$. Then, assuming the H\"older
condition holds for some $C_{13}, \alpha>0$ we have
\begin{equation}
C_{11}\sigma^{n}\tilde b^{p^m} \leq \dist\left(\dddot{\tilde\tau},\dddot{\tilde\varsigma}\right)\leq C\dist\left(\dddot{\tau},\dddot{\varsigma}\right)^\alpha\leq C_{13}C_{12}^\alpha(\sigma^{2n-m})^\alpha
\end{equation}
which implies, after collecting all constant factors, that there is a $C>0$ such that
\begin{equation}
\sigma^m b^{p^m}\tilde b^{p^m}\leq C\left(\sigma^m b^{p^m}b^{p^m}\right)^{\alpha}
\end{equation}
and hence after taking the logarithm of both sides and passing to the limit gives 
\begin{equation}
\alpha\leq \frac{1}{2}\left(1+\frac{\log \tilde b}{\log b}\right).
\end{equation}
and hence the theorem is shown.
\end{proof}

\begin{appendices}

\renewcommand{\o}[2]{\ensuremath{{#2}^{#1}}}

\section{Elementary Results}
\begin{prop}\label{prop:1+Crho}
Let $C>0$ and $0\leq \rho\leq \delta<1$. Then the product $\prod_{i=0}^\infty (1+C\rho^i)$ converges and, moreover there exists a $C_0>0$ such that
\begin{equation}
\prod_{i=m}^\infty (1+C\rho^i)<1+C_0\rho^m
\end{equation}
\end{prop}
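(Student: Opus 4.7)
The plan is to take logarithms, bound the resulting tail sum by a geometric series, and then exponentiate back, controlling the exponential using the elementary inequality $e^x-1 \leq x e^A$ valid on $[0,A]$.

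First, since all factors satisfy $1 + C\rho^i \geq 1$, the partial products form an increasing sequence, so convergence of the infinite product is equivalent to convergence of
\begin{equation}
\sum_{i=0}^{\infty} \log(1+C\rho^i).
\end{equation}
Using $\log(1+x) \leq x$ for $x \geq 0$, this series is dominated termwise by the convergent geometric series $\sum_{i=0}^{\infty} C\rho^i = C/(1-\rho)$, which proves convergence.

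For the quantitative tail estimate, the same bound gives, for every $m \geq 0$,
\begin{equation}
\log \prod_{i=m}^{\infty}(1+C\rho^i) = \sum_{i=m}^{\infty} \log(1+C\rho^i) \leq \sum_{i=m}^{\infty} C\rho^i = \frac{C\rho^m}{1-\rho}.
\end{equation}
Set $A = C/(1-\rho)$; since $\rho^m \leq 1$, the right-hand side lies in $[0,A]$. Exponentiating and using $e^x \leq 1 + xe^A$ on $[0,A]$ (a trivial consequence of the Taylor series for $e^x$), we obtain
\begin{equation}
\prod_{i=m}^{\infty}(1+C\rho^i) \leq \exp\!\left(\frac{C\rho^m}{1-\rho}\right) \leq 1 + \frac{C\rho^m}{1-\rho}\, e^{C/(1-\rho)}.
\end{equation}
Setting $C_0 = \frac{C}{1-\rho}\, e^{C/(1-\rho)}$ gives the desired bound uniformly in $m$.

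There is no real obstacle here: the result is a standard elementary estimate, and the only subtlety is handling small $m$ (where $\rho^m$ is not small) cleanly — which is dispatched by the inequality $e^x \leq 1 + xe^A$ on $[0,A]$ rather than the sharper $e^x \leq 1 + 2x$ that is only available when $x$ is itself small.
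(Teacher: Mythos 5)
Your proof is correct, and it takes a genuinely different route from the paper's. For the tail estimate the paper defines $F_{m,n}(\rho)=\prod_{i=m}^n(1+C\rho^i)$ and bounds its $\rho$-derivative, showing $\frac{d}{d\rho}F_{m,n}(\rho)\leq M\rho^{m-1}$ and then integrating from $0$ to $\rho$ to compare $F_{m,n}$ with $G_m(\rho)=1+\frac{M}{m}\rho^m$. You instead bound the logarithm of the tail by a geometric series and exponentiate using $e^x\leq 1+xe^A$ on $[0,A]$. Your argument is shorter, avoids the differentiation under the product sign, and yields an explicit constant $C_0=\frac{C}{1-\rho}e^{C/(1-\rho)}$ in one step; the paper's route produces a constant $M/m$ whose uniformity in $m$ requires a further (elementary, but unstated) bound on $F_{m,n}(\delta)$, and in fact the paper contains a small slip at the end ($F_{m,n}(0)=0=G_m(0)$ should read $F_{m,n}(0)=1=G_m(0)$). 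One cosmetic remark: since the statement introduces $\delta$ precisely so that $C_0$ can be chosen independently of $\rho\in[0,\delta]$, you should in the last line replace $\rho$ by $\delta$ in the constant, i.e.\ take $C_0=\frac{C}{1-\delta}e^{C/(1-\delta)}$, which dominates your bound because $\frac{C}{1-\rho}e^{C/(1-\rho)}$ is increasing in $\rho$.
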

\begin{proof}
First let us show convergence. Observe that concavity of $\log$ implies $\log(1+C\rho^i)<\log(1)+\log'(1)C\rho^i=C\rho^i$. Therefore taking logarithms gives
\begin{align}
\log \left[\prod_0^n (1+C\rho^i)\right] \leq \sum_{i=0}^n \log (1+C\rho^i) \leq C\sum_{i=0}^n\rho^i \leq \frac{C}{1-\rho}.
\end{align}
Therefore, since the partial convergents are increasing, Bolzano-Weierstrass implies $\log\left[\prod_{i=0}^\infty(1+C\rho^i)\right]$ exists. Hence, applying $\exp$ gives us convergence.

Now let $F_{m,n}(\rho)=\prod_{i=m}^n (1+C\rho^i)$. Observe that, by the product rule,
\begin{align}
\frac{d}{d\rho}F_{m,n}(\rho)
&=\prod_{i=m}^n (1+C\rho^i)\sum_{i=m}^n\frac{Ci\rho^{i-1}}{1+C\rho^i} \\
&=C F_{m,n}(\rho)\rho^{m-1}\sum_{i=0}^{n-m}\frac{(m+i)\rho^i}{1+C\rho^i}
\end{align}
but since $C,\rho>0$,
\begin{align}
\sum_{i=0}^{n-m}\frac{(m+i)\rho^i}{1+C\rho^i}
&\leq m\sum_{i=0}^{n-m}\rho^i+\rho \sum_{i=0}^{n-m}i\rho^{i-1} \\
&\leq m\sum_{i=0}^{\infty}\rho^i+\rho \frac{d}{d\rho}\left(\sum_{i=0}^{\infty}\rho^i\right) \notag \\
&\leq \frac{m}{1-\rho}+\frac{\rho}{(1-\rho)^2} \notag
\end{align}
So, setting $M=C F_{m,n}(\delta)\left(\frac{m}{1-\delta}+\frac{\delta}{(1-\delta)^2}\right)$ and $G_m(\rho)=(1+\frac{M}{m}\rho^m)$, we find
\begin{equation}
\frac{d}{d\rho}F_{m,n}(\rho)\leq M\rho^{m-1}\leq \frac{d}{d\rho}G_m(\rho).
\end{equation}
Hence, as $F_{m,n}(0)=0=G_m(0)$ the result follows by setting $C_0=M/m$.
\end{proof}
\begin{lem}\label{lem:reciprocal}
Let $C>0$ and $0<\rho<1$. Then there exists a constant $C_0>0$ such that
\begin{equation}
\frac{1+C\rho}{1-C\rho}<1+C_0\rho^2.
\end{equation}
\end{lem}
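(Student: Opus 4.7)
The plan is to manipulate the expression directly. First I would rewrite the left-hand side as
\begin{equation}
\frac{1+C\rho}{1-C\rho} = 1 + \frac{2C\rho}{1-C\rho},
\end{equation}
which reduces the stated inequality to the equivalent form $\frac{2C\rho}{1-C\rho} < C_0 \rho^2$.

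Next, I would split into cases according to whether $C\rho < 1$ or not. If $C\rho \geq 1$, then the expression $\frac{1+C\rho}{1-C\rho}$ is either undefined (when $C\rho=1$) or strictly negative (when $C\rho>1$), and in the latter case the inequality $\frac{1+C\rho}{1-C\rho} < 1 + C_0\rho^2$ holds trivially for any $C_0>0$. The substantive case is therefore $0 < C\rho < 1$, in which $1-C\rho > 0$ and the rearranged inequality
\begin{equation}
C_0 > \frac{2C}{\rho(1-C\rho)}
\end{equation}
has a well-defined finite positive right-hand side. Since the statement merely asserts the \emph{existence} of $C_0$ for the given fixed pair $(C,\rho)$, it then suffices to set, for instance, $C_0 = \frac{2C}{\rho(1-C\rho)} + 1$.

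The main conceptual point — and the potential pitfall when the lemma is invoked elsewhere — is that the constant $C_0$ produced this way depends explicitly on $\rho$ and blows up as $\rho\downarrow 0$, since the Taylor expansion $\frac{1+C\rho}{1-C\rho} = 1 + 2C\rho + 2C^2\rho^2 + \cdots$ carries a nonzero linear term $2C\rho$. Thus any downstream application must be set up so that $\rho$ is bounded away from $0$ in the regime of interest, or so that the $\rho^2$ factor on the right is in fact paired with the appropriate power of $\rho$ absorbing the singular factor $1/\rho$; I would flag this dependency explicitly rather than trying to obscure it. Beyond this observation the proof itself is a one-line algebraic rearrangement with no genuine obstacle.
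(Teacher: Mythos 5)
Your proof is correct. Note first that the paper gives no proof of this lemma at all: it is stated bare in the appendix, so there is nothing to compare your argument against, and your one-line rearrangement $\frac{1+C\rho}{1-C\rho}=1+\frac{2C\rho}{1-C\rho}$ together with the choice $C_0=\frac{2C}{\rho(1-C\rho)}+1$ (plus the degenerate cases $C\rho\geq 1$) fully settles the statement as literally quantified. The substantive content of your write-up is the caveat, and it is the right one: since $\frac{1+C\rho}{1-C\rho}=1+2C\rho+2C^2\rho^2+\cdots$ has a nonvanishing linear term, no constant $C_0$ uniform in $\rho$ can make the bound $1+C_0\rho^2$ hold as $\rho\downarrow 0$, so the lemma is genuinely trivial for fixed $\rho$ and false in the uniform reading that the surrounding results (e.g.\ Proposition~\ref{prop:1+Crho}, where the constant is uniform over $\rho\leq\delta$) would suggest. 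As it happens, the label of this lemma is never cited in the body of the paper, so the downstream danger you flag does not materialize; but if the author did intend a uniform constant, the right-hand side would have to read $1+C_0\rho$ (with $\rho$ bounded away from $1/C$), not $1+C_0\rho^2$. Flagging that discrepancy rather than silently absorbing it is exactly the correct thing to do.
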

The following Lemma and Proposition are straightforward and are left to the reader.
\begin{lem}\label{lem:e-vs-rho}
Given constants $0\leq \bar\e,\rho,\sigma<1$ and $C_0,C_1>0$ and a fixed integer $p>1$ there exists a constant $C>0$ such that for all integers $0<m<M$,
\begin{enumerate}
\item $C_0\bar\e^{p^m}+C_1\bar\e^{p^{m+1}}\leq C\bar\e^{p^m}$;
\item $C_0\bar\e^{p^m}+C_1\rho^m\leq C\rho^m$
\item $\sum_{m<n<M} \sigma^{i-m-1}\bar\e^{p^n-p^m}(1+C_0\rho^n)<C$
\item $\sum_{n>M}\bar\e^{p^n}\leq C\bar\e^{p^M}$
\end{enumerate}
\end{lem}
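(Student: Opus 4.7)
The plan is to dispatch each of the four inequalities in turn; each is an exercise in bookkeeping of exponents, and since $\bar\e,\rho,\sigma<1$ with $p\geq 2$ fixed, most of the work is in comparing $p^m$ against $m$ as growth rates.

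For (1), the key observation is that $\bar\e<1$ combined with $p^{m+1}\geq p^m$ gives $\bar\e^{p^{m+1}}\leq \bar\e^{p^m}$ for every $m\geq 0$, so $C=C_0+C_1$ suffices.

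For (2), I would argue that the ratio $\bar\e^{p^m}/\rho^m$ is bounded in $m$. Taking logarithms gives $p^m\log\bar\e - m\log\rho$, which tends to $-\infty$ since $p^m\log\bar\e$ dominates (the first term is negative with $|p^m\log\bar\e|\to\infty$ superexponentially, the second is positive but only linear in $m$). Hence $M_0=\sup_{m\geq 1}\bar\e^{p^m}/\rho^m$ is finite, and we may take $C=C_0 M_0 + C_1$. (If $\bar\e=0$ there is nothing to prove.)

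For (3), since $n>m$ we have $p^n-p^m\geq 0$ and therefore $\bar\e^{p^n-p^m}\leq 1$. Similarly $1+C_0\rho^n\leq 1+C_0$. Consequently
\begin{equation}
\sum_{m<n<M}\sigma^{n-m-1}\bar\e^{p^n-p^m}(1+C_0\rho^n)\;\leq\;(1+C_0)\sum_{k\geq 0}\sigma^k\;=\;\frac{1+C_0}{1-\sigma},
\end{equation}
so $C=(1+C_0)/(1-\sigma)$ works independently of $m$ and $M$.

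For (4), the main point is that for $p\geq 2$ we have $p^k\geq k$, so since $x:=\bar\e^{p^M}<1$ we obtain $x^{p^k}\leq x^k$ for each $k\geq 1$. Writing $\bar\e^{p^{M+k}}=x^{p^k}$ and summing a geometric series,
\begin{equation}
\sum_{n>M}\bar\e^{p^n}=\sum_{k\geq 1}x^{p^k}\leq \sum_{k\geq 1}x^k=\frac{x}{1-x}\leq \frac{\bar\e^{p^M}}{1-\bar\e^{p^2}},
\end{equation}
where in the last step we used $M\geq 2$ (which follows from $0<m<M$ with $m\geq 1$) to bound $x\leq \bar\e^{p^2}<1$ uniformly. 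Thus $C=1/(1-\bar\e^{p^2})$ suffices. There is no real obstacle here; each estimate reduces to the elementary fact that $p^m$ grows much faster than $m$, together with geometric series.
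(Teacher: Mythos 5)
The paper explicitly declines to prove this lemma --- it is labelled ``straightforward and left to the reader'' --- so there is no paper proof to compare against. Your argument is the natural elementary one and is essentially correct. A few small remarks. In item~(2) you dispose of the degenerate case $\bar\e=0$, but the lemma as stated also allows $\rho=0$, in which case $\rho^m=0$ for $m\geq 1$ and item~(2) fails whenever $\bar\e>0$; the intended reading is clearly $\rho>0$, as this is how the lemma is used throughout (e.g.\ in Propositions~\ref{prop:D_mn-estimate} and~\ref{prop:r_mn-convergence-2}), but it is worth flagging that your boundedness-of-the-ratio argument implicitly needs it. In item~(3) you correctly read the typo $\sigma^{i-m-1}$ as $\sigma^{n-m-1}$, and your crude bounds $\bar\e^{p^n-p^m}\leq 1$ and $1+C_0\rho^n\leq 1+C_0$ give a constant that is even independent of $\bar\e$ and $\rho$, which is more than is asked. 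In item~(4) the identity $\bar\e^{p^{M+k}}=(\bar\e^{p^M})^{p^k}$ together with $p^k\geq k$ is the right observation, and your uniformity-in-$M$ step (bounding $1/(1-\bar\e^{p^M})$ by $1/(1-\bar\e^{p^2})$ using $M\geq 2$) is exactly what is needed for a constant independent of $M$.
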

\begin{prop}\label{prop:exp-vs-superexp}
Given any $\rho>0$ there exists a $\oline\e>0$ such that $\sum_{i>0} \rho^i\e^{p^i}$ converges for all $\e<\oline\e$. Moreover for $0<\doline{\e}<\oline\e$ there exists a constant $C=C(\doline\e)>0$ such that $\sum_{i>0} \rho^i\e^{p^i}<C\e$ for all $0<\e<\doline\e$.
\end{prop}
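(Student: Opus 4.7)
The plan is to apply the ratio test directly. Set $a_i(\e) = \rho^i \e^{p^i}$. For any $\e \in (0,1)$ one computes
\[
\frac{a_{i+1}(\e)}{a_i(\e)} = \rho \, \e^{(p-1)p^i},
\]
and since $(p-1)p^i \to \infty$ while $\e < 1$, this ratio tends to $0$ as $i \to \infty$. Hence $\sum_{i \ge 1} a_i(\e)$ converges for every $\e \in (0,1)$, so taking $\oline\e = 1$ (or anything smaller) already handles the first assertion.

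For the stronger linear bound, the key observation is that the first term $a_1(\e) = \rho \e^p = \rho \e^{p-1} \cdot \e$ already carries the desired factor of $\e$; the strategy will be to choose $\oline\e$ so small that for every $\e < \oline\e$ the successive ratios are bounded above by $\tfrac{1}{2}$, turning the tail into a geometric series controlled by $a_1(\e)$. Concretely, for all $i \ge 1$ and $\e \in (0,1)$,
\[
\frac{a_{i+1}(\e)}{a_i(\e)} = \rho \, \e^{(p-1)p^i} \le \rho \, \e^{p-1},
\]
so I would fix $\oline\e = \min\!\bigl(1,\, (2\rho)^{-1/(p-1)}\bigr)$, which forces $\rho \, \e^{p-1} \le \tfrac{1}{2}$ for every $\e < \oline\e$.

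Given $\doline\e \in (0, \oline\e)$, for any $\e \in (0, \doline\e)$ the geometric comparison then yields
\[
\sum_{i \ge 1} \rho^i \e^{p^i} \le a_1(\e) \sum_{k \ge 0} 2^{-k} = 2 \rho \e^p \le 2\rho \, \doline\e^{\,p-1} \cdot \e,
\]
so $C(\doline\e) = 2\rho \, \doline\e^{\,p-1}$ works. The argument is entirely elementary; there is no genuine obstacle, since the super-exponential decay of $\e^{p^i}$ trivially dominates the (at worst exponential) growth of $\rho^i$. The only mild subtlety is choosing $\oline\e$ small enough that the geometric-ratio comparison applies uniformly in $\e$ and $i$ simultaneously, which is why $\oline\e$ must be taken smaller than $(2\rho)^{-1/(p-1)}$ rather than just $1$.
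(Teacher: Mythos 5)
Your argument is correct and complete. The paper itself leaves this proposition unproved (it is explicitly flagged as ``straightforward and left to the reader''), so there is no official argument to compare against; the ratio-test comparison to a geometric series, with $\oline\e$ chosen so that $\rho\e^{p-1}\le\tfrac12$ holds uniformly for $\e<\oline\e$ and all $i\ge 1$, and the factor of $\e$ extracted from the leading term $a_1(\e)=\rho\e^p=\rho\e^{p-1}\cdot\e$, is the natural route and correctly handles the general hypothesis $\rho>0$ (including $\rho\ge 1$).
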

\begin{lem}\label{lem:ratio-perturb}
Let $P,Q,P',Q'\in\RR$ with $P, Q'$ non-zero. Then
\begin{equation}
\left|\frac{P}{Q}-\frac{P'}{Q'}\right|\leq C\max\left(|P-P'|,|Q-Q'|\right)
\end{equation}
where $C=2|Q|^{-1}\max(1,|P'/Q'|)$.
\end{lem}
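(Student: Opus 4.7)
The plan is to reduce the difference of fractions to a common denominator and then split the numerator additively in the standard way. Specifically, I would write
\begin{equation}
\frac{P}{Q}-\frac{P'}{Q'} = \frac{PQ'-P'Q}{QQ'} = \frac{(P-P')Q' + P'(Q'-Q)}{QQ'},
\end{equation}
which is the only non-obvious step, obtained by adding and subtracting $P'Q'$ in the numerator.

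From there, applying the triangle inequality gives
\begin{equation}
\left|\frac{P}{Q}-\frac{P'}{Q'}\right| \leq \frac{|P-P'|}{|Q|} + \frac{|P'|\,|Q-Q'|}{|Q|\,|Q'|} = \frac{1}{|Q|}\left(|P-P'| + \left|\frac{P'}{Q'}\right||Q-Q'|\right).
\end{equation}
Bounding each coefficient by $\max(1,|P'/Q'|)$ and both differences by $\max(|P-P'|,|Q-Q'|)$ yields the factor of $2$, producing the asserted constant $C = 2|Q|^{-1}\max(1,|P'/Q'|)$.

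There is no real obstacle here; this is a one-line algebraic identity followed by the triangle inequality, and the statement is a tool used earlier in Proposition~\ref{prop:r-estimate} and Theorem~\ref{thm:2d-universality} to compare ratios of partial derivatives across different maps. The only mild care needed is that the statement implicitly requires $Q \neq 0$ in addition to the hypothesis $Q'\neq 0$, so that the left-hand side is defined; this is part of the tacit setup whenever the lemma is invoked.
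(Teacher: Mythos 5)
Your proof is correct and takes essentially the same route as the paper: the paper's decomposition $\frac{P}{Q}-\frac{P'}{Q'}=\frac{1}{Q}(P-P')+P'\left(\frac{1}{Q}-\frac{1}{Q'}\right)$ is just your common-denominator split written without clearing denominators, and both lead to the identical bound $\frac{1}{|Q|}\left(|P-P'|+\left|\frac{P'}{Q'}\right||Q-Q'|\right)$ before packaging into $C$. Your observation that $Q\neq 0$ is tacitly required (and that the stated hypothesis $P\neq 0$ is not actually used) is accurate.
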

\begin{proof}
This is immediate given the following inequality,
\begin{align}
\left|\frac{P}{Q}-\frac{P'}{Q'}\right|
&=\left|\frac{1}{Q}(P-P')+P'\left(\frac{1}{Q}-\frac{1}{Q'}\right)\right| \\
&\leq\frac{1}{|Q|}\left[|P-P'|+\left|\frac{P'}{Q'}\right||Q'-Q|\right]. \notag
\end{align}
\end{proof}

\section{Variational Properties of Composition Operators}
In this section we derive properties of the composition operator. We show how the remainder term from Taylor's Theorem behaves under composition and we derive the first variation of the $n$-fold composition operator. Although we only state these for maps on $\RR^2$ or $\CC^2$ these work in full generality.

\begin{prop}\label{prop:remainder-1}
Let $F, G\in\Emb^2(\RR^2,\RR^2)$. For any $z_0,z_1\in\RR^2$, consider the decompositions
\begin{equation}
F(z_0+z_1)=F(z_0)+\D{F}{z_0}(\id+\Rem{F}{z_0})(z_1)
\end{equation}
and
\begin{equation}
G(z_0+z_1)=G(z_0)+\D{G}{z_0}(\id+\Rem{G}{z_0})(z_1).
\end{equation}
Then
\begin{align}
\Rem{FG}{z_0}(z_1)
&=\Rem{G}{z_0}(z_1)+\D{G}{z_0}^{-1}\Rem{F}{G(z_0)}(\D{G}{z_0}(\id+\Rem{G}{z_0})(z_1))
\end{align}
\end{prop}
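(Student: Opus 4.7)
The plan is to verify this identity by a direct algebraic computation, expanding $FG(z_0+z_1)$ in two different ways and comparing.

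First I would write down the defining decomposition of $\Rem{FG}{z_0}$ from the statement applied to the composite $FG$:
\begin{equation}
FG(z_0+z_1)=FG(z_0)+\D{(FG)}{z_0}(\id+\Rem{FG}{z_0})(z_1).
\end{equation}
Next I would compute $FG(z_0+z_1)=F(G(z_0+z_1))$ by first substituting the decomposition for $G$ and then applying the decomposition for $F$ at the base point $G(z_0)$. Setting $w_1=\D{G}{z_0}(\id+\Rem{G}{z_0})(z_1)$ so that $G(z_0+z_1)=G(z_0)+w_1$, this gives
\begin{equation}
F(G(z_0)+w_1)=FG(z_0)+\D{F}{G(z_0)}\bigl(w_1+\Rem{F}{G(z_0)}(w_1)\bigr).
\end{equation}

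Then I would apply the chain rule $\D{(FG)}{z_0}=\D{F}{G(z_0)}\circ \D{G}{z_0}$ to rewrite $\D{F}{G(z_0)}(w_1)=\D{(FG)}{z_0}(\id+\Rem{G}{z_0})(z_1)$. Substituting back yields
\begin{equation}
FG(z_0+z_1)=FG(z_0)+\D{(FG)}{z_0}(\id+\Rem{G}{z_0})(z_1)+\D{F}{G(z_0)}\Rem{F}{G(z_0)}(w_1).
\end{equation}
Equating the two expressions for $FG(z_0+z_1)$ and cancelling the common term $\D{(FG)}{z_0}(z_1)$ gives
\begin{equation}
\D{(FG)}{z_0}\Rem{FG}{z_0}(z_1)=\D{(FG)}{z_0}\Rem{G}{z_0}(z_1)+\D{F}{G(z_0)}\Rem{F}{G(z_0)}(w_1).
\end{equation}

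Finally I would invert $\D{(FG)}{z_0}=\D{F}{G(z_0)}\circ\D{G}{z_0}$ on both sides to isolate $\Rem{FG}{z_0}(z_1)$, which produces exactly the claimed formula. The only thing to check is that the inversions are legitimate, i.e. that $\D{F}{G(z_0)}$ and $\D{G}{z_0}$ are invertible; but this is assumed since $F,G\in\Emb^2(\RR^2,\RR^2)$. There is no real obstacle here — the proof is pure bookkeeping — so the main care needed is simply to keep the base points of the derivatives and remainder operators straight (in particular that $\Rem{F}{}$ is evaluated at $G(z_0)$, not $z_0$).
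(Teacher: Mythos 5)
Your proof is correct and follows essentially the same route as the paper: expand $FG(z_0+z_1)$ once via the defining decomposition of $\Rem{FG}{z_0}$ and once by nesting the $G$- and $F$-decompositions, then equate and solve for $\Rem{FG}{z_0}(z_1)$ by inverting $\D{(FG)}{z_0}=\D{F}{G(z_0)}\circ\D{G}{z_0}$. Your observation that the inversion is legitimate because $F,G$ are embeddings (and that the base point of $\Rem{F}{\cdot}$ is $G(z_0)$) is exactly the bookkeeping the paper does implicitly.
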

\begin{proof}
Observe that
\begin{align}
FG(z_0+z_1)
&=FG(z_0)+\D{FG}{z_0}(z_1)+\D{FG}{z_0}(\Rem{FG}{z_0})(z_1)
\end{align}
must be equal to
\begin{align}
F(G(z_0+z_1))
&=F(G(z_0)+\D{G}{z_0}(\id+\Rem{G}{z_0})(z_1)) \\
&=F(G(z_0))+\D{F}{G(z_0)}(\id+\Rem{F}{G(z_0)})(\D{G}{z_0}(\id+\Rem{G}{z_0})(z_1)) \notag \\
&=F(G(z_0))+\D{F}{G(z_0)}\D{G}{z_0}(z_1)+\D{F}{G(z_0)}\D{G}{z_0}(\Rem{G}{z_0}(z_1)) \notag \\
&+\D{F}{G(z_0)}\Rem{F}{G(z_0)}(\D{G}{z_0}(\id+\Rem{G}{z_0})(z_1)). \notag
\end{align}
This implies that
\begin{align}
\Rem{FG}{z_0}(z_1)
&=\Rem{G}{z_0}(z_1)+\D{G}{z_0}^{-1}\Rem{F}{G(z_0)}(\D{G}{z_0}(\id+\Rem{G}{z_0})(z_1))
\end{align}
and hence the Proposition is shown.
\end{proof}

\begin{prop}\label{prop:variation-composition}
For each integer $n>0$ let $C_n\colon C^\omega(B,B)^n \to C^\omega(B,B)$ denote the $n$-fold composition operator
\begin{equation}
C_n(G_{1},\ldots,G_{n})=G_{1}\circ\cdots\circ G_{n}.
\end{equation}
For $i=1,\ldots,n$ assume we are give $F_i,G_i\in C^\omega(B,B)$ and let $E_i$ be defined by $G_i=F_i+E_i$. 
Then
\begin{equation}
C(G_{1},\ldots, G_{n})=C(F_{1},\ldots,F_{n})+\delta C_n(F_{1},\ldots,F_{n};E_{1},\ldots,E_{n})+\bigo(|E_{i}||E_j|)
\end{equation}
where
\begin{equation}
\delta C_n(F_{1},\ldots,F_{n};E_{1},\ldots,E_{n})=\sum_{i=1}^{n-1}\D{F_{1,\ldots,i}}{F_{i+1,\ldots,n}(z)}(E_{i+1}(F_{i+2,\ldots,n}(z)))
\end{equation}
where we have set $F_\emptyset, E_{n+1}=\id$.
\end{prop}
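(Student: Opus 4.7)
The plan is to proceed by induction on $n$, using the two-variable Taylor expansion of each $F_i$ together with the observation that whenever we encounter a product of two perturbation terms $E_i, E_j$ (or a first-order remainder that is already quadratic in the $E_k$), that contribution is absorbed into the $\bigo(|E_i||E_j|)$ error. Throughout, write $\psi_i = G_i\circ\cdots\circ G_n$ and $\phi_i = F_i\circ\cdots\circ F_n$, so the goal is to estimate $\psi_1 - \phi_1$ to first order in the $E_k$.

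The base case $n=1$ is immediate: $G_1 = F_1 + E_1$, and with the convention $F_\emptyset = \id$ the single term $E_1(z)$ in $\delta C_1$ matches. For the inductive step, assume the formula holds for $n-1$ composants, so that, writing $\phi_2, \psi_2$ for the $(n-1)$-fold compositions,
\begin{equation}
\psi_2(z) = \phi_2(z) + \Delta(z) + \bigo(|E_i||E_j|), \qquad \Delta(z) = \sum_{i=2}^{n}\D{F_{2,\ldots,i-1}}{\phi_i(z)}\bigl(E_i(\phi_{i+1}(z))\bigr).
\end{equation}
Then expand
\begin{align*}
\psi_1(z) &= G_1(\psi_2(z)) = F_1(\psi_2(z)) + E_1(\psi_2(z)) \\
          &= F_1(\phi_2(z)) + \D{F_1}{\phi_2(z)}\bigl(\psi_2(z) - \phi_2(z)\bigr) + \bigo(|\psi_2-\phi_2|^2) + E_1(\psi_2(z)).
\end{align*}
Substitute the inductive expansion for $\psi_2 - \phi_2$, and replace $E_1(\psi_2(z))$ by $E_1(\phi_2(z))$ at the cost of an $\bigo(|E_1||E_i|)$ term (via the mean value theorem applied to $E_1$ and the estimate on $\psi_2-\phi_2$). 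Finally, use the chain rule $\D{F_1}{\phi_2(z)}\circ\D{F_{2,\ldots,i-1}}{\phi_i(z)} = \D{F_{1,\ldots,i-1}}{\phi_i(z)}$ to combine each summand of $\D{F_1}{}\Delta$ into the required form, and verify that the $E_1(\phi_2(z))$ term corresponds exactly to the $i=1$ summand in the statement (under the convention $F_\emptyset = \id$).

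The only real work is bookkeeping: checking that every omitted contribution is genuinely second order in the $E_k$. The main point to watch is the Taylor remainder $\bigo(|\psi_2-\phi_2|^2)$: by the inductive hypothesis $|\psi_2-\phi_2| = \bigo(\max_k |E_k|)$ on the compact domain $B$ (where the $\D{F_{2,\ldots,i-1}}{}$ are bounded by analyticity), so its square is indeed $\bigo(|E_i||E_j|)$. Likewise the replacement $E_1(\psi_2)\rightsquigarrow E_1(\phi_2)$ produces $\D{E_1}{}(\psi_2-\phi_2)$, which is quadratic in the perturbations because $\D{E_1}{}$ is itself of order $|E_1|$ (by Cauchy estimates applied to the analytic extension of $E_1$). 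Once these points are verified the identity drops out directly from the chain rule, completing the induction.
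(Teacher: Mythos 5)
Your argument is correct and is conceptually the same as the paper's: an induction driven by first-order Taylor expansion, with the two error sources (the quadratic Taylor remainder and the substitution $E_1(\psi_2)\rightsquigarrow E_1(\phi_2)$) each absorbed into $\bigo(|E_i||E_j|)$ because $\D{E_1}{}$ is itself of order $|E_1|$. The one structural difference is the direction of the peel: you factor $G_1\circ(G_2\circ\cdots\circ G_n)$ and apply the inductive hypothesis to the inner $(n-1)$-fold block, whereas the paper inducts on $G_{1,\ldots,n}=G_{1,\ldots,n-1}\circ G_n$. The paper in fact writes down both decompositions and chooses the right-peel for the formal induction, and the reason is purely cosmetic: the right-peel produces terms already of the form $\D{F_{1,\ldots,k}}{\cdot}(E_{k+1}(\cdot))$, whereas your left-peel requires the additional chain-rule identification $\D{F_1}{\phi_2(z)}\circ\D{F_{2,\ldots,i-1}}{\phi_i(z)}=\D{F_{1,\ldots,i-1}}{\phi_i(z)}$, which you invoke correctly. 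One bookkeeping remark: the sum $\sum_{i=1}^{n-1}\D{F_{1,\ldots,i}}{F_{i+1,\ldots,n}(z)}\bigl(E_{i+1}(F_{i+2,\ldots,n}(z))\bigr)$ as printed in the statement collects only the $E_2,\ldots,E_n$ contributions and omits the $E_1(F_{2,\ldots,n}(z))$ term; you correctly observe that this term must be present and that it is precisely the $\D{F_\emptyset}=\id$ case, so the intended lower limit is $i=0$ (or equivalently an index shift). Your proof is consistent with that corrected reading.
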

\begin{proof}
For notational simplicity let $F_{1,\ldots,n}=F_1\circ\cdots\circ F_n, G_{1,\ldots,n}=G_1\circ\cdots\circ G_n$ and let $E_{1,\ldots,n}$ satisfy
$G_{1,\ldots,n}=F_{1,\ldots,n}+E_{1,\ldots,n}$. Then equating $G_{1,2,\ldots,n}$ with $G_1\circ G_{2,\ldots,n}$ and using the power series expansion of $G_1$ gives
\begin{align}
G_{1,\ldots,n}(z)
&=G_{1}(F_{2,\ldots, n}(z)+E_{2,\ldots, n}(z)) \\
&=F_{1}(F_{2,\ldots, n}(z)+E_{2,\ldots, n}(z))+E_{1}(F_{2,\ldots, n}(z)+E_{2,\ldots, n}(z)) \notag \\
&=F_{1}(F_{2,\ldots, n}(z))+\D{F_{1}}{F_{2,\ldots, n}(z)}(E_{2,\ldots, n}(z))+\bigo(|E_{2,\ldots, n}|^2) \notag \\
&+E_{1}(F_{2,\ldots, n}(z))+\bigo(|\D{E_{1}}{}||E_{2,\ldots,n}|) \notag
\end{align}
while equating $G_{1,2,\ldots,n}$ with $G_{1,\ldots,n-1}\circ G_{n}$ and using the power series expansion of $G_{1,\ldots,n-1}$ gives
\begin{align}
G_{1,\ldots,n}(z)
&=G_{1,\ldots,n-1}(F_{n}(z)+E_{n}(z)) \\
&=F_{1,\ldots,n-1}(F_{n}(z)+E_{n}(z))+E_{1,\ldots,n-1}(F_{n}(z)+E_{n}(z)) \notag \\
&=F_{1,\ldots,n}(z)+\D{F_{1,\ldots,n-1}}{F_{n}(z)}(E_{n}(z))+\bigo(|E_{n}|^2) \notag \\
&+E_{1,\ldots,n-1}(F_{n}(z))+\bigo(|\D{E_{1,\ldots,n}}{}||E_{n}|). \notag
\end{align}
From the second of these expressions, inductively we find, setting $F_\emptyset, E_{n+1}=\id$, that
\begin{equation}
E_{1,\ldots,n}(z)=\sum_{i=1}^{n-1}\D{F_{1,\ldots,i}}{F_{i+1,\ldots,n}(z)}(E_{i+1}(F_{i+2,\ldots,n}(z)))+\bigo(|E_i||E_j|)
\end{equation}
\end{proof}

\end{appendices}

\bibliographystyle{amsplain}
\bibliography{henon1}

\end{document}